\newtheorem{theorem}{Theorem}[section]
\newtheorem{proposition}[theorem]{Proposition}
\newtheorem{lemma}[theorem]{Lemma}
\newtheorem{corollary}[theorem]{Corollary}
\theoremstyle{definition}
\newtheorem{definition}{Definition}[section]
\theoremstyle{remark}
\newtheorem{remark}{Remark}[section]
\numberwithin{equation}{section}
\newenvironment{bigcases}{\left\{\begin{aligned}}{\end{aligned}\right.}
\newcommand{\edintertext}[1]{%
  \noalign{%
    \vskip\belowdisplayshortskip
    \vtop{\hsize=\linewidth#1\par
    \expandafter}%
    \expandafter\prevdepth\the\prevdepth
  }%
}
\newcommand\bpr[1]{\left(#1\right)}
\newcommand\bsq[1]{\left[#1\right]}
\newcommand{\wto}{\rightharpoonup}
\newcommand{\xto}[1]{\xrightarrow{#1}}
\newcommand{\sub}{\subset}
\newcommand{\subeq}{\subseteq}
\newcommand{\emb}{\hookrightarrow}
\newcommand\R{\mathbb{R}}
\newcommand\Z{\mathbb{Z}}
\newcommand\N{\mathbb{N}}
\newcommand{\Bal}[2]{B_{#1}(#2)}
\newcommand{\cBal}[2]{\overline{B_{#1}(#2)}}
\newcommand\Sob[1][k]{H^{#1}}
\newcommand{\hSob}[1][k,2]{D^{#1}}
\newcommand{\Cct}{C_c^\infty}
\newcommand\abs[1]{\left\lvert #1 \right\rvert}
\newcommand\tabs[1]{\lvert #1 \rvert}
\newcommand\Babs[1]{\Big\lvert#1\Big\rvert}
\newcommand{\norm}[1]{\left\| #1 \right\|}
\newcommand{\dprod}[2]{\left\langle #1,#2 \right\rangle}
\newcommand{\vprod}[3][g]{\langle #2,#3 \rangle_{#1}}
\newcommand\Snorm[3][(M)]{\norm{#3}_{\Sob[#2]#1}}
\newcommand\Hnorm[3][(\R^n)]{\norm{#3}_{\hSob[#2]#1}}
\newcommand{\Lnorm}[3][]{\norm{#3}_{L^{#2}#1}}
\newcommand{\inorm}[2][]{\Lnorm[#1]{\infty}{#2}}
\newcommand{\dist}[2][g]{\operatorname{dist}_{#1}(#2)}
\newcommand\at[1]{\Big\lvert_{#1}}
\newcommand\bigO{\mathcal{O}}
\newcommand{\intM}[2][M]{\int_{#1} #2\, dv_g}
\newcommand{\intMg}[3]{\int_{#2} #3~ dv_{#1}}
\newcommand{\dg}[2][g]{d_{#1}{(#2)}}
\newcommand{\spanned}[1]{\operatorname{span}\left\{#1\right\}}
\newcommand{\Vol}[1][g]{\operatorname{Vol}_{#1}}
\newcommand{\such}{\,:\,}
\newcommand{\tsum}{\textstyle\sum\limits}
\renewcommand\div{\operatorname{div}}
\newcommand{\Dsur}[2]{\frac{d#1}{d#2}}
\newcommand\dsur[2]{\frac{\partial #1}{\partial #2}}
\newcommand{\D}{\Delta}
\newcommand{\Dg}[1][g]{\D_{#1}}
\renewcommand{\epsilon}{\varepsilon}
\renewcommand{\phi}{\varphi}
\renewcommand{\a}{\alpha}
\newcommand{\inj}{i_g}
\newcommand{\sqa}{\sqrt{\a}}
\newcommand{\deu}{{2^\sharp}}
\newcommand{\deuk}[1]{2_{#1}^\sharp}
\newcommand{\pk}{\rho_{n,k}}
\newcommand{\pa}{\nu}
\newcommand{\zm}[1][]{z_{#1}, \mu_{#1}}
\newcommand{\zma}[1][]{\a_{#1}, \pa_{#1}}
\newcommand{\param}[1][]{\mathcal{P}(\tau_{#1},\a_{#1})}
\newcommand{\parama}[1][\tau_\a]{\mathcal{P}(#1,\a)}
\newcommand{\mani}[1][]{\mathcal{B}_{\tau_{#1},\a_{#1}}}
\newcommand{\Set}{\mathcal{S}}
\newcommand{\Bub}{\operatorname{B}}
\newcommand{\Ker}[1][\zma]{\mathcal{K}_{\ifthenelse{\isempty{#1}}{\Bub}{#1}}}
\newcommand{\Keri}[1][i]{\mathcal{K}_{\zma[#1]}}
\newcommand{\Kera}[1][\pa_\a]{\mathcal{K}_{\a,#1}}
\newcommand{\proKe}[1][\zma]{\Pi_{\Ker[#1]^\perp}}
\newcommand{\proKa}[1][\pa_\a]{\Pi_{\Kera[#1]^\perp}}
\newcommand{\proKi}[1][i]{\Pi_{\Keri[#1]^\perp}}
\newcommand{\TBub}[1][]{V_{\zma[#1]}}
\newcommand{\TBuba}[1][\pa_{\a}]{V_{\a,#1}}
\newcommand{\TB}[1]{V_{#1}}
\newcommand{\Ze}[1]{Z^{#1}}
\newcommand{\Zed}[2][]{Z^{#2}_{\zma[#1]}}
\newcommand{\Zeda}[2][\pa_{\a}]{Z^{#2}_{\a,#1}}
\newcommand{\Gga}[1][]{\operatorname{G}_{g,\a_{#1}}}
\newcommand{\Lia}[1][]{L_{\zma[#1]}}
\newcommand{\sqai}[1][i]{\sqrt{\a_{#1}}}
\newcommand{\err}[2][]{R^{#2}_{\zma[#1]}}
\newcommand{\erra}[2][\pa_\a]{R^{#2}_{\a,#1}}
\newcommand{\rad}{\eta}
\newcommand{\Zdif}[1][j]{\mu\dsur{}{z_{#1}}\Theta_\a(z,\cdot) \Bub_{\zm}}
\newcommand{\U}[1][i]{U_{#1}}
\newcommand{\Tu}{\Tilde{u}}
\newcommand{\bu}{\Bar{u}}
\newcommand{\Tg}{\Tilde{g}}
\newcommand{\Tp}{\Tilde{\phi}}
\newcommand{\Tps}{\Tilde{\psi}}
\newcommand{\Cphi}{\check{\phi}}
\newcommand{\Hps}{\hat{\psi}}
\newcommand{\Tx}{\Tilde{x}}
\newcommand{\bz}{\Bar{z}}
\newcommand{\bmu}{\Bar{\mu}}
\newcommand{\bpa}{\Bar{\pa}}
\newcommand{\Tmu}{\Tilde{\mu}}
\newcommand{\Tz}{\Tilde{z}}
\newcommand{\Tpa}{\Tilde{\pa}}
\begin{document}

\title[Optimal constant for Sobolev inequalities]{Attaining the optimal constant for higher-order Sobolev inequalities on manifolds via asymptotic analysis}

\author{Lorenzo Carletti}
\date{August 2024}
\thanks{This publication is supported by the French Community of Belgium as part of the funding of a FRIA grant.}
\address{Lorenzo Carletti, Université Libre de Bruxelles, Service d'Analyse, Boulevard du Triomphe - Campus de la Plaine, 1050 Bruxelles, Belgique}
\email{\url{lorenzo.carletti@ulb.be}}

\subjclass[2020]{Primary 35G20, 46E35, 35B44, 35B33, 35J30}

\begin{abstract}
Let $(M,g)$ be a closed Riemannian manifold of dimension $n$, and $k\geq 1$ an integer such that $n>2k$. 
We show that there exists $B_0>0$ such that for all $u \in H^{k}(M)$, 
\[\|u\|_{L^{2^\sharp}(M)}^2 \leq K_0^2 \int_M |\Delta_g^{k/2} u|^2 \,dv_g + B_0 \|u\|_{H^{k-1}(M)}^2,\] 
where $2^\sharp = \frac{2n}{n-2k}$ and $\Delta_g = -\operatorname{div}_g(\nabla\cdot)$. Here $K_0$ is the optimal constant for the Euclidean Sobolev inequality $\big(\int_{\mathbb{R}^n} |u|^{2^\sharp}\big)^{2/2^\sharp} \leq K_0^2 \int_{\mathbb{R}^n} |\nabla^k u|^2$ for all $u \in C_c^\infty(\mathbb{R}^n)$.  
This result is proved as a consequence of the pointwise blow-up analysis for a sequence of positive solutions $(u_\alpha)_\alpha$ to polyharmonic critical non-linear equations of the form $(\Delta_g + \alpha)^k u = u^{2^\sharp-1}$ in $M$. We obtain a pointwise description of $u_\alpha$, with explicit dependence in $\alpha$ as $\alpha\to \infty$. 
\end{abstract}
\maketitle

\section{Introduction and statement of the results}
Let $(M,g)$ be a smooth compact Riemannian manifold without boundary, of dimension $n\geq 3$. Let also $k\geq 1$ be an integer such that $n>2k$. We define the Sobolev space $\Sob(M)$ as the closure of $C^\infty(M)$ with respect to the norm 
\begin{equation}\label{def:Snorm}
    \Snorm{k}{u}^2 := \sum_{l=0}^k \intM{\abs{\Dg^{l/2}u}^2},
\end{equation}
where 
\[  \abs{\Dg^{l/2} u} := \begin{cases}
    \abs{\Dg^m u} &\text{if $l=2m$ is even}\\
    \abs{\nabla\Dg^m u}_g &\text{if $l=2m+1$ is odd}
\end{cases}.
    \]
Here $\Dg := -\div_g(\nabla~\cdot)$ is the Laplace-Beltrami operator on $M$. 
\par Thanks to a Bochner-Lichnerowicz-Weitzenböck-type formula, as showed in \cite{Rob11}, the norm \eqref{def:Snorm} is equivalent to the usual Sobolev norm 
\[  \bpr{\sum_{\abs{\beta}\leq k} \intM{\abs{\nabla^\beta u}_g^2}}^{1/2},
    \] 
where $\beta$ is now a multi-index. On a compact manifold, the Sobolev space $\Sob(M)$ embeds continuously into $L^\deu(M)$, where $\deu := \frac{2n}{n-2k}$ is the critical exponent. That is, there exist $A,\,B>0$ such that 
\begin{equation}\label{eq:sobineq}  \Lnorm[(M)]{\deu}{u}^2 \leq A \Lnorm[(M)]{2}{\Dg^{k/2}u}^2 + B \Snorm{k-1}{u}^2 \qquad \forall~u\in \Sob(M).
    \end{equation}
This paper is concerned with the optimization of the constant $A$. We are motivated by the following question: What is the smallest possible $A>0$ for which there exists $B>0$ such that \eqref{eq:sobineq} holds?
\par In the Euclidean setting, we define $K_0>0$ to be the optimal constant for the Sobolev inequality
\begin{equation}\label{eq:optSob}
    \Lnorm[(\R^n)]{\deu}{u}^2 \leq K_0^2 \Lnorm[(\R^n)]{2}{\Dg[\xi]^{k/2}u}^2 \qquad \forall~u\in \Cct(\R^n). 
    \end{equation}
Here $\xi$ is the flat metric on $\R^n$, and $\Dg[\xi] = -\tsum_{i=1}^n \partial_i^2$.
This constant has an explicit expression depending only on $n$ and $k$, which was computed in \cite{Swa92}, and a variational characterization:
\[  \frac{1}{K_0^2} := \inf_{u \in \Cct(\R^n)\setminus\{0\}} \frac{\displaystyle\int_{\R^n}\abs{\Dg[\xi]^{k/2}u}^2 dy}{\bpr{\int_{\R^n}\abs{u}^\deu dy}^{2/\deu}}.
    \]

    \par We introduce the following norm, for $ u \in \Cct(\R^n)$,
\[  \Hnorm{k,2}{u}^2 := \int_{\R^n} \abs{\Dg[\xi]^{k/2}u}^2 dy. 
    \]
We also define the homogeneous Sobolev space $\hSob(\R^n)$ as the closure of $\Cct(\R^n)$ with respect to $\Hnorm{k,2}{\cdot}$. The optimal Sobolev inequality \eqref{eq:optSob} holds for all $u \in \hSob(\R^n)$, by density. The extremals for this inequality were studied in \cite{Swa92}, and are all rescalings and translations of the so-called \emph{standard bubble}.
\begin{definition}[the Euclidean bubble]
    We let $\Bub \in \hSob(\R^n)$ be defined as 
    \begin{equation}\label{def:bub}
        \Bub(y) := \big(1+\pk\abs{y}^2\big)^{\frac{n-2k}{2}} \qquad y\in \R^n,
    \end{equation}
    where $\pk  := \bpr{\prod\limits_{l=-k}^{k-1} (n+2l)}^{-1/k}$ is chosen such that $\Bub$ satisfies 
    \begin{equation}\label{eq:bub}
        \Dg[\xi]^k \Bub = \Bub^{\deu-1} \qquad \text{in }\R^n.
    \end{equation}         
    We have $\Hnorm{k,2}{\Bub} = K_0^{-\frac{n}{2k}}$ and 
    \[  \Lnorm[(\R^n)]{\deu}{\Bub} = K_0 \Hnorm{k,2}{\Bub}.
        \]
\end{definition}
As showed in \cite{WeiXu99}, all the positive solutions of \eqref{eq:bub} are given by the transformations of $\Bub$, for $\mu>0$ and $z\in \R^n$,
\begin{equation}\label{eq:transfo}
    \mu^{-\frac{n-2k}{2}}\Bub\Big(\frac{y-z}{\mu}\Big) = \bpr{\frac{\mu}{\mu^2 + \pk \abs{y-z}^2}}^{\frac{n-2k}{2}}.
\end{equation}

\par In this article we show that, in the inhomogeneous context of a closed Riemannian manifold, we can choose $A=K_0^2$ in \eqref{eq:sobineq}. Our main result is as follows.
\begin{theorem}\label{prop:mainest}
    Let $(M,g)$ be a smooth compact Riemannian manifold without boundary of dimension $n$, and let $k\geq 1$ be an integer such that $n>2k$. There exists $B_0>0$ such that for all $u\in \Sob(M)$,
    \begin{equation}\label{eq:SoboptHk}
        \Lnorm[(M)]{\deu}{u}^2 \leq K_0^2 \Lnorm[(M)]{2}{\Dg^{k/2}u}^2 + B_0 \Snorm{k-1}{u}^2.
    \end{equation}
\end{theorem}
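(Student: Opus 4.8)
The plan is to argue by contradiction, reducing Theorem~\ref{prop:mainest} to an eigenvalue-type inequality which is then contradicted by the pointwise blow-up analysis announced in the abstract.

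\emph{Reduction.} For $\alpha>0$ let $Q_\alpha(u):=\dprod{(\Dg+\alpha)^k u}{u}_{L^2}=\sum_{l=0}^{k}\binom{k}{l}\alpha^{k-l}\Lnorm[(M)]{2}{\Dg^{l/2}u}^2$ be the quadratic form of $(\Dg+\alpha)^k$ on $\Sob(M)$, and set $\Lambda_\alpha:=\inf\{\,Q_\alpha(u)\such u\in\Sob(M),\ \Lnorm[(M)]{\deu}{u}=1\,\}$. The top term ($l=k$) of $Q_\alpha$ equals exactly $\Lnorm[(M)]{2}{\Dg^{k/2}u}^2$, while the remaining terms lie between $0$ and $((1+\alpha)^k-1)\Snorm{k-1}{u}^2$. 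Hence \eqref{eq:SoboptHk} with a single choice $\alpha=\alpha_0$ --- and so the theorem --- follows with $B_0=K_0^2((1+\alpha_0)^k-1)$ as soon as $\Lambda_{\alpha_0}\geq K_0^{-2}$ for some $\alpha_0$. Since $\alpha\mapsto\Lambda_\alpha$ is nondecreasing and testing $\Lambda_\alpha$ against a standard bubble $\Bub$ rescaled at a small scale yields $\Lambda_\alpha\leq K_0^{-2}$ for every $\alpha$ (using $\Hnorm{k,2}{\Bub}^2=K_0^{-2}\Lnorm[(\R^n)]{\deu}{\Bub}^2$ and that the lower-order terms of $Q_\alpha$ vanish along concentration), it suffices to find one such $\alpha_0$. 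Assume, for contradiction, that $\Lambda_\alpha<K_0^{-2}$ for all $\alpha>0$.

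\emph{Extremals and the blow-up sequence.} For each $\alpha$, a concentration-compactness (Struwe-type) decomposition of a minimizing sequence for $\Lambda_\alpha$ cannot split off a bubble: each bubble contributes $\Hnorm{k,2}{\Bub}^2=K_0^{-2}\Lnorm[(\R^n)]{\deu}{\Bub}^2$ both to the limiting value of $Q_\alpha$ and to $\Lnorm[(\R^n)]{\deu}{\Bub}^\deu$, so the subadditivity of $t\mapsto t^{2/\deu}$ would force $\Lambda_\alpha\geq K_0^{-2}$. Thus $\Lambda_\alpha$ is attained, and since $(\Dg+\alpha)^{-k}$ is positivity preserving, a comparison argument produces a positive extremal $u_\alpha>0$ with $\Lnorm[(M)]{\deu}{u_\alpha}=1$ solving $(\Dg+\alpha)^k u_\alpha=\Lambda_\alpha u_\alpha^{\deu-1}$; equivalently $v_\alpha:=\Lambda_\alpha^{1/(\deu-2)}u_\alpha$ solves $(\Dg+\alpha)^k v_\alpha=v_\alpha^{\deu-1}$. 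Were $\inorm[(M)]{u_\alpha}$ bounded, elliptic regularity for $(\Dg+\alpha)^k$ would make $(u_\alpha)_\alpha$ precompact while $\Snorm{k-1}{u_\alpha}^2\lesssim\Lambda_\alpha/\alpha\to 0$ would force $u_\alpha\wto 0$, contradicting $\Lnorm[(M)]{\deu}{u_\alpha}=1$. Hence $\inorm[(M)]{u_\alpha}\to\infty$, and $(u_\alpha)_\alpha$ is a genuine blow-up sequence.

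\emph{Pointwise blow-up analysis --- the main obstacle.} Rescaling $u_\alpha$ around a concentration point $x_\alpha\to x_0$ at the natural scale $\mu_\alpha\to 0$, polyharmonic elliptic regularity together with the classification of \cite{WeiXu99} give convergence of the rescalings to $\Bub$ in $C^{2k}_{\mathrm{loc}}(\R^n)$, the $L^\deu$-mass constraint forcing a single bubble (and $\Lambda_\alpha\to K_0^{-2}$). The crux is to upgrade this to a \emph{uniform} pointwise estimate $u_\alpha(x)\lesssim\mu_\alpha^{-\frac{n-2k}{2}}\bpr{1+\mu_\alpha^{-2}\dist{x_\alpha,x}^2}^{-\frac{n-2k}{2}}$ on $M$, with matching control of all derivatives up to order $2k$ and, crucially, with implied constants \emph{independent of $\alpha$}. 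This requires the Green's-function representation $u_\alpha=\Lambda_\alpha\,G_\alpha\ast u_\alpha^{\deu-1}$ with bounds on $G_\alpha$ uniform as $\alpha\to\infty$, the exclusion of secondary concentration points, and control of the remainder $u_\alpha-(\text{bubble})$ in a norm finer than the concentration scale. I expect this to be the most delicate part of the argument.

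\emph{Sharp expansion and contradiction.} From $\Lambda_\alpha=Q_\alpha(u_\alpha)<K_0^{-2}\Lnorm[(M)]{\deu}{u_\alpha}^2=K_0^{-2}$ one gets, for the deficit $D_\alpha:=K_0^{-2}-\Lnorm[(M)]{2}{\Dg^{k/2}u_\alpha}^2$, the lower bound $D_\alpha>\sum_{l=0}^{k-1}\binom{k}{l}\alpha^{k-l}\Lnorm[(M)]{2}{\Dg^{l/2}u_\alpha}^2\geq k\alpha\,\Lnorm[(M)]{2}{\Dg^{(k-1)/2}u_\alpha}^2$. On the other hand, inserting the pointwise description of $u_\alpha$ --- together with the Cartan expansion $g=\xi+\bigO(\dist{x_0,\cdot}^2)$ about $x_0$, the sharpness relation $\Hnorm{k,2}{\Bub}^2=K_0^{-2}\Lnorm[(\R^n)]{\deu}{\Bub}^2$, and a Pohozaev identity for $(\Dg+\alpha)^k u_\alpha=\Lambda_\alpha u_\alpha^{\deu-1}$ localized on a small geodesic ball about $x_\alpha$ (whose boundary terms are read off from the uniform pointwise bounds) --- one obtains $D_\alpha\leq C\,\omega(\mu_\alpha)$ and $\Lnorm[(M)]{2}{\Dg^{(k-1)/2}u_\alpha}^2\geq c\,\omega(\mu_\alpha)$ for the same positive profile quantity $\omega(\mu_\alpha)$ (of order $\mu_\alpha^2$, $\mu_\alpha^2\log(1/\mu_\alpha)$, or $\mu_\alpha^{n-2k}$ according to whether $n>2k+2$, $n=2k+2$, or $n=2k+1$), with $C,c>0$ independent of $\alpha$ by the uniform estimates of the previous step. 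Combining, $C\,\omega(\mu_\alpha)\geq D_\alpha>kc\,\alpha\,\omega(\mu_\alpha)$, hence $C>kc\,\alpha$ --- impossible once $\alpha$ is large. This contradiction yields $\Lambda_\alpha\geq K_0^{-2}$ for $\alpha$ large, and with it Theorem~\ref{prop:mainest}.
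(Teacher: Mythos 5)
Your overall route is essentially the paper's: assume $\Lambda_\alpha<K_0^{-2}$ for every $\alpha$, extract positive minimizers of the critical equation, invoke the single-bubble pointwise description (Theorem \ref{prop:mainprinc}), and reach a contradiction by playing the diverging factor $\alpha$ in front of the $\Sob[k-1]$-energy against the geometric error in the sharp Euclidean Sobolev comparison; this is exactly the mechanism of section \ref{sec:proof1}, where \eqref{eq:1stclaim} is your bound (c), the transfer estimates \eqref{eq:2ndclaim}--\eqref{eq:3rdclaim} together with \eqref{eq:optSob} applied to a cutoff of $u_\alpha\circ\exp_{z_\alpha}$ give your bound (b), and the contradiction $C\gamma_i\leq o(\gamma_i)$ is your ``$C>kc\,\alpha$''.

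There is, however, a concrete error in your final step: the profile $\omega(\mu_\alpha)$ cannot be taken independent of $\alpha$ in the low dimensions. Because $u_\alpha$ is comparable to the modified bubble \eqref{def:TBub}, which (like the Green's function of $(\Dg+\alpha)^k$) decays as $e^{-\sqa\,\dg{z_\alpha,x}/2}$ beyond distance $\alpha^{-1/2}$, the $(k-1)$-energy is carried by the ball of radius $\alpha^{-1/2}$, and one finds $\Lnorm[(M)]{2}{\Dg^{(k-1)/2}u_\alpha}^2\asymp \mu_\alpha/\sqa$ when $n=2k+1$ and $\asymp\mu_\alpha^2\big(1+\abs{\log\sqa\mu_\alpha}\big)$ when $n=2k+2$, i.e. $\gamma_\alpha/\alpha$ with $\gamma_\alpha$ as in \eqref{def:gama} --- not $\mu_\alpha^{n-2k}$ and $\mu_\alpha^2\log(1/\mu_\alpha)$ as you assert. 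Thus your lower bound (c) is false as stated for $n=2k+1$, and unjustified for $n=2k+2$ (where $\abs{\log\sqa\mu_\alpha}$ may be much smaller than $\log(1/\mu_\alpha)$). The argument survives once $\omega$ is replaced by the correct $\alpha$-dependent scale $\gamma_\alpha/\alpha$, since only the extra factor $\alpha$ between (a),(c) and (b) is used; but these boundary dimensions are precisely where the real work lies (the paper's $\epsilon/\sqa$-localization in the proof of \eqref{eq:1stclaim}, and the verification that all metric, $L^\deu$-transfer and cutoff errors are $o(\gamma_\alpha)$ uniformly in $\alpha$), and your sketch does not address them. Finally, the Pohozaev identity you invoke for (b) is unnecessary: the deficit bound follows by comparing the manifold quantities with their Euclidean counterparts in normal coordinates, using the pointwise estimates to control the errors, and applying \eqref{eq:optSob} to a cutoff --- which is exactly where the $\alpha$-dependence you dropped reappears and must be checked.
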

Hebey and Vaugon (1996) proved the case $k=1$, 
they considered the case of a compact manifold \cite{HebVau96}, and also proved the result on a complete Riemannian manifold \cite{HebVau95}. Subsequently, Hebey considered the case $k=2$ on a compact manifold \cite{Heb03}. Theorem \ref{prop:mainest} extends these results for an arbitrary order $k\geq 1$. Note that for the case $k=1$, and for $p\geq 1$, the analog embedding of the Sobolev space $\Sob[1,p](M)$ in the Lebesgue space $L^{\frac{np}{n-p}}(M)$ has been largely studied, see for instance \cite{Dru99}. We also refer to \cite{Aub75, Tal76} for the expression of the optimal constant $K_0$ depending on $n$ and $p$ in this case, and the corresponding extremals on $\R^n$. See also \cite{Heb99,DruHeb02} for a broader discussion about the optimal constants for Sobolev inequalities on manifolds. 
Finally, we refer to \cite{DjHeLe00} where similar optimal inequalities with a norm involving the celebrated Paneitz-Branson operator were considered, for $k=2$. 

\par In the case of $k\geq 1$, Mazumdar \cite{Maz16} proved, with a covering argument, the following two facts:
\begin{enumerate}
    \item Any pair of constants $A,B>0$ for which \eqref{eq:sobineq} holds satisfies $A\geq K_0^2$;
    \item For all $\epsilon >0$, there exists a constant $B_\epsilon >0$ such that 
    \begin{equation}\label{eq:sobepineq}
        \Lnorm[(M)]{\deu}{u}^2 \leq \big(K_0^2 + \epsilon\big)\Lnorm[(M)]{2}{\Dg^{k/2}u}^2 + B_\epsilon \Snorm{k-1}{u}^2.
    \end{equation} 
\end{enumerate}
In particular, \eqref{eq:sobepineq} shows that $\inf\{A>0\,:\, \exists~B>0 \text{ for which \eqref{eq:sobineq} holds}\} = K_0^2$. We show in this paper that the optimal constant is in fact attained, that is that one can choose $\epsilon=0$ in \eqref{eq:sobepineq}.
\par We briefly describe the strategy of the proof of Theorem \ref{prop:mainest}, which is detailed in section \ref{sec:proof1}. As in \cite{HebVau96,Heb03}, the proof will proceed by contradiction. Assuming that $A=K_0^2$ is not reached is equivalent to considering that for all $\alpha>0$,
\begin{equation}\label{tmp:introx}
    \inf_{u\in \Sob(M)\setminus\{0\}} \frac{\displaystyle\intM{(\Dg+\a)^k u~ u}}{\bpr{\intM{\abs{u}^\deu}}^{2/\deu}} < \frac{1}{K_0^2}.
\end{equation}
Assumption \eqref{tmp:introx} implies that for each $\a >0$, there exists a positive function $u_\a \in \Sob(M)$ solution to the critical equation 
\begin{equation}\label{eq:critbub}
    (\Dg+\a)^k u = u^{\deu-1} \qquad \text{in }M,
\end{equation}
satisfying $\Snorm{k}{u_\a} \leq K_0^{-\frac{n}{2k}}$. The main part of this article is devoted to the study of such sequences of solutions: we obtain a precise pointwise description of the blow-up behavior of $u_\a$ as $\a \to \infty$, that we use to reach a contradiction.
\par As we will detail in section \ref{sec:prelim}, for a sequence $\pa_\a = (z_\a, \mu_\a) \in M \times (0,+\infty)$ such that $\a\mu_\a^2 \to 0$ as $\a \to \infty$, we let
\[  \TBuba(x) = \Theta_\a(z_\a,x) \bpr{\frac{\mu_\a}{\mu_\a^2+ \pk\dg{z_\a,x}^2}}^{\frac{n-2k}{2}} \qquad x \in M.
    \]
Here $\Theta_\a(z_\a,\cdot)$ is a smooth function (see definition \ref{def:defTBub}) which is equal to 1 in a ball of radius $1/\sqa$ around the center $z_\a$, and then decays as $e^{-\sqa\dg{z_\a, \cdot}/2}$, with support contained inside a ball of radius $\inj$, the injectivity radius of the compact manifold $M$.
The function $\TBuba$ is then an almost solution to \eqref{eq:critbub}, it satisfies
\[  (\Dg+\a)^k \TBuba = \TBuba^{\deu-1} + o(1),
    \] 
where $o(1)\to 0$ in $\Sob[-k](M)$ and comes with explicit pointwise estimates (see Proposition \ref{prop:estRa}).  

\par We will obtain the required contradiction, and thus the proof of Theorem \ref{prop:mainest}, as a consequence of the following result. Define
\begin{equation}\label{def:rad}
    \rad(t) := \begin{cases}
        t\big(1+\abs{\log t}\big) & \text{if } n = 2k+1\\
        t^{3/2} & \begin{aligned}[t]
            &\text{if } k=1, \,n\geq 4\\
            &~\text{or } k\geq 2,\, n=2k+2
        \end{aligned}\\
        t^2 & \text{if } k\geq 2,\, n\geq 2k+3 
    \end{cases} \qquad t \geq 0.
\end{equation}
\begin{theorem}\label{prop:mainprinc}
    Let $(u_\a)_\a$ be a sequence of positive functions in $\Sob(M)$, such that $u_\a$ is solution to \eqref{eq:critbub} for each $\a$. Assume that there exist a sequence $(\epsilon_\a)_\a$ such that $\epsilon_\a \to 0$ as $\a \to \infty$, and 
    \begin{equation}\label{eq:assump}
        \Snorm{k}{u_\a} \leq K_0^{-\frac{n}{2k}}+\epsilon_\a \qquad \forall~\a.
    \end{equation}
    Then, there exist a sequence $(z_\a)_\a$ of points in $M$, a sequence $(\mu_\a)_\a$ of positive numbers such that $\a\mu_\a^2 \to 0$ as $\a\to \infty$, and a constant $C>0$, such that the following holds. Up to a subsequence, writing $\pa_\a = (\zm[\a])$, we have for all $x\in M$ and $l=0,\ldots 2k-1$ that 
    \begin{multline*}  
        (\mu_\a + \dg{z_\a,x})^l \abs{\nabla^l \bpr{u_\a - \TBuba}(x)}_g \leq C \bpr{\frac{\mu_\a}{\mu_\a^2 + \pk \dg{z_\a,x}^2}}^{\frac{n-2k}{2}}\\ \times \begin{cases}
            \rad\big(\sqa (\mu_\a + \dg{z_\a, x})\big) & \text{when } \sqa\dg{z_\a,x} \leq 1\\
            \a^k \dg{z_\a,x}^{2k} e^{-\sqa\dg{z_\a,x}/2} & \text{when } \sqa\dg{z_\a,x} \geq 1
        \end{cases},
    \end{multline*} 
    where $\TBuba$ is as defined in \eqref{def:TBub}. 
\end{theorem}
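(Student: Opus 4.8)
The plan is to prove Theorem~\ref{prop:mainprinc} in four steps: a concentration analysis fixing the parameters $(\zm[\a])$ and showing $u_\a$ is a single bubble of asymptotically minimal energy; a sharp pointwise bound on $u_\a$ itself; the estimate on $\phi_\a:=u_\a-\TBuba$ from the linearized equation via a Green's function iteration; and the weighted derivative bounds by interior elliptic regularity.

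\textbf{Step 1 (concentration, one bubble, minimal energy).} By \eqref{eq:assump} the sequence is bounded in $\Sob(M)$. Testing \eqref{eq:critbub} against a fixed $\psi\in C^\infty(M)$ gives $\intM{u_\a\,(\Dg+\a)^k\psi}=\intM{u_\a^{\deu-1}\psi}$, and since the left side is of size $\a^k\intM{u_\a\psi}$ while the right side is bounded, $u_\a\rightharpoonup 0$. On the other hand, from $\intM{u_\a^\deu}=\intM{(\Dg+\a)^k u_\a\,u_\a}$, the elementary lower bound $\intM{(\Dg+\a)^ku\,u}\geq\a\,\Snorm{k-1}{u}^2$ (valid for $\a\geq 1$), and the Sobolev inequality \eqref{eq:sobineq}, one gets $\Lnorm[(M)]{\deu}{u_\a}^{\deu-2}\geq c_0>0$ uniformly, so $u_\a$ does not vanish and must concentrate: $\mu_\a:=\lnorm[\infty]{u_\a}^{-2/(n-2k)}\to 0$. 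Taking $z_\a$ near a maximum point and rescaling, $\Tu_\a(y):=\mu_\a^{(n-2k)/2}u_\a(\exp_{z_\a}(\mu_\a y))$ solves $(\Dg[g_\a]+\a\mu_\a^2)^k\Tu_\a=\Tu_\a^{\deu-1}$ with $g_\a\to\xi$ locally; a nontrivial finite-energy limit forces $\a\mu_\a^2\to 0$, and then $\Tu_\a\to\Bub$ in $C^{2k-1}_{loc}(\R^n)$ by the classification of \cite{WeiXu99}. Comparing energies, $\Lnorm[(M)]{2}{\Dg^{k/2}u_\a}^2\to K_0^{-n/k}$, exactly one bubble's worth, which rules out a second concentration point or a nonzero residual and gives $\Snorm{k}{\phi_\a}\to 0$. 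A final small adjustment of $(\zm[\a])$ makes $\phi_\a$ orthogonal to the approximate kernel of the linearization while keeping $\a\mu_\a^2\to 0$ and $\Snorm{k}{\phi_\a}\to 0$.

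\textbf{Step 2 (sharp bound on $u_\a$).} For $\a>0$, $(\Dg+\a)^k$ is a $k$-fold composition of the coercive operator $\Dg+\a$, whose Green's function is positive and of Yukawa type, so $(\Dg+\a)^k$ has a positive Green's function $G_\a$ with $0<G_\a(x,y)\leq C\,\dg{x,y}^{2k-n}e^{-c\sqa\dg{x,y}}$ uniformly in $\a$, and $u_\a(x)=\intM{G_\a(x,\cdot)\,u_\a^{\deu-1}}$. Starting from a crude a priori bound on $\Tu_\a$ (De Giorgi--Nash--Moser on the rescaled equation) and bootstrapping this representation — plugging in an ansatz $u_\a\leq C\,\TBuba$ and checking that convolution with $G_\a$ reproduces it, the relevant fixed point being the bubble — yields
\[
    u_\a(x)\leq C\bpr{\frac{\mu_\a}{\mu_\a^2+\pk\dg{z_\a,x}^2}}^{\frac{n-2k}{2}}\times\begin{cases}1 & \sqa\dg{z_\a,x}\leq 1\\ \a^k\dg{z_\a,x}^{2k}e^{-\sqa\dg{z_\a,x}/2} & \sqa\dg{z_\a,x}\geq 1\end{cases},
\]
and, by interior elliptic estimates on balls of radius $\sim(\mu_\a+\dg{z_\a,x})$, the analogous weighted bounds on $\nabla^l u_\a$ for $l\leq 2k-1$.

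\textbf{Step 3 (linearized equation and the estimate on $\phi_\a$).} Setting $R_\a:=(\Dg+\a)^k\TBuba-\TBuba^{\deu-1}$ (controlled by Proposition~\ref{prop:estRa}) and $Q_\a:=u_\a^{\deu-1}-\TBuba^{\deu-1}-(\deu-1)\TBuba^{\deu-2}\phi_\a$ (pointwise of strictly lower order by Step 2), the error solves
\[
    (\Dg+\a)^k\phi_\a-(\deu-1)\TBuba^{\deu-2}\phi_\a=Q_\a-R_\a.
\]
After rescaling, the left-hand operator is a small perturbation of $\Dg[\xi]^k-(\deu-1)\Bub^{\deu-2}$, whose kernel is the $(n+1)$-dimensional span of $\partial_{y_i}\Bub$ and the dilation mode; the minimal-energy situation from Step 1 makes it uniformly coercive on the orthogonal complement of the approximate kernel. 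Decomposing $\phi_\a=\psi_\a+\sum_i c_i Z^i$ with $\psi_\a$ orthogonal to the kernel, one bounds $\psi_\a$ in a weighted energy norm by $\|Q_\a-R_\a\|$ and estimates the $c_i$ by testing against the $Z^i$, where a symmetry cancellation in the pairing with $R_\a$ makes them lower order. Re-inserting into the Green's representation $\phi_\a(x)=\intM{G_\a(x,\cdot)\big[(\deu-1)\TBuba^{\deu-2}\phi_\a+Q_\a-R_\a\big]}$ and iterating — using the bound on $G_\a$, the estimate on $R_\a$, and the control of $Q_\a$ — produces the claimed pointwise bound, in which the gain factor is exactly $\rad(\sqa(\mu_\a+\dg{z_\a,x}))$ when $\sqa\dg{z_\a,x}\leq 1$ and the exponential tail when $\sqa\dg{z_\a,x}\geq 1$, since $\rad$ in \eqref{def:rad} is calibrated to the size of $R_\a$ (its three cases reflecting the dimension-dependent competition between the curvature and the $\a$-potential contributions to $R_\a$). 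The weighted bounds on $\nabla^l\phi_\a$ for $l=1,\dots,2k-1$ then follow from interior elliptic estimates for $(\Dg+\a)^k$ on balls of radius $\sim(\mu_\a+\dg{z_\a,x})$, rescaled to unit size, together with the $C^0$ bound on $\phi_\a$ and the bounds on $R_\a$.

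\textbf{Main obstacle.} The technical heart, and the step I expect to be hardest, is Steps 2 and 3: the uniform-in-$\a$ construction and control of the Green's function of the iterated Helmholtz operator $(\Dg+\a)^k$ — its near-diagonal singularity, its $e^{-c\sqa\dg{x,y}}$ decay, and its positivity — and the uniform coercivity of the linearized operator off its approximate kernel together with the estimate of the kernel component of $\phi_\a$. Since there is no maximum principle when $k\geq 2$, every pointwise statement must be routed through these integral representations, and one must keep track throughout of the two competing scales: the concentration scale $\mu_\a$ with $\a\mu_\a^2\to 0$, and the Helmholtz scale $1/\sqa$ across which the profile changes from polynomial to exponential decay.
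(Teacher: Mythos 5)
Your overall architecture (concentration to a single bubble, linearization around the modified bubble, Green's-function iteration, rescaled interior estimates for derivatives) is the same as the paper's, but the technical core is not actually supplied, and as written the argument is circular. Step 2 claims the sharp global bound $u_\a\leq C\,\TBuba\,(\cdots)$ by ``plugging in an ansatz and checking that convolution with $\Gga$ reproduces it'': verifying that the integral operator maps the ansatz to itself does not show that the actual solution obeys the ansatz, there is no comparison/maximum-principle argument available for the representation formula, and the crude rescaled bound only controls $u_\a$ at scale $\mu_\a$ near $z_\a$, not globally. Step 3 then uses this unproved bound to declare $Q_\a$ of lower order, so the pointwise estimate on $\phi_\a$ rests on essentially the statement to be proved. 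The paper breaks this circle differently: it \emph{constructs} a solution of the projected equation by a contraction in a weighted $C^0$ space (Proposition \ref{prop:uniqest}), whose key input is the linear pointwise theory of Theorem \ref{prop:linest}; the heart of that theorem is the contradiction/blow-up argument of Proposition \ref{prop:estphiinfn}, which bounds $\mu_i^{\frac{n-2k}{2}}\inorm{\phi_i}$ by $\rad(\sqai\mu_i)$ --- your iteration scheme never addresses this, and without it every Giraud-type iteration (as in Lemma \ref{prop:1stestphi} and Proposition \ref{prop:2ndestphi}) leaves an unabsorbed term proportional to $\inorm{\phi_i}$. Finally, the actual $\phi_\a$ is identified with the constructed fixed point through the $H^k$ uniqueness statements (Propositions \ref{prop:uniqHk} and \ref{prop:uniqsol}), which only require $\Snorm{k}{\phi_\a}\to 0$ and no a priori pointwise information; your proposal has no such identification step.

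Two further gaps. First, you assert that $\a\mu_\a^2\to 0$ because ``a nontrivial finite-energy limit forces'' it; this requires a Liouville/Pohozaev nonexistence theorem for positive finite-energy solutions of $(\Dg[\xi]+\beta)^k u=u^{\deu-1}$ with $\beta>0$, which you neither prove nor cite. The paper instead deduces $\a^k\intM{u_\a^2}=o(1)$ from the almost-minimal energy hypothesis \eqref{eq:assump} combined with Mazumdar's $\epsilon$-Sobolev inequality --- note this is precisely where \eqref{eq:assump} enters quantitatively, so it cannot be skipped. Second, the kernel coefficients are not controlled by a ``symmetry cancellation'' in the pairing with $R_\a$ (no such symmetry is available on $M$); the paper bounds them by evaluating the representation formula at $z_i$ and at points where the other $\Ze{j}$ vanish (Lemma \ref{prop:estlami}), and any direct pairing estimate $\langle R_\a,\Zed{j}\rangle=\bigO(\rad(\sqa\mu_\a))$ would have to be carried out and again carries an $\inorm{\phi}$-contribution. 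Minor points: De Giorgi--Nash--Moser does not apply to the $2k$-th order operator (use the factorized structure plus standard elliptic estimates), and the rescaled interior estimates for $\nabla^l\phi_\a$ are fine in the region $\sqa\dg{z_\a,x}\leq 1$ but in the exponential regime one needs the derivative bounds on $\Gga$ (as in Corollary \ref{prop:estdphi}) or an extra argument to keep the $\a^{l/2}$ growth consistent with the claimed weight.
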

\par As a consequence of this theorem, we have immediately
\[  u_\a - \TBuba \to 0 \qquad \text{in } \Sob(M) \quad \text{as } \a\to \infty.
    \] 
Energy decomposition results of this type for sequences of solutions to polyharmonic critical equations are known to hold for the case of bounded coefficients, see \cite{HebRob01,Maz17}. 
The main novelty of Theorem \ref{prop:mainprinc} is twofold. First, we obtain strong pointwise estimates on $u_\a-\TBuba$ and its derivatives, and not just an energy decomposition. Second, we obtain an explicit dependence in $\a$, as $\a \to \infty$, in the a priori estimates. This part is shown in section \ref{sec:pointwise}, following the strategy of \cite{Pre24}.
In the context of Theorem \ref{prop:mainprinc}, the low energy of the sequence $(u_\a)_\a$ only allows the presence of a single bubble. While this simplifies some arguments compared to the bubble-tree case of \cite{Pre24}, the arbitrary order $k$ of the polyharmonic operator $(\Dg+\a)^k$, and more importantly the presence of the diverging coefficient $\a$ in the lower-order terms 
\begin{equation}\label{eq:lowerterm}
    \a^{k-l}\Dg^l u \qquad l=0,\ldots k-1
    \end{equation}
required us to perform significant and novel modifications to the theory. In the course of the proof, a precise pointwise description of the Green's function for the operator $(\Dg+\a)^k$ on $M$ is required to obtain the estimates of Theorem \ref{prop:mainprinc}, which was studied by the author in a previous work \cite{Car24}.
\par With the use of this pointwise decomposition, the proof of Theorem \ref{prop:mainest} reduces to explicit computations, which are carried out in section \ref{sec:proof1}.
Our approach is therefore completely different from the approach in \cite{Heb03}, since we see Theorem \ref{prop:mainest} as a direct consequence of Theorem \ref{prop:mainprinc}. We believe however that Theorem \ref{prop:mainprinc} has a strong interest in itself, as it serves as a toy-model for general critical polyharmonic equations with diverging coefficients on manifolds. In this case, the proof provides a very precise understanding of the obstructions that do not allow such low-energy solutions to exist as $\a \to \infty$. 
The analytical machinery developed here is quite flexible and will be adapted in a second step to study more general sequences of solutions to polyharmonic equations of this type. Other examples where equations of the form \eqref{eq:critbub} are considered are found for instance in \cite{Wan95,AdPaYa95,Rey02} for the case $k=1$, where the possible concentration phenomena of sequences of solutions to $-\D u+\lambda u = u^{\frac{n+2}{n-2}}$ are investigated on an open set of $\R^n$ with Neumann boundary conditions, as $\lambda\to \infty$. In the case $k=2$, we refer to \cite{FeHeRo05}, where the authors studied the energy of solutions to \eqref{eq:critbub} as $\a \to \infty$. As this article was being reviewed, we have learned that Zeitler \cite{Zei24} has independently obtained Theorem \ref{prop:mainest}, and was able to adapt the method of \cite{Heb03} to the higher-order case.
\par This article is structured as follows: Sections \ref{sec:prelim}-\ref{sec:pointwise} focus on the proof of Theorem \ref{prop:mainprinc}, while Theorem \ref{prop:mainest} is proved in section \ref{sec:proof1}. In section \ref{sec:prelim}, we provide the necessary definitions and computations, and prove technical results concerning the function $\TBub$. Sections \ref{sec:Hktheory} and \ref{sec:pointwise} are devoted to the construction of solutions to a more general version of \eqref{eq:critbub}, as perturbations around the modified bubble $\TBub$. We first develop in section \ref{sec:Hktheory} the linear and non-linear theory leading to existence and uniqueness of such solutions in $\Sob(M)$. In a second step, we extend the theory to obtain pointwise estimates in section \ref{sec:pointwise}, proving Theorem \ref{prop:mainprinc}. This is the core of the analysis of the paper.
Finally, in section \ref{sec:proof1}, we use the strong decomposition of Theorem \ref{prop:mainprinc} to prove Theorem \ref{prop:mainest}. 

\section{Preliminary results}\label{sec:prelim}
In this section we gather some basic definitions and results which will be needed to develop the full description of the blow-up for sequences of solutions to \eqref{eq:critbub}. Remark first that the operator $(\Dg+\a)^k$ satisfies the maximum principle since it is in factorized form. Moreover, it is coercive, and
\begin{equation} \label{eq:Pacoerc}
    \Snorm{k}{u}^2 \leq \dprod{(\Dg+\a)^k u}{u}_{\Sob[-k],\Sob}, \qquad u \in \Sob(M)
    \end{equation}
for all $\a\geq 1$. Here and in the following, unless specified otherwise, all constants depend only on $(M,g),\,n,k$. They are denoted $C$, and their explicit value can vary from line to line, sometimes even in the same line.
\subsection{Riemannian bubbles}
This subsection is devoted to the definition of Riemannian versions of the minimizing functions $\Bub$ for the critical Sobolev inequality defined in \eqref{def:bub}.
\begin{definition}
    We define the concentrated version of the bubble, of center $z\in M$ and with weight $\mu>0$, as 
    \begin{equation}\label{def:bubzm}
        \Bub_{\zm}(x) := \bpr{\frac{\mu}{\mu^2 + \pk\dg{z,x}^2}}^{\frac{n-2k}{2}},
    \end{equation}
    where $\pk$ is as in \eqref{def:bub}.
\end{definition}
In general, for any function $\psi \in \hSob(\R^n)$ we can define a corresponding concentrated version on $M$.
\begin{definition}\label{def:psizm}
    Let $\psi \in \hSob(\R^n)$, $z\in M$ and $\mu>0$, let also $\inj/2 < \varrho < \inj$ and $\chi_\varrho \in \Cct(\R^+)$ be a cut-off function such that $0\leq \chi_\varrho\leq 1$, $\chi_\varrho \equiv 1$ on $[0,\inj/2)$ and $\chi_\varrho \equiv 0$ on $(\varrho, + \infty)$. We define 
    \begin{equation}\label{def:conc}
        \psi_{\zm}(x) = \chi_\varrho(\dg{z,x})\mu^{-\frac{n-2k}{2}}\psi\big(\tfrac{1}{\mu}\exp_z^{-1}(x)\big) \qquad \forall~x\in M.
    \end{equation} 
\end{definition}
Let us fix once and for all $\inj/2 < \varrho < \inj$, so that when $\psi\in \hSob(\R^n)\cap C^\infty(\R^n)$, we have $\psi_{\zm}\in C^\infty(M)$. Note also that if $\psi \in \Cct(\R^n)$, there exists $\mu_0>0$ such that for all $\mu\leq\mu_0$, $\mu\abs{y} < \inj/2$ for all $y\in \R^n$ in the support of $\psi$.
\begin{lemma}\label{prop:psizmbded}
    Let $\psi\in \hSob(\R^n)$, there exists $\mu_0>0$ and a constant $C>0$ such that, for all $\mu\leq \mu_0$, $z\in M$, we have $\psi_{\zm} \in \Sob(M)$ with 
    \[  \Snorm{k}{\psi_{\zm}}\leq C\Hnorm{k,2}{\psi}.
        \]
\end{lemma}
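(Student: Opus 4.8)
The plan is to reduce the estimate to a local computation in a single normal chart around $z$, where the concentrated bubble $\psi_{\zm}$ is, up to the cutoff $\chi_\varrho$, an honest rescaling of $\psi$. First I would fix $\mu_0$ small enough (depending only on $(M,g)$) so that the exponential map $\exp_z$ is a diffeomorphism from $\Bal{\varrho}{0}\subset T_zM$ onto its image, with the metric coefficients $g_{ij}$ in normal coordinates satisfying uniform two-sided bounds $\tfrac12\delta_{ij}\leq g_{ij}\leq 2\delta_{ij}$ and uniform bounds on all derivatives up to order $k$, independently of $z$ (this uses compactness of $M$). In these coordinates $\psi_{\zm}(x) = \chi_\varrho(|y|)\,\mu^{-\frac{n-2k}{2}}\psi(y/\mu)$ with $y = \exp_z^{-1}(x)$.

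Next I would estimate $\Snorm{k}{\psi_{\zm}}$ using the equivalence of \eqref{def:Snorm} with the multi-index Sobolev norm (cited from \cite{Rob11}): it suffices to bound $\sum_{|\beta|\leq k}\intM{|\nabla^\beta \psi_{\zm}|_g^2}$. Since the support of $\psi_{\zm}$ lies in $\Bal{\varrho}{z}$, this integral is carried out in the normal chart, where the Riemannian volume $dv_g$ and the metric-contracted norm $|\cdot|_g$ are comparable to their Euclidean counterparts uniformly in $z$. The Leibniz rule applied to $\chi_\varrho(|y|)\mu^{-\frac{n-2k}{2}}\psi(y/\mu)$ produces terms where $j$ derivatives fall on $\chi_\varrho$ and $|\beta|-j$ on the rescaled $\psi$; the key scaling observation is that after the substitution $y = \mu y'$, a term with $m$ derivatives on $\psi$ contributes $\mu^{2(k-m)}\int |\nabla^m\psi|^2$ at worst (the exponent $\mu^{-\frac{n-2k}{2}}$ together with $dy = \mu^n dy'$ and $m$ derivative rescalings $\mu^{-m}$ gives exactly the homogeneous scaling for $m=k$, and only better — i.e.\ higher powers of $\mu\leq\mu_0$ — for $m<k$). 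The terms where at least one derivative hits $\chi_\varrho$ are supported in the annulus $\{\inj/2\leq|y|\leq\varrho\}$, hence in $\{|y'|\geq \inj/(2\mu)\}$, and are controlled by $\mu$-powers times $\int_{\R^n}|\nabla^m\psi|^2$ for $m\leq k$, which is finite since $\psi\in\hSob(\R^n)$ (here one also invokes that all lower-order homogeneous seminorms of $\psi$ are controlled by $\Hnorm{k,2}{\psi}$ via the Euclidean Sobolev/Hardy-type inequalities, or simply cites that $\hSob(\R^n)$ is complete under $\Hnorm{k,2}{\cdot}$ and all $\int|\nabla^m\psi|^2$, $m\leq k$, are dominated by $\Hnorm{k,2}{\psi}^2$ after accounting for the finite-measure issue — for $\psi\in\Cct(\R^n)$ this is classical and then one passes to the limit by density).

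Summing over $\beta$ and using $\mu\leq\mu_0$ to discard the favorable higher powers of $\mu$, I obtain $\Snorm{k}{\psi_{\zm}}\leq C\Hnorm{k,2}{\psi}$ with $C$ depending only on $(M,g),n,k$, and the bound is uniform in $z\in M$. The main obstacle is not conceptual but bookkeeping: one must check that in the chain-rule expansion no term scales worse than $\mu^0$ (i.e.\ that the top-order term $m=k$ is exactly scale-invariant and everything else gains positive powers of $\mu$), and that the lower-order homogeneous seminorms $\int_{\R^n}|\nabla^m\psi|^2$ are genuinely finite and controlled — this last point requires a little care for general $\psi\in\hSob(\R^n)$ rather than $\psi\in\Cct(\R^n)$, and is cleanest handled by first proving the estimate for $\psi\in\Cct(\R^n)$ and then using density of $\Cct(\R^n)$ in $\hSob(\R^n)$ together with completeness of $\Sob(M)$.
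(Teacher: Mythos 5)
Your reduction to a single normal chart, the Leibniz expansion, and the rescaling bookkeeping all match the paper's proof, but the step you use to control the lower-order terms is not correct as stated. After the substitution $y=\mu y'$, a term with $m<k$ derivatives on $\psi$ is $\mu^{2(k-m)}\int_{\Bal{0}{\varrho/\mu}}\abs{\nabla_\xi^m\psi}^2\,dy'$, and you then invoke the bound $\int_{\R^n}\abs{\nabla_\xi^m\psi}^2\,dy\leq C\,\Hnorm{k,2}{\psi}^2$. This inequality is false even for $\psi\in\Cct(\R^n)$: replacing $\psi$ by $\psi(\cdot/\lambda)$ scales the left-hand side by $\lambda^{n-2m}$ and the right-hand side by $\lambda^{n-2k}$, so no constant independent of $\psi$ can exist (any constant degenerates with the diameter of the support). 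For a general $\psi\in\hSob(\R^n)$ the left-hand side may even be infinite; for instance $\psi=\Bub$ has $\int_{\R^n}\Bub^2\,dy=+\infty$ whenever $2k<n\leq 4k$. For the same reason the proposed density argument cannot repair this: the intermediate inequality you would pass to the limit is not uniform over $\Cct(\R^n)$. The same objection applies to your treatment of the terms where derivatives fall on the cutoff, which you also bound by full-space $L^2$ seminorms of $\nabla^m\psi$. (Hardy-type inequalities are weighted; the unweighted full-space bound you appeal to is precisely what fails.)

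The fix — and what the paper actually does — is to never let the domain of integration become all of $\R^n$: keep the lower-order integrals on the bounded ball ($\Bal{0}{\varrho}$ before rescaling, equivalently $\Bal{0}{\varrho/\mu}$ after), apply Hölder's inequality there with exponent $\deuk{k-m}/2$, and use that the $L^{\deuk{k-m}}$ norm of $\nabla_\xi^m\psi$ is invariant under the concentration map. Concretely,
\begin{equation*}
\mu^{2(k-m)}\int_{\Bal{0}{\varrho/\mu}}\abs{\nabla_\xi^m\psi}^2\,dy
\leq C\,\mu^{2(k-m)}\bpr{\tfrac{\varrho}{\mu}}^{2(k-m)}\bpr{\int_{\R^n}\abs{\nabla_\xi^m\psi}^{\deuk{k-m}}dy}^{2/\deuk{k-m}}
\leq C\varrho^{2(k-m)}\Hnorm{k,2}{\psi}^2,
\end{equation*}
the last step by the homogeneous embedding $\hSob(\R^n)\emb\hSob[m,\deuk{k-m}](\R^n)$; the volume factor from Hölder exactly cancels your favorable power $\mu^{2(k-m)}$. (A weighted Hardy--Rellich inequality restricted to $\Bal{0}{\varrho/\mu}$ would also do, but the unweighted $L^2(\R^n)$ bound will not.) With this replacement your argument coincides with the paper's proof.
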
 
We refer to Appendix \ref{sec:intconc} for a proof of this Lemma. 
\begin{remark}\label{rem:intpsizmR}
    This still holds when the norm is only taken in a subset of $M$. For instance, for a fixed $R>0$ we obtain
    \[  \Snorm[(M\setminus \Bal{z}{R\mu})]{k}{\psi_{\zm}} \leq C \Hnorm[(\R^n \setminus \Bal{0}{R})]{k,2}{\psi},
        \]
    where $C>0$ is independent of $R$.
\end{remark}
\par Comparing \eqref{def:bubzm} with \eqref{eq:transfo}, $\Bub_{\zm}$ can be seen as a good candidate to solve 
\[  \Dg^k \Bub_{\zm} \simeq \Bub_{\zm}^{\deu-1} \qquad \text{in } M
    \]
up to lower-order terms coming from the geometry of the manifold. In equation \eqref{eq:critbub}, because of the lower-order terms \eqref{eq:lowerterm} with $\a \to \infty$, we will need to work with a modified version of the bubble in $M$.

\begin{definition}\label{def:fcth}
    Fix $\a\geq 1$, and let $\chi_\varrho$ be a cut-off function chosen as in definition \ref{def:psizm}. Let also $h : \R^n \to \R^+$ be a $C^{2k+1}(\R^n)$ function such that 
    \[  h(y) \equiv 1 \qquad \text{when } \abs{y} \leq 1,
        \]
    and such that there exists $C_l >0$ for $l=0,\ldots 2k+1$ and 
    \[  \abs{\nabla^l h(y)} \leq C_l e^{-\abs{y}/2} \qquad \text{when } \abs{y} \geq 1.
        \]
    Let $z\in M$, we define 
    \[  \Theta_\a(z,x) := \chi_\varrho(\dg{z,x})\, h(\sqa\exp_z^{-1}(x)) \qquad \forall~x\in M.
        \]
\end{definition}
It is then immediate to obtain that, for $l=1,\ldots 2k+1$,
\begin{equation}\label{eq:estdThet}
    \begin{aligned}
        \nabla_g^l \Theta_\a(z,x) &= 0 & &\text{when } \sqa\dg{z,x} \leq 1,\\
        \abs{\nabla_g^l \Theta_\a(z,x)} &\leq \a^{l/2} e^{-\sqa\dg{z,x}/2} & &\text{when } \sqa\dg{z,x} \geq 1.
    \end{aligned}
\end{equation}
\begin{definition}\label{def:defTBub}
    Let $\a\geq 1$ be such that $1/\sqa < \inj/2$, and let $z\in M$ and $\mu>0$, we write $\pa = (\zm)$. We define, for $x\in M$,
    \begin{equation}\label{def:TBub}
        \TBub(x) := \Theta_\a(z,x) \Bub_{\zm}(x).
    \end{equation}
\end{definition}
\begin{remark}
    This new rescaled version of $\Bub$ is modelled after the function 
    \begin{equation}\label{tmp:betbub}
        \chi_\varrho(\dg{z,x}) c_{n,k}^{-1}\dg{z,x}^{n-2k} \Gga(z,x) \Bub_{\zm}(x),
    \end{equation}
    where $\Gga$ is the Green's function for the operator $(\Dg+\a)^k$ in $M$, and $\frac{c_{n,k}}{\abs{x-y}^{n-2k}}$ is the Green's function for the poly-Laplacian $\Dg[\xi]^k$ on $\R^n$. We refer to \cite{Car24} for the construction and pointwise estimates of $\Gga$ and its derivatives, that are similar to \eqref{eq:estdThet}. Of course any compactly supported function $h$ such that $h\equiv 1$ in $\Bal{0}{1}$ would satisfy the requirements of definition \ref{def:fcth}. We choose here to work with a more general function, with only the minimum requirements to make the proof work. This is done to cover the cases where the bubble needs to be modified precisely as in the expression \eqref{tmp:betbub} (when $\alpha$ is fixed this was done for instance in \cite{Heb14,EsPiVe14}). In these cases, which will be treated in future work, the best possible decay would indeed be exponential, as showed by the pointwise estimates of $\Gga$ in \cite{Car24}.
\end{remark}
As we will see, $\TBub$ is a better candidate to solve the critical equation \eqref{eq:critbub}, when $\a \to \infty$ and $\a\mu^2 \to 0$, in a precise sense given by Proposition \ref{prop:estRa} below.
\begin{definition}
    Let $\a\geq 1$ and $\tau\leq 1$, we define the parameter set 
    \begin{equation}\label{def:param}
        \param := \{\pa = (\zm) \in M\times (0,+\infty) \such \a\mu^2< \tau\}.
        \end{equation}
\end{definition}
\begin{lemma}\label{prop:Thetto1}
    Let $(\tau_i)_i$, $(\a_i)_i$ be sequences of positive numbers such that $\tau_i \to 0$ and $1/\sqai < \inj/2$ for all $i$, and let $\pa_i = (\zm[i]) \in \param[i]$ for all $i$. Then 
    \[  \Theta_{\a_i}(z_i,\exp_{z_i}(\mu_i \cdot)) \to 1 \qquad \text{in } C^\infty_{loc}(\R^n), \text{ as } i\to \infty.
        \]
\end{lemma}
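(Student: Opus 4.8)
The plan is to prove something slightly stronger than $C^\infty_{loc}$ convergence, namely that the rescaled function $y\mapsto \Theta_{\a_i}\big(z_i,\exp_{z_i}(\mu_i y)\big)$ is \emph{identically equal to $1$ on every fixed ball $\Bal{0}{R}$ once $i$ is large}. Since all derivatives of a locally constant function vanish, this immediately yields convergence to the constant function $1$ in $C^\infty_{loc}(\R^n)$.

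The first step is to note that $\mu_i\to 0$: the hypothesis $1/\sqai<\inj/2$ forces $\a_i>4/\inj^{2}$, so $\mu_i^{2}<\tau_i/\a_i\leq (\inj^{2}/4)\,\tau_i\to 0$. Next I would fix $R>0$ and $y\in\Bal{0}{R}$ and examine the two factors of
\[  \Theta_{\a_i}\big(z_i,\exp_{z_i}(\mu_i y)\big)=\chi_\varrho\big(\dg{z_i,\exp_{z_i}(\mu_i y)}\big)\,h\big(\sqai\,\mu_i y\big)
    \]
separately, using $\exp_{z_i}^{-1}\big(\exp_{z_i}(\mu_i y)\big)=\mu_i y$. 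For the cut-off $\chi_\varrho$: the radial geodesic $t\mapsto\exp_{z_i}(t\mu_i y)$ shows $\dg{z_i,\exp_{z_i}(\mu_i y)}\leq\mu_i\abs{y}\leq\mu_i R$, which is $<\inj/2$ once $i$ is large because $\mu_i\to 0$, so $\chi_\varrho(\cdot)=1$ there. For the profile $h$: the condition $\pa_i\in\param[i]$ gives $\abs{\sqai\,\mu_i y}=\sqrt{\a_i\mu_i^{2}}\,\abs{y}\leq\sqrt{\tau_i}\,R\to 0$, so $\abs{\sqai\,\mu_i y}\leq 1$ once $i$ is large, and then $h(\sqai\,\mu_i y)=1$ since $h\equiv 1$ on the unit ball.

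Combining the two observations, for every $R>0$ there is $i_0(R)$ such that $\Theta_{\a_i}(z_i,\exp_{z_i}(\mu_i\cdot))\equiv 1$ on $\Bal{0}{R}$ for all $i\geq i_0(R)$; letting $R\to\infty$ gives the conclusion. There is no genuine obstacle here; the only point requiring a bit of care is to verify that \emph{both} truncation mechanisms — the geometric cut-off $\chi_\varrho$ and the exponential profile $h$ — become trivial on compact subsets of the rescaled variable, which is exactly what is encoded by $\mu_i\to 0$ (which kills $\chi_\varrho$) and $\a_i\mu_i^{2}\to 0$ (which kills $h$).
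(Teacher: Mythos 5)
Your proof is correct and follows essentially the same argument as the paper: on any fixed compact set, the condition $\a_i\mu_i^2<\tau_i\to 0$ makes $h(\sqai\mu_i y)\equiv 1$ and the smallness of $\mu_i$ makes $\chi_\varrho\equiv 1$, so the rescaled function is identically $1$ for large $i$, giving $C^\infty_{loc}$ convergence. The only difference is that you spell out explicitly why $\mu_i\to 0$ (via $1/\sqai<\inj/2$), a point the paper leaves implicit.
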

\begin{proof}
    Fix $K\sub \R^n$ a compact subset. Then, since $\a_i \mu_i^2 < \tau_i$ for all $i$, there is $i_0\in \N$ such that for all $i \geq i_0$, $\sqai \mu_i \abs{y} \leq 1$ for all $y \in K$. Thus, for all $i\geq i_0$, we have
    \[  \Theta_{\a_i}(z_i,\exp_{z_i}(\mu_i y)) = \chi_\varrho(\mu_i \abs{y}) h(\sqai\mu_i y) \equiv 1
        \]
    for all $y \in K$.
\end{proof}

\subsection{Almost solutions to the linearized equation}
The purpose of the next sections is to find approximated solutions to \eqref{eq:critbub} of the form $u_\a = \TBub + \phi_\a$, where $\phi_\a$ is small in comparison with $\TBub$ in a specific sense that will be explained below (see Proposition \ref{prop:uniqest}). It is then natural to spend some time studying the linearized version  around $\TBub$ of the critical equation \eqref{eq:critbub}, namely
\begin{equation}\label{eq:critlin}
    (\Dg+\a)^k u = (\deu-1)\TBub^{\deu-2} u \qquad \text{in } M.
\end{equation}
In this subsection, we define functions of the same form as \eqref{def:TBub}, but related to solutions to the linearized critical equation in $\R^n$.
\begin{definition}
    In the Euclidean setting, we consider the equation 
    \begin{equation}\label{eq:linbub}
        \Dg[\xi]^k u = (\deu-1)\Bub^{\deu-2}u \qquad \text{in }\R^n,
    \end{equation}
    for $n>2k$. We then define 
    \begin{equation}\label{def:ker}
        \Ker[] := \{v \in \hSob(\R^n) \such \Dg[\xi]^k v = (\deu-1)\Bub^{\deu-2} v \quad \text{in the weak sense}\},
    \end{equation}
    the set of solution to \eqref{eq:bub} linearized around $\Bub$ as given by \eqref{def:bub}.
\end{definition}
\begin{remark}
    If $v\in \Ker[]$, then 
    \[  \vprod[\hSob(\R^n)]{\Bub}{v} = \int_{\R^n} \vprod[\xi]{\Dg[\xi]^{k/2}\Bub}{\Dg[\xi]^{k/2}v} dy = (\deu-1)\int_{\R^n} \Bub^{\deu-1} v\, dy
        \]
    by definition of $\Ker[]$. Since $\Bub$ solves \eqref{eq:bub}, we also have
    \[  \int_{\R^n} \Bub^{\deu-1} v\, dy = \int_{\R^n} \Dg[\xi]^k \Bub v \, dy = \int_{\R^n} \vprod[\xi]{\Dg[\xi]^{k/2}\Bub}{\Dg[\xi]^{k/2}v} dy. 
        \]
    We conclude, for all $n>2k$, that $\vprod[\hSob(\R^n)]{\Bub}{v}=0$, so that $\Bub \in \Ker[]^\perp$ the orthogonal to $\Ker[]$ in $\hSob(\R^n)$. 
\end{remark}

It has been shown in \cite{BarWetWil03} that Equation \eqref{eq:linbub} possesses $n+1$ linearly independent solutions in $\hSob(\R^n)$. These correspond to the partial derivatives of $\Bub$ with respect to the parameters $\zm$ of the transformation \eqref{eq:transfo} under which \eqref{eq:bub} is invariant.
\begin{definition}
    We define 
    \begin{equation}\label{def:Z}
    \begin{aligned}
        \Ze{0}(y) &:= y \cdot \nabla \Bub(y) + \frac{n-2k}{2}\Bub(y),\\
        \Ze{j}(y) &:= \dsur{\Bub}{y_j}(y) & &\qquad j=1,\ldots n.
    \end{aligned}
    \end{equation}
    These form an orthogonal basis of $\Ker[]$ in $\hSob(\R^n)$. In particular, they are solutions of \eqref{eq:linbub} in $\R^n$. 
\end{definition}

\par We now define the rescaled versions of $\Ze{j}$ on $M$ by analogy, using $\TBub$ as defined in \eqref{def:TBub}.
\begin{definition}
    Let $\a\geq 1$ be such that $1/\sqa < \inj/2$, $\tau\leq 1$, and $\pa_0 = (z_0,\mu_0)\in \param$. We define
    \begin{equation}\label{def:tZ}
    \begin{aligned}
        \Zeda[\pa_0]{0} &:= \mu_0 \bpr{\dsur{\TBub}{\mu}}\at{z=z_0,\mu=\mu_0},\\
        \Zeda[\pa_0]{j} &:= \mu_0 \bpr{\dsur{\TBub}{z_j}}\at{z=z_0,\mu=\mu_0} & &\qquad j=1\ldots n,
    \end{aligned}
    \end{equation}
    where the partial derivative along the direction $z_j$ is defined in \eqref{def:dzj} below. 
\end{definition}
\begin{remark}
    The multiplication by $\mu_0$ ensures that $\Zed{j}$ is equal to first order to the same rescaling transformation of $\Ze{j}$ that $\TBub$ is for $\Bub$. Indeed, for all $x\in M$, we have
    \begin{align*}
        \Zed{0}(x) &= \Theta_\a(z,x) \big(\Ze{0}\big)_{\zm}(x)\\
        \Zed{j}(x) &= \Theta_\a(z,x) \big(\Ze{j}\big)_{\zm}(x) + \mu\dsur{}{z_j}\Theta_\a(z,x) \Bub_{\zm}(x) & &\qquad j=0,\ldots n,
    \end{align*}
    where $\big(\Ze{j}\big)_{\zm}$ is the rescaling defined in \eqref{def:conc}, for $\dg{z,x} < \inj$, 
    \begin{equation}\label{def:Zzm}
        \big(\Ze{j}\big)_{\zm}(x) := \mu^{-\frac{n-2k}{2}}\Ze{j}\big(\tfrac{1}{\mu}\exp_{z}^{-1}(x)\big).
    \end{equation}
\end{remark}
\begin{remark}
    The map $(\zm) \mapsto \TBub$ is differentiable in its variables $z\in M$, $\mu>0$, and the partial derivative along the variable $z$, near a center $z_0$, can be defined through the local charts. For $j=0,\ldots n$, we let $v_j\in\R^n$ be the $j$-th vector of the canonical basis of $\R^n$, then we define 
    \begin{align*}
        \gamma_j : (-\epsilon, \epsilon) & \longrightarrow M\\
            t & \longmapsto \exp_{z_0}(tv_j)
    \end{align*}
    for a small enough $\epsilon >0$. The partial derivative along $z_j$ is defined as
    \begin{equation}\label{def:dzj}
        \dsur{\TBub}{z_j}\at{\pa_0} := \Dsur{}{t}\big(\TBuba[(\gamma_j(t),\mu)]\big)\at{t = 0}  
    \end{equation}
\end{remark}
By analogy with $\Ker[]$, we also define the following.
\begin{definition}
    Let $\a\geq 1$ be such that $1/\sqa < \inj/2$, $\tau\leq 1$, and $\pa = (\zm) \in \param$. We define the set
    \begin{equation}\label{def:kerzm}
        \Ker := \spanned{\Zed{j} \such 0\leq j\leq n} \sub \Sob(M).
    \end{equation}
\end{definition}
As we will show below (see Corollary \ref{prop:Zedorth}), $\{\Zed{j}\such 0\leq j\leq n\}$ forms an almost orthogonal basis of $\Ker$.

\subsection{Properties of the rescaled functions}
In this subsection, we study the properties of the rescaled functions defined earlier. Since they have very similar pointwise behavior, we will write in the following $X$ to be either $\Bub$ or $\Ze{j}$ for some $j\in \{0,\ldots n\}$. Similarly, we write respectively $X_{\zma}$ either as $X_{\zma} =\TBub$ defined in \eqref{def:TBub}, or $X_{\zma} = \Zed{j}$ defined in \eqref{def:tZ}. In this subsection, we show that indeed $\TBub$ and $\Zed{j}$ are \emph{almost solutions} for the Equations \eqref{eq:critbub} and \eqref{eq:critlin}, respectively.

\begin{proposition}\label{prop:estdifX}
    Let $\a \geq 1$ be such that $1/\sqa < \inj/2$, $\tau\leq 1$, and let $\pa = (\zm)\in \param$. For $l=0,\ldots 2k$, there is a constant $C_l>0$ independent of $\a, \tau, \pa$ such that the following holds.
    \begin{itemize}
        \item For all $x\in M$ with $\sqa\dg{z,x} \leq 1$, 
        \begin{equation}\label{eq:estdifXsqa}
            \abs{\nabla_g^l X_{\zma}(x) - \mu^{-\frac{n-2k+2l}{2}}\big(\nabla_\xi^l X\big)\big(\tfrac{1}{\mu}\exp_z^{-1}(x)\big)}_\xi\leq C_l (\mu+\dg{z,x})^{2-l}\Bub_{\zm}(x).
        \end{equation}
        \item For all $x\in M$ with $\sqa\dg{z,x} \geq 1$,
        \begin{equation}\label{eq:estdifXexp}
            \abs{\nabla_g^l X_{\zma}(x)}_g \leq C_l \a^{l/2} \mu^{\frac{n-2k}{2}}\dg{z,x}^{2k-n} e^{-\sqa \dg{z,x}/2}.
        \end{equation}
    \end{itemize}
\end{proposition}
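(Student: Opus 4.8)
The plan is to prove Proposition \ref{prop:estdifX} by directly expanding the definitions \eqref{def:TBub} resp.\ \eqref{def:tZ} of $X_{\zma}$ and using the Leibniz rule for the covariant derivative, separating at each stage the contribution of the cut-off/correction factor $\Theta_\a(z,\cdot)$ from that of the concentrated profile $X_{\zm}$. The two regimes $\sqa\dg{z,x}\leq 1$ and $\sqa\dg{z,x}\geq 1$ are handled separately, exploiting the fact — recorded in \eqref{eq:estdThet} — that $\Theta_\a(z,\cdot)\equiv 1$ on the first region and decays exponentially with its derivatives on the second.

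For the exponential regime $\sqa\dg{z,x}\geq 1$, estimate \eqref{eq:estdifXexp} should follow relatively painlessly: on this set $\dg{z,x}\geq 1/\sqa$, so the profile $X\big(\tfrac1\mu\exp_z^{-1}(x)\big)$ is evaluated far out where $X$ and all its derivatives are polynomially controlled (like $\abs{y}^{2k-n}$ for $\Bub$ and similarly, up to a shift, for the $\Ze j$), giving after the $\mu$-rescaling the factor $\mu^{\frac{n-2k}{2}}\dg{z,x}^{2k-n}$; each covariant derivative either falls on the profile, producing another power of $1/\dg{z,x}\leq\sqa$, or on $\Theta_\a$, producing a factor $\a^{l/2}e^{-\sqa\dg{z,x}/2}$ by \eqref{eq:estdThet}; in all cases the total is bounded by $C_l\,\a^{l/2}\mu^{\frac{n-2k}{2}}\dg{z,x}^{2k-n}e^{-\sqa\dg{z,x}/2}$, absorbing any leftover polynomial factors into the exponential. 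One has to be a little careful about the transition region $\dg{z,x}\in[1/\sqa,\varrho]$ where $\chi_\varrho$ also varies, but there $\dg{z,x}\simeq 1/\sqa$ is bounded below, so $\chi_\varrho$-derivatives only cost bounded constants, harmlessly absorbed.

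The substantive part is the near regime, inequality \eqref{eq:estdifXsqa}, which asserts that $X_{\zma}$ agrees with the \emph{exactly rescaled} Euclidean profile $\mu^{-\frac{n-2k+2l}{2}}(\nabla^l_\xi X)\big(\tfrac1\mu\exp_z^{-1}(x)\big)$ up to an error of relative size $(\mu+\dg{z,x})^{2}$ (times $\dg{z,x}^{-l}$). Here $\Theta_\a\equiv 1$, so the only discrepancies come from: (i) the difference between the Riemannian volume element / metric coefficients in geodesic normal coordinates around $z$ and the Euclidean ones — these differ from their flat values by $O(\dg{z,x}^2)$ since $g_{ij}(x)=\delta_{ij}+O(\dg{z,x}^2)$ and the Christoffel symbols are $O(\dg{z,x})$; (ii) the difference between the covariant derivative $\nabla_g^l$ and the flat derivative $\nabla_\xi^l$ of the same function read in these coordinates, which again is a sum of terms each carrying at least one factor of a Christoffel symbol or its derivatives, hence $O(\dg{z,x}^{2-l'})$ against the appropriate number $l'$ of remaining flat derivatives; (iii) the cut-off $\chi_\varrho$ in the definition \eqref{def:conc} of $X_{\zm}$, which however is identically $1$ for $\dg{z,x}<\inj/2$, and for $\dg{z,x}\geq\inj/2$ we are in the region where $\dg{z,x}$ is bounded below so the desired bound is trivial (the left side is bounded by $C\Bub_{\zm}(x)\leq C(\mu+\dg{z,x})^{2-l}\Bub_{\zm}(x)$ up to constants depending on $\inj$). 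The mechanism is: write everything in normal coordinates, Taylor-expand the metric, and match powers; each derivative hitting the profile $X(\tfrac\cdot\mu)$ produces a factor $\mu^{-1}$, while each derivative hitting a geometric coefficient produces a factor controlled by $\dg{z,x}$, and the two combine so that the $l$-th order error is $(\mu+\dg{z,x})^{2}$ times the size of the flat $l$-th derivative, which is $\lesssim\mu^{-l}$ on the relevant scale — whence, after recalling that $\Bub_{\zm}(x)\simeq\mu^{-\frac{n-2k}2}$ when $\dg{z,x}\lesssim\mu$ and $\Bub_{\zm}(x)\simeq\mu^{\frac{n-2k}2}\dg{z,x}^{2k-n}$ otherwise, one checks the stated bound holds uniformly in both sub-ranges $\dg{z,x}\lesssim\mu$ and $\mu\lesssim\dg{z,x}\lesssim 1/\sqa$.

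The main obstacle I anticipate is purely bookkeeping: organizing the Leibniz expansion of $\nabla_g^l(\Theta_\a\cdot X_{\zm})$ and, inside it, the comparison of $\nabla_g^l$ with $\nabla_\xi^l$ for a radial profile concentrating at scale $\mu$, so that the powers of $\mu$, of $\dg{z,x}$, and of $\sqa$ all come out matching the claimed right-hand sides in \emph{both} regimes simultaneously and \emph{uniformly} in $\a,\tau,\pa$. No single step is deep — it is the classical "almost-solution" estimate for a bubble on a manifold, here complicated only by the extra factor $\Theta_\a$ and by the need to track the $\a$-dependence — but getting all the exponents right, and in particular verifying that the correction $\Theta_\a$ never degrades the $(\mu+\dg{z,x})^{2-l}$ gain in the near regime (it does not, since $\Theta_\a\equiv 1$ there), is where the care is needed. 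I would present it as: first the trivial reduction to $\dg{z,x}<\inj/2$ (where $\chi_\varrho\equiv 1$ and $\Theta_\a=h(\sqa\exp_z^{-1}(\cdot))$); then the far regime via \eqref{eq:estdThet} and polynomial bounds on $X$; then the near regime via normal-coordinate Taylor expansion; and finally reconcile the two at the transition $\dg{z,x}\simeq 1/\sqa$.
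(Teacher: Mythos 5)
Your proposal follows essentially the same route as the paper: in the region $\sqa\dg{z,x}\leq 1$ one has $\Theta_\a\equiv 1$ and the estimate reduces to the normal-coordinate comparison of $\nabla_g^l$ with $\nabla_\xi^l$ (the paper's expansion \eqref{eq:nabexp}) combined with the pointwise bounds \eqref{eq:controlB} and \eqref{eq:estdZ}, while for $\sqa\dg{z,x}\geq 1$ the Leibniz rule together with \eqref{eq:estdThet} gives the exponential bound, exactly as you outline. The only ingredient not visible in your sketch is the extra term $\mu\,\dsur{}{z_j}\Theta_\a(z,\cdot)\,\Bub_{\zm}$ appearing in $\Zed{j}$ for $j\geq 1$ (the center-derivative of $\Theta_\a$), which the paper isolates and controls via Lemma \ref{prop:Phito0}; since it vanishes when $\sqa\dg{z,x}\leq 1$ and obeys the same exponential estimates in the far region, it does not affect your argument.
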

\begin{proof}
    Let $\inj/2 <\varrho <\inj$ be as defined in definition \ref{def:psizm}. Computing the quantities in local coordinates at $z\in M$, for a smooth $f\in C^\infty(M)$, we have for all $y \in \Bal{0}{\varrho}$, $l=2,\ldots 2k$,
    \begin{multline}\label{eq:nabexp}
        \big(\nabla_g^l f\big)_{i_1,\ldots i_l} (\exp_{z}(y)) = \big(\nabla_\xi^l (f\circ \exp_z) \big)_{i_1,\ldots i_l}(y)\\ + \bigO\bpr{\abs{y}\abs{\nabla_\xi^{l-1}(f\circ \exp_z)(y)}} + \bigO\bpr{\tsum_{m=1}^{l-2}\abs{\nabla_\xi^m (f\circ \exp_z)(y)}}. 
    \end{multline}
    We first consider the case $\dg{z,x} \leq 1/\sqa$. Then, since $\Theta_\a(z,x) \equiv 1$, we have
    \begin{align*}
        \nabla_g^l \TBub(x) &= \nabla_g^l \Bub_{\zm}(x) \\
        \nabla_g^l \Zed{j}(x) &= \nabla_g^l \big(\Ze{j}\big)_{\zm}(x),\qquad j=0,\ldots n
    \end{align*}
    for $l=0,\ldots 2k$.
    For all $y \in \Bal{0}{1/\sqa} \sub \R^n$, and $l=2,\ldots 2k$, we have 
    \begin{multline*}
        \nabla_g^l X_{\zma}(\exp_{z}(y)) = \mu^{-\frac{n-2k}{2}-l} \big(\nabla_\xi^l X\big)\bpr{\frac{y}{\mu}}\\ + \bigO\bpr{\abs{y}\mu^{-\frac{n-2k}{2}-l+1} \abs{\big(\nabla_\xi^{l-1} X\big)\bpr{\frac{y}{\mu}}}}\\ + \bigO\bpr{\tsum_{m=1}^{l-2} \mu^{-\frac{n-2k}{2}-m}\abs{\big(\nabla_\xi^m X\big)\bpr{\displaystyle\frac{y}{\mu}}}}.
    \end{multline*}
    Note that the derivatives of $\Bub$ and $\Ze{j}$ have a known behavior : Using their explicit expression, we obtain for all $y \in M$ and $l=0,\ldots 2k$,
    \begin{align}
        \label{eq:controlB} &(1+\abs{y})^{l} \abs{\nabla^l \Bub(y)} \leq C (1+\abs{y})^{2k-n}\\
        \label{eq:estdZ}    &(1+\abs{y})^l \abs{\nabla^l \Ze{j}(y)} \leq C (1+\abs{y})^{2k-n}
    \end{align}
    for some $C>0$.
    We then obtain, for $y \in \Bal{0}{1/\sqa}$,
    \begin{align*}
        \nabla_g^l X_{\zma}(\exp_z(y)) &= \mu^{-\frac{n-2k+2l}{2}} \big(\nabla_\xi^l X\big) \bpr{\frac{y}{\mu}} + \bigO\Big(\abs{y}(\mu+\abs{y})^{-l+1}\Bub_{\zm}(\exp_z(y))\Big)\\
            &\qquad\qquad +\bigO\bpr{\tsum_{m=1}^{l-2} (\mu+\abs{y})^{-m} \Bub_{\zm}(\exp_z(y))}\\
            &= \mu^{-\frac{n-2k+2l}{2}} \big(\nabla_\xi^l X\big)\bpr{\frac{y}{\mu}} + \bigO\Big((\mu+\abs{y})^{2-l}\Bub_{\zm}(\exp_z(y))\Big),
    \end{align*}
    where we used that $\abs{y} \leq (\mu+\abs{y}) \leq \frac{1+\tau}{\sqa} < \inj$ for all $\pa \in \param$. This proves \eqref{eq:estdifXsqa}.
    \par Let now $x\in M$ be such that $1/\sqa \leq \dg{z,x} \leq \varrho$, where $\varrho$ was chosen in definition \ref{def:psizm}, and write $y=\exp_z^{-1}(x)$, we compute
    \begin{equation}\label{tmp:forV}
    \begin{aligned}
        \abs{\nabla_g^l \TBub(x)}_g &\leq C\sum_{m=0}^l \abs{\nabla_g^{l-m} \Theta_\a(z,x)}_g\abs{\nabla_g^m \Bub_{\zm}(x)}_g\\
            &\leq C\Bigg[ \mu^{-\frac{n-2k}{2}}\Bub\big(\tfrac{y}{\mu}\big)\a^{l/2}e^{-\sqa\abs{u}/2}\\
            &\qquad\qquad +\sum_{m=1}^{l}\sum_{p=1}^m \mu^{-\frac{n-2k}{2}-p}\abs{\big(\nabla_\xi^p \Bub\big)\big(\tfrac{y}{\mu}\big)}\a^{\frac{l-m}{2}}e^{-\sqa\abs{y}/2}  \Bigg]
    \end{aligned}
    \end{equation}
    using \eqref{eq:estdThet}. As before, we use \eqref{eq:controlB} and \eqref{eq:estdZ}, together with the fact that $\abs{y} > \mu$ for all $y \in \Bal{0}{\varrho}\setminus\Bal{0}{1/\sqa}$, and obtain
    \begin{align*}  
        \abs{\nabla_g^l \TBub(\exp_z(y))}_g &\leq C\mu^{\frac{n-2k}{2}}\abs{y}^{2k-n} \a^{l/2}e^{-\sqa\abs{y}/2}\bsq{1+ \sum_{m=1}^l \sum_{p=1}^m \a^{-m/2}\abs{y}^{-p}}\\
            &\leq C \mu^{\frac{n-2k}{2}} \abs{y}^{2k-n}\a^{l/2} e^{-\sqa\abs{y}/2}
    \end{align*}
    for all $y \in \Bal{0}{\varrho}\setminus\Bal{0}{1/\sqa}$. Now for $\Zed{j}$, we have 
    \begin{multline*}  
        \abs{\nabla_g^l \Zed{j}(x)}_g \leq C\sum_{m=0}^l \abs{\nabla_g^{l-m} \Theta_\a(z,x)}\abs{\nabla_g^m \big(\Ze{j}\big)_{\zm}(x)}\\ + \sum_{m=0}^l \abs{\nabla_g^l \bpr{\Zdif}(x)}.
    \end{multline*}
    With the same arguments as in \eqref{tmp:forV}, using \eqref{eq:estdZ}, we obtain
    \[  \sum_{m=0}^l \abs{\nabla_g^{l-m} \Theta_\a(z,x)}\abs{\nabla_g^m \big(\Ze{j}\big)_{\zm}(x)} \leq C \mu^{\frac{n-2k}{2}}\dg{z,x}^{2k-n}\a^{l/2} e^{-\sqa \dg{z,x}/2}
        \]
    for all $x \in \Bal{z}{\varrho}\setminus \Bal{z}{1/\sqa}$. Now using \eqref{eq:estdPhi} in Lemma \ref{prop:Phito0} below, we have
    \begin{align*}
        \sum_{m=0}^l \abs{\nabla_g^l \bpr{\Zdif}(x)} \leq C\sqa\mu \mu^{\frac{n-2k}{2}}\dg{z,x}^{2k-n}\a^{l/2} e^{-\sqa \dg{z,x}/2}.
    \end{align*}
    Realizing that when $\dg{z,x}\geq \varrho$, $\abs{\nabla_g^l X_{\zma}} = 0$, we obtain \eqref{eq:estdifXexp} which concludes the proof of Proposition \ref{prop:estdifX}.
\end{proof}

\begin{proposition}\label{prop:Tconc}
    Let $(\tau_i)_i$ and $(\a_i)_i$ be sequences of positive numbers such that $1/\sqai \leq \inj/2$ for all $i$, $\tau_i\to 0$ as $i \to \infty$, and let $\pa_i = (\zm[i]) \in \param[i]$ for all $i$. Let $X$ denote either $\Bub$ of $\Ze{j}$ for some $j=0,\ldots n$, and $X_{\zma[i]}$ denote respectively either $\TBub[i]$ or $\Zed[i]{j}$, we have the following.
    \begin{enumerate}
        \item $\displaystyle \mu_i^{\frac{n-2k}{2}} X_{\zma[i]}(\exp_{z_i}(\mu_i \cdot)) \xto{i\to \infty} X$ in $C^\infty_{loc}(\R^n)$;
        \item For $l=0,\ldots k$, 
            \begin{equation}\label{eq:ii}
                \intM{\abs{\Dg^{l/2}X_{\zma[i]}}^{\deuk{k-l}}} \xto{i\to \infty} \int_{\R^n} \abs{\Dg[\xi]^{l/2}X}^{\deuk{k-l}} dy,
            \end{equation}
            where $\deuk{l} := \frac{2n}{n-2l}$;
        \item We have 
            \begin{equation}\label{eq:iv}
                X_{\zma[i]} \wto 0 \qquad \text{ in $\Sob(M)$, as $i\to \infty$},
            \end{equation}
            and $\Snorm{k}{X_{\zma[i]}} \to \Hnorm{k,2}{X}$;
        \item For any $\psi \in \hSob(\R^n)$, 
            \begin{equation}\label{eq:iii}
                \vprod[\Sob(M)]{X_{\zma[i]}}{\psi_{\zm[i]}} \xto{i\to \infty} \vprod[\hSob(\R^n)]{X}{\psi},
            \end{equation}
            where $\psi_{\zm[i]}$ is the concentrated version of $\psi$ defined in \eqref{def:conc}.
    \end{enumerate}
\end{proposition}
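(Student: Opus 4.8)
The plan is to reduce all four assertions to a single mechanism: the change of variables $y=\mu_i^{-1}\exp_{z_i}^{-1}(x)$, which rescales the bubble to unit size, combined with (a) the $C^\infty_{loc}(\R^n)$--convergence of the rescaled objects to their Euclidean models, and (b) uniform--in--$i$ decay away from the concentration point furnished by Proposition~\ref{prop:estdifX}. The standing hypotheses should enter only through the bookkeeping of exponents in (b): since $1/\sqai\le\inj/2$ we have $\a_i\ge 4/\inj^2$, hence $\mu_i^2<\tau_i/\a_i\to0$ and in particular $\mu_i\to0$; integrability at spatial infinity will use $n>2k$; and the residual powers of $\a_i$ coming from the exponentially decaying part of $X_{\zma[i]}$ will be absorbed using $\a_i\mu_i^2<\tau_i\to0$.

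For (1), I would write, for $X=\Bub$ and using $\dg{z_i,\exp_{z_i}(\mu_i y)}=\mu_i\abs{y}$,
\[\mu_i^{\frac{n-2k}{2}}\,\TBub[i]\big(\exp_{z_i}(\mu_i y)\big)=\Theta_{\a_i}\big(z_i,\exp_{z_i}(\mu_i y)\big)\,\Bub(y),\]
so that (1) is exactly Lemma~\ref{prop:Thetto1}; for $X=\Ze{j}$ I would use the decomposition of $\Zed{j}$ recorded after \eqref{def:tZ}, handle the principal term as above, and note that the correction $\mu_i\,\partial_{z_j}\Theta_{\a_i}(z_i,\cdot)\,\Bub_{\zm[i]}$ vanishes identically on any fixed compact set once $i$ is large (there $\sqai\mu_i\abs{y}<1$, hence $\Theta_{\a_i}\equiv1$ on a neighborhood and $\partial_{z_j}\Theta_{\a_i}$ vanishes), or else invoke \eqref{eq:estdifXsqa}. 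For (2), I fix $R>0$ and split over $\Bal{z_i}{R\mu_i}$ and its complement. On $\Bal{z_i}{R\mu_i}$, after rescaling the metric $(\exp_{z_i}^*g)(\mu_i\,\cdot)$ converges to $\xi$ in $C^\infty_{loc}$, so $\Dg^{l/2}$ applied to the rescaled function tends coefficientwise to $\Dg[\xi]^{l/2}$; since $l\le k$, combining this with (1) gives uniform convergence on $\Bal{0}{R}$ of the rescaled integrand, and the scaling identity $\tfrac{n-2k+2l}{2}\cdot\deuk{k-l}=n$ makes the power of $\mu_i$ cancel the Jacobian, so this contribution converges to $\int_{\Bal{0}{R}}\abs{\Dg[\xi]^{l/2}X}^{\deuk{k-l}}\,dy$. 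On the complement I would use \eqref{eq:estdifXexp} on $\{\sqai\dg{z_i,\cdot}\ge1\}$ together with the bound $\abs{\nabla_g^l X_{\zma[i]}(x)}\le C\,\mu_i^{\frac{n-2k}{2}}(\mu_i+\dg{z_i,x})^{2k-n-l}$ for $\dg{z_i,x}\le1/\sqai$ (which follows from \eqref{eq:estdifXsqa} and \eqref{eq:controlB}--\eqref{eq:estdZ}); the substitutions $r=\mu_i s$ and $r=t/\sqai$ then bound the tail by $\delta_R+o_i(1)$ with $\delta_R\to0$ as $R\to\infty$. Letting $i\to\infty$ and then $R\to\infty$, and using $\int_{\R^n\setminus\Bal{0}{R}}\abs{\Dg[\xi]^{l/2}X}^{\deuk{k-l}}\to0$, yields \eqref{eq:ii}.

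For (3), the same splitting with Lebesgue exponent $2$ and $l\le k-1$ gives $\Snorm{k-1}{X_{\zma[i]}}\to0$; combined with (2) for $l=k$ (recall $\deuk{0}=2$) this shows $(X_{\zma[i]})_i$ is bounded in $\Sob(M)$ and that $\Snorm{k}{X_{\zma[i]}}^2=\Lnorm[(M)]{2}{\Dg^{k/2}X_{\zma[i]}}^2+\Snorm{k-1}{X_{\zma[i]}}^2\to\Hnorm{k,2}{X}^2$. Any weakly convergent subsequence in $\Sob(M)$ then converges strongly in $\Sob[k-1](M)$ by the Rellich--Kondrachov theorem, so its limit is the $\Sob[k-1](M)$--limit $0$, and by boundedness the whole sequence converges weakly to $0$, which is \eqref{eq:iv}. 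For (4), both $(X_{\zma[i]})_i$ and $(\psi_{\zm[i]})_i$ (by Lemma~\ref{prop:psizmbded}) are bounded in $\Sob(M)$, so by density of $\Cct(\R^n)$ in $\hSob(\R^n)$ it suffices to take $\psi\in\Cct(\R^n)$; for $i$ large $\psi_{\zm[i]}$ is then supported in $\Bal{z_i}{R\mu_i}$ (with $\operatorname{supp}\psi\subset\Bal{0}{R}$), the pairing is a finite sum over $l=0,\dots,k$ of rescaled integrals, the $l=k$ term converges to $\int_{\R^n}\vprod[\xi]{\Dg[\xi]^{k/2}X}{\Dg[\xi]^{k/2}\psi}\,dy=\vprod[\hSob(\R^n)]{X}{\psi}$, and each $l<k$ term carries a factor $\mu_i^{2(k-l)}\to0$ in front of a bounded integral, giving \eqref{eq:iii}.

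I expect the main obstacle to be the uniform--in--$i$ control of the tail integrals in step (2) (and the analogous estimates needed for $\Snorm{k-1}{X_{\zma[i]}}\to0$ in (3)): one must patch together the two regimes of Proposition~\ref{prop:estdifX} and verify that in each the powers of $\mu_i$, $\dg{z_i,\cdot}$ and $\a_i$ combine to give a bound that is small uniformly in $i$ — with $n>2k$ responsible for integrability at spatial infinity and $\a_i\mu_i^2\to0$ absorbing the $\a_i$--dependent prefactors that come from the exponentially decaying part of $X_{\zma[i]}$. Once these uniform tail estimates are in place, the remaining steps are essentially formal consequences of them and of the $C^\infty_{loc}$--convergence supplied by Lemma~\ref{prop:Thetto1} and \eqref{eq:estdifXsqa}.
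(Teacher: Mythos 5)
Your proposal is correct and follows essentially the same route as the paper: rescaling by $\mu_i$, splitting between the concentration region and the tail, and invoking Lemma~\ref{prop:Thetto1}, Proposition~\ref{prop:estdifX} and Lemma~\ref{prop:psizmbded}, with the residual $\a_i$-powers in the exponential regime absorbed by $\a_i\mu_i^2\to0$. The only (harmless) deviations are organizational — you split at radius $R\mu_i$ rather than $1/\sqai$ in (2), prove $\Snorm{k-1}{X_{\zma[i]}}\to0$ directly instead of via the $L^{2n/(n+2k)}$-norm, and reduce (4) to $\psi\in\Cct(\R^n)$ by density (legitimate since the constant in Lemma~\ref{prop:psizmbded} is uniform in $i$) — and the exponent bookkeeping you flag as the main obstacle does go through as you describe.
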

\begin{proof}
    \par\textbf{Proof of (1):} For $X_{\zma[i]} = \TBub[i]$, the conclusion follows immediately from the definition of $\TBub[i]$, and Lemma \ref{prop:Thetto1}. For $X_{\zma[i]} = \Zed[i]{j}$, $j=0,\ldots n$, as in Lemma \ref{prop:Thetto1}, for all compact set $K\sub \R^n$, there is $i_0>0$ such that for all $i\geq i_0$, $\mu_i\abs{y} \leq 1/\sqai$ for all $y\in K$. Therefore, 
    \[  \Zed[i]{j}(\exp_{z_i}(\mu_i y)) = \mu_i^{-\frac{n-2k}{2}}\Ze{j}(y) \qquad \forall~ y \in K,
        \]  
    and we conclude.
    \par\textbf{Proof of (2):} For all $i$, we have
    \begin{multline}\label{tmp:twoterm}
        \intM{\abs{\Dg^{l/2}X_{\zma[i]}}^{\deuk{k-l}}} = \intM[\Bal{z_i}{1/\sqai}]{\abs{\Dg^{l/2}X_{\zma[i]}}^{\deuk{k-l}}}\\ + \intM[\Bal{z_i}{\varrho}\setminus\Bal{z_i}{1/\sqai}]{\abs{\Dg^{l/2}X_{\zma[i]}}^{\deuk{k-l}}}.
    \end{multline}
    We compute the first term in the right-hand side of \eqref{tmp:twoterm} using \eqref{eq:estdifXsqa}, in local coordinates we have with a change of variables $y = \tfrac{1}{\mu_i}\exp_{z_i}^{-1}(x)$,
    \begin{multline}\label{tmp:Xdeukl}
        \intM[\Bal{z_i}{1/\sqai}]{\abs{\Dg^{l/2}X_{\zma[i]}}^{\deuk{k-l}}}\\ = \int_{\Bal{0}{\frac{1}{\sqai\mu_i}}} \abs{\Dg[\xi]^{l/2} X(y)}^{\deuk{k-l}}\big(1+\bigO(\mu_i^2 \abs{y}^2)\big) dy\\ + \bigO\bpr{\int_{\Bal{0}{\frac{1}{\sqai\mu_i}}} \mu_i^{\frac{4n}{n-2k+2l}} \bsq{(1+\abs{y})^{2-l}\Bub(y)}^{\deuk{k-l}}dy}
    \end{multline}
    We then observe that for all $y\in \R^n$, using \eqref{eq:controlB} and \eqref{eq:estdZ},
    \begin{align*}
        (1+\abs{y})^{2-l} \Bub(y) &\leq C (1+\abs{y})^{2k-n+2-l}\\
        \abs{y}^2 \abs{\Dg[\xi]^{l/2}X(y)}^{\deuk{k-l}} &\leq C \abs{y}^2 (1+\abs{y})^{-2n \frac{n-2k+l}{n-2k+2l}}, 
    \end{align*}
    so that, with straightforward computations, \eqref{tmp:Xdeukl} gives
    \begin{equation}\label{tmp:decompX}
        \intM[\Bal{z_i}{1/\sqai}]{\abs{\Dg^{l/2}X_{\zma[i]}}^{\deuk{k-l}}} = \int_{\R^n} \abs{\Dg[\xi]^{l/2} X(y)}^{\deuk{k-l}} dy + o(1)
    \end{equation}
    as $i \to \infty$, since $X \in \hSob(\R^n) \emb \hSob[l,\deuk{k-l}](\R^n)$ by Sobolev embedding. We compute the second term in the right-hand side of \eqref{tmp:twoterm} using \eqref{eq:estdifXexp}, and we obtain with a change of variables $y = \sqai \exp_{z_i}^{-1}(x)$,
    \begin{multline}\label{tmp:Tconcxx}
        \intM[\Bal{z_i}{\varrho}\setminus\Bal{z_i}{1/\sqai}]{\abs{\Dg^{l/2}X_{\zma[i]}}^{\deuk{k-l}}} \\
        \begin{aligned}    
            &\leq C \int_{\Bal{0}{\varrho\sqai}\setminus\Bal{0}{1}} \a_i^{-\frac{n}{2}}\a_i^{\frac{n}{2}} \mu_i^{\frac{n-2k}{2}\deuk{k-l}} \bsq{\abs{y}^{2k-n}e^{-\abs{y}/2}}^{\deuk{k-l}} dy\\
            &\leq C \mu_i^{\frac{n-2k}{2}\deuk{k-l}} = o(1) 
        \end{aligned}
    \end{multline}
    as $i\to \infty$. This concludes the proof of \eqref{eq:ii}. 
    \par\textbf{Proof of (3):} We use the continuous embedding $L^{\frac{2n}{n+2k}}(M) \emb \Sob[-k](M)$, so that we need to show
    \[  \intM{\abs{X_{\zma[i]}}^{\frac{2n}{n+2k}}} = o(1) \qquad \text{as } i \to \infty.
        \]
    We split the domain of the integral between $\Bal{z_i}{1/\sqai}$ and $\Bal{z_i}{\varrho}\setminus\Bal{z_i}{1/\sqai}$ as before. We have, by straightforward computations,
    \[
        \intM[\Bal{z_i}{1/\sqai}]{\abs{X_{\zma[i]}}^{\frac{2n}{n+2k}}} \leq C \int_{\Bal{0}{\frac{1}{\sqai\mu_i}}} \mu^{\frac{4kn}{n+2k}} (1+\abs{y})^{-2n\frac{n-2k}{n+2k}} dy = o(1).
    \]
    We also get
    \begin{multline*}  
        \intM[\Bal{z_i}{\varrho}\setminus\Bal{z_i}{1/\sqai}]{\abs{X_{\zma[i]}}^{\frac{2n}{n+2k}}}\\ \leq C (\sqai\mu_i)^{\frac{n(n-6k)}{n+2k}}\mu_i^{\frac{4kn}{n+2k}}\int_{\R^n\setminus\Bal{0}{1}} \bsq{\abs{y}^{2k-n}e^{-\abs{y}/2}}^{\frac{2n}{n+2k}} dy = o(1) 
    \end{multline*}
    as $i\to \infty$. By the compactness of the inclusion $\Sob(M) \emb \Sob[k-1](M)$, we then have $X_{\zma[i]} \to 0$ in $\Sob[k-1](M)$, and 
    \[  \Snorm{k}{X_{\zma[i]}}^2 = \Lnorm[(M)]{2}{\Dg^{k/2}X_{\zma[i]}}^2 + \Snorm{k-1}{X_{\zma[i]}}^2 = \Hnorm{k,2}{X}^2 + o(1) 
        \]
    as $i\to \infty$, using \eqref{eq:ii}. 
    \par\textbf{Proof of (4):} As showed before, $\Snorm{k-1}{X_{\zma[i]}} \to 0$, so that, since $\psi_{\zm[i]}$ is a bounded sequence in $\Sob(M)$ with Lemma \ref{prop:psizmbded}, we obtain
    \begin{equation}\label{eq:vprodphiZ}
        \vprod[\Sob(M)]{X_{\zma[i]}}{\psi_{\zm[i]}} = \intM{\vprod{\Dg^{k/2}X_{\zma[i]}}{\Dg^{k/2}\psi_{\zm[i]}}} + o(1).
    \end{equation}
    Fix any $R>0$, we split the domain of the integral between $\Bal{z_i}{R\mu_i}$ and $M\setminus\Bal{z_i}{R\mu_i}$. On the one hand, we define the metric $\Tg_i(y) := \exp_{z_i}^*g(\mu_i y)$ on $\Bal{0}{R}\sub \R^n$, and we use the fact that $\mu_i^{\frac{n-2k}{2}} X_{\zma[i]}(\exp_{z_i}(\mu_i \cdot)) \to X$, and $\Tg_i \to \xi$ in $C^\infty_{loc}(\R^n)$ to obtain 
    \begin{multline*}
        \lim_{i\to \infty} \intM[\Bal{z_i}{R\mu_i}]{\vprod{\Dg^{k/2}X_{\zma[i]}}{\Dg^{k/2}\psi_{\zm[i]}}}\\
        \begin{aligned}
            &= \int_{\Bal{0}{R}} \vprod[\xi]{\Dg[\xi]^{k/2}X}{\Dg[\xi]^{k/2}\psi}dy\\
            &= \int_{\R^n} \vprod[\xi]{\Dg[\xi]^{k/2}X}{\Dg[\xi]^{k/2}\psi}dy + \bigO\bpr{\Hnorm[(\R^n \setminus \Bal{0}{R})]{k,2}{\psi}},
        \end{aligned}
    \end{multline*}
    since $X,\psi \in \hSob(M)$. On the other hand, we have by Hölder's inequality, and since $(X_{\zma[i]})_i$ is bounded in $\Sob(M)$, that
    \[
        \intM[M\setminus\Bal{z_i}{R\mu_i}]{\vprod{\Dg^{k/2}X_{\zma[i]}}{\Dg^{k/2}\psi_{\zm[i]}}} \leq C \Hnorm[(\R^n \setminus \Bal{0}{R})]{k,2}{\psi}
    \]
    using Remark \ref{rem:intpsizmR}. Finally, for all $R>0$, we have showed that 
    \[  \lim_{i\to \infty} \vprod[\Sob(M)]{X_{\zma[i]}}{\psi_{\zm[i]}} = \int_{\R^n} \vprod[\xi]{\Dg[\xi]^{k/2}X}{\Dg[\xi]^{k/2}\psi} dy + \bigO\bpr{\Hnorm[(\R^n \setminus\Bal{0}{R})]{k,2}{\psi}}.
        \]
    Letting $R\to \infty$, since $\psi \in \hSob(\R^n)$, we conclude.
\end{proof}

\begin{corollary}\label{prop:Zedorth}
    Let $(\tau_i)_i$, $(\a_i)_i$, $(\pa_i)_i$ be sequences as in Proposition \ref{prop:Tconc}, then for $j,j' \in \{0,\ldots n\}$, we have
    \[  \lim_{i\to \infty}\vprod[\Sob(M)]{\Zed[i]{j}}{\Zed[i]{j'}} = \begin{cases}
        \Hnorm{k,2}{\Ze{j}}^2 & \text{if } j = j'\\
        0 & \text{otherwise}
    \end{cases}.
        \]
\end{corollary}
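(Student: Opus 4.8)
The plan is to reduce the $\Sob(M)$ inner product to its leading term, localize near the concentration point $z_i$ at scale $\mu_i$, and identify the limit with $\vprod[\hSob(\R^n)]{\Ze{j}}{\Ze{j'}}$, which is explicitly known since $\{\Ze{j}\}_{j=0}^n$ is an orthogonal basis of $\Ker[]$ in $\hSob(\R^n)$.

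First I would apply Proposition \ref{prop:Tconc}(3) with $X = \Ze{j}$ and with $X = \Ze{j'}$: this gives $\Snorm{k-1}{\Zed[i]{j}} \to 0$ and $\Snorm{k-1}{\Zed[i]{j'}} \to 0$, while $\Snorm{k}{\Zed[i]{j}} \to \Hnorm{k,2}{\Ze{j}}$ and $\Snorm{k}{\Zed[i]{j'}} \to \Hnorm{k,2}{\Ze{j'}}$, so both sequences are bounded in $\Sob(M)$. By Cauchy--Schwarz the contributions of the derivatives of order $\leq k-1$ to $\vprod[\Sob(M)]{\Zed[i]{j}}{\Zed[i]{j'}}$ are therefore $o(1)$, and
\[
    \vprod[\Sob(M)]{\Zed[i]{j}}{\Zed[i]{j'}} = \intM{\vprod{\Dg^{k/2}\Zed[i]{j}}{\Dg^{k/2}\Zed[i]{j'}}} + o(1) \qquad (i\to\infty).
\]
Next, for a fixed $R>0$ I would split this integral between $\Bal{z_i}{R\mu_i}$ and $M\setminus\Bal{z_i}{R\mu_i}$, exactly as in the proof of Proposition \ref{prop:Tconc}(4). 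On $\Bal{z_i}{R\mu_i}$, rescaling by $y\mapsto\exp_{z_i}(\mu_i y)$ pulls $g$ back to $\Tg_i(y) = \exp_{z_i}^*g(\mu_i y) \to \xi$ in $C^\infty_{loc}(\R^n)$; combined with $\mu_i^{\frac{n-2k}{2}}\Zed[i]{j}(\exp_{z_i}(\mu_i\cdot)) \to \Ze{j}$ and $\mu_i^{\frac{n-2k}{2}}\Zed[i]{j'}(\exp_{z_i}(\mu_i\cdot)) \to \Ze{j'}$ in $C^\infty_{loc}(\R^n)$ from Proposition \ref{prop:Tconc}(1), and the fact that $\Dg^{k/2}$ and the pairing $\vprod{\cdot}{\cdot}$ depend only on the metric, the change of variables gives
\[
    \intM[\Bal{z_i}{R\mu_i}]{\vprod{\Dg^{k/2}\Zed[i]{j}}{\Dg^{k/2}\Zed[i]{j'}}} \xto{i\to\infty} \int_{\Bal{0}{R}} \vprod[\xi]{\Dg[\xi]^{k/2}\Ze{j}}{\Dg[\xi]^{k/2}\Ze{j'}}\,dy.
\]

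For the far part, Cauchy--Schwarz bounds the integral over $M\setminus\Bal{z_i}{R\mu_i}$ by $\Lnorm[(M\setminus\Bal{z_i}{R\mu_i})]{2}{\Dg^{k/2}\Zed[i]{j}}\,\Lnorm[(M\setminus\Bal{z_i}{R\mu_i})]{2}{\Dg^{k/2}\Zed[i]{j'}}$, and I would show $\Lnorm[(M\setminus\Bal{z_i}{R\mu_i})]{2}{\Dg^{k/2}\Zed[i]{j}} = \Hnorm[(\R^n\setminus\Bal{0}{R})]{k,2}{\Ze{j}} + o(1)$ (and likewise for $j'$) by repeating the computation that proves Proposition \ref{prop:Tconc}(2): split once more at $\dg{z_i,\cdot} = 1/\sqai$, insert the pointwise estimates \eqref{eq:estdifXsqa}--\eqref{eq:estdifXexp} of Proposition \ref{prop:estdifX} together with the decay bounds \eqref{eq:controlB}--\eqref{eq:estdZ}; the exponential-decay region contributes $o(1)$ and the polynomial region converges to $\int_{\R^n\setminus\Bal{0}{R}}\abs{\Dg[\xi]^{k/2}\Ze{j}}^2\,dy$. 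Letting $i\to\infty$ and then $R\to\infty$, and using $\Ze{j},\Ze{j'}\in\hSob(\R^n)$ so that the tails $\Hnorm[(\R^n\setminus\Bal{0}{R})]{k,2}{\cdot}$ vanish, we obtain
\[
    \lim_{i\to\infty}\vprod[\Sob(M)]{\Zed[i]{j}}{\Zed[i]{j'}} = \int_{\R^n}\vprod[\xi]{\Dg[\xi]^{k/2}\Ze{j}}{\Dg[\xi]^{k/2}\Ze{j'}}\,dy = \vprod[\hSob(\R^n)]{\Ze{j}}{\Ze{j'}}.
\]
Since $\{\Ze{j}\}_{j=0}^n$ is an orthogonal basis of $\Ker[]$ in $\hSob(\R^n)$, this equals $\Hnorm{k,2}{\Ze{j}}^2$ when $j=j'$ and $0$ otherwise, which is the claim.

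The main obstacle is the uniform-in-$i$ control of the far part, that is, the identity $\Lnorm[(M\setminus\Bal{z_i}{R\mu_i})]{2}{\Dg^{k/2}\Zed[i]{j}} = \Hnorm[(\R^n\setminus\Bal{0}{R})]{k,2}{\Ze{j}} + o(1)$: this is where the two-regime structure of $\Zed[i]{j}$ — polynomial decay inside $\Bal{z_i}{1/\sqai}$, exponential decay outside, with the Jacobian $\a_i^{-n/2}$ in the far zone — must be tracked carefully, the diverging coefficient $\a_i$ being precisely what prevents a one-line argument. An alternative that avoids the direct rescaling is to observe that $\Zed[i]{j}$ differs from the honest concentrated function $\big(\Ze{j}\big)_{\zm[i]}$ of \eqref{def:Zzm} only by $(\Theta_{\a_i}(z_i,\cdot)-1)\big(\Ze{j}\big)_{\zm[i]}$ (plus, when $j\geq 1$, the term $\mu_i\dsur{}{z_j}\Theta_{\a_i}(z_i,\cdot)\,\Bub_{\zm[i]}$), both supported in $M\setminus\Bal{z_i}{1/\sqai}\subset M\setminus\Bal{z_i}{R\mu_i}$ for $i$ large since $\sqai\mu_i\to 0$, hence of $\Sob(M)$-norm $o(1)$; one may then apply Proposition \ref{prop:Tconc}(4) with $X=\Ze{j}$ and $\psi=\Ze{j'}$. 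This route reaches the same conclusion but relies on essentially the same tail estimate.
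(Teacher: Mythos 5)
Your proof is correct and follows essentially the same strategy as the paper's: reduce the $\Sob(M)$ inner product to its top-order term using $\Zed[i]{j}\to 0$ in $\Sob[k-1](M)$, localize near $z_i$, change variables, and conclude by the orthogonality of the $\Ze{j}$ in $\hSob(\R^n)$. The only (cosmetic) difference is that you split at scale $R\mu_i$ and take a double limit $i\to\infty$, then $R\to\infty$, as the paper does in its proof of \eqref{eq:iii}, whereas the paper's proof of this corollary splits directly at scale $1/\sqrt{\a_i}$ and reuses the computations \eqref{tmp:Tconcxx} and \eqref{tmp:decompX} in one step; both reductions rest on the same pointwise estimates of Proposition \ref{prop:estdifX}.
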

\begin{proof}
    By \eqref{eq:iv}, we have $\Zed[i]{j} \to 0$ and $\Zed[i]{j'} \to 0$ in $\Sob[k-1](M)$, so that 
    \[  \vprod[\Sob(M)]{\Zed[i]{j}}{\Zed[i]{j'}} = \intM{\vprod{\Dg^{k/2}\Zed[i]{j}}{\Dg^{k/2}\Zed[i]{j'}}} + o(1).
        \]
    Now using \eqref{tmp:Tconcxx} with $\deuk{k-k} = 2$, we also obtain 
    \[  \intM{\vprod{\Dg^{k/2}\Zed[i]{j}}{\Dg^{k/2}\Zed[i]{j'}}} = \intM[\Bal{z_i}{1/\sqai}]{\vprod{\Dg^{k/2}\Zed[i]{j}}{\Dg^{k/2}\Zed[i]{j'}}} + o(1).
        \]
    It follows from the same computation as in \eqref{tmp:decompX} that
    \begin{multline*}
        \intM[\Bal{z_i}{1/\sqai}]{\vprod{\Dg^{k/2}\Zed[i]{j}}{\Dg^{k/2}\Zed[i]{j'}}}\\ = \int_{\Bal{0}{\frac{1}{\sqai\mu_i}}} \vprod[\xi]{\Dg[\xi]^{k/2}\Ze{j}}{\Dg[\xi]^{k/2}\Ze{j'}} dy + o(1)
    \end{multline*}
    as $i\to \infty$. Since $\{\Ze{j}\such 0\leq j\leq n\}$, form an orthogonal basis of $\Ker[]$, we thus get
    \[  \intM[\Bal{z_i}{1/\sqai}]{\vprod{\Dg^{k/2}\Zed[i]{j}}{\Dg^{k/2}\Zed[i]{j'}}} = \delta_{j\,j'}\Hnorm{k,2}{\Ze{j}}^2 + o(1)
        \]
    as $i\to \infty$, which gives the conclusion.
\end{proof}

The following result quantifies how far $\TBub$ and $\Zed{j}$ are, respectively, from solving \eqref{eq:critbub} and \eqref{eq:critlin}.
\begin{proposition}\label{prop:estRa}
    Write for $x\in M$,
    \begin{align*}
        \err{B}(x) &:= (\Dg+\a)^k \TBub(x) - \TBub^{\deu-1}(x)\\
        \err{\Ze{j}}(x) &:= (\Dg+\a)^k \Zed{j} - (\deu-1) \TBub^{\deu-1}(x) \Zed{j}(x) & &\qquad j=0,\ldots n.
    \end{align*}
    There exists $C>0$ such that, for all $\a\geq 1$ such that $1/\sqa < \inj/2$, all $\tau\leq 1$, and $\pa = (\zm) \in \param$, we have the following:
    \begin{itemize}
        \item For all $x\in M$ with $\sqa\dg{z,x}\leq 1$,
            \[  \abs{\err{X}(x)} \leq C\a(\mu+\dg{z,x})^{2-2k}\Bub_{\zm}(x).
                \]
        \item For all $x\in M$ with $\sqa\dg{z,x} \geq 1$,
            \[  \abs{\err{X}(x)}\leq C\a^{k} \mu^{\frac{n-2k}{2}}\dg{z,x}^{2k-n} e^{-\sqa\dg{z,x}/2}.
                \]
    \end{itemize}
    Moreover, 
    \begin{equation}\label{eq:RainH-k}
        \Snorm{-k}{\err{X}} \leq C (\sqa \mu)^{1/2}. 
    \end{equation} 
\end{proposition}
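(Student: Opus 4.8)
The plan is to expand $(\Dg+\a)^k=\sum_{l=0}^{k}\binom{k}{l}\a^{k-l}\Dg^l$ and estimate each summand, treating separately the regions $\sqa\dg{z,x}\le 1$ and $\sqa\dg{z,x}\ge 1$. On the first region $\Theta_\a\equiv 1$, so $\TBub=\Bub_{\zm}$, and since $\partial_{z_j}\Theta_\a$ vanishes wherever $\sqa\dg{z,x}<1$, also $\Zed{j}=\big(\Ze{j}\big)_{\zm}$. Writing $X$ for $\Bub$ or $\Ze{j}$ as in Proposition \ref{prop:estdifX}, I would first deduce from \eqref{eq:estdifXsqa} together with \eqref{eq:controlB}--\eqref{eq:estdZ} that $\abs{\nabla_g^{m}X_{\zma}(x)}\le C(\mu+\dg{z,x})^{-m}\Bub_{\zm}(x)$ for $0\le m\le 2k$, hence $\abs{\Dg^l X_{\zma}(x)}\le C(\mu+\dg{z,x})^{-2l}\Bub_{\zm}(x)$ for $0\le l\le k$. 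On this region $\a(\mu+\dg{z,x})^2\le 2\a\mu^2+2\a\dg{z,x}^2\le 2\tau+2\le 4$, so for $l\le k-1$ one gets $\a^{k-l}\abs{\Dg^l X_{\zma}(x)}\le C\a(\mu+\dg{z,x})^{2-2k}\Bub_{\zm}(x)$. For the top-order term $l=k$, arguing as in the proof of Proposition \ref{prop:estdifX} (normal coordinates, \eqref{eq:nabexp}, and $g-\xi=\bigO(\abs{y}^2)$ when contracting to form $\Dg^k$), one obtains $\Dg^k X_{\zma}(\exp_z(y))=\mu^{-\frac{n+2k}{2}}\big(\Dg[\xi]^k X\big)(y/\mu)+\bigO\!\big((\mu+\abs{y})^{2-2k}\Bub_{\zm}(\exp_z(y))\big)$; since $\Bub$ solves \eqref{eq:bub} and $\Ze{j}$ solves \eqref{eq:linbub}, a check of the scaling exponents ($\deu-1=\frac{n+2k}{n-2k}$, etc.) shows that $\mu^{-\frac{n+2k}{2}}\big(\Dg[\xi]^k X\big)(y/\mu)$ equals precisely the non-linear term subtracted in $\err{B}$ (resp.\ in $\err{\Ze{j}}$) at the point $\exp_z(y)$. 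Summing the $k+1$ contributions yields the first bullet, $\abs{\err{X}(x)}\le C\a(\mu+\dg{z,x})^{2-2k}\Bub_{\zm}(x)$.

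On the region $\sqa\dg{z,x}\ge 1$, estimate \eqref{eq:estdifXexp} gives $\abs{\nabla_g^{m}X_{\zma}(x)}\le C\a^{m/2}\mu^{\frac{n-2k}{2}}\dg{z,x}^{2k-n}e^{-\sqa\dg{z,x}/2}$ for $0\le m\le 2k$, hence $\a^{k-l}\abs{\Dg^l X_{\zma}(x)}\le C\a^{k}\mu^{\frac{n-2k}{2}}\dg{z,x}^{2k-n}e^{-\sqa\dg{z,x}/2}$ for every $l=0,\dots,k$. The non-linear terms are of lower order: on this region $\TBub(x)\le C e^{-\sqa\dg{z,x}/2}\Bub_{\zm}(x)\le C\mu^{\frac{n-2k}{2}}\dg{z,x}^{2k-n}e^{-\sqa\dg{z,x}/2}$, by the exponential decay of $\Theta_\a$ and $\Bub_{\zm}(x)\le C\mu^{\frac{n-2k}{2}}\dg{z,x}^{2k-n}$, while \eqref{eq:estdifXexp} with $m=0$ controls $\Zed{j}$ and $\TBub^{\deu-2}(x)\le C\a^k$ since $\dg{z,x}\ge 1/\sqa$; in both cases the non-linear term is again bounded by $C\a^{k}\mu^{\frac{n-2k}{2}}\dg{z,x}^{2k-n}e^{-\sqa\dg{z,x}/2}$. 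This proves the second bullet.

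For \eqref{eq:RainH-k} I would use the continuous embedding $L^{\frac{2n}{n+2k}}(M)\emb\Sob[-k](M)$ (already invoked in the proof of Proposition \ref{prop:Tconc}), so it suffices to estimate $\Lnorm[(M)]{\frac{2n}{n+2k}}{\err{X}}$ from the pointwise bounds above, splitting $M$ into the same two regions. On $\Bal{z}{1/\sqa}$ the change of variables $y=\tfrac{1}{\mu}\exp_z^{-1}(x)$ and the inequality $(1+\pk\abs{y}^2)^{-\frac{n-2k}{2}\cdot\frac{2n}{n+2k}}\le C(1+\abs{y})^{-(n-2k)\frac{2n}{n+2k}}$ reduce the corresponding integral to $C\a^{\frac{2n}{n+2k}}\mu^{\frac{4n}{n+2k}}\int_{\Bal{0}{1/(\sqa\mu)}}(1+\abs{y})^{(2-n)\frac{2n}{n+2k}}\,dy$, while on $\Bal{z}{\varrho}\setminus\Bal{z}{1/\sqa}$ the change of variables $y=\sqa\exp_z^{-1}(x)$ reduces it to $C(\sqa\mu)^{\frac{n(n-2k)}{n+2k}}\int_{\abs{y}\ge 1}\abs{y}^{(2k-n)\frac{2n}{n+2k}}e^{-\frac{n}{n+2k}\abs{y}}\,dy$. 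The second integral converges; the first equals $\bigO(1)$ if $n>2k+4$, carries a factor $\abs{\log(\sqa\mu)}$ if $n=2k+4$, and equals $\bigO\!\big((\sqa\mu)^{-\frac{n(2k+4-n)}{n+2k}}\big)$ if $2k<n<2k+4$. Taking the $\frac{n+2k}{2n}$-th power in each case and using $\a\mu^2=(\sqa\mu)^2<\tau\le 1$, one finds $\Lnorm[(M)]{\frac{2n}{n+2k}}{\err{X}}\le C(\sqa\mu)^{\min\{2,\,(n-2k)/2\}}\le C(\sqa\mu)^{1/2}$, the exponent $1/2$ being attained only in the limiting case $n=2k+1$.

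The delicate point is the top-order term in the first region: one has to verify that the iterated Riemannian Laplacian $\Dg^k$ applied to the (cut-off-free) bubble reproduces the Euclidean non-linearity with an error no larger than $(\mu+\dg{z,x})^{2-2k}\Bub_{\zm}$, i.e.\ that the curvature corrections $g-\xi=\bigO(\abs{y}^2)$ and their derivatives do not worsen this order; the remaining estimates are then elementary but require care in tracking the powers of $\a$ and $\mu$. For \eqref{eq:RainH-k} the only subtlety is bookkeeping: the pointwise bounds are borderline non-integrable at infinity in low dimensions, and it is precisely the truncation at radius $1/(\sqa\mu)$ produced by the region split that yields the final power $(\sqa\mu)^{1/2}$ --- the same mechanism that dictates the shape of $\rad$ in \eqref{def:rad}.
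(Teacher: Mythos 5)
Your proposal is correct and follows essentially the same route as the paper: split according to $\sqa\dg{z,x}\leq 1$ versus $\sqa\dg{z,x}\geq 1$, use the expansions of Proposition \ref{prop:estdifX} together with \eqref{eq:controlB}--\eqref{eq:estdZ} and the fact that $\a(\mu+\dg{z,x})^2\leq C$ in the inner region, identify the leading term $\mu^{-\frac{n+2k}{2}}(\Dg[\xi]^kX)(\cdot/\mu)$ with the subtracted nonlinearity via \eqref{eq:bub} and \eqref{eq:linbub}, and bound the outer region by the exponential estimates \eqref{eq:estdifXexp}. For \eqref{eq:RainH-k} your change-of-variables computation through the embedding $L^{\frac{2n}{n+2k}}(M)\emb \Sob[-k](M)$ is precisely the content of Lemma \ref{prop:estintRX} (with $\sigma=1$), which the paper simply invokes, so the argument matches; the only cosmetic imprecision is the borderline case $n=2k+4$, where a logarithmic factor accompanies $(\sqa\mu)^2$ but is harmlessly absorbed into $(\sqa\mu)^{1/2}$.
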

\begin{proof}
    Let $X$ be either $B$ or $\Ze{j}$ for some $j\in\{0,\ldots n \}$, and $X_{\zma}$ be respectively either $X_{\zma} = \TBub$ or $X_{\zma} = \Zed{j}$. Assume first that $\sqa\dg{z,x} \leq 1$, then with \eqref{eq:estdifXsqa}, we have for all $y\in \Bal{0}{1/\sqa}\sub \R^n$,
    \begin{multline}\label{tmp:Rax}
        (\Dg+\a)^k X_{\zma}(\exp_z(y)) = \mu^{-\frac{n+2k}{2}} \big(\Dg[\xi]^k X\big)\big(\tfrac{y}{\mu}\big) + \bigO\bpr{(\mu+\abs{y})^{2-2k}\Bub_{\zm}(\exp_z(y))}\\
            +\sum_{l=0}^{k} \tbinom{k}{l} \a^{k-l} \Big[\bigO\bpr{\mu^{-\frac{n-2k+4l}{2}} \abs{\nabla_\xi^{2l}X}\big(\tfrac{y}{\mu}\big)} + \bigO\bpr{(\mu+\abs{y})^{2-2l}\Bub_{\zm}(\exp_z(y))}\Big].
    \end{multline}
    Now observe that
    \begin{equation*}
    \begin{aligned}
        \mu^{-\frac{n+2k}{2}}\big(\Dg[\xi]^{k} \Bub\big)\big(\tfrac{y}{\mu}\big) &= \bpr{\mu^{-\frac{n-2k}{2}} \Bub\big(\tfrac{y}{\mu}\big)}^{\deu-1} = \TBub^{\deu-1}(\exp_z(y))\\
        \mu^{-\frac{n+2k}{2}}\big(\Dg[\xi]^{k} \Ze{j}\big)\big(\tfrac{y}{\mu}\big) &= \mu^{-\frac{n+2k}{2}} \Bub^{\deu-2}\big(\tfrac{y}{\mu}\big)\Ze{j}\big(\tfrac{y}{\mu}\big) = \TBub^{\deu-2}(\exp_z(y))\Zed{j}(\exp_z(y))\\
    \end{aligned}
    \end{equation*}
    for all $y\in \Bal{0}{1/\sqa}$, by definition of $\TBub,\Zed{j}$. With \eqref{eq:controlB} and \eqref{eq:estdZ}, and using that $\a(\mu+\abs{y})^2 \leq \tau+1$ for all $y\in\Bal{0}{1/\sqa}$, \eqref{tmp:Rax} becomes
    \[  \abs{\err{X}(\exp_z(y))} \leq C \a(\mu+\abs{y})^{2-2k} \Bub_{\zm}(\exp_z(y)).
        \]
    In the case $\sqa\dg{z,x} \geq 1$, we use \eqref{eq:estdifXexp}, and we obtain
    \begin{align*}  
        \abs{(\Dg+\a)^k X_{\zma}(x)} &\leq C \sum_{l=0}^{k} \a^{k-l}\a^l \mu^{\frac{n-2k}{2}} \dg{z,x}^{2k-n}e^{-\sqa\dg{z,x}/2}\\
            &\leq C\a^k \mu^{\frac{n-2k}{2}}\dg{z,x}^{2k-n}e^{-\sqa\dg{z,x}/2}.
    \end{align*}
    It remains only to prove \eqref{eq:RainH-k}, this follows from Lemma \ref{prop:estintRX} below with $\sigma=1$.
\end{proof}

\begin{corollary}~
    \begin{enumerate}
        \item There exists $C>0$ such that, for all $\a\geq 1$ such that $1/\sqa < \inj/2$, all $\tau\leq 1$, and $\pa = (\zm)\in \param$,
            \begin{equation}\label{eq:bdedDgaZ}
                \Snorm{-k}{(\Dg+\a)^k \Zed{j}} \leq C \qquad j=0,\ldots n;
            \end{equation}
        \item If $(\a_i)_i$, $(\tau_i)_i$, $(\pa_i)_i$ are sequences taken as in Proposition \ref{prop:Tconc}, then for $j,j' \in \{0,\ldots n\}$,
            \begin{equation}\label{eq:DgaZZ}
                \dprod{(\Dg+\a_i)^k\Zed[i]{j}}{\Zed[i]{j'}}_{\Sob[-k],\Sob} \xto{i\to \infty} \delta_{j\,j'}\Hnorm{k,2}{\Ze{j}}^2.
            \end{equation}
    \end{enumerate}
\end{corollary}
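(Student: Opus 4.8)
The plan is to reduce both statements to the elementary identity
\[
    (\Dg+\a)^k \Zed{j} = \err{\Ze{j}} + (\deu-1)\,\TBub^{\deu-2}\Zed{j},
\]
which is precisely the definition of $\err{\Ze{j}}$ from Proposition~\ref{prop:estRa} (with the power $\deu-2$ that appears in the linearized equation \eqref{eq:critlin}), and then to feed in the estimates already established for $\err{\Ze{j}}$ and for the rescaled functions. Once this identity is in place, \eqref{eq:bdedDgaZ} and \eqref{eq:DgaZZ} follow from short computations, with no genuinely new analysis needed.

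First, for \eqref{eq:bdedDgaZ}, I would estimate the two pieces separately. The term $\err{\Ze{j}}$ is controlled by \eqref{eq:RainH-k}, which gives $\Snorm{-k}{\err{\Ze{j}}}\leq C(\sqa\mu)^{1/2}\leq C$ since $\pa\in\param$ with $\tau\leq 1$ forces $(\sqa\mu)^2 = \a\mu^2 < 1$. For $(\deu-1)\TBub^{\deu-2}\Zed{j}$ I would use the continuous embedding $L^{\frac{2n}{n+2k}}(M)\emb\Sob[-k](M)$ together with the generalized Hölder inequality, putting the exponent $\deu$ on each of the $\deu-1$ factors, which is legitimate since $\frac{\deu-2}{\deu}+\frac{1}{\deu} = \frac{\deu-1}{\deu} = \frac{n+2k}{2n}$; this yields $\Snorm{-k}{\TBub^{\deu-2}\Zed{j}}\leq C\Lnorm[(M)]{\deu}{\TBub}^{\deu-2}\Lnorm[(M)]{\deu}{\Zed{j}}$. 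It then only remains to bound $\Lnorm[(M)]{\deu}{\TBub}$ and $\Lnorm[(M)]{\deu}{\Zed{j}}$ uniformly in $\a,\tau,\pa$, which is the very computation performed in the proof of Proposition~\ref{prop:Tconc}(2): split $M$ into $\Bal{z}{1/\sqa}$ and $\Bal{z}{\varrho}\setminus\Bal{z}{1/\sqa}$, use the pointwise bounds \eqref{eq:estdifXsqa}--\eqref{eq:estdifXexp} (with $l=0$) together with \eqref{eq:controlB}--\eqref{eq:estdZ}; the inner region contributes a bounded multiple of $\int_{\R^n}(1+\abs{y})^{(2k-n)\deu}\,dy<\infty$, and the outer region a remainder of size $O(\mu^n)$, which is bounded since $\mu<1$.

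For \eqref{eq:DgaZZ}, I would pair the identity with $\Zed[i]{j'}$, obtaining
\[
    \dprod{(\Dg+\a_i)^k\Zed[i]{j}}{\Zed[i]{j'}}_{\Sob[-k],\Sob} = \dprod{\err[i]{\Ze{j}}}{\Zed[i]{j'}}_{\Sob[-k],\Sob} + (\deu-1)\intM{\TBub[i]^{\deu-2}\Zed[i]{j}\Zed[i]{j'}}.
\]
The first summand is $\leq \Snorm{-k}{\err[i]{\Ze{j}}}\,\Snorm{k}{\Zed[i]{j'}}\leq C(\sqai\mu_i)^{1/2}$ by \eqref{eq:RainH-k} and the boundedness of $\Snorm{k}{\Zed[i]{j'}}$ from Proposition~\ref{prop:Tconc}(3), hence it tends to $0$ because $\a_i\mu_i^2<\tau_i\to 0$. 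For the second summand I would fix $R>0$ and split the integral over $\Bal{z_i}{R\mu_i}$ and its complement. On the small ball the integrand is scale invariant (since $\tfrac{n-2k}{2}\deu = n$), so the change of variables $x=\exp_{z_i}(\mu_i y)$, together with the $C^\infty_{loc}$-convergences $\mu_i^{\frac{n-2k}{2}}X_{\zma[i]}(\exp_{z_i}(\mu_i\,\cdot))\to X$ from Proposition~\ref{prop:Tconc}(1) and $\exp_{z_i}^*g(\mu_i\,\cdot)\to\xi$, makes it converge to $\int_{\Bal{0}{R}}\Bub^{\deu-2}\Ze{j}\Ze{j'}\,dy$; over the complement I would bound by Hölder (exponent $\deu$ on each factor) and use that $\intM[M\setminus\Bal{z_i}{R\mu_i}]{\abs{X_{\zma[i]}}^\deu}\to\int_{\R^n\setminus\Bal{0}{R}}\abs{X}^\deu$ (a consequence of parts (1)--(2) of Proposition~\ref{prop:Tconc}), which is $o(1)$ as $R\to\infty$. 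Letting $R\to\infty$ I would then get $\intM{\TBub[i]^{\deu-2}\Zed[i]{j}\Zed[i]{j'}}\to\int_{\R^n}\Bub^{\deu-2}\Ze{j}\Ze{j'}\,dy$, and I would conclude using that $\Ze{j},\Ze{j'}$ solve \eqref{eq:linbub}:
\[
    (\deu-1)\int_{\R^n}\Bub^{\deu-2}\Ze{j}\Ze{j'}\,dy = \int_{\R^n}\vprod[\xi]{\Dg[\xi]^{k/2}\Ze{j}}{\Dg[\xi]^{k/2}\Ze{j'}}\,dy = \vprod[\hSob(\R^n)]{\Ze{j}}{\Ze{j'}} = \delta_{j\,j'}\Hnorm{k,2}{\Ze{j}}^2,
\]
the last equality because $\{\Ze{j}\such 0\leq j\leq n\}$ is an orthogonal basis of $\Ker[]$ in $\hSob(\R^n)$.

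I do not expect a real obstacle: the statement is genuinely a corollary of Propositions~\ref{prop:estRa} and \ref{prop:Tconc}. The single point needing care is the uniform-in-$i$ control of the $L^\deu$-tails $\intM[M\setminus\Bal{z_i}{R\mu_i}]{\abs{X_{\zma[i]}}^\deu}$ and the accompanying scaling bookkeeping in \eqref{eq:DgaZZ}, but both are direct refinements of computations already done for Proposition~\ref{prop:Tconc}(2). An alternative route to \eqref{eq:DgaZZ} would be to expand $(\Dg+\a_i)^k$ in powers of $\a_i$, use the limit $\intM{\vprod{\Dg^{k/2}\Zed[i]{j}}{\Dg^{k/2}\Zed[i]{j'}}}\to\delta_{j\,j'}\Hnorm{k,2}{\Ze{j}}^2$ already obtained inside the proof of Corollary~\ref{prop:Zedorth}, and verify that the lower-order contributions $\a_i^{k-l}\intM{\vprod{\Dg^{l/2}\Zed[i]{j}}{\Dg^{l/2}\Zed[i]{j'}}}$ with $l<k$ vanish; this is conceptually tidier but requires separate weighted bounds on $\a_i^{k-l}\Lnorm[(M)]{2}{\Dg^{l/2}\Zed[i]{j}}^2$.
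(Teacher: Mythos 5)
Your proposal is correct and follows essentially the same route as the paper: write $(\Dg+\a)^k \Zed{j}$ as $\err{\Ze{j}}+(\deu-1)\TBub^{\deu-2}\Zed{j}$, control the error term via \eqref{eq:RainH-k}, bound the nonlinear term through $L^{\frac{2n}{n+2k}}(M)\emb\Sob[-k](M)$ and Hölder, and for \eqref{eq:DgaZZ} pass to the limit in $\intM{\TBub[i]^{\deu-2}\Zed[i]{j}\Zed[i]{j'}}$ by rescaling and conclude from \eqref{eq:linbub} and the orthogonality of the $\Ze{j}$. The only differences are cosmetic: you split the integral at radius $R\mu_i$ and let $R\to\infty$ where the paper splits at $1/\sqai$, and you spell out the uniform-in-$(\a,\tau,\pa)$ $L^\deu$ bounds that the paper cites via \eqref{eq:ii}, which is if anything slightly more careful since the uniform claim in part (1) is not purely sequential.
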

\begin{proof}
    The first claim is a direct consequence of Proposition \ref{prop:estRa} and the fact that $\Zed{j},\TBub$ are uniformly bounded in $L^\deu(M)$ by \eqref{eq:ii}. Indeed, using the continuous embedding $L^{\frac{2n}{n+2k}}(M) \emb \Sob[-k](M)$ and Hölder's inequality, we have
    \begin{align*}  
        \Snorm{-k}{\TBub^{\deu-2}\Zed{j}} &\leq C \Lnorm[(M)]{\frac{2n}{n+2k}}{\TBub^{\deu-2}\Zed{j}} \leq C \Lnorm[(M)]{\deu}{\Zed{j}} \Lnorm[(M)]{\deu}{\TBub}^{\frac{4k}{n-2k}}\\
            &\leq C \Hnorm{k,2}{\Ze{j}}\Hnorm{k,2}{\Bub}^{\frac{4k}{n-2k}} \leq C,
    \end{align*}
    by \eqref{eq:ii}. Moreover, with \eqref{eq:RainH-k}, we have 
    \[  \Snorm{-k}{(\Dg+\a)^k \Zed{j}} \leq \Snorm{-k}{\TBub^{\deu-2}\Zed{j}} +o(1) \leq C.
        \]
    For the second claim, test $\err[i]{\Ze{j}}\in \Sob[-k](M)$ against $\Zed[i]{j'} \in \Sob(M)$, we have 
    \[
        \abs{\dprod{(\Dg+\a_i)^k \Zed[i]{j}-(\deu-1)\TBub[i]^{\deu-2}\Zed[i]{j}}{\Zed[i]{j'}}_{\Sob[-k],\Sob}} = o(1)
    \] 
    as $i\to \infty$ by \eqref{eq:RainH-k}. Therefore,
    \begin{multline*}
        \dprod{(\Dg+\a_i)^k\Zed[i]{j}}{\Zed[i]{j'}}_{\Sob[-k],\Sob} = \intM{(\deu-1)\TBub[i]^{\deu-2}\Zed[i]{j}\Zed[i]{j'}} + o(1)\\
            = (\deu-1) \intM[\Bal{z_i}{1/\sqai}]{\TBub[i]^{\deu-2}\Zed[i]{j}\Zed[i]{j'}} \\
            + (\deu-1) \intM[\Bal{z_i}{\varrho}\setminus\Bal{z_i}{1/\sqai}]{\TBub[i]^{\deu-2}\Zed[i]{j}\Zed[i]{j'}} + o(1). 
    \end{multline*}
    Now as in \eqref{tmp:decompX}, 
    \begin{align*}
        \intM[\Bal{z_i}{1/\sqai}]{\TBub[i]^{\deu-2}\Zed[i]{j}\Zed[i]{j'}} &= \int_{\Bal{0}{\frac{1}{\sqai\mu_i}}} \Bub^{\deu-2}\Ze{j}\Ze{j'}\big(1+\bigO(\mu_i^2 \abs{y})\big) dy\\
            &=\int_{\R^n} \Bub^{\deu-2}\Ze{j}\Ze{j'} dy + o(1)
    \end{align*}
    as $i\to \infty$. We also have 
    \begin{multline*}
        \abs{\intM[\Bal{z_i}{\varrho}\setminus\Bal{z_i}{1/\sqai}]{\TBub[i]^{\deu-2}\Zed[i]{j}\Zed[i]{j'}}}\\ \leq \bpr{\intM[\Bal{z_i}{\varrho}\setminus\Bal{z_i}{1/\sqai}]{\TBub[i]^\deu}}^{\frac{2k}{n}} \Lnorm[(M)]{\deu}{\Zed[i]{j}}\Lnorm[(M)]{\deu}{\Zed[i]{j'}} = o(1)
    \end{multline*}
    by \eqref{tmp:Tconcxx}. Thus, we obtained
    \begin{align*}
        \lim_{i\to \infty}\dprod{(\Dg+\a_i)^k\Zed[i]{j}}{\Zed[i]{j'}}_{\Sob[-k],\Sob} &= \int_{\R^n}(\deu-1)\Bub^{\deu-2}\Ze{j}\Ze{j'}dy \\
            &= \delta_{j\,j'} \Hnorm{k,2}{\Ze{j}}^2,
    \end{align*}
    since $\Ze{j}$ solves \eqref{eq:linbub} in $\R^n$.
\end{proof}

\subsection{Finding the best bubble}
The purpose of this subsection is to show that when $u\in\Sob(M)$ is close to the set of all possible bubbles $\{\TBub \such \pa \in \param\}$, there is some choice of parameters $\bpa$ which is optimal, and the corresponding bubble satisfies $u -\TBuba[\bpa] \in \Ker[\a,\bpa]^\perp$. The following are standard considerations, we refer to \cite{BahCor88} for analogous results. 
\begin{lemma}\label{prop:uniqbub}
    Let $(\a_i)_i$, $(\tau_i)_i$ be sequences of positive numbers such that $\a_i \to \infty$, $\tau_i\to 0$ as $i\to \infty$, and let $\pa_i = (\zm[i]), \Tpa_i = (\Tz_i,\Tmu_i) \in \param[i]$, for all $i$, be two sequences such that 
    \[  \Snorm{k}{\TBub[i]-\TB{\a_i,\Tpa_i}} \xto{i\to \infty} 0.
        \]  
    Then, we have 
    \[  \begin{aligned}
        &\frac{\mu_i}{\Tmu_i} \to 1\\
        &\frac{\dg{z_i,\Tz_i}^2}{\mu_i\Tmu_i}\to 0
    \end{aligned} \qquad \text{as } i\to \infty.
        \]
\end{lemma}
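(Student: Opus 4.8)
The plan is to exploit the concentration-compactness description of the bubbles provided by Proposition \ref{prop:Tconc}. First I would argue by contradiction: suppose that, along a subsequence, either $\mu_i/\Tmu_i$ stays away from $1$ or $\dg{z_i,\Tz_i}^2/(\mu_i\Tmu_i)$ stays away from $0$. By the symmetry of the roles of $\pa_i$ and $\Tpa_i$ we may pass to a further subsequence so that $\Tmu_i/\mu_i \to \lambda \in [0,+\infty]$, and by compactness of $M$ we may assume $z_i \to z_\infty$ and $\Tz_i \to \Tz_\infty$. Rescaling everything at scale $\mu_i$ around $z_i$, that is, pulling back by $x = \exp_{z_i}(\mu_i\,\cdot)$, Proposition \ref{prop:Tconc}(1) gives $\mu_i^{(n-2k)/2}\TBub[i](\exp_{z_i}(\mu_i\cdot)) \to \Bub$ in $C^\infty_{loc}(\R^n)$, while the pullback of $\TB{\a_i,\Tpa_i}$ at the same scale is, up to the harmless cutoff $\Theta$ which tends to $1$ by Lemma \ref{prop:Thetto1}, a rescaled-translated bubble $\bigl(\tfrac{\Tmu_i}{\mu_i}\bigr)^{-(n-2k)/2}\Bub\!\bigl(\tfrac{\mu_i\cdot - \exp_{z_i}^{-1}(\Tz_i)}{\Tmu_i}\bigr)$ in the chart, so it converges (locally) precisely when the scale ratio $\Tmu_i/\mu_i$ converges to a finite positive limit and the rescaled displacement $\exp_{z_i}^{-1}(\Tz_i)/\mu_i$ converges in $\R^n$. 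Since $\Snorm{k}{\TBub[i]-\TB{\a_i,\Tpa_i}} \to 0$, the pullbacks must have the same $\hSob(\R^n)$ limit.

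The key step is then to identify that limit. The hypothesis forces the pullback of $\TB{\a_i,\Tpa_i}$ at scale $\mu_i$ to converge strongly in $\hSob_{loc}$ to $\Bub$; by the classification \eqref{eq:transfo} of positive solutions of \eqref{eq:bub} (equivalently, by uniqueness of the profile in a bubble decomposition), this is only possible if $\Tmu_i/\mu_i \to 1$ and $\exp_{z_i}^{-1}(\Tz_i)/\mu_i \to 0$. I would make this rigorous as follows. If $\Tmu_i/\mu_i \to 0$, then at scale $\mu_i$ the function $\TB{\a_i,\Tpa_i}$ either escapes to infinity in the chart (if $\dg{z_i,\Tz_i}/\mu_i \to \infty$) and converges to $0$ locally, contradicting convergence to $\Bub \not\equiv 0$; or it concentrates at a point, so its pullback converges to $0$ weakly in $\hSob(\R^n)$ but with $\hSob$-norm bounded below, again contradicting strong convergence to $\Bub$. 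If $\Tmu_i/\mu_i \to \infty$, rescale instead at scale $\Tmu_i$ around $\Tz_i$ and run the symmetric argument. If $\Tmu_i/\mu_i \to \lambda \in (0,\infty)$ but $\exp_{z_i}^{-1}(\Tz_i)/\mu_i \to a \in \R^n \cup \{\infty\}$, then the common limit would be both $\Bub(y)$ and $\lambda^{-(n-2k)/2}\Bub((y-a)/\lambda)$ (or $0$ if $|a|=\infty$), and comparing behaviors at $y=0$ and at infinity — e.g. via the value at the origin and the decay rate, or simply because the transformations in \eqref{eq:transfo} are all distinct — forces $\lambda = 1$ and $a = 0$. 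Then $\mu_i/\Tmu_i \to 1$ is immediate, and $\exp_{z_i}^{-1}(\Tz_i)/\mu_i \to 0$ together with $\mu_i/\Tmu_i\to 1$ and the fact that $\dg{z_i,\Tz_i}$ is comparable to $|\exp_{z_i}^{-1}(\Tz_i)|$ (they agree to first order since $\dg{z_i,\Tz_i} \to 0$) gives $\dg{z_i,\Tz_i}^2/(\mu_i\Tmu_i) \to 0$.

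I expect the main obstacle to be handling the case $\Tmu_i/\mu_i \to \infty$ (or its mirror image): when the second bubble is much wider than the first, the pullback at scale $\mu_i$ of $\TB{\a_i,\Tpa_i}$ is nearly constant on compact sets of the chart, and one must extract information not from $C^\infty_{loc}$ convergence but from the global $\hSob(\R^n)$-norm, i.e. one genuinely has to use that the $\Sob(M)$-distance goes to $0$, not merely local convergence. The clean way to do this is to test the identity $\Snorm{k}{\TBub[i]-\TB{\a_i,\Tpa_i}}\to 0$ against $(\Ze{0})_{\zm[i]}$, the concentrated dilation generator: by Proposition \ref{prop:Tconc}(4) (the formula \eqref{eq:iii}, applied with $\psi = \Ze{0}$ and using also \eqref{eq:controlB}–\eqref{eq:estdZ} to control the tails), $\vprod[\Sob(M)]{\TBub[i]}{(\Ze{0})_{\zm[i]}} \to \vprod[\hSob(\R^n)]{\Bub}{\Ze{0}} \neq 0$, whereas if the scales are badly mismatched the corresponding pairing with $\TB{\a_i,\Tpa_i}$ tends to $0$ (a rescaled version of the same limit computation, since $\Ze{0}$ concentrated at scale $\mu_i$ is orthogonal in the limit to a bubble living at a vastly different scale), contradicting $\Snorm{k}{\TBub[i]-\TB{\a_i,\Tpa_i}}\to 0$. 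This pins down $\mu_i/\Tmu_i \to 1$; testing similarly against the translation generators $(\Ze{j})_{\zm[i]}$, $j=1,\dots,n$, and using the already-established comparability of scales, pins down $\dg{z_i,\Tz_i}^2/(\mu_i\Tmu_i)\to 0$. The remaining steps — verifying the tail estimates needed to apply \eqref{eq:iii}, and translating back from $\exp_{z_i}^{-1}(\Tz_i)$ to $\dg{z_i,\Tz_i}$ — are routine given the estimates of Proposition \ref{prop:estdifX}.
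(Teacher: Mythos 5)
Your overall skeleton (rescale at scale $\mu_i$ around $z_i$, use Proposition \ref{prop:Tconc}(1) and the vanishing of the $H^k$-distance to force the pulled-back bubbles to share the limit $\Bub$, then rule out degenerate scale ratios and displacements case by case) is viable and is close in spirit to the paper's argument. But the step you single out as ``the clean way'' to handle the mismatched-scale case, and which you also rely on to pin down the centers, is based on a false identity: you claim $\vprod[\hSob(\R^n)]{\Bub}{\Ze{0}}\neq 0$, whereas in fact $\vprod[\hSob(\R^n)]{\Bub}{\Ze{j}}=0$ for every $j=0,\ldots,n$. Indeed $\Ze{0},\ldots,\Ze{n}$ span $\Ker[]$, and the remark following \eqref{def:ker} shows precisely that $\Bub\in\Ker[]^\perp$ (equivalently, $\int_{\R^n}\Bub^{\deu-1}\Ze{j}\,dy=0$, which is just differentiation of the scale- and translation-invariant quantity $\int_{\R^n}\Bub_{\lambda,a}^{\deu}\,dy$ in $\lambda$ and $a$). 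Consequently, when you test $\TBub[i]-\TB{\a_i,\Tpa_i}$ against $(\Ze{0})_{\zm[i]}$ via \eqref{eq:iii}, \emph{both} pairings tend to $0$, so no contradiction is produced; the same zero-pairing kills the proposed test against the translation generators $(\Ze{j})_{\zm[i]}$, so the displacement claim is not recovered either. This is a genuine gap, not a detail.

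The fix is to test against the bubble itself rather than against elements of the kernel, and this is exactly what the paper does. In its first step it shows that $\frac{\Tmu_i}{\mu_i}+\frac{\mu_i}{\Tmu_i}+\frac{\dg{z_i,\Tz_i}^2}{\mu_i\Tmu_i}$ stays bounded: if not, a direct computation (localizing to $\Bal{z_i}{1/\sqai}\cap\Bal{\Tz_i}{1/\sqai}$ and using \eqref{tmp:Tconcxx} and \eqref{eq:iv}) gives $\vprod[\Sob(M)]{\TBub[i]}{\TB{\a_i,\Tpa_i}}=o(1)$, hence $\vprod[\Sob(M)]{\TBub[i]}{\TBub[i]-\TB{\a_i,\Tpa_i}}=K_0^{-\frac{n}{k}}+o(1)$, while Cauchy--Schwarz and the hypothesis force this quantity to be $o(1)$ --- a contradiction; here the nonvanishing limit $\Snorm{k}{\TBub[i]}^2\to K_0^{-\frac{n}{k}}$ plays the role you wanted $\vprod[\hSob(\R^n)]{\Bub}{\Ze{0}}$ to play. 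Only then does it extract $\mu_i/\Tmu_i\to\lambda>0$ and $\tfrac{1}{\mu_i}\exp_{z_i}^{-1}(\Tz_i)\to y_\infty$, and identifies $\lambda=1$, $y_\infty=0$ by comparing on $\Bal{z_i}{R\mu_i}$ in the scale-invariant $L^{\deu}$ norm --- essentially your non-degenerate-case argument. Your alternative escape/concentration dichotomy for $\Tmu_i/\mu_i\to 0$ or $\infty$ could also be made rigorous along these $L^{\deu}$ (or local strong $\hSob$) lines, but the kernel-pairing mechanism you emphasize cannot.
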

\begin{proof}
    We first claim that there exists a constant $C>0$ such that 
    \begin{equation}\label{tmp:mutmubdd}
        \frac{\Tmu_i}{\mu_i} + \frac{\mu_i}{\Tmu_i} + \frac{\dg{\Tz_i,z_i}^2}{\mu_i\Tmu_i} \leq C.
    \end{equation}
    By contradiction, suppose that $\frac{\Tmu_i}{\mu_i} + \frac{\mu_i}{\Tmu_i} + \frac{\dg{\Tz_i,z_i}^2}{\mu_i\Tmu_i} \to \infty$, then observe that 
    \begin{multline*}
        \intM{\vprod{\Dg^{k/2}\TBub[i]}{\Dg^{k/2}\TB{\a_i,\Tpa_i}}}\\ = \intM[\Bal{z_i}{1/\sqai}\cap \Bal{\Tz_i}{1/\sqai}]{\vprod{\Dg^{k/2}\TBub[i]}{\Dg^{k/2}\TB{\a_i,\Tpa_i}}} + o(1) 
    \end{multline*}
    using \eqref{tmp:Tconcxx}. Moreover, 
    \begin{multline*}  
        \intM[\Bal{z_i}{1/\sqai}\cap \Bal{\Tz_i}{1/\sqai}]{\vprod{\Dg^{k/2}\TBub[i]}{\Dg^{k/2}\TB{\a_i,\Tpa_i}}}\\ = \intM[\Bal{z_i}{1/\sqai}\cap \Bal{\Tz_i}{1/\sqai}]{\vprod{\Dg^{k/2}\Bub_{\zm[i]}}{\Dg^{k/2}\Bub_{\Tz_i,\Tmu_i}}} = o(1)
        \end{multline*}  
    as $i\to \infty$, by straightforward computations and with the contradiction assumption. With \eqref{eq:iv}, we then have
    \[  \vprod[\Sob(M)]{\TBub[i]}{\TB{\a_i,\Tpa_i}} = \intM{\vprod{\Dg^{k/2}\TBub[i]}{\Dg^{k/2}\TB{\a_i,\Tpa_i}}} + o(1) = o(1),
        \]
    and thus we compute 
    \begin{align*}  
        \vprod[\Sob(M)]{\TBub[i]}{\TBub[i]-\TB{\a_i,\Tpa_i}} &= \Snorm{k}{\TBub[i]}^2 - \vprod[\Sob(M)]{\TBub[i]}{\TB{\a_i,\Tpa_i}}\\
            &= K_0^{-\frac{n}{k}} + o(1)
    \end{align*}
    using \eqref{eq:ii} and since $\Hnorm{k,2}{\Bub} = K_0^{-\frac{n}{2k}}$. We also obtain
    \[  \vprod[\Sob(M)]{\TBub[i]}{\TBub[i]-\TB{\a_i,\Tpa_i}} \leq \Snorm{k}{\TBub[i]} \Snorm{k}{\TBub[i]-\TB{\a_i,\Tpa_i}} = o(1)
        \]
    by assumption, and we reach a contradiction. This proves \eqref{tmp:mutmubdd}, and thus as a direct consequence there exists $\lambda >0$ and $y_\infty \in \R^n$ such that $\frac{\mu_i}{\Tmu_i} \to \lambda$ and $\tfrac{1}{\mu_i}\exp_{z_i}^{-1}(\Tz_i) \to y_\infty$, up to a subsequence. 
    \par By Sobolev's inequality, we have 
    \[  \Lnorm[(M)]{\deu}{\TBub[i]-\TB{\a_i,\Tpa_i}} = o(1) \qquad \text{as } i\to \infty.
        \]
    Let $R>0$, we compute
    \begin{multline*}
        o(1)= \intM[\Bal{z_i}{R\mu_i}]{\abs{\TBub[i]- \TB{\a_i,\Tpa_i}}^\deu} \\
            \geq \int_{\Bal{0}{R}} \abs{\Bub(y) - \lambda^{\frac{n-2k}{2}}\Bub(\lambda (y-y_\infty))}^{\deu} dy + o(1).
    \end{multline*}
    This can only hold if $\lambda = 1$ and $y_\infty = 0$.
\end{proof}
We now prove that the optimal bubble can be attained, following the proof of \cite[Proposition 7]{BahCor88}.
\begin{definition}
    Let $\a\geq 1$ be such that $1/\sqa<\inj/2$, and $\tau\leq 1$, we define the set of all possible rescaled bubbles,
    \begin{equation}\label{def:Bubmani}
        \mani := \{\TBub \such \pa = (\zm) \in \param\}.
    \end{equation}
\end{definition}
\begin{lemma}\label{prop:minprob}
    There exists $\epsilon_0 >0$, $\a_0\geq 1$ and $\tau_0 >0$ such that, for all $\a\geq \a_0$, $\tau\leq \tau_0$ and $\epsilon \leq \epsilon_0$, for any $u\in \Sob(M)$ such that $\dist[\Sob(M)]{u, \mani}<\epsilon$, the problem
    \begin{equation}\label{eq:minprob}
        \text{minimize}~ \Snorm{k}{u - \TBub} \quad \text{for } \pa \in \parama[4\tau] 
    \end{equation}
    admits a solution $\bpa = (\bz,\bmu) \in \parama[2\tau]$.
\end{lemma}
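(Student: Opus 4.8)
The plan is to set up the minimization problem \eqref{eq:minprob} over a closed parameter set and use a compactness argument, exploiting the rigidity provided by Lemma \ref{prop:uniqbub}. First I would fix the ``interior'' set $\parama[2\tau] = \{\pa = (\zm) : \a\mu^2 < 2\tau\}$ and consider, for a given $u$ with $\dist[\Sob(M)]{u,\mani} < \epsilon$, the infimum
\[  m(u) := \inf\{\Snorm{k}{u - \TBub} : \pa \in \parama[4\tau]\}.
    \]
Since $u$ is within $\epsilon$ of $\mani$, there is $\pa^* = (z^*,\mu^*) \in \param$ (hence $\a(\mu^*)^2 < \tau$) with $\Snorm{k}{u - \TBuba[\pa^*]} < \epsilon$, so $m(u) < \epsilon$. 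The point to prove is that the infimum over the slightly larger set $\parama[4\tau]$ is \emph{attained at an interior point}, i.e. at some $\bpa$ with $\a\bmu^2 < 2\tau$.

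The core of the argument is a contradiction/compactness scheme, uniform in the relevant parameters. Suppose the statement fails; then there are sequences $\a_i \to \infty$ (one must also check the case $\a_i$ bounded, but since $\a \geq \a_0$ is allowed to be large this is the relevant regime; alternatively one argues for each fixed large $\a$), $\tau_i \to 0$, $\epsilon_i \to 0$, and functions $u_i \in \Sob(M)$ with $\dist[\Sob(M)]{u_i,\mani[i]} < \epsilon_i$, such that the minimization \eqref{eq:minprob} either has no minimizer or every minimizer $\bpa_i$ satisfies $\a_i\bmu_i^2 \geq 2\tau_i$. Take a minimizing sequence of parameters $\pa_i^{(m)} \in \parama[4\tau_i]$ for $\Snorm{k}{u_i - \TBuba[\pa_i^{(m)}]}$; I would first show the infimum is attained by a compactness argument on the parameter space: the constraint set $\overline{\parama[4\tau_i]}$ is (after rescaling $\mu_i \mapsto \sqrt{\a_i}\,\mu_i$) a compact subset of $M \times [0,\sqrt{4\tau_i}]$, but the difficulty is that $\mu \to 0$ degenerates $\TBub$, so one needs to rule out $\bmu_i \to 0$. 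Here one uses that $\Snorm{k}{u_i - \TBuba[\pa_i^{(m)}]}$ is close to $m(u_i) < \epsilon_i$, while if $\mu \to 0$ the bubble $\TBub \wto 0$ (by \eqref{eq:iv}), so $\Snorm{k}{u_i - \TBub} \to \Snorm{k}{u_i}$ which, since $\Snorm{k}{u_i} \geq \Snorm{k}{\TBuba[\pa_i^*]} - \epsilon_i \geq \Hnorm{k,2}{\Bub}/2$ for $i$ large, stays bounded away from $0$ — contradicting that it should approach $m(u_i) \to 0$. So the infimum is attained at some $\bpa_i = (\bz_i,\bmu_i) \in \overline{\parama[4\tau_i]}$ with $\a_i\bmu_i^2$ bounded away from $0$ only via the degenerate endpoint being excluded — more precisely, one gets $\Snorm{k}{u_i - \TBuba[\bpa_i]} \leq \Snorm{k}{u_i - \TBuba[\pa_i^*]} < \epsilon_i$.

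Next, from $\Snorm{k}{u_i - \TBuba[\bpa_i]} < \epsilon_i \to 0$ and $\Snorm{k}{u_i - \TBuba[\pa_i^*]} < \epsilon_i \to 0$ I get by the triangle inequality $\Snorm{k}{\TBuba[\bpa_i] - \TBuba[\pa_i^*]} \to 0$. Applying Lemma \ref{prop:uniqbub} to the two parameter sequences $\bpa_i$ and $\pa_i^*$ yields $\bmu_i/\mu_i^* \to 1$ and $\dg{\bz_i,z_i^*}^2/(\bmu_i\mu_i^*) \to 0$. Since $\a_i(\mu_i^*)^2 < \tau_i$, the first relation gives $\a_i \bmu_i^2 = \a_i(\mu_i^*)^2 \cdot (\bmu_i/\mu_i^*)^2 < \tau_i(1+o(1)) < 2\tau_i$ for $i$ large, so $\bpa_i \in \parama[2\tau_i]$ — in particular it is an \emph{interior} point of $\parama[4\tau_i]$, so it is a genuine unconstrained local minimizer, contradicting the assumption that every minimizer sits on (or beyond) the boundary $\a\bmu^2 \geq 2\tau$. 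This contradiction proves the lemma, with $\epsilon_0,\a_0,\tau_0$ produced by the contradiction argument.

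The main obstacle I expect is the first compactness step: showing that the infimum in \eqref{eq:minprob} is actually attained despite the non-compactness caused by $\mu \to 0$ (and, if one does not fix $\a$, by the interplay between $\a \to \infty$ and the $\a$-dependent support of $\Theta_\a$). This requires carefully combining the weak convergence $\TBub \wto 0$ as $\mu \to 0$ from \eqref{eq:iv} with the quantitative lower bound $\Snorm{k}{\TBuba[\pa_i^*]} \to \Hnorm{k,2}{\Bub} = K_0^{-n/2k} > 0$ from \eqref{eq:ii}/\eqref{eq:iv}, to trap the minimizing parameters in a compact region where $\TBub$ is non-degenerate; once past this, the identification of the limit and the application of Lemma \ref{prop:uniqbub} are routine.
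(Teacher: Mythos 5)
Your argument follows essentially the same route as the paper's proof: you rule out degeneration $\mu\to 0$ along a minimizing sequence by combining the weak convergence $\TBub \wto 0$ with the fact that $\Snorm{k}{\TBub}\to K_0^{-\frac{n}{2k}}$, and you obtain the interiority $\a\bmu^2<2\tau$ through the same contradiction scheme ($\a_i\to\infty$, $\tau_i\to 0$, $\epsilon_i\to 0$) based on Lemma \ref{prop:uniqbub}. One small correction: weak convergence of the bubble does not give $\Snorm{k}{u_i-\TBub}\to\Snorm{k}{u_i}$ — in fact $\Snorm{k}{u_i-\TBub}^2\to \Snorm{k}{u_i}^2+K_0^{-\frac{n}{k}}$ — but since your argument only needs the lower bound, which follows from weak lower semicontinuity of the norm, the conclusion is unaffected.
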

\begin{proof}
    Let $\epsilon\leq \epsilon_0$, $\tau\leq \tau_0$ to be fixed later, and let $u\in \Sob(M)$ be such that $\dist[\Sob(M)]{u,\mani} < \epsilon$. Consider a minimizing sequence $(\pa_i)_i$, $\pa_i = (\zm[i]) \in \parama[4\tau]$, for \eqref{eq:minprob}. Since $M$ is compact, up to a subsequence we have $z_i \to \bz \in M$, and since $\mu_i^2 < 4\tau/\a$, up to a subsequence $\mu_i \to \bmu \in [0,2\sqrt{\tau}/\sqa]$. We first show that $\bmu \neq 0$. 
    \par Assume by contradiction that $\mu_i \to 0$, then by \eqref{eq:iv}, we have $\TBuba[\pa_i] \wto 0$ in $\Sob(M)$. Hence, by lower semi-continuity of the norm, we have 
    \[  \Snorm{k}{u} \leq \liminf_{i\to \infty} \Snorm{k}{u - \TBuba[\pa_i]} \leq \epsilon_0.
        \]
    By the triangle inequality, we also obtain for all $i$ that 
    \[  \Snorm{k}{u} \geq \Snorm{k}{\TBuba[\pa_i]} - \Snorm{k}{u-\TBuba[\pa_i]},
        \]
    so that 
    \begin{equation}\label{tmp:contraV}
        \Snorm{k}{\TBuba[\pa_i]} \leq 2\epsilon_0 \qquad \forall~i \geq 0.
    \end{equation}
    Proposition \ref{prop:Tconc} gives moreover that $\Snorm{k}{\TBuba[\pa_i]} \to \Hnorm{k,2}{\Bub}= K_0^{-\frac{n}{2k}}$, which is a contradiction with \eqref{tmp:contraV} choosing $\epsilon_0$ small enough. 
    \par We showed that for all $u\in \Sob(M)$ such that $\dist[\Sob(M)]{u,\mani} < \epsilon$, there exists $\bz \in M$ and $\bmu >0$ that minimize the quantity $\Snorm{k}{u-\TBub}$. We now claim that $\a\bmu^2 < 2\tau$ when $\epsilon_0$ and $\tau_0$ are small enough, and $\a_0$ big enough. By contradiction, let $(\tau_i)_i$, $(\epsilon_i)_i$, $(\a_i)_i$ be sequences such that $\tau_i \to 0$, $\epsilon_i\to 0$ and $\a_i \to \infty$, assume that there exists for each $i$ : A parameter $\pa_i = (\zm[i])$ with $z_i \in M$, $\mu_i>0$ and $\a_i \mu_i^2 < \tau_i$, and some $\bpa_i = (\bz_i,\bmu_i)$ with $\bz_i \in M$, $\bmu_i >0$, such that $\TB{\a_i,\bpa_i}$ minimizes \eqref{eq:minprob} and $2\tau_i \leq \a_i \bmu_i^2 \leq 4\tau_i$, satisfying
    \[  \Snorm{k}{\TBub[i]-\TB{\a_i,\bpa_i}} < 2\epsilon_i.
        \]
    By Lemma \ref{prop:uniqbub}, we see that 
    \begin{align}\label{tmp:dot}
        \frac{\bmu_i}{\mu_i} &= 1+o(1) & &\text{and} & \frac{\dg{z_i,\bz_i}^2}{\mu_i\bmu_i} &= o(1),
    \end{align}
    as $i\to \infty$. Observe now that $\bmu_i \geq \frac{\sqrt{2\tau_i}}{\sqai}$, and $\mu_i < \frac{\sqrt{\tau_i}}{\sqai}$, so that for all $i$, $\frac{\bmu_i}{\mu_i} > \sqrt{2}$. For $i$ large enough, this is incompatible with \eqref{tmp:dot}, which concludes the proof.
\end{proof}
\begin{lemma}\label{prop:phiainKa}
    Let $\a_0\geq 1$, $\epsilon_0>0$ and $\tau_0>0$ be given by Lemma \ref{prop:minprob}, and fix $\a\geq \a_0$, $\tau\leq \tau_0$ and $\epsilon\leq \epsilon_0$. Let $u \in \Sob(M)$ be such that $\dist[\Sob(M)]{u,\mani}< \epsilon$, and let $\bpa \in \parama[2\tau]$ be a minimizer of \eqref{eq:minprob}. Then, 
    \[  u-\TBuba[\bpa] \in \Kera[\bpa]^\perp,
        \]
    where $\Kera[\bpa]$ is defined in \eqref{def:kerzm}.
\end{lemma}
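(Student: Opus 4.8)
The plan is to obtain the orthogonality from the first-order optimality (Euler--Lagrange) condition for the minimization problem \eqref{eq:minprob}. Set $\phi := u - \TBuba[\bpa]$. Since $\Kera[\bpa] = \spanned{\Zeda[\bpa]{j}\such 0\leq j\leq n}$, it is enough to prove that $\vprod[\Sob(M)]{\phi}{\Zeda[\bpa]{j}} = 0$ for every $j=0,\dots,n$. First I would observe that $\bpa = (\bz,\bmu)$ is an \emph{interior} minimizer of the map $\pa\mapsto\Snorm{k}{u-\TBub}^2$ on $\parama[4\tau]$: indeed $\bpa\in\parama[2\tau]$ forces $\a\bmu^2 < 2\tau < 4\tau$, and since $\bmu>0$ and $M$ is a closed manifold, $\bpa$ has a full neighborhood in $M\times(0,+\infty)$ still contained in $\parama[4\tau]$. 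Hence, along any $C^1$ curve $t\mapsto\pa_t\in\parama[4\tau]$ with $\pa_0=\bpa$, the function $t\mapsto\Snorm{k}{u-\TBuba[\pa_t]}^2$ has a local minimum at $t=0$, so its derivative there vanishes.

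Next I would differentiate in the parameters. As recalled around \eqref{def:dzj}, the map $\pa=(\zm)\mapsto\TBub$ is differentiable as a map into $\Sob(M)$, and by \eqref{def:tZ} its partial derivative in $\mu$ at $\bpa$ is $\bmu^{-1}\Zeda[\bpa]{0}$, while its partial derivative along the $z_j$-direction $\gamma_j(t)=\exp_{\bz}(tv_j)$ at $\bpa$ is $\bmu^{-1}\Zeda[\bpa]{j}$. Expanding the square of the Hilbert norm, for a curve $\pa_t$ as above with velocity $W := \frac{d}{dt}\TBuba[\pa_t]|_{t=0}\in\Sob(M)$, one gets
\[
    \frac{d}{dt}\Big|_{t=0}\Snorm{k}{u-\TBuba[\pa_t]}^2 = -2\,\vprod[\Sob(M)]{u-\TBuba[\bpa]}{W},
\]
and the left-hand side is $0$ by the previous paragraph. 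Choosing $\pa_t=(\bz,\bmu+t)$ makes $W=\bmu^{-1}\Zeda[\bpa]{0}$, and choosing $\pa_t=(\gamma_j(t),\bmu)$ makes $W=\bmu^{-1}\Zeda[\bpa]{j}$; since $\bmu>0$ this yields $\vprod[\Sob(M)]{\phi}{\Zeda[\bpa]{j}}=0$ for all $j=0,\dots,n$, hence $\phi\in\Kera[\bpa]^\perp$, as claimed.

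The only slightly delicate point is the differentiability of $\pa\mapsto\TBub$ in the $\Sob(M)$-\emph{norm} — that the relevant difference quotients converge strongly and not merely pointwise. I expect this to reduce to the explicit formula \eqref{def:TBub}, the joint smoothness in $x$ and in the parameters of $\Theta_\a$ and of the exponential map, and the pointwise derivative bounds of Proposition \ref{prop:estdifX} (used with $X=\Bub$ and with $X=\Ze{j}$), which furnish a dominating function for the quotients locally uniformly in $\pa\in\parama[4\tau]$; then one differentiates under the integral defining $\Snorm{k}{\cdot}^2$. Everything else is the classical finite-dimensional reduction / projection onto the manifold of bubbles, carried out exactly as in \cite[Proposition 7]{BahCor88}.
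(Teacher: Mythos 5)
Your proof is correct and follows essentially the same route as the paper: since $\bpa\in\parama[2\tau]$ is an interior point of $\parama[4\tau]$, the first-order optimality conditions for \eqref{eq:minprob} give vanishing derivatives of $\pa\mapsto\Snorm{k}{u-\TBub}^2$ at $\bpa$, which upon expanding the Hilbert norm yields $\vprod[\Sob(M)]{u-\TBuba[\bpa]}{\Zeda[\bpa]{j}}=0$ for $j=0,\dots,n$ by the definitions \eqref{def:tZ} and \eqref{def:dzj}. Your extra remark on justifying differentiability of $\pa\mapsto\TBub$ in the $\Sob(M)$-norm via domination is a point the paper simply asserts, so no discrepancy arises.
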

\begin{proof}
    Fix $\bpa = (\bz,\bmu)$ be a minimizer for the problem \eqref{eq:minprob} obtained by Lemma \ref{prop:minprob}, the map $(\zm)\mapsto \TBub$ defined in \eqref{def:TBub} is differentiable in its variables $z\in M$, $\mu>0$ in a neighborhood of $(\bz, \bmu)$. Since $\bpa\in M\times (0,+\infty)$ is a minimizer of \eqref{eq:minprob}, that is also an interior point of $\parama[4\tau]$, we have 
    \begin{align*}
        \dsur{}{\mu} \Snorm{k}{u-\TBub}^2\at{\bpa = (\bz,\bmu)} &= 0\\
        \dsur{}{z_j} \Snorm{k}{u-\TBub}^2\at{\bpa = (\bz,\bmu)} &= 0 & &\quad j = 1,\ldots n,
    \end{align*}
    where the partial derivative along the variable $z_j$ is as defined in \eqref{def:dzj}.
    Writing the variable $\mu$ as $z_{0}$ for simplicity, we have 
    \begin{align*}
        0 =\dsur{}{z_j} \Snorm{k}{u-\TBub}^2\at{\bpa} &= \dsur{}{z_j} \vprod[\Sob(M)]{u-\TBub}{u-\TBub}\at{\bpa}\\
            &= 2\vprod[\Sob(M)]{u-\TBuba[\bpa]}{\dsur{}{z_j}\TBub\at{\bpa}}
    \end{align*}
    for $j=0,\ldots n$. By definition of $\Zeda[\bpa]{j}$ in \eqref{def:tZ} and $\Kera[\bpa]$ in \eqref{def:kerzm}, we obtain the result.
\end{proof}

\section{Constructive approach in $H^k(M)$}\label{sec:Hktheory}
\subsection{Uniform invertibility Theorem}\label{sec:unifinv}
In this subsection, we prove existence and uniqueness of solutions to the linearization of equation \eqref{eq:critbub} around $\TBub$, for $\pa \in \param$, 
\[  (\Dg+\a)^k u - (\deu-1)\TBub^{\deu -2} u= f \qquad \text{in $M$},
    \] 
up to terms that belong to $\Ker$, as defined in \eqref{def:kerzm}. We prove that for such a solution, $\Snorm{k}{u}$ is controlled by $\Snorm{-k}{f}$ uniformly with respect to $\a$ and $\tau$ as $\a \to \infty, \tau \to 0$. We write in the following $\proKe$ the projection onto the set $\Ker^\perp$ in $\Sob(M)$.

\begin{definition}
    Let $\a\geq 1$ be such that $1/\sqa < \inj/2$, $\tau\leq 1$, and $\pa = (\zm) \in \param$ as defined in \eqref{def:param}. We define the linear operator $\Lia : \Ker^\perp \to \Ker^\perp$ as
    \begin{equation}\label{def:Lzm}
        \Lia \phi := \proKe\bsq{\phi - (\Dg+\a)^{-k}\bpr{(\deu-1)\TBub^{\deu-2}\phi}}.
    \end{equation}
\end{definition}

This subsection is devoted to the proof of the following result.
\begin{proposition}\label{prop:lininv}
    There exists $\a_0\geq 1$, $\tau_0 >0$, and $C_0 >0$ such that, for all $\a \geq \a_0$, $\tau\leq \tau_0$, and $\pa =(\zm)\in \param$, $\Lia$ is invertible and the following holds:
    \begin{equation}\label{eq:lininv}
        \frac{1}{C_0} \Snorm{k}{\phi} \leq \Snorm{k}{\Lia\phi} \leq C_0 \Snorm{k}{\phi} \qquad \forall~\phi\in \Ker^\perp.
        \end{equation}
\end{proposition}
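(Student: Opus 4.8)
The plan is to prove Proposition~\ref{prop:lininv} by establishing the upper bound and the lower bound in \eqref{eq:lininv} separately, and then deducing invertibility from the lower bound together with a Fredholm-type argument.

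\textbf{Upper bound.} The upper bound is the easy direction: the operator $\phi \mapsto (\Dg+\a)^{-k}\bpr{(\deu-1)\TBub^{\deu-2}\phi}$ is bounded on $\Sob(M)$ uniformly in $\a,\tau,\pa$. Indeed, by coercivity \eqref{eq:Pacoerc}, $(\Dg+\a)^{-k}:\Sob[-k](M)\to\Sob(M)$ has operator norm $\leq 1$. It then suffices to bound the multiplication map $\phi\mapsto \TBub^{\deu-2}\phi$ from $\Sob(M)$ to $\Sob[-k](M)$. Using the continuous embedding $L^{\frac{2n}{n+2k}}(M)\emb \Sob[-k](M)$ and Hölder's inequality, $\Snorm{-k}{\TBub^{\deu-2}\phi}\leq C\Lnorm[(M)]{\frac{2n}{n+2k}}{\TBub^{\deu-2}\phi}\leq C\Lnorm[(M)]{\deu}{\phi}\Lnorm[(M)]{\deu}{\TBub}^{\frac{4k}{n-2k}}\leq C\Snorm{k}{\phi}$, where in the last step we used the Sobolev embedding and the uniform bound $\Lnorm[(M)]{\deu}{\TBub}\leq C$ from \eqref{eq:ii}. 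Since $\proKe$ is a norm-$1$ projection, this gives $\Snorm{k}{\Lia\phi}\leq C_0\Snorm{k}{\phi}$.

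\textbf{Lower bound.} This is the crux. Argue by contradiction: suppose there are sequences $\a_i\to\infty$, $\tau_i\to 0$, $\pa_i=(\zm[i])\in\param[i]$, and $\phi_i\in\Ker[i]^\perp$ with $\Snorm{k}{\phi_i}=1$ but $\Snorm{k}{\Lia[i]\phi_i}\to 0$. Write $\Lia[i]\phi_i=\phi_i-\proKe[i]\bsq{(\Dg+\a_i)^{-k}\bpr{(\deu-1)\TBub[i]^{\deu-2}\phi_i}}=:f_i$ with $\Snorm{k}{f_i}\to0$. Set $w_i:=(\Dg+\a_i)^{-k}\bpr{(\deu-1)\TBub[i]^{\deu-2}\phi_i}$, so $\phi_i=\proKe[i]w_i+f_i$; since $\phi_i\perp\Ker[i]$, in fact $\phi_i=\proKe[i](\phi_i-f_i)+\proKe[i] f_i$ and one gets $\Snorm{k}{\phi_i-w_i}\to 0$ after projecting (the component of $w_i$ along $\Ker[i]$ is also controlled, using that $\Zed[i]{j}$ is an almost-eigenfunction of the linearized operator via \eqref{eq:DgaZZ}, so that $\vprod[\Sob(M)]{w_i}{\Zed[i]{j}}=\frac{\deu-1}{\Hnorm{k,2}{\Ze{j}}^{2}}\intM[\big]{\TBub[i]^{\deu-2}\phi_i\,\Zed[i]{j}}+o(1)$ and a separate argument — see below — shows these rescaled test integrals against $\phi_i$ vanish). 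The key point is then the equation
\[  (\Dg+\a_i)^k \phi_i = (\deu-1)\TBub[i]^{\deu-2}\phi_i + (\Dg+\a_i)^k f_i \qquad\text{in }M,
    \]
with right-hand side error small in $\Sob[-k](M)$. Now rescale: set $\Tp_i(y):=\mu_i^{\frac{n-2k}{2}}\phi_i(\exp_{z_i}(\mu_i y))$ on balls of radius $\sim 1/(\sqai\mu_i)\to\infty$. Using that $\a_i\mu_i^2<\tau_i\to0$, the operator $\mu_i^{2k}(\Dg+\a_i)^k$ converges to $\Dg[\xi]^k$ in $C^\infty_{loc}$ (the lower-order terms $\a_i^{k-l}\Dg^l$ scale like $(\a_i\mu_i^2)^{k-l}\to0$ relative to $\Dg[\xi]^k$), and $\mu_i^{n-2k+\text{(scaling)}}\,\TBub[i]^{\deu-2}\to\Bub^{\deu-2}$ in $C^\infty_{loc}$ by Proposition~\ref{prop:Tconc}(1). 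Since $\Snorm{k}{\phi_i}=1$, $(\Tp_i)_i$ is bounded in $\hSob(\R^n)$, hence converges weakly, up to a subsequence, to some $\phi_\infty\in\hSob(\R^n)$ satisfying the limit equation $\Dg[\xi]^k\phi_\infty=(\deu-1)\Bub^{\deu-2}\phi_\infty$, i.e.\ $\phi_\infty\in\Ker[]$. Next, the orthogonality $\phi_i\perp\Zed[i]{j}$ passes to the limit: by \eqref{eq:iii}-type arguments (more precisely, using Corollary~\ref{prop:Zedorth} and the weak convergence $\Tp_i\wto\phi_\infty$ together with strong $L^{\deu}_{loc}$ convergence of the rescaled bubble powers), $\vprod[\hSob(\R^n)]{\phi_\infty}{\Ze{j}}=0$ for all $j=0,\ldots,n$, forcing $\phi_\infty=0$ since $\{\Ze{j}\}$ spans $\Ker[]$. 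Finally, one must upgrade $\Tp_i\wto0$ to $\Snorm{k}{\phi_i}\to0$, contradicting $\Snorm{k}{\phi_i}=1$: split $\Snorm{k}{\phi_i}^2=\dprod{(\Dg+\a_i)^k\phi_i}{\phi_i}_{\Sob[-k],\Sob}-\Snorm{k-1}{\phi_i}^2+\text{(lower order, }o(1)\text{)}$... more directly, test the equation against $\phi_i$ itself: $\Snorm{k}{\phi_i}^2 \leq \dprod{(\Dg+\a_i)^k\phi_i}{\phi_i}=(\deu-1)\intM[\big]{\TBub[i]^{\deu-2}\phi_i^2}+o(1)$ by \eqref{eq:Pacoerc} and $\Snorm{-k}{(\Dg+\a_i)^k f_i}=\Snorm{k}{f_i}=o(1)$. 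The integral $\intM[\big]{\TBub[i]^{\deu-2}\phi_i^2}$ concentrates in $\Bal{z_i}{1/\sqai}$ (tail estimate via \eqref{tmp:Tconcxx}-type bounds and Hölder), and there equals $\int_{\R^n}\Bub^{\deu-2}\Tp_i^2\,dy+o(1)$; since $\Bub^{\deu-2}\in L^{n/(2k)}(\R^n)$ and $\Tp_i\wto0$ with $\Tp_i\to0$ strongly in $L^{\deu}_{loc}$, this tends to $0$. Hence $\Snorm{k}{\phi_i}\to0$, the desired contradiction.

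\textbf{From the lower bound to invertibility.} The two-sided bound \eqref{eq:lininv}, valid for $\a\geq\a_0$, $\tau\leq\tau_0$, shows in particular that $\Lia$ is injective with closed range on $\Ker^\perp$. To get surjectivity, note $\Lia=\mathrm{Id}-\proKe\circ T_{\a,\pa}$ where $T_{\a,\pa}\phi:=(\Dg+\a)^{-k}\bpr{(\deu-1)\TBub^{\deu-2}\phi}$; by the embedding $\Sob(M)\emb\emb\Sob[k-1](M)$ and boundedness of multiplication by $\TBub^{\deu-2}$ followed by the smoothing $(\Dg+\a)^{-k}$, the operator $\proKe\circ T_{\a,\pa}$ is compact on $\Sob(M)$. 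Therefore $\Lia$ is a compact perturbation of the identity, so by the Fredholm alternative, injectivity implies surjectivity, hence $\Lia$ is invertible, and \eqref{eq:lininv} gives the uniform bound on $\Lia^{-1}$.

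\textbf{Main obstacle.} The delicate point is the lower bound, and within it, two subtleties require care: first, controlling the component of $w_i$ (equivalently of $\phi_i$) \emph{along} $\Ker[i]$ — this is where the almost-orthogonality of the $\Zed[i]{j}$ and the almost-eigenfunction relation \eqref{eq:DgaZZ} enter, and one must check these $o(1)$ errors are genuinely uniform; second, and more seriously, the rescaling argument must handle the diverging coefficients $\a_i^{k-l}\Dg^l$ in $(\Dg+\a_i)^k$ — one has to verify that after the natural scaling $y=\exp_{z_i}^{-1}(x)/\mu_i$ these contribute factors $(\a_i\mu_i^2)^{k-l}=o(1)$ and hence drop out in the $C^\infty_{loc}$ limit, so that the limiting equation is exactly the clean Euclidean linearized equation \eqref{eq:linbub}. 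The tail estimates away from $\Bal{z_i}{1/\sqai}$, needed to confine all concentration to the region where the rescaled picture is valid, rely on the pointwise bounds of Proposition~\ref{prop:estdifX} and are routine but must be done uniformly in $\a$.
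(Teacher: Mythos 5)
Your proposal is correct and follows essentially the same route as the paper: the upper bound via coercivity, the embedding $L^{\frac{2n}{n+2k}}(M)\emb \Sob[-k](M)$ and Hölder together with \eqref{eq:ii}, and the lower bound by the identical contradiction/blow-up scheme (rescale at scale $\mu_i$, pass to the limiting equation \eqref{eq:linbub}, show the weak limit lies in $\Ker[]\cap\Ker[]^\perp$ and hence vanishes, then test the equation against $\phi_i$ and use concentration of $\intM{\TBub[i]^{\deu-2}\phi_i^2}$ to contradict $\Snorm{k}{\phi_i}=1$). Your only departures are minor: you control the kernel-component coefficients through almost-orthogonality and \eqref{eq:DgaZZ} rather than the paper's Step 1 of testing the equation against $\Zed[i]{j_0}$, and you make explicit the Fredholm/compactness argument for surjectivity, which the paper leaves implicit.
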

The proof of Proposition \ref{prop:lininv} is split in two parts: We first show the right-hand side of the inequality, i.e. that $\Lia$ is a bounded operator, uniformly with respect to $\a,\tau$, and $\pa \in \param$; In a second step, we prove the uniform invertibility of $\Lia$.
\begin{proof}[Proof of Proposition \ref{prop:lininv}, part 1: $\Lia$ is bounded]
    First observe that since $(\Dg+\a)^k$ is coercive, with \eqref{eq:Pacoerc}, we have for all $f \in \Sob[-k](M)$
    \begin{equation}\label{eq:Dgabded}
        \Snorm{k}{(\Dg+\a)^{-k}f} \leq \Snorm{-k}{f}.
    \end{equation} 
    With that in mind, let $\phi \in \Ker^\perp$, $\a \geq \a_0$, $\tau \leq \tau_0$ to be chosen later, and $\pa \in \param$. We have 
    \begin{align*}  
        \Snorm{k}{\Lia\phi} &\leq \Snorm{k}{\phi - (\Dg+\a)^{-k}\bpr{(\deu-1)\TBub^{\deu-2}\phi}}\\
            &\leq \Snorm{k}{\phi} + (\deu-1) \Snorm{-k}{\TBub^{\deu-2}\phi}
        \end{align*} 
    since $\proKe$ is a projection. Use the continuous embedding of the space $\Sob[-k](M)$ into $L^{\frac{2n}{n+2k}}(M)$, by Hölder's inequality we obtain
    \[  \Snorm{-k}{\TBub^{\deu-2}\phi} \leq C \Lnorm[(M)]{\frac{2n}{n+2k}}{\TBub^{\deu-2}\phi} \leq C \Lnorm[(M)]{\deu}{\phi} \bpr{\intM{\TBub^{\deu}}}^{\frac{2k}{n}}.
        \]
    Thanks to \eqref{eq:ii}, we then conclude that for a big enough $\a_0$, small enough $\tau_0$, there exists $C>0$ such that for all $\a \geq \a_0$, $\tau\leq \tau_0$, $\pa \in \param$,
    \begin{equation*}
        \Snorm{k}{\Lia\phi} \leq \Snorm{k}{\phi} + C \Lnorm[(M)]{\deu}{\phi} \leq C \Snorm{k}{\phi}.
    \end{equation*} 
    \end{proof}
The second part of the proof of Proposition \ref{prop:lininv} is analog to \cite[Proposition 3.1]{Pre24}, but in the polyharmonic setting. The presence of the diverging coefficient $\a$ needs to be taken care of in the computations, this explains why we use the modified bubble $\TBub$. 
\begin{proof}[Proof of Proposition \ref{prop:lininv}, part 2: Uniform invertibility]
    We prove the left-hand side of \eqref{eq:lininv} by contradiction. Suppose there exists sequences $(\a_i)_i, (\tau_i)_i$ such that $\a_i \to \infty$, $\tau_i \to 0$ as $i\to \infty$, $\pa_i = (\zm[i]) \in \param[i]$ for all $i$, and a sequence $(\phi_i)_i$ in $\Keri^\perp$ such that
    \begin{equation}\label{eq:normphi}
        \Snorm{k}{\phi_i} = 1 \qquad \forall~i\geq 0
    \end{equation} 
    and $\Lia[i]\phi_i\to 0$ in $\Sob(M)$ as $i\to \infty$. By definition of $\Lia[i]$ in \eqref{def:Lzm}, for all $i$ there exist real numbers $\lambda_i^j \in \R$, $j=0,\ldots n$, such that
    \begin{equation}\label{eq:forphi}
        (\Dg+\a_i)^k \phi_i - (\deu-1)\TBub[i]^{\deu-2}\phi_i = o(1) + \sum_{j=0}^n \lambda_i^j (\Dg+\a_i)^k \Zed[i]{j},
    \end{equation}
    where $o(1) \to 0$ in $\Sob[-k](M)$ as $i\to \infty$. Since $\phi_i \in \Keri^\perp$, we have 
    \begin{equation}\label{eq:prodphiZ}
        \vprod[\Sob(M)]{\phi_i}{\Zed[i]{j}} = 0 \qquad \forall~i, \quad j=0,\ldots n.
    \end{equation}
    Moreover, with \eqref{eq:normphi}, there exists $\phi_0 \in \Sob(M)$ such that, up to a subsequence, $\phi_i \wto \phi_0$ in $\Sob(M)$. Finally, since $\pa_i \in \param[i]$ with $\a_i \to \infty$ and $\tau_i \to 0$, by \eqref{eq:iv} we get that $\TBub[i] \wto 0$ and $\Zed[i]{j}\wto 0$ in $\Sob(M)$ as $i\to \infty$.
    \par The idea of the proof is to test equation \eqref{eq:forphi} against various functions, to obtain that $\phi_i \to 0$ in $\Sob(M)$ which will give the contradiction.

    \par\textbf{Step 1.} We prove that $\tsum_{j=0}^n \tabs{\lambda_i^j} \to 0$. Fix $j_0 \in \{0,\ldots n\}$, and test equation \eqref{eq:forphi} against $\Zed[i]{j_0}$ for all $i$. By integration by parts, and the fact that $\Zed[i]{j_0}$ is bounded in $\Sob(M)$, we get
    \[  
        \intM{\err[i]{\Ze{j_0}}\, \phi_i} = o(1) + \sum_{j=0}^k \lambda_i^j \intM{(\Dg+\a_i)^k \Zed[i]{j}\,\Zed[i]{j_0}}.
    \]
    Since we have $\err[i]{\Ze{j_0}} \to 0$ in $\Sob[-k](M)$ as $i\to \infty$ by Proposition \ref{prop:estRa}, with \eqref{eq:DgaZZ} we obtain
    \[  o(1) = \lambda_i^{j_0} + o\Big(\sum_{j=0}^n \tabs{\lambda_i^j}\Big) \qquad \text{ as $i\to \infty$}.
        \]   
    This holds for $j_0 = 0,\ldots n$, we conclude that $\tsum_{j=0}^n \tabs{\lambda_i^j} \to 0$ as $i\to \infty$.

    \par\textbf{Step 2.} Let $\psi \in \Cct(\R^n)$ with $\Hnorm{k,2}{\psi} = 1$, and define $\psi_{\zm[i]} \in \Sob(M)$ by \eqref{def:conc}. We now test equation \eqref{eq:forphi} against $\psi_{\zm[i]}$,
    \begin{multline*}  
        \sum_{l=0}^k \tbinom{k}{l} \a_i^{k-l} \intM{\vprod{\Dg^{l/2}\phi_i}{\Dg^{l/2}\psi_{\zm[i]}}} - (\deu-1) \intM{\TBub[i]^{\deu-2} \phi_i \psi_{\zm[i]}}\\ = o(1) + \sum_{j=0}^k \lambda_i^j \intM{(\Dg+\a_i)^k \Zed[i]{j}\,\psi_{\zm[i]}}.
    \end{multline*}
    First, by \eqref{eq:bdedDgaZ} and Lemma \ref{prop:psizmbded}, since $\tsum_{j=0}^n \tabs{\lambda_i^j} \to 0$ we have 
    \begin{equation}\label{tmp:DgaZpsi}
        \sum_{j=0}^k \lambda_i^j \intM{(\Dg+\a_i)^k \Zed[i]{j}\,\psi_{\zm[i]}}\to 0 \qquad \text{as } i\to \infty.
    \end{equation}
    Define, for all $i$ and for $y \in \Bal{0}{\varrho_0/\mu_i} \sub \R^n$, $\varrho_0 < \inj$ given by Lemma \ref{prop:intcomp} below, 
    \[  \Cphi_{i}(y) = \chi_{\varrho_0}(\mu_i \abs{y}) \mu_i^{\frac{n-2k}{2}} \phi_i(\exp_{z_i}(\mu_i y)), 
        \]
    where $\chi_{\varrho_0} \in \Cct(\R^+)$ is a cut-off function such that $\chi_{\varrho_0} \equiv 1$ on $[0,\varrho_0/2)$ and $\chi_{\varrho_0} \equiv 0$ on $(\varrho_0,+\infty)$. Using Lemma \ref{prop:intcomp}, we obtain $\Hnorm{k,2}{\Cphi_i} \leq C \Snorm{k}{\phi_i}$, and thus $\Cphi_i$ converges weakly, up to a subsequence, to some $\Cphi_0 \in \hSob(\R^n)$ as $i \to \infty$. For $R>0$, we can write by the change of variables $y = \tfrac{1}{\mu_i }\exp_{z_i}^{-1}(x)$,
    \[  \intM[\Bal{z_i}{R\mu_i}]{\TBub[i]^{\deu-2}\phi_i \psi_{\zm[i]}} = \int_{\Bal{0}{R}} \Bub^{\deu-2} \Cphi_i \psi\, dy + o(1)
        \]
    using that $\mu_i^{\frac{n-2k}{2}}\TBub[i](\exp_{z_i}(\mu_i \cdot)) \to \Bub$ and $\exp_{z_i}^* g(\mu_i \cdot) \to \xi$ in $C^\infty_{loc}(\R^n)$ as $i\to \infty$. Since $\psi \in \Cct(\R^n)$, we can now choose $R_\psi>0$ such that the support of $\psi$ is contained in $\Bal{0}{R_\psi}$, this gives up to a subsequence
    \begin{equation}\label{tmp:TBphipsi}
    \begin{aligned}
        \intM{\TBub[i]^{\deu-2}\phi_i \psi_{\zm[i]}} &= \int_{\Bal{0}{R_\psi}} \Bub^{\deu-2}\Cphi_i \psi\, dy + o(1)\\
            &= \int_{\R^n} \Bub^{\deu-2}\Cphi_0 \psi\, dy + o(1)
    \end{aligned}
    \end{equation}
    using the embedding $\hSob(\R^n) \emb L^{\deu}(\R^n)$ and standard integration theory. We now claim that, similarly,
    \begin{equation}\label{tmp:Dgaphipsi}
        \sum_{l=0} \tbinom{k}{l} \a_i^{k-l} \intM{\vprod{\Dg^{l/2}\phi_i}{\Dg^{l/2}\psi_{\zm[i]}}} = \int_{\R^n} \vprod[\xi]{\Dg[\xi]^{k/2}\Cphi_0}{\Dg[\xi]^{k/2}\psi}dy + o(1)
        \end{equation}
    as $i \to \infty$. To prove \eqref{tmp:Dgaphipsi}, fix $R_\psi >0$ as above, it follows that 
    \[  \intM{\vprod{\Dg^{l/2}\phi_i}{\Dg^{l/2}\psi_{\zm[i]}}} = \intM[\Bal{z}{R_\psi \mu_i}]{\vprod{\Dg^{l/2}\phi_i}{\Dg^{l/2}\psi_{\zm[i]}}}
        \]   
    for $l = 0,\ldots k$. For the term $l=k$, by a change of variables we have as before letting $i\to \infty$, up to a subsequence,
    \begin{align*}  
        \intM[\Bal{z_i}{R_\psi\mu_i}]{\vprod{\Dg^{k/2}\phi_i}{\Dg^{k/2}\psi_{\zm[i]}}} &= \int_{\Bal{0}{R_\psi}} \vprod[\xi]{\Dg[\xi]^{k/2}\Cphi_i}{\Dg[\xi]^{k/2}\psi} dy + o(1)\\
            &= \int_{\R^n} \vprod[\xi]{\Dg[\xi]^{k/2}\Cphi_0}{\Dg[\xi]^{k/2}\psi} dy + o(1),
        \end{align*}
    since $\chi_\varrho(\mu \abs{\cdot}) \to 1$ and $\exp_{z_i}^* g(\mu_i \cdot) \to \xi$ in $C^\infty_{loc}(\R^n)$. For $l=0,\ldots k-1$, we have 
    \begin{multline*}  
        \abs{\intM[\Bal{z_i}{R_\psi \mu_i}]{\vprod{\Dg^{l/2}\phi_i}{\Dg^{l/2}\psi_{\zm[i]}}}}\\ \leq \bpr{\intM[\Bal{z_i}{R_\psi\mu_i}]{\abs{\Dg^{l/2}\phi}^2}}^{1/2} \bpr{\intM[\Bal{z_i}{R_\psi \mu_i}]{\abs{\Dg^{l/2}\psi_{\zm[i]}}^2}}^{1/2}.
        \end{multline*}
    Using Hölder's inequality and the Sobolev embedding $\Sob(M) \emb \Sob[l,\deuk{k-l}](M)$, where $\deuk{l} :=\tfrac{2n}{n-2l}$, we deduce that
    \begin{equation}\label{eq:Dglphimu}
    \begin{aligned}
        \intM[\Bal{z_i}{R_\psi\mu_i}]{\abs{\Dg^{l/2}\phi_i}^2} &\leq C \big(R_\psi^n \mu_i^n\big)^{\frac{2(k-l)}{n}} \Lnorm[(M)]{\deuk{k-l}}{\Dg^{l/2}\phi_i}^2\\
            &\leq C \mu_i^{2(k-l)} \Snorm{k}{\phi_i}^2 \leq C \mu^{2(k-l)}.
    \end{aligned}
    \end{equation}
    With a change of variables, we also have
    \begin{equation}\label{eq:Dglpsizm}
    \begin{aligned} 
        \intM[\Bal{z_i}{R_\psi \mu_i}]{\abs{\Dg^{l/2}\psi_{\zm[i]}}^2} &= \mu_i^{2(k-l)}\bpr{\int_{\Bal{0}{R_\psi}}\abs{\Dg[\xi]^{l/2}\psi}^2 dy + o(1)}\\
            &\leq C \mu_i^{2(k-l)} R_\psi^{2(k-l)}\bpr{\int_{\R^n} \abs{\Dg[\xi]^{l/2}\psi}^{\deuk{k-l}}}^{2/\deuk{k-l}}\\
            &\leq C \mu_i^{2(k-l)}\Hnorm{k,2}{\psi}
    \end{aligned}   
    \end{equation}
    using again Hölder's inequality and the embedding $\hSob(\R^n) \emb \hSob[l,\deuk{k-l}](\R^n)$. We thus obtain in the end 
    \[  \sum_{l=0}^k \tbinom{k}{l} \a_i^{k-l} \intM{\vprod{\Dg^{l/2}\phi_i}{\Dg^{l/2}\psi_{\zm[i]}}} = \int_{\R^n} \vprod[\xi]{\Dg[\xi]^{k/2}\Cphi_0}{\Dg[\xi]^{k/2}\psi} dy + \bigO(\a_i \mu_i^2).
        \]
    Recalling that $\a_i \mu_i^2 \to 0$ by assumption, this concludes the proof of \eqref{tmp:Dgaphipsi}. Putting \eqref{tmp:DgaZpsi}, \eqref{tmp:TBphipsi}, and \eqref{tmp:Dgaphipsi} together and letting $i\to \infty$, we have showed that for all $\psi \in \Cct(\R^n)$, 
    \[  \int_{\R^n} \vprod[\xi]{\Dg[\xi]^{k/2}\Cphi_0}{\Dg[\xi]^{k/2}\psi} dy - (\deu-1) \int_{\R^n} \Bub^{\deu-2} \Cphi_0 \psi\, dy = 0.
        \]  
    In other words, $\Cphi_0 \in \hSob(\R^n)$ is a weak solution to \eqref{eq:linbub}, and by definition of $\Ker[]$ in \eqref{def:ker}, we have $\Cphi_0 \in \Ker[]$. Using \eqref{eq:vprodphiZ}, as in the proof of \eqref{eq:iii}, we obtain for $j=0,\ldots n$,
    \[  \vprod[\Sob(M)]{\phi_i}{\Zed[i]{j}} = \intM{\vprod{\Dg^{k/2}\phi_i}{\Dg^{k/2}\Zed[i]{j}}} + o(1).
        \]
    We now have, for a fixed $R>0$,
    \begin{multline}\label{tmp:lininvx}  
        \intM[\Bal{z_i}{R\mu_i}]{\vprod{\Dg^{k/2}\phi_i}{\Dg^{k/2}\Zed[i]{j}}} \\
        \begin{aligned}
            &= \int_{\Bal{0}{R}} \vprod[\xi]{\Dg[\xi]^{k/2}\Cphi_i}{\Dg[\xi]^{k/2}Z_j} dy + o(1)\\
            &= \int_{\R^n} \vprod[\xi]{\Dg[\xi]^{k/2}\Cphi_i}{\Dg[\xi]^{k/2}\Ze{j}} dy + \bigO\big(\Hnorm[(\R^n \setminus \Bal{0}{R})]{k,2}{\Ze{j}}\big) + o(1)\\
            &= \int_{\R^n} \vprod[\xi]{\Dg[\xi]^{k/2}\Cphi_0}{\Dg[\xi]^{k/2}\Ze{j}} dy+ \bigO\big(\Hnorm[(\R^n \setminus \Bal{0}{R})]{k,2}{\Ze{j}}\big) + o(1)
        \end{aligned}
    \end{multline}
    since $\Cphi_i \wto \Cphi_0$ weakly in $\hSob(\R^n)$ as $i\to \infty$, up to a subsequence. Moreover,
    \begin{multline}\label{tmp:lininvxx}
        \abs{\intM[M\setminus \Bal{z_i}{R\mu_i}]{\vprod{\Dg^{k/2}\phi_i}{\Dg^{k/2}\Zed[i]{j}}}}\\
        \begin{aligned}
            &\leq \Snorm{k}{\phi_i} \bpr{\intM[M\setminus \Bal{z_i}{R\mu_i}]{\abs{\Dg^{k/2}\Zed[i]{j}}^2}}^{1/2}\\
            &\leq C \Hnorm[(\R^n \setminus \Bal{0}{R})]{k,2}{\Ze{j}}.
        \end{aligned} 
    \end{multline}
    Hence, letting $R\to \infty$ in \eqref{tmp:lininvx}, \eqref{tmp:lininvxx}, and using assumption \eqref{eq:prodphiZ} we conclude that 
    \[  \vprod[\hSob(\R^n)]{\Cphi_0}{\Ze{j}} = 0 \qquad j = 0,\ldots n,
        \]
    and thus $\Cphi_0 \in \Ker[]^\perp$. We then have $\Cphi_0 = 0$ and $\Cphi_i \wto 0$ in $\hSob(\R^n)$, up to a subsequence.

    \par\textbf{Step 3: Contradiction.} We now test \eqref{eq:forphi} against $\phi_i \in \Sob(M)$. Since $(\phi_i)$ is bounded in $\Sob(M)$, with \eqref{eq:bdedDgaZ}, and since that $\lambda_i^j \to 0$ for $j= 0,\ldots n$, we obtain 
    \[  \sum_{l=0}^k \tbinom{k}{l} \a_i^{k-l} \intM{\abs{\Dg^{l/2}\phi_i}^2} - (\deu-1)\intM{\TBub[i]^{\deu-2}\phi_i^2} = o(1)
        \]
    as $i\to \infty$. Observe that, as a consequence,
    \[  \Snorm{k}{\phi_i}^2 = \sum_{l=0}^k \intM{\abs{\Dg^{l/2}\phi_i}^2} \leq C \intM{\TBub[i]^{\deu-2}\phi_i^2} + o(1)
        \]
    as $i\to \infty$, and we estimate this last quantity. Fix some $R>0$, reasoning as before,
    \begin{align*}  
        \intM[\Bal{z_i}{R\mu_i}]{\TBub[i]^{\deu-2}\phi_i^2} &= \int_{\Bal{0}{R}} \Bub^{\deu-2} \Cphi_i^2 \,dy + o(1)\\
            &= \int_{\R^n} \Bub^{\deu-2} \Cphi_i^2 \,dy + \bigO\big(\Lnorm[(\R^n \setminus \Bal{0}{R})]{\deu}{\Bub}^{\deu-2}\big)+ o(1).
        \end{align*}
    Now since we have $\Cphi_i \wto 0$ in $\hSob(\R^n)$ up to a subsequence, using again standard integration theory we get $\Cphi_i \wto 0$ in $L^\deu(\R^n)$. Since $\Bub \in L^{\deu}(\R^n)$, this implies 
    \[  \int_{\R^n} \Bub^{\deu-2} \Cphi_i^2\,dy \to 0 \qquad \text{as $i\to \infty$}.
        \]
    We also obtain by Hölder's inequality and the Sobolev embedding 
    \begin{align*}  
        \intM[M\setminus \Bal{z_i}{R\mu_i}]{\TBub[i]^{\deu-2} \phi_i^2} &\leq C \bpr{\intM[M\setminus\Bal{z_i}{R\mu_i}]{\TBub[i]^\deu}}^{\frac{2k}{n}} \Snorm{k}{\phi_i}^2\\
            &\leq C \Lnorm[(\R^n \setminus \Bal{0}{R})]{\deu}{\Bub}^{\deu-2}.
    \end{align*}
    Letting $R\to \infty$, we have showed that
    \[  \Snorm{k}{\phi_i}^2 \to 0 \qquad \text{as $i\to \infty$}
        \]
    up to a subsequence, which contradicts \eqref{eq:normphi}.
\end{proof}

\subsection{Non-linear procedure in energy space}
The objective of this section is to construct a solution $u_{\zma} \in \Sob(M)$, satisfying \eqref{eq:critbub} up to terms that belong to $\Ker$, as a perturbation around $\TBub$. We prove existence and uniqueness of such a $u_{\zma} = \TBub + \phi_{\zma}$, for small $\phi_{\zma}$ in $\Sob(M)$. In the following we write $f_+(x) := \max(0,f(x))$ the positive part of $f$.
\begin{proposition}\label{prop:uniqHk}
    There exists $\delta>0$, $\a_0 \geq 1$ and $\tau_0 >0$ such that for all $\a \geq \a_0$, $\tau \leq \tau_0$, and $\pa = (\zm) \in \param$, there exists a unique solution 
    \[  \phi_{\zma} \in \Ker^\perp \cap \{\phi \in \Sob(M) \such \Snorm{k}{\phi}<\delta\}
        \]
    to 
    \begin{equation}\label{eq:nlinphi}
        \proKe\bsq{\TBub + \phi_{\zma} - (\Dg+\a)^{-k}\bpr{(\TBub+\phi_{\zma})_+^{\deu-1}}}= 0,
    \end{equation}
    writing $\proKe$ the projection in $\Sob(M)$ onto the orthogonal to $\Ker$ as defined in \eqref{def:kerzm}.
\end{proposition}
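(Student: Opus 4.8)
The plan is to recast equation~\eqref{eq:nlinphi} as a fixed-point problem on the closed subspace $\Ker^\perp$ of the Hilbert space $\Sob(M)$ and to solve it by the Banach fixed-point theorem, using the uniform invertibility of $\Lia$ from Proposition~\ref{prop:lininv}. Write the nonlinear remainder
\[
    N(\phi) := (\TBub+\phi)_+^{\deu-1} - \TBub^{\deu-1} - (\deu-1)\,\TBub^{\deu-2}\phi .
\]
Since $\TBub\geq 0$ and $(\Dg+\a)^k\TBub - \TBub^{\deu-1} = \err{B}$ by the definition of $\err{B}$ in Proposition~\ref{prop:estRa}, substituting $(\TBub+\phi)_+^{\deu-1} = \TBub^{\deu-1} + (\deu-1)\TBub^{\deu-2}\phi + N(\phi)$ into \eqref{eq:nlinphi}, using the linearity of $\proKe$ and $\proKe\phi=\phi$ for $\phi\in\Ker^\perp$, transforms \eqref{eq:nlinphi} into $\Lia\phi = \proKe(\Dg+\a)^{-k}\bpr{N(\phi)-\err{B}}$. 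By Proposition~\ref{prop:lininv} there are $\a_0\geq1$, $\tau_0>0$ and $C_0>0$ such that, for $\a\geq\a_0$, $\tau\leq\tau_0$ and $\pa\in\param$, the operator $\Lia$ is invertible on $\Ker^\perp$ with $\Snorm{k}{\Lia^{-1}\psi}\leq C_0\Snorm{k}{\psi}$ for all $\psi\in\Ker^\perp$; hence this is in turn equivalent to the fixed-point equation $\phi = T(\phi)$ on $\Ker^\perp$, where
\[
    T(\phi):=\Lia^{-1}\proKe(\Dg+\a)^{-k}\bpr{N(\phi)-\err{B}}.
\]

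The next step is to collect the a priori bounds that make $T$ a contraction on a small ball. By coercivity, \eqref{eq:Dgabded} gives $\Snorm{k}{(\Dg+\a)^{-k}g}\leq\Snorm{-k}{g}$, and $\proKe$ has operator norm at most $1$; combined with the bound on $\Lia^{-1}$ this yields $\Snorm{k}{T(\phi)}\leq C_0(\Snorm{-k}{N(\phi)}+\Snorm{-k}{\err{B}})$ and $\Snorm{k}{T(\phi_1)-T(\phi_2)}\leq C_0\Snorm{-k}{N(\phi_1)-N(\phi_2)}$. The error term is controlled by \eqref{eq:RainH-k}, which gives $\Snorm{-k}{\err{B}}\leq C(\sqa\mu)^{1/2}\leq C\tau^{1/4}$, uniformly in $\pa\in\param$. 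For the nonlinear term I would use the elementary pointwise estimates for $t\mapsto(\TBub+t)_+^{\deu-1}$ around $t=0$: when $\deu-1\geq2$ one has $\abs{N(\phi)}\leq C(\TBub^{\deu-3}\phi^2+\abs{\phi}^{\deu-1})$, and when $\deu-1<2$ one has $\abs{N(\phi)}\leq C\abs{\phi}^{\deu-1}$, with the analogous bounds for $N(\phi_1)-N(\phi_2)$ coming from the mean value theorem; the truncation $(\cdot)_+$ leaves these bounds unchanged. Inserting these into the embedding $L^{\frac{2n}{n+2k}}(M)\emb\Sob[-k](M)$, applying Hölder's inequality, the Sobolev embedding $\Sob(M)\emb L^\deu(M)$, and the uniform bound $\Lnorm[(M)]{\deu}{\TBub}\leq C$ furnished by \eqref{eq:ii} (for $\a_0$ large and $\tau_0$ small), a short computation with the exponents gives, for $\theta:=\min(2,\deu-1)>1$ and $C$ independent of $\a\geq\a_0$, $\tau\leq\tau_0$, $\pa\in\param$,
\[
    \Snorm{-k}{N(\phi)}\leq C\Snorm{k}{\phi}^{\theta},\qquad \Snorm{-k}{N(\phi_1)-N(\phi_2)}\leq C\bpr{\Snorm{k}{\phi_1}+\Snorm{k}{\phi_2}}^{\theta-1}\Snorm{k}{\phi_1-\phi_2},
\]
whenever $\Snorm{k}{\phi},\Snorm{k}{\phi_1},\Snorm{k}{\phi_2}\leq 1$.

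With these in hand I would fix the constants. First choose $\delta\in(0,1/2)$ small enough that $C_0C\delta^{\theta-1}\leq\tfrac14$ and $C_0C(2\delta)^{\theta-1}\leq\tfrac12$, then enlarge $\a_0$ and shrink $\tau_0$ (preserving the conclusion of Proposition~\ref{prop:lininv}) so that $C_0C\tau_0^{1/4}\leq\tfrac{\delta}{2}$. For $\phi$ in the closed ball $\overline{B}:=\{\phi\in\Ker^\perp\such\Snorm{k}{\phi}\leq\delta\}$ the estimates above give $\Snorm{k}{T(\phi)}\leq C_0C\delta^{\theta}+C_0C\tau^{1/4}\leq\tfrac{\delta}{4}+\tfrac{\delta}{2}<\delta$, so $T$ maps $\overline{B}$ into its interior, and $\Snorm{k}{T(\phi_1)-T(\phi_2)}\leq C_0C(2\delta)^{\theta-1}\Snorm{k}{\phi_1-\phi_2}\leq\tfrac12\Snorm{k}{\phi_1-\phi_2}$ for $\phi_1,\phi_2\in\overline{B}$. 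Since $\overline{B}$, with the metric induced by $\Snorm{k}{\cdot}$, is complete, the Banach fixed-point theorem produces a unique $\phi_{\zma}\in\overline{B}$ with $T(\phi_{\zma})=\phi_{\zma}$; it lies in the open ball $\{\phi\in\Ker^\perp\such\Snorm{k}{\phi}<\delta\}$ and, reversing the equivalences, solves \eqref{eq:nlinphi}, and it is the only solution there.

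Given Propositions~\ref{prop:lininv} and \ref{prop:estRa}, this is a fairly standard Lyapunov--Schmidt-type contraction, and the diverging coefficient $\a$ enters only through quantities already controlled in those statements. The points requiring genuine care are the \emph{uniformity} of every constant in $\a$, $\tau$ and $\pa$ as $\a\to\infty$ and $\tau\to0$ --- which is exactly what the uniform bound $\Snorm{k}{\Lia^{-1}\psi}\leq C_0\Snorm{k}{\psi}$ and the decay $\Snorm{-k}{\err{B}}\leq C(\sqa\mu)^{1/2}$ provide --- and the bookkeeping of the nonlinear estimates across the two regimes $\deu-1\geq2$ and $\deu-1<2$, together with the (harmless) presence of the truncation $(\cdot)_+$. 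I expect the exponent accounting in the nonlinear estimates to be the most delicate routine part, while no new conceptual difficulty arises beyond Proposition~\ref{prop:lininv}.
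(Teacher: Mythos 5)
Your proposal is correct and follows essentially the same route as the paper: recast \eqref{eq:nlinphi} as a fixed-point problem for $\phi=T(\phi)$ on a small ball of $\Ker^\perp$, using the uniform invertibility of $\Lia$ (Proposition \ref{prop:lininv}), the smallness $\Snorm{-k}{\err{B}}\leq C(\sqa\mu)^{1/2}$ from Proposition \ref{prop:estRa}, and superlinear estimates on the nonlinear remainder (your $N$ is the paper's $G_{\zma}$, and your exponent $\theta=\min(2,\deu-1)$ plays the role of the paper's $1+\theta$ with $0<\theta<\min(1,\deu-2)$), then conclude by Banach's fixed-point theorem with constants uniform in $\a,\tau,\pa$.
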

The proof follows from a fixed-point argument. We refer to \cite{MusPis02,RobVet13} for instances where a similar method is developed in the case $k=1$ and for operators with bounded coefficients. 
We will need to estimate the following two quantities.
\begin{definition}
    Let $\a \geq 1$ be such that $1/\sqa < \inj/2$, $\tau\leq 1$, and $\pa = (\zm)\in \param$. We define 
    \begin{equation}\label{def:EaGa}
        \begin{aligned}
            E_{\zma} &:= \proKe\bsq{\TBub - (\Dg+\a)^{-k} \TBub^{\deu-1}}\\
            G_{\zma}(\phi) &:= (\TBub+\phi)_+^{\deu-1} - \TBub^{\deu-1} - (\deu-1) \TBub^{\deu-2}\phi \qquad \forall~\phi \in \Ker^\perp.
        \end{aligned}
    \end{equation}
\end{definition}
\begin{lemma}\label{prop:EaHk}
    For all $\delta>0$, there exists $\a_0\geq 1$ and $\tau_0> 0$ such that, for all $\a \geq \a_0$, $\tau\leq \tau_0$, and $\pa \in \param$, we have 
    \[  \Snorm{k}{E_{\zma}} \leq \delta.
        \]
\end{lemma}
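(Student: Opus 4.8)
The plan is to reduce the estimate directly to the $\Sob[-k]$-bound on the error term $\err{B}$ established in Proposition~\ref{prop:estRa}. Since $\proKe$ is the orthogonal projection onto $\Ker^\perp$ in the Hilbert space $\Sob(M)$, it is norm-non-increasing, so
\[ \Snorm{k}{E_{\zma}} \leq \Snorm{k}{\TBub - (\Dg+\a)^{-k}\TBub^{\deu-1}}. \]
Because $(\Dg+\a)^k$ is coercive, it is an isomorphism from $\Sob(M)$ onto $\Sob[-k](M)$; moreover $\TBub^{\deu-1}\in L^{\frac{2n}{n+2k}}(M)\emb\Sob[-k](M)$ since $\TBub\in L^{\deu}(M)$ by \eqref{eq:ii}, so the right-hand side makes sense. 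I would then factor out $(\Dg+\a)^{-k}$ and invoke the very definition of $\err{B}$ to write
\[ \TBub - (\Dg+\a)^{-k}\TBub^{\deu-1} = (\Dg+\a)^{-k}\bpr{(\Dg+\a)^k\TBub - \TBub^{\deu-1}} = (\Dg+\a)^{-k}\err{B}. \]

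Next I would apply the coercivity estimate \eqref{eq:Dgabded} with $f=\err{B}$, followed by the bound \eqref{eq:RainH-k} of Proposition~\ref{prop:estRa}, to get
\[ \Snorm{k}{E_{\zma}} \leq \Snorm{-k}{\err{B}} \leq C(\sqa\mu)^{1/2}, \]
where $C>0$ is the constant from Proposition~\ref{prop:estRa}, independent of $\a$, $\tau$ and $\pa$. Since $\pa=(\zm)\in\param$ means $\a\mu^2<\tau$, and hence $\sqa\mu<\sqrt{\tau}$, this gives $\Snorm{k}{E_{\zma}}\leq C\tau^{1/4}$.

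Given $\delta>0$, it then suffices to choose $\tau_0:=\min\{1,(\delta/C)^4\}$ and $\a_0\geq 1$ large enough that $1/\sqrt{\a_0}<\inj/2$, so that Proposition~\ref{prop:estRa} applies; for all $\a\geq\a_0$, $\tau\leq\tau_0$ and $\pa\in\param$ we then have $\Snorm{k}{E_{\zma}}\leq C\tau^{1/4}\leq C\tau_0^{1/4}\leq\delta$. I do not expect any real obstacle here: the one point that requires attention is that the constant in \eqref{eq:RainH-k} is uniform in all the parameters — which is exactly what Proposition~\ref{prop:estRa} provides — so that the smallness of $\Snorm{k}{E_{\zma}}$ is genuinely governed by $\tau$ alone and $\tau_0$ can be fixed once and for all.
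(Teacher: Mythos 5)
Your proposal is correct and follows essentially the same route as the paper: projection is norm-non-increasing, the difference is rewritten as $(\Dg+\a)^{-k}\err{B}$, and then \eqref{eq:Dgabded} combined with the bound \eqref{eq:RainH-k} of Proposition~\ref{prop:estRa} gives $\Snorm{k}{E_{\zma}}\leq C(\sqa\mu)^{1/2}\leq C\tau^{1/4}$, after which $\tau_0$ (and $\a_0$ so that the proposition applies) is chosen accordingly. No gaps.
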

\begin{proof}
    This is an immediate consequence of Proposition \ref{prop:estRa}. Since $\proKe$ is a projection, and by \eqref{eq:Dgabded}, we have
    \begin{align*}  
        \Snorm{k}{E_{\zma}} &\leq \Snorm{k}{\TBub - (\Dg+\a)^{-k}\TBub^{\deu-1}}\\
            &\leq \Snorm{-k}{(\Dg+\a)^k \TBub - \TBub^{\deu-1}} \leq \tau^{1/4}.
    \end{align*}
\end{proof}
\begin{lemma}\label{prop:GaHk}
    There exist $0<\theta < \min(1, \deu-2)$, $\a_0 \geq 1$, $\tau_0\leq 1$ and a constant $C>0$ such that, for all $\a \geq \a_0$, $\tau\leq \tau_0$, and $\pa \in \param$, the following holds. Let $\phi_0, \phi_1, \phi_2 \in L^\deu(M)$ be such that $\Lnorm[(M)]{\deu}{\phi_j} \leq 1$, $j=0,1,2$, then 
    \begin{align}
    \label{tmp:Ga1}    &\Lnorm[(M)]{\frac{2n}{n+2k}}{G_{\zma}(\phi_0)} \leq C \Lnorm[(M)]{\deu}{\phi_0}^{1+\theta}\\
    \label{tmp:Ga2}    &\Lnorm[(M)]{\frac{2n}{n+2k}}{G_{\zma}(\phi_1)- G_{\zma}(\phi_2)}\\
        \notag &\qquad\qquad\qquad \leq C \bpr{\Lnorm[(M)]{\deu}{\phi_1}^\theta + \Lnorm[(M)]{\deu}{\phi_2}^\theta} \Lnorm[(M)]{\deu}{\phi_1-\phi_2}.
    \end{align}
\end{lemma}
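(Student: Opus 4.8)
The plan is to reduce Lemma \ref{prop:GaHk} to a single elementary pointwise inequality and then conclude by Hölder's inequality, exactly in the spirit of the lower-order estimates from the classical $k=1$ theory. I would fix once and for all a parameter $\theta\in\bpr{0,\min(1,\deu-2)}$ — this is a nonempty interval since $\deu-2=\tfrac{4k}{n-2k}>0$ — and first establish the pointwise bound: there is $C>0$, depending only on $n,k,\theta$, such that for all $s\geq 0$ and all $t_1,t_2\in\R$,
\begin{multline*}
    \abs{(s+t_1)_+^{\deu-1}-(s+t_2)_+^{\deu-1}-(\deu-1)s^{\deu-2}(t_1-t_2)}\\ \leq C\bsq{s^{\deu-2-\theta}\bpr{\abs{t_1}^\theta+\abs{t_2}^\theta}+\abs{t_1}^{\deu-2}+\abs{t_2}^{\deu-2}}\abs{t_1-t_2}.
\end{multline*}
Evaluated at each $x\in M$ with $s=\TBub(x)\geq 0$ and $t_j=\phi_j(x)$, this gives a pointwise control of $G_{\zma}(\phi_1)-G_{\zma}(\phi_2)$; specialising to $t_2=0$ (and using $G_{\zma}(0)=0$, which holds since $\TBub\geq 0$) yields $\abs{G_{\zma}(\phi_0)}\leq C\bpr{\TBub^{\deu-2-\theta}\abs{\phi_0}^{1+\theta}+\abs{\phi_0}^{\deu-1}}$, which will produce \eqref{tmp:Ga1}.

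For the pointwise inequality, the point is that $\deu-1>1$, so $g(u):=u_+^{\deu-1}$ is $C^1$ on $\R$ with $g'(u)=(\deu-1)u_+^{\deu-2}$; the fundamental theorem of calculus then rewrites the left-hand side as $(\deu-1)(t_1-t_2)\int_0^1\bpr{(s+\sigma_r)_+^{\deu-2}-s^{\deu-2}}\,dr$, where $\sigma_r:=t_2+r(t_1-t_2)$ satisfies $\abs{\sigma_r}\leq T:=\abs{t_1}+\abs{t_2}$. It therefore suffices to bound $\abs{(s+\sigma)_+^q-s^q}$ by $C\bpr{s^{q-\theta}T^\theta+T^q}$ whenever $q:=\deu-2>0$ and $\abs{\sigma}\leq T$; afterwards, subadditivity of $t\mapsto t^\theta$ and $(a+b)^q\leq C(a^q+b^q)$ finish the job. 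For $s\leq T$ this is crude, since $(s+\sigma)_+\leq 2T$ and $s\leq T$. For $s>T$ one has $s+\sigma>0$, and if $q\leq 1$ then $\abs{(s+\sigma)^q-s^q}\leq\abs{\sigma}^q\leq T^q$, while if $q>1$ the mean value theorem gives $\leq q(s+T)^{q-1}T\leq Cs^{q-1}T=Cs^{q-\theta}T^\theta(T/s)^{1-\theta}\leq Cs^{q-\theta}T^\theta$, using $T<s$ and $1-\theta>0$.

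The last step is to integrate. Raising the pointwise bounds to the power $\tfrac{2n}{n+2k}$ and integrating over $M$, I would apply Hölder's inequality with the exponents $\bpr{\tfrac{2n}{4k-\theta(n-2k)},\tfrac{\deu}{\theta},\deu}$ on the term containing $\TBub$ — these are admissible precisely because $\theta<\deu-2=\tfrac{4k}{n-2k}$, and one checks $\tfrac{2n}{4k-\theta(n-2k)}\cdot(\deu-2-\theta)=\deu$ and $\tfrac{\deu}{\theta}\cdot\theta=\deu$ — and with the exponents $\bpr{\tfrac{n}{2k},\deu}$ on the terms $\abs{\phi_j}^{\deu-2}\abs{\phi_1-\phi_2}$ (noting $\tfrac{n}{2k}\cdot(\deu-2)=\deu$), while for the pure power $\abs{\phi_0}^{\deu-1}$ one only changes the Lebesgue exponent, since $(\deu-1)\cdot\tfrac{2n}{n+2k}=\deu$. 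This bounds the left-hand sides of \eqref{tmp:Ga1} and \eqref{tmp:Ga2} by $\Lnorm[(M)]{\deu}{\TBub}^{\deu-2-\theta}$ times suitable powers of $\Lnorm[(M)]{\deu}{\phi_j}$ and $\Lnorm[(M)]{\deu}{\phi_1-\phi_2}$. By \eqref{eq:ii}, after enlarging $\a_0$ and shrinking $\tau_0$ one has $\Lnorm[(M)]{\deu}{\TBub}\leq C$ uniformly over $\a\geq\a_0$, $\tau\leq\tau_0$, $\pa\in\param$; and since $\Lnorm[(M)]{\deu}{\phi_j}\leq 1$ and $\theta\leq\deu-2$, the powers $\deu-2$ and $\deu-1$ may be replaced by $\theta$ and $1+\theta$. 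This yields \eqref{tmp:Ga1} and \eqref{tmp:Ga2}.

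I expect the only genuinely delicate point to be the elementary inequality: one must simultaneously accommodate the positive-part truncation (so that $g$ is merely $C^1$, with $g'$ not necessarily Lipschitz) and the two regimes $\deu-2\lessgtr 1$; everything else is Hölder bookkeeping, the single binding constraint being $\theta<\deu-2$, which fixes the admissible range of the auxiliary exponent $\theta$.
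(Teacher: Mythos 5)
Your proposal is correct and follows essentially the same route as the paper: the elementary pointwise inequalities \eqref{eq:estpoint1}--\eqref{eq:estpoint2} applied with $b=\TBub(x)$, followed by Hölder's inequality with the exponents you list and the uniform bound on $\Lnorm[(M)]{\deu}{\TBub}$ coming from \eqref{eq:ii}. The only difference is that you actually prove the pointwise inequality (via the $C^1$ regularity of $u\mapsto u_+^{\deu-1}$, the fundamental theorem of calculus, and the two regimes $\deu-2\lessgtr 1$), which the paper simply asserts; your argument for it is sound.
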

\begin{proof}
    We use the following identity: There exist a constant $C>0$ and an exponent $0< \theta < \min(1,\deu-2)$ such that for all $b>0$, $a,a_1,a_2 \in \R$, the following holds:
    \begin{equation}\label{eq:estpoint1} 
        \abs{(b+a)_+^{\deu-1} - b^{\deu-1} - (\deu-1)\, b^{\deu-2} a} \leq C \abs{a}\bpr{\abs{a}^{\deu-2} + b^{\deu-2-\theta}\abs{a}^{\theta}}
    \end{equation}
    and 
    \begin{multline}\label{eq:estpoint2}
        \abs{(b+a_1)_+^{\deu-1} - (b+a_2)_+^{\deu-1} - (\deu-1)\, b^{\deu-2}(a_1 - a_2)}\\
            \leq C \abs{a_1 - a_2} \bpr{b^{\deu-2-\theta}\abs{a_1}^\theta + \abs{a_1}^{\deu-2} + b^{\deu-2-\theta}\abs{a_2}^\theta + \abs{a_2}^{\deu-2}}.
    \end{multline}
    Apply \eqref{eq:estpoint1} at each point and integrate, this gives
    \begin{multline}\label{tmp:Gax}
        \intM{\abs{(\TBub + \phi_0)_+^{\deu-1} - \TBub^{\deu-1} - (\deu-1) \TBub^{\deu-2} \phi_0}^{\frac{2n}{n+2k}}}\\
            \begin{aligned}
                &\leq C\intM{\bpr{\abs{\phi_0}^{\deu-1} + \TBub^{\deu-2-\theta}\abs{\phi_0}^{1+\theta}}^{\frac{2n}{n+2k}}}\\
                &\leq C \intM{\abs{\phi_0}^\deu} + C \bpr{\intM{\TBub^\deu}}^{\frac{\deu-2-\theta}{\deu-1}} \bpr{\intM{\abs{\phi_0}^\deu}}^{\frac{1+\theta}{\deu-1}}
            \end{aligned}
    \end{multline}
    by Hölder's inequality. Now using \eqref{eq:ii} we get that, for a big enough $\a_0$, small enough $\tau_0$, there exists $C>0$ such that for all $\a \geq \a_0$, $\tau\leq \tau_0$, and $\pa \in \param$,
    \[  \bpr{\intM{\TBub^\deu}}^{\frac{\deu-2-\theta}{\deu-1}} \bpr{\intM{\abs{\phi_0}^\deu}}^{\frac{1+\theta}{\deu-1}} \leq C \Lnorm[(M)]{\deu}{\phi_0}^{(1+\theta)\frac{2n}{n+2k}}.
        \]
    Therefore, coming back to \eqref{tmp:Gax}, we have showed that
    \[  \Lnorm[(M)]{\frac{2n}{n+2k}}{G_{\zma}(\phi_0)} \leq C \bpr{\Lnorm[(M)]{\deu}{\phi_0}^{\deu-1} + \Lnorm[(M)]{\deu}{\phi_0}^{1+\theta}} \leq C \Lnorm[(M)]{\deu}{\phi_0}^{1+\theta}
        \]
    since $\Lnorm[(M)]{\deu}{\phi_0} \leq 1$. 
    \par Using the same arguments with \eqref{eq:estpoint2}, we obtain \eqref{tmp:Ga2}.
\end{proof}
\begin{proof}[Proof of Proposition \ref{prop:uniqHk}]
    Let $\a_0$ and $\tau_0$ be given as the respective maxima and minima of the values obtained in Proposition \ref{prop:lininv} and Lemma \ref{prop:GaHk}, and let $\a \geq \a_0$, $\tau\leq \tau_0$, and $\pa = (\zm) \in \param$. Observe that $\phi \in \Ker^\perp$ is a solution to \eqref{eq:nlinphi} if and only if it satisfies
    \begin{equation}\label{tmp:eqphix}
        E_{\zma} - \proKe \circ (\Dg+\a)^{-k} \big(G_{\zma}(\phi)\big) + \Lia\phi = 0,
    \end{equation}
    where $E_{\zma}$ and $G_{\zma}$ are defined in \eqref{def:EaGa}. By the choice of $\a_0$, $\tau_0$, $\Lia$ is uniformly invertible, so that defining the operator $T_{\zma} : \Ker^\perp \to \Ker^\perp$ as 
    \[  T_{\zma}(\phi) := -\Lia^{-1}\big(E_{\zma}\big) + \Lia^{-1} \circ \proKe \circ (\Dg+\a)^{-k} \big(G_{\zma}(\phi)\big),
        \]
    equation \eqref{tmp:eqphix} is equivalent to 
    \[  \phi = T_{\zma}(\phi).
        \]
    We first show that $T_{\zma}$ stabilizes the set
    \[  \Set_\delta := \{ \phi \in \Ker^\perp \such \Snorm{k}{\phi} < \delta\},
        \]
    for a small enough $0< \delta\leq 1$ to be chosen later. We have
    \begin{align}
        \notag &\Snorm{k}{\Lia^{-1}(E_{\zma})} \leq C \Snorm{k}{E_{\zma}}\\
        \label{tmp:estGaxx} &\Snorm{k}{\Lia^{-1}\circ \proKe \circ (\Dg+\a)^{-k}\big(G_{\zma}(\phi)\big)} \leq C \Snorm{-k}{G_{\zma}(\phi)}.
    \end{align}
    Using the continuous embeddings $L^{\frac{2n}{n+2k}}(M) \emb \Sob[-k](M)$ and $\Sob(M) \emb L^\deu(M)$, and with Lemma \ref{prop:GaHk}, we have $G_{\zma}(\phi) \in \Sob[-k](M)$ and 
    \[  \Snorm{-k}{G_{\zma}(\phi)} \leq C \Lnorm[(M)]{\deu}{\phi}^{1+\theta} \leq C \delta^{1+\theta}
        \]
    for all $\phi \in \Set_\delta$, for some $0<\theta<\min(1,\deu-2)$ independent of $\a,\tau$. Using Lemma \ref{prop:EaHk}, up to increasing $\a_0$ and decreasing $\tau_0$, we obtain
    \begin{align*}
        \Snorm{k}{T_{\zma}} &\leq C \bpr{\Snorm{k}{E_{\zma}} + \Snorm{-k}{G_{\zma}(\phi)}}\\
            &\leq \frac{\delta}{2} + C \delta^{1+\theta}.
    \end{align*}
    We now fix $\delta >0$ small enough so that $C\delta^{1+\theta} < \delta/2$, and we see that $T_{\zma}(\Set_\delta) \subeq \Set_\delta$.
    \par Let $\phi_1,\phi_2 \in \Set_\delta$, we have 
    \begin{align*}
        \Snorm{k}{T_{\zma}(\phi_1) - T_{\zma}(\phi_2)} &\leq C \Snorm{-k}{G_{\zma}(\phi_1)- G_{\zma}(\phi_2)}\\
            &\leq C \delta^\theta \Snorm{k}{\phi_1-\phi_2}
    \end{align*}
    using again \eqref{tmp:estGaxx}, the continuous Sobolev embeddings, and Lemma \ref{prop:GaHk}. Up to choosing $\delta>0$ small enough so that $C\delta^\theta < 1$, $T_{\zma}$ is a contraction on $\Set_\delta$. By Banach's fixed-point Theorem, there exists a unique $\phi \in \Set_\delta$ such that $T_{\zma}(\phi)= \phi$.
\end{proof}
\begin{remark}
    Note that nothing imposes the solution $u_{\zma} := \TBub + \phi_{\zma}$ of \eqref{eq:nlinphi} to be positive in $M$, so that we must take this particularity into account when raising it to the critical exponent $\deu \not\in \Z$. This justifies having considered the positive part of $u_{\zma}$ in \eqref{eq:nlinphi}. This equation then corresponds to the projection onto $\Ker^\perp$ of the slightly modified equation
    \begin{equation}\label{eq:modcrit}
        (\Dg+\a)^k u = u_+^{\deu-1}, \qquad u \in \Sob(M).
    \end{equation}
    However, thanks to the factorized form of the polyharmonic operator, if $u_{\zma}$ is a solution to \eqref{eq:modcrit}, we can use the maximum principle and $u_{\zma}$ is non-negative. Therefore, $(u_{\zma})_+ = u_{\zma}$, and $u_{\zma}$ solves \eqref{eq:critbub}. Conversely, if $u_{\zma}$ is a positive solution to \eqref{eq:critbub}, then it also satisfies \eqref{eq:nlinphi}.
\end{remark}

\section{Blow-up analysis}\label{sec:pointwise}
\subsection{Pointwise behavior of solutions to the linearized equation}
In section \ref{sec:unifinv}, we proved that $\Lia$ is a bounded operator, uniformly invertible in $\Ker^\perp$ with respect to $\a\geq \a_0$, $\tau\leq \tau_0$. For any sequence $(R_i)_i$ such that $R_i \to 0$ in $\Sob[-k](M)$, for any $(\a_i)_i$, $(\tau_i)_i$ such that $\a_i \to \infty$, $\tau_i \to 0$ as $i\to \infty$ and $\pa_i\in \param[i]$, we have trivially $\proKi\bpr{(\Dg+\a_i)^{-k} R_i} \in \Keri^\perp$ and Proposition \ref{prop:lininv} applies : There exists $i_0$ such that for all $i\geq i_0$, there is a unique $\phi_i \in \Keri$ and a unique family $\{\lambda_i^j \in \R \such j=0,\ldots n\}$ satisfying
\[  (\Dg+\a_i)^k \phi_i - (\deu-1) \TBub[i]^{\deu-2} \phi_i = R_i + \sum_{j=0}^n \lambda_i^j (\Dg+\a_i)^k \Zed[i]{j}.
    \]
Moreover, the $\Sob(M)$ norm of $\phi_i$ is controlled by the $\Sob[-k](M)$ norm of $R_i$. This control can in fact be improved, we show here below how $\phi_i$ inherits pointwise estimates from $R_i$, when $R_i$ has suitable bounds. 
\begin{definition}
    Let $\a\geq 1$ be such that $1/\sqa < \inj/2$, $\tau\leq 1$, and $\pa = (\zm) \in \param$. We define for all $x \in M$ such that $\dg{z,x} \leq 1/\sqa$,
    \begin{multline}\label{def:ra} 
        r_{\zma}(x) := \\ \begin{cases}
            \sqa (\mu + \dg{z,x})\big(1 + \abs{\log \sqa(\mu+\dg{z,x})}\big) & \text{if } n =2k+1\\
            \a^{3/4} (\mu+ \dg{z,x})^{3/2} & \begin{aligned}[t]
                &\text{if } k=1, \,n\geq 4\\
                &~\text{or } k\geq 2,\, n=2k+2
            \end{aligned}\\
            \a (\mu + \dg{z,x})^2 & \text{if } k \geq 2,\,n\geq 2k+3
        \end{cases},
    \end{multline}
    so that $r_{\zma}(x) = \rad\big(\sqa(\mu + \dg{z,x})\big)$, for $\rad$ as in \eqref{def:rad}.
\end{definition}
This subsection is devoted to the proof of the following result.

\begin{theorem}\label{prop:linest}
    Let $(\tau_i)_i$ and $(\a_i)_i$ be sequences of positive numbers such that $\a_i \to \infty, \tau_i \to 0$ as $i\to \infty$, and let $\pa_i = (\zm[i]) \in \param[i]$ for all $i$. There exists $C_0>0$ and $i_0>0$ such that the following holds :
    \par Let $(R_i)_i$ be a sequence of continuous functions in $M$, satisfying
    \begin{equation}\label{eq:estRiprop}
        \abs{R_i(x)} \leq \Bub_{\zm[i]} \times\begin{cases}
            \a_i^{\sigma_{n,k}} (\mu_i + \dg{z_i, x})^{2\sigma_{n,k}-2k} & \text{when } \sqai \dg{z_i,x} \leq 1\\
            \a_i^k e^{-\sqai \dg{z_i,x}/2} & \text{when } \sqai \dg{z_i,x} \geq 1
        \end{cases}
    \end{equation}
    in $M$, where 
    \begin{equation}\label{def:snk}
        \sigma_{n,k} = \begin{cases}
            1/2 & \text{if } n=2k+1\\
            3/4 & \begin{aligned}[t]
                &\text{if } k=1, \,n\geq 4\\
                &~\text{or } k\geq 2,\, n=2k+2
            \end{aligned}\\
            1   & \text{if } k\geq 2,\, n\geq 2k+3
        \end{cases}.
    \end{equation}
    Let, for all $i\geq i_0$, $\phi_i \in \Keri^\perp$ be the unique solution given by Proposition \ref{prop:lininv} to 
    \begin{equation}\label{eq:linphiR}
        (\Dg+\a_i)^k \phi_i - (\deu-1) \TBub[i]^{\deu-2} \phi_i = R_i + \sum_{j=0}^n \lambda_i^j (\Dg+\a_i)^k \Zed[i]{j},
    \end{equation}
    for some unique choice of $\lambda_i^j \in \R$, $j=0,\ldots n$. Then $\phi_i$ is continuous in $M$ and satisfies 
    \begin{multline}\label{eq:estphi}
        \abs{\phi_i(x)} \leq C_0 \Bub_{\zm[i]}(x)\\ \times \begin{cases}
        r_{\zma[i]}(x) & \text{when } \sqai \dg{z_i,x}\leq 1\\
        \a_i^k \dg{z_i,x}^{2k} e^{-\sqai \dg{z_i,x}/2} & \text{when } \sqai \dg{z_i,x}\geq 1,
    \end{cases}
        \end{multline}
    where $r_{\zma[i]}$ is defined in \eqref{def:ra}.
\end{theorem}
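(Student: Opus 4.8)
The plan is to argue by contradiction through a rescaling argument, in the spirit of \cite{Pre24}, based on the Green's representation formula for $(\Dg+\a_i)^k$ together with the sharp pointwise estimates on its Green's function $\Gga[i]$ from \cite{Car24}. Write, for the weight appearing on the right-hand side of \eqref{eq:estphi},
\[  w_i(x) := \Bub_{\zm[i]}(x)\times\begin{cases} r_{\zma[i]}(x) & \text{when } \sqai\dg{z_i,x}\leq 1,\\ \a_i^k\dg{z_i,x}^{2k}e^{-\sqai\dg{z_i,x}/2} & \text{when } \sqai\dg{z_i,x}\geq 1,\end{cases}\]
and set $\Lambda_i := \sup_M \tabs{\phi_i}/w_i$. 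First I would check that each $\phi_i$ is continuous — hence $\Lambda_i<\infty$, since $w_i>0$ on $M$ — by elliptic bootstrap on \eqref{eq:linphiR}, using $R_i\in C^0(M)$ and $\TBub[i]^{\deu-2}\phi_i\in L^{2n/(n+2k)}(M)$; and I would record, testing \eqref{eq:linphiR} against the $\Zed[i]{j}$ and using \eqref{eq:RainH-k}, \eqref{eq:DgaZZ} and $L^{2n/(n+2k)}(M)\emb\Sob[-k](M)$, the bounds
\[  \Snorm{k}{\phi_i}+\sum_{j=0}^n\tabs{\lambda_i^j}\leq C\,\Snorm{-k}{R_i}\leq C\,\rad(\sqai\mu_i),\]
where the last inequality follows from \eqref{eq:estRiprop} and the choice \eqref{def:snk} of $\sigma_{n,k}$ (as in the referenced Lemma \ref{prop:estintRX}).

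From coercivity of $(\Dg+\a_i)^k$ and \eqref{eq:linphiR} one has the representation $\phi_i(x)=\intM{\Gga[i](x,\cdot)\bsq{(\deu-1)\TBub[i]^{\deu-2}\phi_i+R_i}}+\sum_{j}\lambda_i^j\Zed[i]{j}(x)$, so the proof reduces to three pointwise estimates, uniform in $i$ and $x$, proved by splitting $M$ into the regions $\dg{z_i,\cdot}\leq\mu_i$, $\mu_i\leq\dg{z_i,\cdot}\leq 1/\sqai$ and $\dg{z_i,\cdot}\geq 1/\sqai$ and invoking the bounds on $\Gga[i]$ from \cite{Car24}, on $\Zed[i]{j}$ from Proposition \ref{prop:estdifX}, and on $R_i$ from \eqref{eq:estRiprop}: namely \textbf{(i)} $\intM{\Gga[i](x,\cdot)\tabs{R_i}}\leq C\,w_i(x)$; \textbf{(ii)} $\sum_{j}\tabs{\lambda_i^j}\tabs{\Zed[i]{j}(x)}\leq C\,w_i(x)$, which uses $\tabs{\lambda_i^j}\leq C\rad(\sqai\mu_i)$ from the first step; and \textbf{(iii)} $\intM{\Gga[i](x,\cdot)\TBub[i]^{\deu-2}w_i}\leq C\,\Theta_i(x)\,w_i(x)$ with $\Theta_i\leq C$ always and $\Theta_i(x_i)\to 0$, as $i\to\infty$, for every sequence $(x_i)$ with $\dg{z_i,x_i}/\mu_i\to\infty$ or $\sqai\dg{z_i,x_i}\not\to 0$. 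The exponents $\sigma_{n,k}$ and the precise shape \eqref{def:rad} of $\rad$ — including the logarithmic factor when $n=2k+1$ — are tuned exactly so that (i) holds with an $\bigO(1)$ constant; the content of (iii) is that $\TBub[i]^{\deu-2}$ concentrates at scale $\mu_i$, so the convolution only sees the small values of $w_i$ near $z_i$, while the exponential decay of $\Gga[i]$ (strictly faster than that of $\TBub[i]$) makes the region $\sqai\dg{z_i,\cdot}\geq 1$ negligible.

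Granting (i)--(iii), assume by contradiction that $\Lambda_i\to\infty$ along a subsequence, and pick $x_i$ with $\tabs{\phi_i(x_i)}\geq\tfrac12\Lambda_i w_i(x_i)$. Dividing the representation at $x_i$ by $\Lambda_iw_i(x_i)$, using $\tabs{\phi_i}\leq\Lambda_iw_i$ pointwise together with (i)--(iii), gives $\tfrac12\leq(\deu-1)C\,\Theta_i(x_i)+C/\Lambda_i$, so for $i$ large $\Theta_i(x_i)\geq c_0>0$; by the properties of $\Theta_i$ this forces, up to a subsequence, $\dg{z_i,x_i}\leq C\mu_i$ and $\sqai\dg{z_i,x_i}\to 0$, so $y_i:=\tfrac1{\mu_i}\exp_{z_i}^{-1}(x_i)$ stays bounded. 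I would then rescale: with $\Tg_i:=\exp_{z_i}^*g(\mu_i\,\cdot)$ and $\hat\phi_i(y):=\tfrac{\mu_i^{(n-2k)/2}}{\Lambda_i\rad(\sqai\mu_i)}\phi_i(\exp_{z_i}(\mu_iy))$ on $\Bal{0}{\varrho/\mu_i}$, the same computation as in Step 2 of the proof of Proposition \ref{prop:lininv} shows that $\hat\phi_i$ solves $(\Dg[\Tg_i]+\mu_i^2\a_i)^k\hat\phi_i-(\deu-1)\big(\mu_i^{(n-2k)/2}\TBub[i](\exp_{z_i}(\mu_i\,\cdot))\big)^{\deu-2}\hat\phi_i=\hat R_i$ with $\hat R_i\to 0$ in $C^0_{loc}(\R^n)$ — the rescaled right-hand side of \eqref{eq:linphiR} being controlled, via (i)--(ii) and the matching $\a_i^{\sigma_{n,k}}\mu_i^{2\sigma_{n,k}}\leq C\rad(\sqai\mu_i)$, by $C\Bub(y)(1+\tabs{y})^{2\sigma_{n,k}-2k}$, then divided by $\Lambda_i\to\infty$. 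Since $\Tg_i\to\xi$, $\mu_i^2\a_i<\tau_i\to 0$, and $\mu_i^{(n-2k)/2}\TBub[i](\exp_{z_i}(\mu_i\,\cdot))\to\Bub$ in $C^\infty_{loc}$ (Proposition \ref{prop:Tconc}), with $\tabs{\hat\phi_i(y)}\leq C\Bub(y)(1+\tabs{y})^{2\sigma_{n,k}}$ from $\tabs{\phi_i}\leq\Lambda_iw_i$, elliptic estimates give $\hat\phi_i\to\hat\phi_0$ in $C^{2k-1}_{loc}$ with $\Dg[\xi]^k\hat\phi_0=(\deu-1)\Bub^{\deu-2}\hat\phi_0$; the pointwise bound forces $\Dg[\xi]^k\hat\phi_0\in L^{2n/(n+2k)}(\R^n)$, so $\hat\phi_0\in\hSob(\R^n)\cap\Ker[]$. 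On the other hand the orthogonality $\vprod[\Sob(M)]{\phi_i}{\Zed[i]{j}}=0$ passes to the limit (exactly as in Step 2 of the proof of Proposition \ref{prop:lininv}) to give $\vprod[\hSob(\R^n)]{\hat\phi_0}{\Ze{j}}=0$ for all $j$, i.e. $\hat\phi_0\in\Ker[]^\perp$, whence $\hat\phi_0=0$; but $\tabs{\hat\phi_i(y_i)}\geq\tfrac12\Bub(y_i)\,\rad(\sqai(\mu_i+\dg{z_i,x_i}))/\rad(\sqai\mu_i)\geq c>0$ with $y_i$ bounded, a contradiction. This yields $\Lambda_i\leq C_0$, i.e. \eqref{eq:estphi}, for $i$ large.

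The main difficulty is concentrated in estimates (i) and (iii): both rest on the sharp pointwise control of $\Gga[i]$ and its derivatives as $\a_i\to\infty$ from \cite{Car24}, and on a delicate region-by-region analysis — in (i) the borderline dimension $n=2k+1$ produces a logarithmic loss, which is precisely why $\rad$ carries the factor $1+\tabs{\log t}$ there, while in (iii) one must quantify the decay of the absorption prefactor $\Theta_i$ away from the bubble scale, which is where the polyharmonic order and the diverging coefficient $\a_i$ genuinely complicate matters. A structural point that makes the whole scheme work is that the diverging lower-order terms $\a_i^{k-l}\Dg^l$ have been incorporated into the modified bubble $\TBub[i]$, so that the error terms of Proposition \ref{prop:estRa} are controlled; without this modification the rescaled equation above would fail to converge.
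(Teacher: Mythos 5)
Your overall scheme (Green's representation with the estimates on $\Gga[i]$ from \cite{Car24}, Giraud-type convolution bounds, and a contradiction/rescaling argument at a near-maximum point of the weighted quotient) is essentially the paper's strategy, with your absorption estimate (iii) playing the role of the paper's Lemma \ref{prop:1stestphi} and Proposition \ref{prop:2ndestphi}. However, there is a genuine gap in your first step, and it is load-bearing. You claim $\sum_j\tabs{\lambda_i^j}\leq C\Snorm{-k}{R_i}\leq C\,\rad(\sqai\mu_i)$ "as in Lemma \ref{prop:estintRX}". That lemma (and any duality estimate of this type) only gives $\Snorm{-k}{R_i}\leq C(\sqai\mu_i)^{1/2}$, and in the low-dimensional cases this is \emph{much larger} than $\rad(\sqai\mu_i)$: for $n=2k+1$ one gets $(\sqai\mu_i)^{1/2}$ versus the needed $\sqai\mu_i(1+\tabs{\log\sqai\mu_i})$, for $n=2k+2$ (or $k=1$, $n=4$) one gets $\sqai\mu_i$ versus $(\sqai\mu_i)^{3/2}$, and for $k\geq2$, $n=2k+3$ one gets $(\sqai\mu_i)^{3/2}$ versus $(\sqai\mu_i)^{2}$. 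So the bound $\tabs{\lambda_i^j}\leq C\rad(\sqai\mu_i)$ is not established by your argument, and it is exactly what your estimate (ii) requires (at $x=z_i$ one needs $\tabs{\lambda_i^j}\lesssim\rad(\sqai\mu_i)$), what makes your rescaled right-hand side $\hat R_i$ lose the $\lambda$-terms, and what produces the $C/\Lambda_i$ term in the decisive inequality $\tfrac12\leq C\Theta_i(x_i)+C/\Lambda_i$. With the only honest a priori bound available at that stage, namely $\tabs{\lambda_i^j}\leq C\big(\rad(\sqai\mu_i)+\mu_i^{\frac{n-2k}{2}}\inorm{\phi_i}\big)$ (equivalently $C(1+\Lambda_i)\rad(\sqai\mu_i)$ in your normalization), the division by $\Lambda_i w_i(x_i)$ leaves an $O(1)$ contribution from the kernel terms and the contradiction does not close as written.

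Two repairs are possible. The paper's route: prove only the weaker Lemma \ref{prop:estlami} (by evaluating the representation formula at $z_i$ and at the special points $y_i^{j_0}$), keep the rescaled coefficients $\mu_i^{-\frac{n-2k}{2}}\lambda_i^j/\inorm{\phi_i}\to\lambda_\infty^j$ in the blow-up limit, and only then eliminate them by integrating the limit equation against $\Ze{j_0}$, which forces $\lambda_\infty^j=0$ thanks to the orthogonality of the $\Ze{j}$ (this is the step \eqref{tmp:intpp}); your limit argument must carry these extra terms. Alternatively, you can salvage your stronger claim by replacing the duality bound with a \emph{direct pointwise} estimate: testing \eqref{eq:linphiR} against $\Zed[i]{j_0}$, the term $\int_M R_i\,\Zed[i]{j_0}\,dv_g$ can be estimated from \eqref{eq:estRiprop} and $\tabs{\Zed[i]{j_0}}\leq C\Bub_{\zm[i]}$ directly, which does yield $O(\rad(\sqai\mu_i))$ (including the logarithm when $n=2k+1$), while $\int_M\err[i]{\Ze{j_0}}\phi_i\,dv_g$ is $O\big((\sqai\mu_i)^{1/2}\Snorm{-k}{R_i}\big)=O(\rad(\sqai\mu_i))$ in every case; but this computation must be done and is not what you wrote. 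Finally, note that your estimate (iii), and the passage of the orthogonality $\phi_i\in\Keri^\perp$ to the limit, are asserted rather than proved; they are plausible (a single application of Lemma \ref{prop:gir2} with $\gamma>n$ gives (iii), and the lower-order terms in the $\Sob(M)$ product are handled as in the paper), but they carry the same technical weight as the paper's iteration and should be carried out, with attention to the borderline logarithmic cases.
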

\par By Proposition \ref{prop:estRa}, a sequence $(R_i)_i$ satisfying \eqref{eq:estRiprop} is for instance given by $R_i = \err[i]{\Bub} := (\Dg+\a_i)^k \TBub[i]-\TBub[i]^{\deu-1}$. In that regard, \eqref{eq:linphiR} can be seen as a linearization around $\TBub[i]$ of the equation \eqref{eq:critbub}. This is essentially how we will use Theorem \ref{prop:linest} in the next subsection. As it will be clear here below when using a representation formula to estimate $\phi_i$, we have that $\phi_i$ behaves formally as $(\Dg+\a_i)^{-k}R_i$. 
\par The proof of Theorem \ref{prop:linest} closely follows the arguments of \cite{Pre24}, with some significant adaptations. While the presence of only one bubble simplifies the arguments, the diverging coefficient $\a_i$ creates additional difficulties in the pointwise analysis of the blow-up around $\TBub[i]$. The strategy of the proof is as follows. We first use a representation formula to obtain a naive pointwise estimate on $\phi_i$ that depends on $\Lnorm[(M)]{\infty}{\phi_i}$. In a second step, we obtain a bound on $\Lnorm[(M)]{\infty}{\phi_i}$ in terms of $\a_i$ and $\mu_i$ using a contradiction argument, which will allow to conclude.

\par As in the proof of Proposition \ref{prop:lininv}, we start by observing that $\tsum_{j=0}^n \tabs{\lambda_i^j} \to 0$.
\begin{lemma}\label{prop:limli}
    Let $(\lambda_i^j)_i$, $j=0,\ldots n$, be the coefficients given in \eqref{eq:linphiR}, Then
    \[  \lim_{i\to \infty} \sum_{j=0}^n \tabs{\lambda_i^j} = 0.
        \]
\end{lemma}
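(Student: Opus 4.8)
The statement mirrors Step 1 in the proof of Proposition~\ref{prop:lininv}, so the plan is to adapt that argument to the present setting. The idea is to test equation \eqref{eq:linphiR} against each $\Zed[i]{j_0}$, $j_0 = 0,\ldots n$, and extract an asymptotic linear system for the vector $(\lambda_i^j)_j$ whose matrix converges to a nonsingular (indeed diagonal) matrix, forcing all $\lambda_i^j \to 0$.

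\textbf{Key steps.} First I would pair \eqref{eq:linphiR} with $\Zed[i]{j_0}\in\Sob(M)$ in the $\Sob[-k]$--$\Sob$ duality and integrate by parts, using that $\phi_i\in\Keri^\perp$, so that $\vprod[\Sob(M)]{\phi_i}{\Zed[i]{j_0}}=0$. Writing the left-hand side in terms of the error $\err[i]{\Ze{j_0}}$ from Proposition~\ref{prop:estRa}, namely
\[
    \dprod{(\Dg+\a_i)^k\phi_i - (\deu-1)\TBub[i]^{\deu-2}\phi_i}{\Zed[i]{j_0}}_{\Sob[-k],\Sob} = \intM{\err[i]{\Ze{j_0}}\,\phi_i} + (\deu-1)\intM{\TBub[i]^{\deu-2}\phi_i\Zed[i]{j_0}},
\]
and noting that $\intM{\TBub[i]^{\deu-2}\phi_i\Zed[i]{j_0}}$ also appears when the duality pairing is computed directly, the leading term is $\intM{\err[i]{\Ze{j_0}}\phi_i}$, which is $o(1)$ because $\Snorm{-k}{\err[i]{\Ze{j_0}}}\to 0$ by \eqref{eq:RainH-k} and $(\phi_i)_i$ is bounded in $\Sob(M)$ by Proposition~\ref{prop:lininv} and \eqref{eq:estRiprop} (the latter guaranteeing $\Snorm{-k}{R_i}$ bounded, hence $\Snorm{k}{\phi_i}\leq C$). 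On the right-hand side, the term $\intM{R_i\Zed[i]{j_0}}$ is $o(1)$ since $\Snorm{-k}{R_i}$ is bounded and $\Zed[i]{j_0}\wto 0$ in $\Sob(M)$ by \eqref{eq:iv}; more carefully, one estimates it directly by splitting $M$ into $\Bal{z_i}{1/\sqai}$ and its complement, using \eqref{eq:estRiprop}, \eqref{eq:controlB}, \eqref{eq:estdZ} and the bound $\a_i\mu_i^2<\tau_i\to 0$ to see the integral vanishes. Finally, $\sum_j \lambda_i^j\intM{(\Dg+\a_i)^k\Zed[i]{j}\,\Zed[i]{j_0}} \to \sum_j \lambda_i^j\,\delta_{jj_0}\Hnorm{k,2}{\Ze{j}}^2$ by \eqref{eq:DgaZZ}, up to an $o(\sum_j\tabs{\lambda_i^j})$ remainder coming from the off-diagonal entries. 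Putting these together yields, for each $j_0$,
\[
    o(1) = \lambda_i^{j_0}\Hnorm{k,2}{\Ze{j_0}}^2 + o\Big(\tsum_{j=0}^n\tabs{\lambda_i^j}\Big) \qquad\text{as } i\to\infty,
\]
and since $\Hnorm{k,2}{\Ze{j_0}} > 0$, summing over $j_0$ absorbs the error term and gives $\sum_{j=0}^n\tabs{\lambda_i^j}\to 0$.

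\textbf{Main obstacle.} The only delicate point is justifying that the coefficient system is \emph{uniformly} nondegenerate, i.e.\ that the $o(\sum_j\tabs{\lambda_i^j})$ error genuinely comes with a small constant independent of the (a priori unbounded) magnitude of the $\lambda_i^j$; this is handled exactly as in Step~1 of Proposition~\ref{prop:lininv}, by writing each equation with a common $o(\sum_j\tabs{\lambda_i^j})$ bound and then taking the finite sum over $j_0$, after which the error is reabsorbed into the left-hand side for $i$ large. A secondary point worth stating carefully is that $(\phi_i)_i$ is bounded in $\Sob(M)$: this is not hypothesized directly but follows from Proposition~\ref{prop:lininv} together with the fact that hypothesis \eqref{eq:estRiprop} and the embedding $L^{\frac{2n}{n+2k}}(M)\emb\Sob[-k](M)$ give $\Snorm{-k}{R_i}\leq C$, hence $\Snorm{-k}{\proKi((\Dg+\a_i)^{-k}R_i)}\leq C$ and so $\Snorm{k}{\phi_i}\leq C$. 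Everything else is a routine rerun of the computations already carried out in Section~\ref{sec:prelim}.
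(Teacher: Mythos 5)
Your argument is essentially the paper's: test \eqref{eq:linphiR} against $\Zed[i]{j_0}$, use the symmetry of $(\Dg+\a_i)^k$ so that the quadratic terms cancel and the tested left-hand side reduces to $\intM{\err[i]{\Ze{j_0}}\,\phi_i}=o(1)$, then invert the asymptotically diagonal Gram matrix coming from \eqref{eq:DgaZZ} to absorb the $o\big(\tsum_j\tabs{\lambda_i^j}\big)$ error. Two corrections are needed, though. First, your primary justification that $\intM{R_i\,\Zed[i]{j_0}}=o(1)$ ``since $\Snorm{-k}{R_i}$ is bounded and $\Zed[i]{j_0}\wto 0$ in $\Sob(M)$'' is not valid: a merely bounded sequence in $\Sob[-k](M)$ paired with a weakly null sequence in $\Sob(M)$ need not tend to zero. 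Your fallback direct estimate by splitting at $\dg{z_i,\cdot}=1/\sqai$ does work, but the cleaner route, and the one the paper takes, is Lemma \ref{prop:estintRX}: the hypothesis \eqref{eq:estRiprop} gives $\Snorm{-k}{R_i}\leq C(\sqai\mu_i)^{1/2}\to 0$, i.e.\ $R_i\to 0$ \emph{strongly} in $\Sob[-k](M)$, which disposes of the $R_i$-term against the bounded $\Zed[i]{j_0}$ and also upgrades your bound $\Snorm{k}{\phi_i}\leq C\Snorm{-k}{R_i}$ from $O(1)$ to $o(1)$. Second, your displayed identity carries a spurious term: by symmetry one has exactly $\dprod{(\Dg+\a_i)^k\phi_i-(\deu-1)\TBub[i]^{\deu-2}\phi_i}{\Zed[i]{j_0}}_{\Sob[-k],\Sob}=\intM{\err[i]{\Ze{j_0}}\,\phi_i}$, with no additional $(\deu-1)\intM{\TBub[i]^{\deu-2}\phi_i\,\Zed[i]{j_0}}$ on the right; your surrounding prose shows you intend precisely this cancellation, and note that the orthogonality $\phi_i\in\Keri^\perp$ plays no role in this step.
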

\begin{proof}
    Since $R_i$ satisfies the same type of pointwise bounds as the term $\err{X}$ in Proposition \ref{prop:estRa}, by Lemma \ref{prop:estintRX} we have $R_i \to 0$ in $\Sob[-k](M)$. By Proposition \ref{prop:lininv}, there exists $C>0$ such that
    \[  \Snorm{k}{\phi_i} \leq C \Snorm{-k}{R_i} = o(1).
        \]
    We test \eqref{eq:linphiR} against $\Zed[i]{j_0}$ for a fixed $j_0 \in \{0,\ldots n\}$, with Proposition \ref{prop:estRa} and \eqref{eq:DgaZZ} we obtain 
    \[  \lambda_i^{j_0} + o\Big(\sum_{j=0}^n \tabs{\lambda_i^j}\Big) = o(1) \qquad \text{as } i \to \infty,
        \]
    and we conclude.
\end{proof}

We now obtain a first bound on the function $\phi_i$. For the sake of brevity, we write until the end of this subsection $\inorm{\phi_i} := \Lnorm[(M)]{\infty}{\phi_i}$. 
We will use here below the Green's function $\Gga$, for the operator $(\Dg+\a)^k$ in $M$, studied in \cite{Car24}, as well as versions of the so-called Giraud's Lemma, which are recalled in Appendix \ref{sec:gir}. We define 
\begin{equation}\label{def:Fa}
    F_{\zma}(x) := \begin{cases}
        r_{\zma}(x) & \text{when } \sqa\dg{z,x} \leq 1\\
        \a^k \dg{z,x}^{2k}e^{-\sqa\dg{z,x}/2} & \text{when } \sqa\dg{z,x} \geq 1
    \end{cases}
\end{equation}
for all $x\in M$, where $r_{\zma}$ is as in \eqref{def:ra}.

\begin{lemma}\label{prop:1stestphi}
    There exists $i_0>0$ such that for all $i\geq i_0$, the unique solution $\phi_i \in \Keri^\perp$ to \eqref{eq:linphiR} in Theorem \ref{prop:linest} satisfies
    \begin{multline}\label{eq:1stestphi}
        \abs{\phi_i(x) - \sum_{j=0}^n \lambda_i^j \Zed[i]{j}(x)} \leq C F_{\zma[i]}(x) \Bub_{\zm[i]}(x)\\ + C \inorm{\phi_i} \Psi\big(\sqai\dg{z_i,x}\big)\begin{cases}
            \mu_i^{\frac{n-2k}{2}} \Bub_{\zm[i]}(x) & \text{if } n < 4k\\
            \bpr{\frac{\mu_i + \dg{z_i,x}}{\mu_i}}^{-2k+1} & \text{if } n=4k\\
            \bpr{\frac{\mu_i + \dg{z_i,x}}{\mu_i}}^{-2k} & \text{if } n> 4k
        \end{cases},
    \end{multline}
    where
    \begin{equation}\label{def:Psi}
        \Psi(t) := \begin{cases}
            1 & \text{when } t< 1\\
            e^{-t/2} & \text{when } t\geq 1
        \end{cases}
    \end{equation}
    for all $t\geq 0$.
\end{lemma}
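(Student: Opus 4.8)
The plan is to combine the Green's representation formula for $(\Delta_g+\alpha_i)^k$ with the pointwise bounds on its Green's function $\operatorname{G}_{g,\alpha_i}$ obtained in \cite{Car24} and the Giraud-type lemmas recalled in Appendix \ref{sec:gir}. As a preliminary I would record that $\phi_i$ is continuous: for fixed $i$ the function $V_{\alpha_i,\nu_i}$ is bounded and $R_i$ is continuous, so rewriting \eqref{eq:linphiR} as $(\Delta_g+\alpha_i)^k\phi_i = (2^\sharp-1)V_{\alpha_i,\nu_i}^{2^\sharp-2}\phi_i + R_i + \sum_{j=0}^n\lambda_i^j(\Delta_g+\alpha_i)^kZ^j_{\alpha_i,\nu_i}$ and iterating the smoothing of $(\Delta_g+\alpha_i)^{-k}$ along the Sobolev scale, a standard bootstrap starting from $\phi_i\in H^k(M)$ gives $\phi_i\in C^0(M)$ (no uniformity in $i$ is needed here).

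Next, set $\psi_i := \phi_i - \sum_{j=0}^n\lambda_i^jZ^j_{\alpha_i,\nu_i}$, so that $(\Delta_g+\alpha_i)^k\psi_i = (2^\sharp-1)V_{\alpha_i,\nu_i}^{2^\sharp-2}\phi_i + R_i$ by \eqref{eq:linphiR}. Since $\int_M\operatorname{G}_{g,\alpha_i}(x,y)(\Delta_g+\alpha_i)^kZ^j_{\alpha_i,\nu_i}(y)\,dv_g(y) = Z^j_{\alpha_i,\nu_i}(x)$, the representation formula reads
\[
\phi_i(x)-\sum_{j=0}^n\lambda_i^jZ^j_{\alpha_i,\nu_i}(x) = \int_M\operatorname{G}_{g,\alpha_i}(x,y)\Big((2^\sharp-1)V_{\alpha_i,\nu_i}^{2^\sharp-2}(y)\,\phi_i(y)+R_i(y)\Big)\,dv_g(y),
\]
and I would split the integral into the contribution of $R_i$ and that of the self-interaction term $V_{\alpha_i,\nu_i}^{2^\sharp-2}\phi_i$. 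The inputs are: $V_{\alpha_i,\nu_i}\le C\operatorname{B}_{z_i,\mu_i}$ pointwise; the estimates of \cite{Car24} for $\operatorname{G}_{g,\alpha_i}$, which behave like $d_g(x,y)^{2k-n}$ when $\sqrt{\alpha_i}d_g(x,y)\le1$ and decay exponentially at rate $\sqrt{\alpha_i}/2$ (with the appropriate powers of $\alpha_i$) beyond; the bound \eqref{eq:estRiprop} on $R_i$; and the crude bound $|\phi_i|\le\|\phi_i\|_{L^\infty(M)}$ for the self-interaction term.

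The core of the argument is then the evaluation of these two convolutions via Giraud's lemma. For the $R_i$-term, after inserting $\operatorname{B}_{z_i,\mu_i}(y)\simeq\mu_i^{(n-2k)/2}(\mu_i+d_g(z_i,y))^{2k-n}$, one is reduced in the region $\sqrt{\alpha_i}d_g(z_i,x)\le1$ to estimating $\alpha_i^{\sigma_{n,k}}\mu_i^{(n-2k)/2}\int_M d_g(x,y)^{2k-n}(\mu_i+d_g(z_i,y))^{2\sigma_{n,k}-n}\,dv_g(y)$; the three values of $\sigma_{n,k}$ in \eqref{def:snk} place this Giraud convolution respectively into the sub-critical, borderline and super-critical regimes, producing exactly the three cases of $r_{\alpha_i,\nu_i}$ in \eqref{def:ra}, the case $n=2k+1$ being the borderline one that generates the logarithmic factor. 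In the region $\sqrt{\alpha_i}d_g(z_i,x)\ge1$, the exponential decay of both $\operatorname{G}_{g,\alpha_i}$ and $R_i$ yields $\alpha_i^kd_g(z_i,x)^{2k}e^{-\sqrt{\alpha_i}d_g(z_i,x)/2}$, and together with the powers of $\alpha_i,\mu_i$ this gives the summand $CF_{\alpha_i,\nu_i}(x)\operatorname{B}_{z_i,\mu_i}(x)$. For the self-interaction term, $|\phi_i|\le\|\phi_i\|_{L^\infty(M)}$ leaves $(2^\sharp-1)\|\phi_i\|_{L^\infty(M)}\int_M\operatorname{G}_{g,\alpha_i}(x,y)\operatorname{B}_{z_i,\mu_i}(y)^{2^\sharp-2}\,dv_g(y)$; since $\operatorname{B}_{z_i,\mu_i}^{2^\sharp-2}$ decays like order $-4k$ away from $z_i$ at scale $\mu_i$, Giraud's lemma gives the trichotomy according to whether $n-2k<4k$, $=4k$ or $>4k$, i.e. $n<4k$, $n=4k$, $n>4k$, yielding respectively $\mu_i^{(n-2k)/2}\operatorname{B}_{z_i,\mu_i}(x)$, $\big((\mu_i+d_g(z_i,x))/\mu_i\big)^{-2k+1}$ and $\big((\mu_i+d_g(z_i,x))/\mu_i\big)^{-2k}$, all cut off by $\Psi(\sqrt{\alpha_i}d_g(z_i,x))$ because of the exponential decay of $\operatorname{G}_{g,\alpha_i}$. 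Adding the two contributions yields \eqref{eq:1stestphi}.

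The main obstacle is this last step: keeping track of the exact powers of $\alpha_i$ and $\mu_i$ through the Giraud convolutions — this is precisely where the diverging coefficient $\alpha_i$ departs from the bounded-coefficient analysis of \cite{Pre24} — handling the transition zone $\sqrt{\alpha_i}d_g(z_i,x)\approx1$ where $\operatorname{G}_{g,\alpha_i}$ switches from polynomial to exponential behaviour, and sorting out the $(n,k)$ case distinctions so that they reproduce \eqref{def:ra} and \eqref{eq:1stestphi} on the nose.
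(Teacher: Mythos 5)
Your proposal is correct and follows essentially the same route as the paper: bootstrap regularity, the representation formula with the Green's function $\Gga[i]$ of \cite{Car24} applied to $\phi_i-\sum_j\lambda_i^j\Zed[i]{j}$, the crude bound $\abs{\phi_i}\leq \inorm{\phi_i}$ on the self-interaction term, and the two Giraud-type lemmas to produce the $r_{\zma[i]}$ bound (with the $n=2k+1$ logarithm) and the trichotomy $n<4k$, $n=4k$, $n>4k$ together with the exponential cut-off $\Psi(\sqai\dg{z_i,x})$. The only cosmetic difference is that the paper records the $n=4k$ case with an exponent $-2k+\theta$ for arbitrary $\theta>0$ before absorbing it into $-2k+1$, which you state directly.
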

\begin{proof}
    First, note that for $\a_0,\tau_0$ given by Proposition \ref{prop:lininv}, there exists $i_0>0$ such that  $\a_i \geq \a_0$, $\tau_i \leq \tau_0$ for all $i\geq i_0$.
    Since $\TBub[i], \Zed[i]{j} \in C^{2k}(M)$ and $R_i \in C^0(M)$, by a standard bootstrap argument, we have $\phi_i \in C^{2k-1,\beta}(M)$ for $\beta \in (0,1)$. Observe that equation \eqref{eq:linphiR} can be re-written as 
    \[  (\Dg+\a_i)^k \bsq{\phi_i -\sum_{j=0}^n \lambda_i^j \Zed[i]{j}} = (\deu-1) \TBub[i]^{\deu-1}\phi_i + R_i \qquad \text{in } M.
        \]
    We then have the following representation formula: For all $x\in M$,
    \begin{multline}\label{eq:reprforphi}
        \phi_i(x) -\sum_{j=0}^n \lambda_i^j \Zed[i]{j}(x) = (\deu-1) \intM{\TBub[i]^{\deu-2}(y) \phi_i(y) \Gga[i](x,y)}(y)\\ + \intM{R_i(y)\Gga[i](x,y)}(y).
    \end{multline}
    We now use the estimates on $\Gga[i]$ in \cite[Theorem 1.1]{Car24}, namely that up to increasing $i_0 >0$, for all $i\geq i_0$ we have 
    \[  \Gga[i](x,y) \leq C \dg{x,y}^{2k-n} \Psi(\sqai\dg{x,y}).
        \]
    With the definition of $\TBub[i]$ in \eqref{def:TBub} and Giraud's Lemma \ref{prop:gir2} below, we obtain the following:
    \begin{itemize}
        \item When $n<4k$,
            \begin{multline*}
                \abs{\intM{\Gga[i](x,y)\TBub[i]^{\deu-2}(y)\phi_i(y)}(y)}\\
                    \leq C\inorm{\phi_i} \Psi\big(\sqai \dg{z_i,x}\big) \mu_i^{\frac{n-2k}{2}}\Bub_{\zm[i]}(x).
            \end{multline*}
        \item When $n=4k$, for all $\theta>0$, there exists $C_\theta>0$ such that 
            \begin{multline*}  
                \abs{\intM{\Gga[i](x,y)\TBub[i]^{\deu-2}(y)\phi_i(y)}(y)}\\ \leq C_\theta \inorm{\phi_i} \Psi\big(\sqai\dg{z_i,x}\big) \bpr{\frac{\mu_i + \dg{z_i,x}}{\mu_i}}^{-2k+\theta}
                \end{multline*}
            using the properties of the function $t \mapsto \log t$ in $(0,+\infty)$.
        \item When $n>4k$, 
            \begin{multline*}  
                \abs{\intM{\Gga[i](x,y)\TBub[i]^{\deu-2}(y)\phi_i(y)}(y)}\\ \leq C \inorm{\phi_i} \Psi\big(\sqai\dg{z_i,x}\big) \bpr{\frac{\mu_i + \dg{z_i,x}}{\mu_i}}^{-2k}.
            \end{multline*}
    \end{itemize}
    We also compute, using Lemma \ref{prop:gir1} with \eqref{eq:estRiprop}, when $\sqai \dg{z_i,x} \leq 1$,
    \begin{multline}\label{eq:girRasqa}
        \abs{\intM{R_i(y)\Gga[i](x,y)}(y)} \\
        \begin{aligned}
            &\leq C\begin{cases}
            \a_i^{1/2}\mu_i^{1/2}\big(1+\abs{\log \sqai(\mu_i + \dg{z_i,x})}\big) & \text{if } n=2k+1\\
            \a_i^{\sigma_{n,k}} \mu_i^{\frac{n-2k}{2}}(\mu_i+\dg{z_i,x})^{2k-n+2\sigma_{n,k}} & \text{if } n\geq 2k+2
            \end{cases}\\
                &\leq C r_{\zma[i]}(x) \Bub_{\zm[i]}(x),
        \end{aligned}
    \end{multline}
    where $r_{\zma}$ is as in \eqref{def:ra}. Finally, when $\sqai \dg{z_i,x} \geq 1$, we get 
    \begin{equation}\label{eq:girRaexp}
    \begin{aligned}
        \abs{\intM{R_i(y)\Gga[i](x,y)}(y)} &\leq C\a_i^k \mu_i^{\frac{n-2k}{2}}\dg{z_i,x}^{4k-n}e^{-\sqai\dg{z_i,x}/2}\\
            &\leq C \a_i^k \dg{z_i,x}^{2k} \Bub_{\zm[i]}(x) e^{-\sqai\dg{z_i,x}/2},
    \end{aligned}
    \end{equation}
    which concludes the proof.
\end{proof}

The following Lemma improves the result of Lemma \ref{prop:limli}. 
\begin{lemma}\label{prop:estlami}
    Let $\lambda_i^j$, $j=0,\ldots n$, be the coefficients appearing in \eqref{eq:linphiR}. Then we have for all $i\geq i_0$, $i_0$ given by Lemma \ref{prop:1stestphi},
    \[   \abs{\lambda_i^j} \leq C \rad(\sqai\mu_i) + C \mu_i^{\frac{n-2k}{2}}\inorm{\phi_i} \qquad j=0,\ldots n.
        \]
\end{lemma}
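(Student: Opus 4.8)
The plan is to recover the coefficients $\lambda_i^j$ by testing \eqref{eq:linphiR} against the $\Zed[i]{j_0}$, exactly as in the proof of Lemma~\ref{prop:limli}, but then to quantify the right-hand side by means of the pointwise bound \eqref{eq:1stestphi} rather than the crude estimate $\Snorm{k}{\phi_i}=o(1)$. Pairing \eqref{eq:linphiR} with $\Zed[i]{j_0}$, using the symmetry of the bilinear form attached to $(\Dg+\a_i)^k$ together with the definition of $\err[i]{\Ze{j_0}}$ in Proposition~\ref{prop:estRa} (so that the terms carrying $\TBub[i]^{\deu-2}$ cancel), one is left with
\begin{multline*}
    \intM{\err[i]{\Ze{j_0}}\,\phi_i}\\
    =\intM{R_i\,\Zed[i]{j_0}}+\sum_{j=0}^n\lambda_i^j\,\dprod{(\Dg+\a_i)^k\Zed[i]{j}}{\Zed[i]{j_0}}_{\Sob[-k],\Sob}.
\end{multline*}
By \eqref{eq:DgaZZ} the matrix $\big(\dprod{(\Dg+\a_i)^k\Zed[i]{j}}{\Zed[i]{j_0}}_{\Sob[-k],\Sob}\big)_{j,j_0}$ converges to $\operatorname{diag}\big(\Hnorm{k,2}{\Ze{0}}^2,\ldots,\Hnorm{k,2}{\Ze{n}}^2\big)$, whose entries are positive constants, so this reads
\[
    \lambda_i^{j_0}\,\Hnorm{k,2}{\Ze{j_0}}^2=\intM{\err[i]{\Ze{j_0}}\,\phi_i}-\intM{R_i\,\Zed[i]{j_0}}+o\Big(\textstyle\sum_{j=0}^n|\lambda_i^j|\Big).
\]
Summing over $j_0$ and absorbing the term $o\big(\sum_{j}|\lambda_i^j|\big)$ into the left-hand side for $i$ large, it suffices to bound $\intM{|\err[i]{\Ze{j_0}}|\,|\phi_i|}$ and $\intM{|R_i|\,|\Zed[i]{j_0}|}$ by $C\rad(\sqai\mu_i)+C\mu_i^{\frac{n-2k}{2}}\inorm{\phi_i}$.

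For the first integral I would write $\phi_i=\psi_i+\sum_{j}\lambda_i^j\Zed[i]{j}$ with $\psi_i:=\phi_i-\sum_j\lambda_i^j\Zed[i]{j}$, which obeys the pointwise bound \eqref{eq:1stestphi}. The part coming from $\sum_j\lambda_i^j\Zed[i]{j}$ is controlled by $\big(\sum_j|\lambda_i^j|\big)\max_j\big|\intM{\err[i]{\Ze{j_0}}\Zed[i]{j}}\big|$; since $\err[i]{\Ze{j_0}}$ satisfies \eqref{eq:estRiprop} with exponent $1$ (up to a constant, by Proposition~\ref{prop:estRa}) and $|\Zed[i]{j}|\le C\Bub_{\zm[i]}\Psi(\sqai\dg{z_i,\cdot})$ by Proposition~\ref{prop:estdifX}, this integral is $O(\rad(\sqai\mu_i))=o(1)$, so that piece is again absorbable. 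It then remains to estimate $\intM{|\err[i]{\Ze{j_0}}|\,|\psi_i|}$ and $\intM{|R_i|\,|\Zed[i]{j_0}|}$ directly. In each of these I would insert the pointwise bounds (Proposition~\ref{prop:estRa} for $\err[i]{\Ze{j_0}}$, hypothesis \eqref{eq:estRiprop} for $R_i$, the bound above for $|\Zed[i]{j_0}|$, and \eqref{eq:1stestphi} for $\psi_i$), use that $F_{\zma[i]}$ is globally bounded (clear from \eqref{def:Fa} and \eqref{def:ra}), pass to geodesic normal coordinates at $z_i$, and split the domain according to $\sqai\dg{z_i,x}\le 1$ or $\ge 1$. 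Everything then reduces to radial integrals of the type $\int_0^{1/\sqai}(\mu_i+r)^{a}r^{n-1}\,dr$ (evaluated by $r=\mu_i s$) and $\int_{1/\sqai}^{\varrho}r^{b}e^{-\sqai r}\,dr$ (evaluated by $r=\sqai^{-1}s$). Tracking the exponents, the contributions of $|\err[i]{\Ze{j_0}}|\,F_{\zma[i]}\Bub_{\zm[i]}$ and of $|R_i|\,|\Zed[i]{j_0}|$ assemble into a constant times $(\sqai\mu_i)^{2\sigma_{n,k}}$, with an extra logarithm precisely when $n=2k+1$, hence into $C\rad(\sqai\mu_i)$; the contribution of the $\inorm{\phi_i}$-term of \eqref{eq:1stestphi} assembles into $C\mu_i^{\frac{n-2k}{2}}\inorm{\phi_i}$, which one verifies separately in the sub-cases $n<4k$, $n=4k$ and $n>4k$, always with a spare factor $(\sqai\mu_i)^{\epsilon}$ to close the estimate.

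The only genuinely new input compared with Lemma~\ref{prop:limli} is the use of the pointwise control \eqref{eq:1stestphi}; the remaining estimates are the Giraud-type radial computations already employed in Lemma~\ref{prop:1stestphi} (cf.\ Lemmas~\ref{prop:gir1}--\ref{prop:gir2}). I therefore expect the main obstacle to be purely computational bookkeeping: carrying the powers of $\a_i$ and $\mu_i$ through the dimension cases of $\rad$ and of \eqref{eq:1stestphi}, and checking in each case that the resulting power of $\sqai\mu_i$ is at least $2\sigma_{n,k}$ — so that the term is dominated by $\rad(\sqai\mu_i)$ — or, for the $\inorm{\phi_i}$-part, that the surviving $\mu_i$-power is at least $\frac{n-2k}{2}$ while the power of $\a_i$ stays bounded.
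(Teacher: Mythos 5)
Your argument is correct, but it takes a genuinely different route from the paper. The paper's proof never tests the equation: it simply evaluates the pointwise bound \eqref{eq:1stestphi} at two well-chosen families of points. At $x=z_i$ all the $\Zed[i]{j}$ with $j\geq 1$ vanish while $\tabs{\Zed[i]{0}(z_i)}=\tfrac{n-2k}{2}\mu_i^{-\frac{n-2k}{2}}$, and at $y_i^{j_0}=\exp_{z_i}(\mu_i\Tilde{\rho}_{n,k}v_{j_0})$ (with $\Tilde{\rho}_{n,k}$ chosen so that $\Ze{0}$ vanishes there) only $\Zed[i]{j_0}$ survives, again of size $\sim\mu_i^{-\frac{n-2k}{2}}$; since at these points $F_{\zma[i]}\Bub_{\zm[i]}\leq C\rad(\sqai\mu_i)\mu_i^{-\frac{n-2k}{2}}$ and the $\inorm{\phi_i}$-weight in \eqref{eq:1stestphi} is $O(1)$, each $\lambda_i^j$ is isolated instantly, with no integration at all. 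You instead pair \eqref{eq:linphiR} with $\Zed[i]{j_0}$ as in Lemma \ref{prop:limli}, invert the almost-diagonal matrix from \eqref{eq:DgaZZ}, split $\phi_i=\psi_i+\sum_j\lambda_i^j\Zed[i]{j}$ with $\psi_i$ controlled by \eqref{eq:1stestphi}, absorb the $o\big(\sum_j\tabs{\lambda_i^j}\big)$ terms, and estimate $\intM{\tabs{\err[i]{\Ze{j_0}}}\tabs{\psi_i}}$, $\intM{\tabs{R_i}\tabs{\Zed[i]{j_0}}}$ and $\intM{\tabs{\err[i]{\Ze{j_0}}}\tabs{\Zed[i]{j}}}$ by Giraud-type radial computations; I checked the representative dimension cases (including the borderline ones $n=2k+1$, $n=2k+2$, $n=4k$ and $k=1$, $n=4$) and the exponents do close, giving $C\rad(\sqai\mu_i)+C\mu_i^{\frac{n-2k}{2}}\inorm{\phi_i}$ as claimed. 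What your route buys is robustness — it does not require knowing the special points where the $\Ze{j}$ vanish — at the price of substantially heavier bookkeeping through the cases of $\rad$, $\sigma_{n,k}$ and the three regimes of \eqref{eq:1stestphi}; the paper's two-point evaluation gets the same conclusion in a few lines.
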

\begin{proof}
    Recall the definition of $\Bub_{\zm[i]}$ in \eqref{def:bubzm} and $\Zed[i]{j}$ in \eqref{def:tZ}, we have 
    \begin{align*}
        \Bub_{\zm[i]}(z_i) &= \mu_i^{-\frac{n-2k}{2}}, & \abs{\Zed[i]{0}(z_i)} &= \tfrac{n-2k}{2}\mu_i^{-\frac{n-2k}{2}}, & \Zed[i]{j}(z_i) = 0 \quad 1\leq j\leq n.
    \end{align*}
    Evaluating \eqref{eq:1stestphi} at the point $z_i$, we thus obtain for all $i$,
    \begin{align*}
        \abs{\lambda_i^0} \mu_i^{-\frac{n-2k}{2}} &\leq \abs{\phi_i(z_i)} + CF_{\zma[i]}(z_i) \mu_i^{-\frac{n-2k}{2}} + C\inorm{\phi_i} \Psi(0)\\
            &\leq C \inorm{\phi_i} + C r_{\zma[i]}(z_i) \mu_i^{-\frac{n-2k}{2}},
    \end{align*}
    and we have by definition $r_{\zma[i]}(z_i) =\rad(\sqai \mu_i)$.
    \par Let now $j_0 \in \{1,\ldots n\}$ be fixed, and let $\Tilde{\rho}_{n,k} >0$ be a constant such that $\Ze{0}\big((\Tilde{\rho}_{n,k}, 0,\ldots 0)\big) = 0$, its explicit value can be computed using \eqref{def:Z}. Let $v_{j_0}\in\R^n$ be the $j_0$-th vector of the canonical basis of $\R^n$, and define 
    \[  y_i^{j_0} = \exp_{z_i}\big(\mu_i \Tilde{\rho}_{n,k}\, v_{j_0}\big).
    \]
    This way, we have
    \begin{align*}
        \Zed[i]{0}(y_i^{j_0}) &= 0, & \Zed[i]{j}(y_i^{j_0}) = 0 \quad j\neq j_0,
    \end{align*}
    and 
    \begin{align*}
        \Bub_{\zm[i]}(y_i^{j_0}) &= \mu_i^{-\frac{n-2k}{2}} (1+\pk\Tilde{\rho}_{n,k}^2)^{-\frac{n-2k}{2}} & \Zed[i]{j_0}(y_i^{j_0}) &= \mu_i^{-\frac{n-2k}{2}} \tfrac{\pk\Tilde{\rho}_{n,k}^2}{(1+\pk\Tilde{\rho}_{n,k}^2)^{\frac{n-2k}{2}+1}}.
    \end{align*}
    As before, evaluate \eqref{eq:1stestphi} at the point $y_i^{j_0}$, we obtain for all $i$,
    \[  \abs{\lambda_i^{j_0}}\mu_i^{-\frac{n-2k}{2}} \leq C \inorm{\phi_i} + C r_{\zma[i]}(y_i^{j_0}) \mu_i^{-\frac{n-2k}{2}},
        \]
    and we have $r_{\zma[i]}(y_i^{j_0}) \leq C \rad(\sqai \mu_i)$ using that $\dg{z_i,y_i^{j_0}} \leq C \mu_i$.
\end{proof}

We now iteratively improve the bound of Lemma \ref{prop:1stestphi} using the representation formula and Giraud's Lemma.
\begin{proposition}\label{prop:2ndestphi}
    Let $\phi_i\in\Keri^\perp$ be the unique solution to \eqref{eq:linphiR} in Theorem \ref{prop:linest}. There exists $C>0$ such that $\phi_i$ satisfies, for all $i\geq i_0$ and for all $x\in M$,
    \begin{equation}\label{eq:2ndestphi}
        \abs{\phi_i(x)} \leq C F_{\zma[i]}(x) \Bub_{\zm[i]}(x) + C\mu_i^{\frac{n-2k}{2}}\inorm{\phi_i} \Bub_{\zm[i]}(x)\Psi\big(\sqai \dg{z_i,x}\big),
    \end{equation}
    where $i_0$ is given by Lemma \ref{prop:1stestphi}, $F_{\zma[i]}$ is defined in \eqref{def:Fa}, and $\Psi$ is defined in \eqref{def:Psi}.
\end{proposition}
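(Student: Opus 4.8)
The plan is to bootstrap the estimate of Lemma~\ref{prop:1stestphi} by reinjecting it into the representation formula \eqref{eq:reprforphi}, using the pointwise bound $\Gga[i](x,y)\leq C\dg{x,y}^{2k-n}\Psi(\sqai\dg{x,y})$ of \cite[Theorem~1.1]{Car24} together with the weighted convolution estimates of Appendix~\ref{sec:gir}.

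First I would dispose of the term $\sum_{j}\lambda_i^j\Zed[i]{j}$ appearing in \eqref{eq:1stestphi}. Bounding $\abs{\lambda_i^j}\leq C\rad(\sqai\mu_i)+C\mu_i^{\frac{n-2k}{2}}\inorm{\phi_i}$ by Lemma~\ref{prop:estlami}, $\abs{\Zed[i]{j}(x)}\leq C\Bub_{\zm[i]}(x)\Psi(\sqai\dg{z_i,x})$ by \eqref{eq:estdZ} and Proposition~\ref{prop:estdifX} (in the region $\sqai\dg{z_i,x}\geq1$ the exponential decay of $\Zed[i]{j}$ being dominated by $F_{\zma[i]}(x)\Bub_{\zm[i]}(x)$), and using that $\rad$ is nondecreasing, so that $\rad(\sqai\mu_i)\leq r_{\zma[i]}(x)$ when $\sqai\dg{z_i,x}\leq1$, one obtains
\[\Babs{\sum_{j=0}^n\lambda_i^j\Zed[i]{j}(x)}\leq CF_{\zma[i]}(x)\Bub_{\zm[i]}(x)+C\mu_i^{\frac{n-2k}{2}}\inorm{\phi_i}\,\Bub_{\zm[i]}(x)\Psi(\sqai\dg{z_i,x}).\]
Inserting this into \eqref{eq:1stestphi} already proves \eqref{eq:2ndestphi} when $n<4k$ (in particular when $n=2k+1$), since in that range the last factor in \eqref{eq:1stestphi} is precisely $\mu_i^{\frac{n-2k}{2}}\Bub_{\zm[i]}(x)$. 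When $n\geq4k$ it only gives
\[\abs{\phi_i(x)}\leq CF_{\zma[i]}(x)\Bub_{\zm[i]}(x)+C\inorm{\phi_i}\,\Psi(\sqai\dg{z_i,x})\,g_i^{(0)}(x),\]
where I write $g_i^{(m)}(x):=\bpr{\frac{\mu_i+\dg{z_i,x}}{\mu_i}}^{-\beta_m}$, with $\beta_0:=2k$ if $n>4k$ and $\beta_0$ equal to any exponent in $(0,2k)$ if $n=4k$ (this last point being what the $n=4k$ case of Lemma~\ref{prop:1stestphi} provides).

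Then I would iterate. Assuming $\abs{\phi_i(x)}\leq CF_{\zma[i]}(x)\Bub_{\zm[i]}(x)+C\inorm{\phi_i}\Psi(\sqai\dg{z_i,x})g_i^{(m)}(x)$ with $\beta_m<n-2k$, I reinject it into \eqref{eq:reprforphi}. The term $\intM{R_i(y)\Gga[i](x,y)}(y)$ is handled exactly as in \eqref{eq:girRasqa}--\eqref{eq:girRaexp} and contributes $\leq CF_{\zma[i]}(x)\Bub_{\zm[i]}(x)$. In the term $(\deu-1)\intM{\TBub[i]^{\deu-2}(y)\phi_i(y)\Gga[i](x,y)}(y)$ one uses $\TBub[i]^{\deu-2}(y)\leq C\mu_i^{-2k}\bpr{\frac{\mu_i+\dg{z_i,y}}{\mu_i}}^{-4k}$: the $F_{\zma[i]}\Bub_{\zm[i]}$-part of the bound on $\phi_i$ reproduces $\leq CF_{\zma[i]}(x)\Bub_{\zm[i]}(x)$ by Giraud's Lemma together with the self-reproducing identity $\intM{\dg{x,y}^{2k-n}\TBub[i]^{\deu-2}(y)\Bub_{\zm[i]}(y)}(y)\leq C\Bub_{\zm[i]}(x)$ (a consequence of \eqref{eq:bub}), while the $\inorm{\phi_i}\Psi g_i^{(m)}$-part contributes, by Giraud's Lemma~\ref{prop:gir2}, a term $\leq C\inorm{\phi_i}\Psi(\sqai\dg{z_i,x})g_i^{(m+1)}(x)$ with $\beta_{m+1}=\min(\beta_m+2k,\,n-2k)$ (a logarithmic loss occurs at the borderline exponent $\beta_m=n-4k$, but it is reabsorbed at the next step, as in the $n=4k$ case of Lemma~\ref{prop:1stestphi}). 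After finitely many steps, their number depending only on $n$ and $k$, one reaches $\beta_m=n-2k$, where $g_i^{(m)}(x)\leq C\mu_i^{\frac{n-2k}{2}}\Bub_{\zm[i]}(x)$; this exponent is a fixed point of the iteration since $\mu_i^{\frac{n-2k}{2}}\Bub_{\zm[i]}$ is itself reproduced, up to a constant, by $\psi\mapsto\intM{\dg{x,y}^{2k-n}\TBub[i]^{\deu-2}(y)\psi(y)}(y)$. Combining these estimates gives \eqref{eq:2ndestphi}.

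The main obstacle is the repeated application of Giraud's Lemma with the correct bookkeeping: one must track simultaneously the powers of $\a_i$ and of $\mu_i$, the polynomial decay exponent $\beta_m$, and the exponential cutoff $\Psi$ in the region $\sqai\dg{z_i,x}\geq1$, and check that the borderline cases (the logarithmic case $n=4k$, and the instances where $\beta_m=n-4k$ during the iteration) produce only logarithmic losses, reabsorbed one step later. Since the number of iterations depends only on $n$ and $k$, the constant $C$ in \eqref{eq:2ndestphi} stays uniform in $i$; the delicate part is to have set up the estimates of Appendix~\ref{sec:gir} with weights flexible enough to run the induction in the presence of the diverging coefficient $\a_i$, which is the principal difference with \cite{Pre24}.
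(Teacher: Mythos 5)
Your proposal is correct and follows essentially the same route as the paper: absorb the $\sum_j\lambda_i^j\Zed[i]{j}$ terms via Lemma \ref{prop:estlami} together with \eqref{eq:basicZB} and \eqref{eq:raPsitoFa}, observe that \eqref{tmp:2ndix} already closes the case $n<4k$, and for $n\geq 4k$ bootstrap the representation formula \eqref{eq:reprforphi} with Giraud's Lemma \ref{prop:gir2}, gaining $2k$ in the decay exponent per iteration (with a reabsorbed logarithmic loss at the borderline) until the fixed-point weight $\mu_i^{\frac{n-2k}{2}}\Bub_{\zm[i]}\Psi$ is reached in finitely many steps. Your exponent bookkeeping $\beta_{m+1}=\min(\beta_m+2k,\,n-2k)$ is just a compact rephrasing of the paper's case analysis in \eqref{tmp:2ndx}--\eqref{tmp:2ndimp2}.
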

\begin{proof}
    We use the estimate of Lemma \ref{prop:estlami}, with the observation that, by definition, 
    \begin{equation}\label{eq:basicZB}
        \abs{\Zed[i]{j}(x)} \leq C\Bub_{\zm[i]}(x)\Psi\big(\sqai \dg{z_i,x}\big) \quad \forall~x\in M, ~j=0,\ldots n,
    \end{equation}
    and that 
    \begin{equation}\label{eq:raPsitoFa}
        \rad(\sqai\mu_i) \Psi\big(\sqai\dg{z_i,x}\big) \leq F_{\zma[i]}(x) \quad \forall~x\in M.
    \end{equation}
    The bound \eqref{eq:1stestphi} thus becomes 
    \begin{multline}\label{tmp:2ndix}
        \abs{\phi_i(x)} \leq CF_{\zma[i]}(x)\Bub_{\zm[i]}(x)\\ + C\inorm{\phi_i} \Psi\big(\sqai \dg{z_i,x}\big) \begin{cases}
            \mu_i^{\frac{n-2k}{2}}\Bub_{\zm[i]} & \text{if } n<4k\\
            \mu_i^{\frac{n-2k}{2}}\Bub_{\zm[i]} + \bpr{\frac{\mu_i+\dg{z_i,x}}{\mu_i}}^{-2k+1} & \text{if } n=4k\\
            \mu_i^{\frac{n-2k}{2}}\Bub_{\zm[i]} + \bpr{\frac{\mu_i +\dg{z_i,x}}{\mu_i}}^{-2k} & \text{if } n>4k.
        \end{cases}
    \end{multline}
    When $n\geq 4k$, remark that we have $n\geq 2k+2$, we use the estimate \eqref{tmp:2ndix} in the representation formula \eqref{eq:reprforphi}. With Lemma \ref{prop:gir2}, we compute that for all $x\in M$,
    \begin{multline}\label{tmp:2ndx}
        \intM{\Gga[i](x,y)\TBub[i]^{\deu-2}(y)F_{\zma[i]}(y)\Bub_{\zm[i]}(y)}(y)\\
        \begin{aligned}
            &\leq C \a_i^{\sigma_{n,k}}\mu_i^{2\sigma_{n,k}} \mu_i^{\frac{n-2k}{2}} (\mu_i + \dg{z_i,x})^{2k-n} \Psi\big(\sqai\dg{z_i,x}\big)\\
            &\leq CF_{\zma[i]}(x)\Bub_{\zm[i]}(x),
        \end{aligned} 
\end{multline}
    using \eqref{eq:raPsitoFa}. We also obtain with Lemma \ref{prop:gir2} that
    \begin{multline}\label{tmp:2ndxx}
        \intM{\Gga[i](x,y)\TBub[i]^{\deu-2}(y)\Psi\big(\sqai \dg{z_i,y}\big)\Bub_{\zm[i]}(y)}(y)\\ \leq C\Bub_{\zm[i]}(x) \Psi\big(\sqai\dg{z_i,x}\big).
    \end{multline}
    For the last term coming from \eqref{tmp:2ndix}, we have to consider several cases, as in the proof of Lemma \ref{prop:1stestphi}. Using Lemma \ref{prop:gir2}, we obtain the following:
    \begin{itemize}
        \item When $4k\leq n<6k$, we let $\theta = 1$ if $n=4k$ and $\theta =0 $ if $n>4k$. Then
            \begin{multline}\label{tmp:2ndxxx}
                \intM{\Gga[i](x,y)\TBub[i]^{\deu-2}(y)\bpr{\frac{\mu_i+\dg{z_i,y}}{\mu_i}}^{-2k+\theta}\Psi\big(\sqai\dg{z_i,x}\big)}(y)\\
                \begin{aligned}
                    &\leq C\mu_i^{n-2k}(\mu_i+\dg{z_i,x})^{2k-n}\Psi\big(\sqai\dg{z_i,x}\big) \\
                    &\leq C\mu_i^{\frac{n-2k}{2}}\Bub_{\zm[i]}(x)\Psi\big(\sqai\dg{z_i,x}\big).    
                \end{aligned}    
            \end{multline}
        \item When $n=6k$, for all $\Tilde{\theta}>0$, there exists $C_{\Tilde{\theta}}>0$ such that 
            \begin{multline}\label{tmp:2ndimp1}
                \intM{\Gga[i](x,y)\TBub[i]^{\deu-2}(y)\bpr{\frac{\mu_i+\dg{z_i,y}}{\mu_i}}^{-2k}\Psi\big(\sqai\dg{z_i,x}\big)}(y)\\
                    \leq C_{\Tilde{\theta}} \bpr{\frac{\mu_i + \dg{z_i,x}}{\mu_i}}^{-4k+\Tilde{\theta}} \Psi\big(\sqai \dg{z_i,x}\big)
            \end{multline}
            using the properties of the function $t\mapsto \log t$ in $(0,+\infty)$.
        \item When $n>6k$,
        \begin{multline}\label{tmp:2ndimp2}
            \intM{\Gga[i](x,y)\TBub[i]^{\deu-2}(y)\bpr{\frac{\mu_i+\dg{z_i,y}}{\mu_i}}^{-2k}\Psi\big(\sqai\dg{z_i,x}\big)}(y)\\
                \leq C \bpr{\frac{\mu_i + \dg{z_i,x}}{\mu_i}}^{-4k} \Psi\big(\sqai \dg{z_i,x}\big).
        \end{multline}
    \end{itemize}
    In the case $n<6k$, plugging \eqref{tmp:2ndix} in \eqref{eq:reprforphi}, putting \eqref{tmp:2ndx}, \eqref{tmp:2ndxx} and \eqref{tmp:2ndxxx} together, we have proven  \eqref{eq:2ndestphi}. When $n\geq 6k$, we repeat the arguments, using now the improved estimates \eqref{tmp:2ndimp1} and \eqref{tmp:2ndimp2} in \eqref{eq:reprforphi}. After a finite amount of iterations using Lemma \ref{prop:gir2}, we obtain \eqref{eq:2ndestphi}.
\end{proof}

We now obtain an estimate on $\inorm{\phi_i}$, showing that the second term in \eqref{eq:2ndestphi} can be absorbed by the first.
\begin{proposition}\label{prop:estphiinfn}
    Let $\phi_i \in \Keri^\perp$ be the unique solutions to \eqref{eq:linphiR} in Theorem \ref{prop:linest}, and let $\rad$ be as in \eqref{def:rad}. There exists $C>0$ and $i_0>0$ such that for all $i\geq i_0$,
    \[  \mu_i^{\frac{n-2k}{2}}\inorm{\phi_i} \leq C \rad(\sqai \mu_i).
        \]
\end{proposition}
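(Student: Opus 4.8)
The plan is to argue by contradiction, following the scheme announced after the statement of Theorem \ref{prop:linest}. If $\phi_i\equiv 0$ there is nothing to prove, so assume $\inorm{\phi_i}>0$, set $M_i:=\mu_i^{\frac{n-2k}{2}}\inorm{\phi_i}$ and $N_i:=M_i/\rad(\sqai\mu_i)$, and suppose that, along a subsequence, $N_i\to+\infty$. Normalize $\hat\phi_i:=\phi_i/M_i$; then $\hat\phi_i\in\Keri^\perp$ solves
\[
(\Dg+\a_i)^k\hat\phi_i-(\deu-1)\TBub[i]^{\deu-2}\hat\phi_i=\frac{R_i}{M_i}+\sum_{j=0}^n\frac{\lambda_i^j}{M_i}(\Dg+\a_i)^k\Zed[i]{j},
\]
and, $\hat\phi_i$ being continuous on the compact $M$, $|\hat\phi_i(x_i)|=\inorm{\hat\phi_i}=\mu_i^{-\frac{n-2k}{2}}$ at some $x_i\in M$.

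The first step is a localization of $x_i$ at scale $\mu_i$. I would first record the elementary pointwise inequality $F_{\zma}(x)\Bub_{\zm}(x)\le C\,\rad(\sqa\mu)\,\mu^{-\frac{n-2k}{2}}$, valid for all $x\in M$ and all admissible $\a,\mu$, proved by a direct computation distinguishing the three regimes in \eqref{def:rad} and \eqref{def:Fa} and using $\a\mu^2\to 0$. Dividing the estimate of Proposition \ref{prop:2ndestphi} by $M_i$ and using this inequality together with $\inorm{\phi_i}=\mu_i^{-\frac{n-2k}{2}}M_i$ gives
\[
|\hat\phi_i(x)|\le \frac{C}{N_i}\,\mu_i^{-\frac{n-2k}{2}}+C\,\Bub_{\zm[i]}(x)\,\Psi\big(\sqai\dg{z_i,x}\big)\qquad\text{for all }x\in M.
\]
Evaluating at $x=x_i$ and using $\mu_i^{\frac{n-2k}{2}}\Bub_{\zm[i]}(x)\le (C\mu_i/\dg{z_i,x})^{n-2k}$ forces $\dg{z_i,x_i}\le C\mu_i$; up to a subsequence $y_i:=\mu_i^{-1}\exp_{z_i}^{-1}(x_i)\to y_\infty\in\R^n$.

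Next I would blow up using the rescaling that preserves the $\hSob(\R^n)$-norm, $\Cphi_i^\sharp(y):=\mu_i^{\frac{n-2k}{2}}\hat\phi_i(\exp_{z_i}(\mu_iy))$ for $|y|<\varrho/\mu_i$, so that $\inorm{\Cphi_i^\sharp}=1$, attained at $y_i$. After rescaling the equation, $g_i:=\exp_{z_i}^*g(\mu_i\cdot)\to\xi$ in $C^\infty_{loc}$ and $\a_i\mu_i^2\to 0$ turn the operator into $\Dg[\xi]^k$; Proposition \ref{prop:Tconc}(1) gives $\mu_i^{2k}\TBub[i]^{\deu-2}(\exp_{z_i}(\mu_i\cdot))\to\Bub^{\deu-2}$; the source term is $O(N_i^{-1})$ locally since $\a_i^{\sigma_{n,k}}\mu_i^{2\sigma_{n,k}}\le C\rad(\sqai\mu_i)$ by \eqref{eq:estRiprop}–\eqref{def:snk}; and $|\lambda_i^j|\le CM_i$ by Lemma \ref{prop:estlami}, so, up to a subsequence, $\lambda_i^j/M_i\to\ell^j$, the corresponding term converging to $\ell^j(\deu-1)\Bub^{\deu-2}\Ze{j}$. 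Hence the rescaled right-hand side is bounded in $L^\infty_{loc}$; $W^{2k,p}$ elliptic estimates with $p$ large give $\Cphi_i^\sharp$ bounded in $H^{2k}_{loc}$, so, along a subsequence, $\Cphi_i^\sharp\to\Cphi_\infty^\sharp$ in $C^0_{loc}$ and weakly in $\hSob(\R^n)$ on balls, with $|\Cphi_\infty^\sharp(y)|\le C(1+|y|)^{2k-n}$, with $\Cphi_\infty^\sharp(y_\infty)=\pm1$, and
\[
\Dg[\xi]^k\Cphi_\infty^\sharp-(\deu-1)\Bub^{\deu-2}\Cphi_\infty^\sharp=(\deu-1)\Bub^{\deu-2}\sum_{j=0}^n\ell^j\Ze{j}\qquad\text{in }\R^n.
\]

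It remains to show $\Cphi_\infty^\sharp\equiv 0$, contradicting $\Cphi_\infty^\sharp(y_\infty)=\pm1$. The decay gives $\Cphi_\infty^\sharp\in L^\deu(\R^n)$, hence the right-hand side above lies in $L^{\frac{2n}{n+2k}}(\R^n)$; boundedness of $\Dg[\xi]^{-k}\colon L^{\frac{2n}{n+2k}}(\R^n)\to\hSob(\R^n)$ and Liouville's theorem for polyharmonic functions (via a Giraud estimate showing $\Dg[\xi]^{-k}$ of that right-hand side also decays) give $\Cphi_\infty^\sharp\in\hSob(\R^n)$. Testing the limit equation against each $\Ze{j_0}\in\Ker[]$ in the $\hSob(\R^n)$ inner product, and using that $\Ze{j_0}$ solves \eqref{eq:linbub} and that $\{\Ze{j}\}$ is an orthogonal basis of $\Ker[]$, forces $\ell^{j_0}=0$ for all $j_0$, so $\Cphi_\infty^\sharp\in\Ker[]$. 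Finally, passing to the limit in $0=\vprod[\Sob(M)]{\hat\phi_i}{\Zed[i]{j}}$: with this rescaling the $\Dg^{k/2}$-term localizes at scale $\mu_i$ without any extra power of $\mu_i$, $\Snorm{k}{\hat\phi_i}$ is bounded (by Proposition \ref{prop:lininv}, Lemma \ref{prop:estintRX} and $N_i\to\infty$), and the tail estimate for $\Zed[i]{j}$ (as in the proof of \eqref{eq:iii}) lets $R\to\infty$, yielding $\vprod[\hSob(\R^n)]{\Cphi_\infty^\sharp}{\Ze{j}}=0$, i.e. $\Cphi_\infty^\sharp\in\Ker[]^\perp$. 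Hence $\Cphi_\infty^\sharp=0$, a contradiction, so $N_i$ is bounded. The main obstacle is precisely this uniform-in-$\a$ bookkeeping: one must simultaneously control $\Snorm{-k}{R_i}$, the multipliers $\lambda_i^j$ and the rescaled lower-order terms $\a_i^{k-l}\Dg^l$ against the single scale $\rad(\sqai\mu_i)$, both to localize the concentration at scale $\mu_i$ and to ensure the blow-up limit is a genuine element of $\hSob(\R^n)$ so that the kernel/orthogonality dichotomy applies.
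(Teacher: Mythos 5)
Your proposal follows the paper's proof in all but one step: the contradiction hypothesis, the sup-normalization (your $\Cphi_i^\sharp$ is exactly the paper's $\psi_i$ from \eqref{def:psii}), the localization of the maximum at scale $\mu_i$ via Proposition \ref{prop:2ndestphi} and \eqref{eq:estFB}, the blow-up limit solving the linearized equation with multipliers, the vanishing of the multipliers by testing against $\Ze{j_0}$, and the final dichotomy $\Cphi_\infty^\sharp\in\Ker[]\cap\Ker[]^\perp$ are all the paper's argument. The genuine gap is in the last step, where you pass to the limit in $0=\vprod[\Sob(M)]{\hat\phi_i}{\Zed[i]{j}}$ \emph{as in the proof of \eqref{eq:iii}} and invoke boundedness of $\Snorm{k}{\hat\phi_i}$, claiming it follows from Proposition \ref{prop:lininv}, Lemma \ref{prop:estintRX} and $N_i\to\infty$. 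It does not: those results only give $\Snorm{k}{\phi_i}\leq C\Snorm{-k}{R_i}\leq C(\sqai\mu_i)^{1/2}$, while your normalizer is $M_i=N_i\,\rad(\sqai\mu_i)$ with no quantitative lower bound on $N_i$. In the regimes $\rad(t)=t^{3/2}$ or $t^{2}$ one has $(\sqai\mu_i)^{1/2}/\rad(\sqai\mu_i)\to\infty$, so $\Snorm{k}{\hat\phi_i}\leq C(\sqai\mu_i)^{1/2}/\big(N_i\rad(\sqai\mu_i)\big)$ can blow up if $N_i$ grows slowly. Since both the Cauchy--Schwarz tail estimate (the analogue of \eqref{tmp:lininvxx}) and the disposal of the lower-order terms $l\leq k-1$ of the $\Sob(M)$-inner product rest on this bound, the identification $\Cphi_\infty^\sharp\in\Ker[]^\perp$ --- precisely the step that closes the contradiction --- is not justified as written.

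The paper avoids any energy bound on the normalized function: it integrates by parts so that all derivatives fall on $\Zed[i]{j}$, controls the terms $l\leq k-1$ using only the sup-normalization together with the pointwise estimates \eqref{eq:estdifXsqa}--\eqref{eq:estdifXexp} (yielding $O(\a_i^{-(k-l)})$), and for $l=k$ uses the pointwise bound \eqref{eq:2ndestphi} on $\phi_i$ to reduce to $\Bal{z_i}{1/\sqai}$, then the expansion of $\Dg^k\Zed[i]{j}$ and a tail estimate in $R$ to pass to the limit. Substituting that argument (or first proving an improved bound of the type $\Snorm{k}{\phi_i}\leq C\big(\rad(\sqai\mu_i)+\mu_i^{\frac{n-2k}{2}}\inorm{\phi_i}\big)$, which requires testing the equation and using \eqref{eq:2ndestphi}, an argument you did not give) repairs your proof; as stated, the step fails. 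A minor additional slip: an $H^{2k}_{loc}$ bound does not give $C^0_{loc}$ compactness when $n\geq 4k$; you need the $W^{2k,p}_{loc}$ bounds with $p$ large (equivalently $C^{2k-1,\beta}_{loc}$ bounds, as in the paper) that you mention in the same sentence.
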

The proof will proceed by contradiction, we will assume 
\begin{equation}\label{eq:contrainfn}
    \frac{\mu_i^{\frac{n-2k}{2}}\inorm{\phi_i}}{\rad(\sqai\mu_i)} \to \infty \qquad \text{as } i \to \infty.
\end{equation}
Let $\inj/2 < \varrho < \inj$ be as in definition \ref{def:psizm}. If $\phi_i \not\equiv 0$, we define for all $y \in \Bal{0}{\varrho/\mu_i} \sub \R^n$,
\begin{equation}\label{def:psii}
    \psi_i(y) := \frac{\phi_i(\exp_{z_i}(\mu_i y))}{\inorm{\phi_i}},
\end{equation}
then $\psi_i \in C^0(\Bal{0}{\varrho/\mu_i})$. 
\par Before proving Proposition \ref{prop:estphiinfn}, we prove a few intermediate results.

\begin{lemma}\label{prop:dgzixi}
    Assume that $\phi_i$ satisfies \eqref{eq:contrainfn}, and let $\Tx_i \in M$ be such that $\abs{\phi_i(\Tx_i)} = \inorm{\phi_i}$. There exists $R_0>0$ and $i_0>0$ such that for all $i\geq i_0$, 
    \begin{equation}\label{eq:dgzixi}
        \dg{z_i,\Tx_i} \leq R_0\mu_i.
    \end{equation}
\end{lemma}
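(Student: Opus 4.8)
The plan is to argue by contradiction using the refined pointwise bound from Proposition \ref{prop:2ndestphi}. Suppose \eqref{eq:dgzixi} fails, so that after passing to a subsequence we have $\dg{z_i,\Tx_i}/\mu_i \to \infty$, where $\Tx_i \in M$ realizes $\abs{\phi_i(\Tx_i)} = \inorm{\phi_i}$. The idea is that, far from the concentration point $z_i$ on the scale $\mu_i$, both terms in the estimate \eqref{eq:2ndestphi} decay strictly faster than $\inorm{\phi_i}$, which is impossible since the left-hand side equals $\inorm{\phi_i}$ at $\Tx_i$.

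First I would split into the two regimes $\sqai\dg{z_i,\Tx_i}\leq 1$ and $\sqai\dg{z_i,\Tx_i}\geq 1$. In the inner regime, write $t_i := \dg{z_i,\Tx_i}/\mu_i \to \infty$ while $\sqai\mu_i t_i \leq 1$. Apply \eqref{eq:2ndestphi} at $\Tx_i$: the second term is $C\mu_i^{\frac{n-2k}{2}}\inorm{\phi_i}\Bub_{\zm[i]}(\Tx_i)\Psi(\sqai\dg{z_i,\Tx_i})$, and since $\Psi \leq 1$ and $\mu_i^{\frac{n-2k}{2}}\Bub_{\zm[i]}(\Tx_i) = (1+\pk t_i^2)^{-\frac{n-2k}{2}} \to 0$, this is $o(\inorm{\phi_i})$; hence it can be absorbed into the left-hand side for $i$ large, leaving $\inorm{\phi_i} \leq 2C F_{\zma[i]}(\Tx_i)\Bub_{\zm[i]}(\Tx_i)$. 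Now $F_{\zma[i]}(\Tx_i) = r_{\zma[i]}(\Tx_i) = \rad(\sqai(\mu_i+\dg{z_i,\Tx_i}))$. Using the definition \eqref{def:ra}–\eqref{def:rad} of $\rad$ and the bound $\Bub_{\zm[i]}(\Tx_i) = \mu_i^{-\frac{n-2k}{2}}(1+\pk t_i^2)^{-\frac{n-2k}{2}}$, one checks by direct comparison (treating each of the three cases $n=2k+1$, $n=2k+2$ or $k=1$, and $n\geq 2k+3$) that $r_{\zma[i]}(\Tx_i)\Bub_{\zm[i]}(\Tx_i) = o\big(\mu_i^{-\frac{n-2k}{2}}\rad(\sqai\mu_i)\big)$ as $t_i\to\infty$, because the extra factor $(1+\pk t_i^2)^{-\frac{n-2k}{2}}$ beats the polynomial (or polynomial-times-log) growth of $\rad(\sqai\mu_i(1+t_i))/\rad(\sqai\mu_i)$; here one uses $n>2k$ so $\frac{n-2k}{2}>0$. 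This gives $\inorm{\phi_i} = o\big(\mu_i^{-\frac{n-2k}{2}}\rad(\sqai\mu_i)\big)$, i.e. $\mu_i^{\frac{n-2k}{2}}\inorm{\phi_i}/\rad(\sqai\mu_i)\to 0$, contradicting \eqref{eq:contrainfn}.

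In the outer regime $\sqai\dg{z_i,\Tx_i}\geq 1$, the exponential factors make the decay even stronger: $F_{\zma[i]}(\Tx_i)\Bub_{\zm[i]}(\Tx_i) = \a_i^k\dg{z_i,\Tx_i}^{2k}e^{-\sqai\dg{z_i,\Tx_i}/2}\Bub_{\zm[i]}(\Tx_i)$ and the second term of \eqref{eq:2ndestphi} carries $\Psi(\sqai\dg{z_i,\Tx_i}) = e^{-\sqai\dg{z_i,\Tx_i}/2}$; comparing with $\mu_i^{-\frac{n-2k}{2}}\rad(\sqai\mu_i)$ and using $\a_i\mu_i^2\to 0$ (so $\sqai\mu_i\to 0$ and $\rad(\sqai\mu_i)$ is bounded below only by something going to $0$, but the exponential kills everything) again yields $\mu_i^{\frac{n-2k}{2}}\inorm{\phi_i}/\rad(\sqai\mu_i)\to 0$, a contradiction. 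In both cases we conclude there is $R_0>0$ with $\dg{z_i,\Tx_i}\leq R_0\mu_i$ for $i$ large, as claimed. The main obstacle is the bookkeeping in the inner-regime comparison: one must verify uniformly in $t_i\to\infty$ that $\rad(s(1+t))(1+t^2)^{-\frac{n-2k}{2}} = o(\rad(s))$ as $t\to\infty$ for $s = \sqai\mu_i\to 0$, handling the logarithmic case $n=2k+1$ separately, but this is elementary once the cases of \eqref{def:rad} are written out.
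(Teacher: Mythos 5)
Your overall strategy is the same as the paper's: evaluate \eqref{eq:2ndestphi} at the maximum point $\Tx_i$, absorb the second term using that $\mu_i^{\frac{n-2k}{2}}\Bub_{\zm[i]}(\Tx_i)\to 0$ when $\dg{z_i,\Tx_i}/\mu_i\to\infty$, and then play the remaining term against \eqref{eq:contrainfn}. The paper does this in one stroke, with no case distinction between the two regimes, because it only uses the uniform bound $F_{\zma}(x)\Bub_{\zm}(x)\leq C\mu^{-\frac{n-2k}{2}}\rad(\sqa\mu)$ valid for all $x\in M$ (its \eqref{eq:estFB}): by \eqref{eq:contrainfn} this quantity is $o\big(\inorm{\phi_i}\big)$, so both terms on the right of \eqref{eq:2ndestphi} at $\Tx_i$ are $o\big(\abs{\phi_i(\Tx_i)}\big)$, which is the contradiction.

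There is, however, one incorrect intermediate claim in your inner-regime analysis, precisely in the borderline case $n=2k+1$ that you defer as ``elementary''. You assert $r_{\zma[i]}(\Tx_i)\Bub_{\zm[i]}(\Tx_i)=o\big(\mu_i^{-\frac{n-2k}{2}}\rad(\sqai\mu_i)\big)$ as $t_i:=\dg{z_i,\Tx_i}/\mu_i\to\infty$. When $n=2k+1$, so that $\rad(t)=t(1+\abs{\log t})$ and $\tfrac{n-2k}{2}=\tfrac12$, the relevant ratio is comparable to $\frac{1+\abs{\log\big(\sqai\mu_i(1+t_i)\big)}}{1+\abs{\log \sqai\mu_i}}$, which is bounded but does not tend to zero: taking for instance $t_i=\abs{\log\sqai\mu_i}\to\infty$, the ratio converges to a positive constant. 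So the $o(\cdot)$ statement you rely on is false in that case. The gap is harmless and easily repaired: since \eqref{eq:contrainfn} asserts $\mu_i^{\frac{n-2k}{2}}\inorm{\phi_i}/\rad(\sqai\mu_i)\to\infty$, a uniform bound $F_{\zma[i]}(\Tx_i)\Bub_{\zm[i]}(\Tx_i)\leq C\,\mu_i^{-\frac{n-2k}{2}}\rad(\sqai\mu_i)$ — which does hold in every case and is exactly what the paper invokes — already yields the contradiction once the second term has been absorbed; no decay in $t_i$ is needed. Your outer-regime discussion is loose (``the exponential kills everything'') but correct in substance, and the same global bound covers it without splitting into regimes.
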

\begin{proof}
    Remark that
    \begin{equation}\label{eq:estFB}
        F_{\zma}(x)\Bub_{\zm}(x) \leq C \mu^{-\frac{n-2k}{2}} \rad(\sqa\mu) \qquad \forall~x\in M.
    \end{equation}
    We evaluate the estimate \eqref{eq:2ndestphi} at $\Tx_i \in M$, and obtain with \eqref{eq:estFB} that
    \begin{equation}\label{tmp:dgzx}
        \abs{\phi_i(\Tx_i)} \leq C \mu_i^{-\frac{n-2k}{2}} \rad(\sqai\mu_i) + C \abs{\phi_i(\Tx_i)} \bpr{\frac{\mu_i}{\mu_i+\dg{z_i,x}}}^{n-2k}.
        \end{equation} 
        With the assumption \eqref{eq:contrainfn}, we see that $\mu_i^{-\frac{n-2k}{2}}\rad(\sqai\mu_i) = o\big(\abs{\phi_i(\Tx_i)}\big)$. If we assume $\frac{\dg{z_i,\Tx_i}}{\mu_i} \to \infty$, we reach the contradiction $\abs{\phi_i(\Tx_i)} \leq o\big(\abs{\phi_i(\Tx_i)}\big)$, so that we have \eqref{eq:dgzixi}.
\end{proof}
\begin{lemma}\label{prop:bdpsii}
    Assume that $\phi_i$ satisfies \eqref{eq:contrainfn}, and let $\psi_i$ be as defined in \eqref{def:psii}. There exists $C>0$ such that the following holds. For all $R\geq 1$, 
    \begin{equation}\label{eq:estpsii}
        \abs{\psi_i(y)} \leq C (1+o(1)) (1+\abs{y})^{2k-n} \qquad \forall~y \in \Bal{0}{R} \sub \R^n,
    \end{equation}
    where $o(1) \to 0$ as $i\to \infty$.
\end{lemma}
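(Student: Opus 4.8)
The plan is to push the pointwise bound of Proposition \ref{prop:2ndestphi} down to the rescaled function $\psi_i$ by evaluating it along the points $x=\exp_{z_i}(\mu_i y)$ with $y\in\Bal{0}{R}$. First I would record the two elementary consequences of $\a_i\geq 1$ and $\a_i\mu_i^2<\tau_i\to 0$: namely $\mu_i\to 0$ and $\sqai\mu_i=\sqrt{\a_i\mu_i^2}\leq\sqrt{\tau_i}\to 0$ as $i\to\infty$. The first makes $\mu_i R<\varrho$ for $i$ large, so $\psi_i$ is defined on all of $\Bal{0}{R}$ (and $\phi_i\not\equiv 0$ for $i$ large, since \eqref{eq:contrainfn} forces $\inorm{\phi_i}>0$); the second makes $\sqai\mu_i(1+R)\leq 1$ for $i$ large, so every such $x$ lies in the near regime $\sqai\dg{z_i,x}\leq 1$. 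There, using $\dg{z_i,x}=\mu_i\abs{y}$, one has $\Psi\big(\sqai\dg{z_i,x}\big)=1$, $F_{\zma[i]}(x)=\rad\big(\sqai\mu_i(1+\abs{y})\big)$, and $\Bub_{\zm[i]}(x)=\mu_i^{-\frac{n-2k}{2}}(1+\pk\abs{y}^2)^{-\frac{n-2k}{2}}$.

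Substituting these into Proposition \ref{prop:2ndestphi} and dividing through by $\inorm{\phi_i}$, I expect to land on a bound of the shape $\abs{\psi_i(y)}\leq C\,\rad\big(\sqai\mu_i(1+\abs{y})\big)\big(\mu_i^{\frac{n-2k}{2}}\inorm{\phi_i}\big)^{-1}(1+\pk\abs{y}^2)^{-\frac{n-2k}{2}}+C\,(1+\pk\abs{y}^2)^{-\frac{n-2k}{2}}$. Since $\pk>0$ is a fixed constant, the second term is at once $\leq C(1+\abs{y})^{2k-n}$ with $C$ depending only on $n$ and $k$; this is the main term, and it already carries the uniform constant of the statement. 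For the first term I would factor out $\rad(\sqai\mu_i)/(\mu_i^{\frac{n-2k}{2}}\inorm{\phi_i})$, which tends to $0$ precisely by the contradiction hypothesis \eqref{eq:contrainfn}, and bound the remaining scaling ratio $\rad\big(\sqai\mu_i(1+\abs{y})\big)/\rad(\sqai\mu_i)$ by a constant $C(R)$ for $i$ large. The first term is then $\leq o(1)(1+\abs{y})^{2k-n}$, and \eqref{eq:estpsii} follows — the factor $1+o(1)$ in the statement being exactly this $R$-dependent but $i$-vanishing contribution sitting on top of the uniform second term.

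The only point that needs real care, and the main obstacle, is the last scaling estimate in the borderline dimension $n=2k+1$, where $\rad(t)=t(1+\abs{\log t})$ is not a pure power. Here the ratio equals $(1+\abs{y})\,\frac{1+\abs{\log(\sqai\mu_i)+\log(1+\abs{y})}}{1+\abs{\log(\sqai\mu_i)}}$, and I would use $\sqai\mu_i\to 0$: for $i$ large, $\abs{\log(\sqai\mu_i)}>\log(1+R)\geq\log(1+\abs{y})$, so the fraction is bounded by $\frac{1+2\abs{\log(\sqai\mu_i)}}{1+\abs{\log(\sqai\mu_i)}}<2$, whence the ratio is $\leq 2(1+R)$. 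In the remaining cases $\rad(t)=t^{3/2}$ and $\rad(t)=t^2$ the ratio is exactly $(1+\abs{y})^{3/2}\leq(1+R)^{3/2}$ and $(1+\abs{y})^2\leq(1+R)^2$ respectively, so nothing further is needed there.
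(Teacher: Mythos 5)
Your argument is correct and is essentially the paper's proof: evaluate the bound of Proposition \ref{prop:2ndestphi} at $x=\exp_{z_i}(\mu_i y)$ in the regime $\sqai\dg{z_i,x}\leq 1$, divide by $\inorm{\phi_i}$, and use \eqref{eq:contrainfn} to make the $r_{\zma[i]}$-term an $R$-dependent $o(1)$. The only cosmetic difference is that the paper absorbs the scaling ratio in one stroke via $r_{\zma[i]}(\exp_{z_i}(\mu_i y))\leq CR^2\rad(\sqai\mu_i)$ (which handles the logarithmic case $n=2k+1$ uniformly in $i$), whereas you bound the same ratio case by case for $i$ large, which is equally valid here.
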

\begin{proof}
    Fix $R>0$, and fix $i_0>0$ such that $R\mu_i < 1/\sqai$ for all $i\geq i_0$, this is possible since $\a_i\mu_i^2 \to 0$. Evaluate \eqref{eq:2ndestphi} at $x=\exp_{z_i}(\mu_i y)$ for $y\in\Bal{0}{R}\sub\R^n$, we obtain for all $i\geq i_0$
    \[  \abs{\psi_i(y)} \leq C \bpr{\frac{r_{\zma[i]}(\exp_{z_i}(\mu_i y))}{\inorm{\phi_i}} + \mu_i^{\frac{n-2k}{2}}}\Bub_{\zm[i]}(\exp_{z_i}(\mu_i y)).
        \]
    Now using that $r_{\zma[i]}(\exp_{z_i}(\mu_i y)) \leq CR^2 \rad(\sqai\mu_i)$ for all $\abs{y}<R$ and $R\geq 1$, we get using \eqref{eq:contrainfn} that
    \[  \abs{\psi_i(y)} \leq C\bpr{R^2\frac{\rad(\sqai\mu_i)\mu_i^{-\frac{n-2k}{2}}}{\inorm{\phi_i}} + 1} \Bub(y) \leq C(1+ o(1)) (1+\abs{y})^{2k-n}.
        \]
\end{proof}
\begin{lemma}\label{prop:bdDgapsii}
    Assume that $\phi_i$ satisfies \eqref{eq:contrainfn}, and let $\psi_i$ be as defined in \eqref{def:psii}. For all $R\geq 1$, there exists $C_R>0$ and $i_0\geq 0$ such that for all $i\geq i_0$,
    \[  \Lnorm[(\Bal{0}{R})]{\infty}{(\Dg[\Tg_i]+\a_i\mu_i^2)^k \psi_i} \leq C_R,
        \]
    where $\Tg_i := \exp^*_{z_i}g(\mu_i \cdot)$.
\end{lemma}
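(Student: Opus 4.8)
The plan is to pass to the rescaled variable $x=\exp_{z_i}(\mu_i y)$ and to use the equation \eqref{eq:linphiR} satisfied by $\phi_i$. Writing $\Tg_i=\exp_{z_i}^*g(\mu_i\cdot)$, a direct computation in normal coordinates at $z_i$ yields the scaling identity
\[
    (\Dg[\Tg_i]+\a_i\mu_i^2)^k\psi_i(y)=\frac{\mu_i^{2k}}{\inorm{\phi_i}}\,\big((\Dg+\a_i)^k\phi_i\big)\big(\exp_{z_i}(\mu_i y)\big),
\]
valid for $\mu_i|y|<\varrho$. Fix $R\geq 1$ and choose $i_0$ so large that $\sqai R\mu_i\leq 1$ for $i\geq i_0$ (possible since $\a_i\mu_i^2\to 0$); on $\Bal{0}{R}$ we then have $\dg{z_i,\exp_{z_i}(\mu_i y)}=\mu_i|y|$, $\Theta_{\a_i}\equiv 1$ near $z_i$, and all the estimates of Sections~\ref{sec:prelim}--\ref{sec:pointwise} valid in the range $\sqai\dg{z_i,\cdot}\leq 1$ apply. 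Using \eqref{eq:linphiR} it remains to bound, at $\exp_{z_i}(\mu_i y)$ and after multiplication by $\mu_i^{2k}/\inorm{\phi_i}$, the three terms $(\deu-1)\TBub[i]^{\deu-2}\phi_i$, $R_i$, and $\sum_j\lambda_i^j(\Dg+\a_i)^k\Zed[i]{j}$.

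For the first term, since $\mu_i^{\frac{n-2k}{2}}\TBub[i](\exp_{z_i}(\mu_i\cdot))\to\Bub$ in $C^0_{loc}(\R^n)$, we get $\TBub[i]^{\deu-2}(\exp_{z_i}(\mu_i y))\leq C\mu_i^{-2k}$ on $\Bal{0}{R}$, while $\phi_i(\exp_{z_i}(\mu_i y))/\inorm{\phi_i}=\psi_i(y)$ is bounded by Lemma~\ref{prop:bdpsii}; the resulting contribution is $\leq C|\psi_i(y)|\leq C_R$. For the second term, $\Bub_{\zm[i]}(\exp_{z_i}(\mu_i y))\leq\mu_i^{-\frac{n-2k}{2}}$ and \eqref{eq:estRiprop} give (using that the exponent $2\sigma_{n,k}-2k$ is negative) $|R_i(\exp_{z_i}(\mu_i y))|\leq C\mu_i^{-\frac{n-2k}{2}-2k}\,\a_i^{\sigma_{n,k}}\mu_i^{2\sigma_{n,k}}$ on $\Bal{0}{R}$. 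The key arithmetic point is $\a_i^{\sigma_{n,k}}\mu_i^{2\sigma_{n,k}}=(\sqai\mu_i)^{2\sigma_{n,k}}\leq C\,\rad(\sqai\mu_i)$, which one checks directly in each of the three cases of \eqref{def:snk}--\eqref{def:rad}; hence the $R_i$-contribution is $\leq C\,\rad(\sqai\mu_i)\big/\big(\mu_i^{\frac{n-2k}{2}}\inorm{\phi_i}\big)$, which is $o(1)$ by the contradiction hypothesis \eqref{eq:contrainfn}.

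For the third term, write $(\Dg+\a_i)^k\Zed[i]{j}=(\deu-1)\TBub[i]^{\deu-2}\Zed[i]{j}+\err[i]{\Ze{j}}$. On $\Bal{0}{R}$ one has $\Zed[i]{j}(\exp_{z_i}(\mu_i y))=\mu_i^{-\frac{n-2k}{2}}\Ze{j}(y)$ with $|\Ze{j}(y)|\leq C$, so the first piece is $\leq C\mu_i^{-2k-\frac{n-2k}{2}}$; for the second, Proposition~\ref{prop:estRa} gives $|\err[i]{\Ze{j}}(\exp_{z_i}(\mu_i y))|\leq C\a_i\mu_i^{2-2k}\mu_i^{-\frac{n-2k}{2}}\leq C\tau_i\mu_i^{-2k-\frac{n-2k}{2}}$ since $\a_i\mu_i^2<\tau_i$. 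Thus $|(\Dg+\a_i)^k\Zed[i]{j}(\exp_{z_i}(\mu_i y))|\leq C\mu_i^{-2k-\frac{n-2k}{2}}$ and the contribution of the third term is $\leq C\big(\textstyle\sum_j|\lambda_i^j|\big)\big/\big(\mu_i^{\frac{n-2k}{2}}\inorm{\phi_i}\big)$. Feeding in Lemma~\ref{prop:estlami}, $|\lambda_i^j|\leq C\rad(\sqai\mu_i)+C\mu_i^{\frac{n-2k}{2}}\inorm{\phi_i}$, this splits into a term that is $o(1)$ by \eqref{eq:contrainfn} and a term bounded by $C$. Summing the three contributions gives $\Lnorm[(\Bal{0}{R})]{\infty}{(\Dg[\Tg_i]+\a_i\mu_i^2)^k\psi_i}\leq C_R$ for $i\geq i_0$, as claimed. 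The work is purely bookkeeping: the only delicate points are verifying $(\sqai\mu_i)^{2\sigma_{n,k}}\leq C\rad(\sqai\mu_i)$ case by case and checking that \eqref{eq:contrainfn} together with Lemma~\ref{prop:estlami} exactly absorbs the dangerous powers of $\mu_i$ and $\inorm{\phi_i}$.
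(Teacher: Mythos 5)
Your proposal is correct and follows essentially the same route as the paper: rescale via the identity $(\Dg[\Tg_i]+\a_i\mu_i^2)^k\psi_i = \mu_i^{2k}\inorm{\phi_i}^{-1}\,(\Dg+\a_i)^k\phi_i(\exp_{z_i}(\mu_i\cdot))$, use \eqref{eq:linphiR}, bound $\mu_i^{2k}\TBub[i]^{\deu-2}$ by a constant against the bounded $\psi_i$, show the $R_i$ term is $o(1)$ from \eqref{eq:estRiprop} and \eqref{eq:contrainfn}, and handle the $\lambda_i^j(\Dg+\a_i)^k\Zed[i]{j}$ term by splitting off $\err[i]{\Ze{j}}$ and invoking Lemma \ref{prop:estlami} with \eqref{eq:contrainfn}. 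The only cosmetic differences are that the paper uses the trivial bound $\abs{\psi_i}\leq 1$ rather than Lemma \ref{prop:bdpsii}, and absorbs the $\err[i]{\Ze{j}}$ contribution using $\lambda_i^j\to 0$ instead of the estimate of Lemma \ref{prop:estlami}; the substance is identical.
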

\begin{proof}
    Fix $R>0$ and let $i_0>0$ be such that $R\mu_i \leq 1/\sqai$ for all $i\geq i_0$. By definition of $\psi_i$, we compute, for $i\geq i_0$,
    \[  \abs{(\Dg[\Tg_i]+\a_i\mu_i^2)^k \psi_i(y)} = \frac{\mu_i^{2k}}{\inorm{\phi_i}} \abs{(\Dg+\a_i)^k \phi_i(\exp_{z_i}(\mu_i y))}
        \]
    for all $y\in \Bal{0}{R}$. We use equation \eqref{eq:linphiR} satisfied by $\phi_i$, observe that using \eqref{eq:estRiprop}, and with \eqref{eq:contrainfn},
    \begin{align}
        \notag &\mu_i^{2k}\TBub[i]^{\deu-2}(\exp_{z_i}(\mu_i y)) = \mu_i^{2k} \Bub_{\zm[i]}^{\deu-2}(\exp_{z_i}(\mu_i y)) \leq C(1+\abs{y})^{-4k} \leq C\\
        \label{eq:estsupRi} &\frac{\mu_i^{2k}}{\inorm{\phi_i}} \abs{R_i(\exp_{z_i}(\mu_i y))} \leq C_R \frac{\rad(\sqai\mu_i)}{\inorm{\phi_i}}\mu_i^{-\frac{n-2k}{2}} (1+\abs{y})^{-n} = o(1)
    \end{align}
    for all $y\in \Bal{0}{R}$. Moreover, using Proposition \ref{prop:estRa}, we have 
    \begin{multline*}  \frac{\mu_i^{2k}}{\inorm{\phi_i}} \abs{\sum_{j=0}^n \lambda_i^j (\Dg+\a_i)^k \Zed[i]{j}(\exp_{z_i}(\mu_i y))}\\ \leq (\deu-1)\frac{\mu_i^{2k}}{\inorm{\phi_i}}\sum_{j=0}^n \tabs{\lambda_i^j} \abs{\Zed[i]{j}(\exp_{z_i}(\mu_i y))} \Bub_{\zm[i]}^{\deu-2}(\exp_{z_i}(\mu_i y))\\ + \frac{\mu_i^{2k}}{\inorm{\phi_i}}\bigg(\sum_{j=0}^n \tabs{\lambda^j_i}\bigg)\abs{\err[i]{\Ze{j}}(\exp_{z_i}(\mu_i y))}.
        \end{multline*}
    On the one hand, since $\err[i]{\Ze{j}}$ has bounds similar to those of $R_i$, we obtain as in \eqref{eq:estsupRi} that 
    \[  \frac{\mu_i^{2k}}{\inorm{\phi_i}} \abs{\err[i]{\Ze{j}}(\exp_{z_i}(\mu_i y))} \leq C_R \frac{\a_i\mu_i^2}{\inorm{\phi_i}}\mu_i^{-\frac{n-2k}{2}}= o(1) \qquad \text{as } i \to \infty,
        \]
    for all $y \in \Bal{0}{R}$. On the other hand, we use Lemma \ref{prop:estlami} and \eqref{eq:basicZB}, and get for $j=0,\ldots n$, 
    \begin{multline*}
        \frac{\mu_i^{2k}}{\inorm{\phi_i}}\tabs{\lambda_i^j} \abs{\Zed[i]{j}(\exp_{z_i}(\mu_i y))}\Bub_{\zm[i]}^{\deu-2} (\exp_{z_i}(\mu_i y))\\
        \begin{aligned}
            &\leq C_R \frac{\rad(\sqai \mu_i) \mu_i^{-\frac{n-2k}{2}}}{\inorm{\phi_i}}(1+\abs{y})^{-(n+2k)} + C (1+\abs{y})^{-(n+2k)}\\
            &\leq C_R
        \end{aligned}
    \end{multline*}
    since that \eqref{eq:contrainfn} holds. Thus, we obtain in the end
    \[  \abs{(\Dg[\Tg_i]+\a_i\mu_i^2)^k \psi_i(y)} \leq C_R \qquad \forall y \in \Bal{0}{R},
        \]
    for $i \geq i_0$.
\end{proof}
\begin{proposition}\label{prop:eqpsiinf}
    Assume that $\phi_i$ satisfies \eqref{eq:contrainfn}, and let $\psi_i$ be as defined in \eqref{def:psii}. Then, there exists $\psi_\infty$ such that $\psi_i \to \psi_\infty$ in $C^\infty_{loc}(\R^n)$, 
    up to a subsequence. Moreover, $\psi_\infty \in C^\infty(\R^n) \cap \hSob(\R^n)$ and satisfies
    \begin{equation}\label{eq:psiinf}
        \Dg[\xi]^k \psi_\infty = (\deu-1)\Bub^{\deu-2}\psi_\infty \qquad \text{in } \R^n.
    \end{equation}
\end{proposition}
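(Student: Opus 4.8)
The plan is to pass to the limit, via interior elliptic estimates, in the rescaled equation satisfied by $\psi_i$, all the needed a priori bounds being already available. Set $\Tg_i := \exp_{z_i}^*g(\mu_i\,\cdot)$ and $P_i := (\Dg[\Tg_i]+\a_i\mu_i^2)^k$. Changing variables $x=\exp_{z_i}(\mu_i y)$ in \eqref{eq:linphiR} (so that $(\Dg+\a_i)^k$, read in these coordinates and rescaled by $\mu_i^{2k}$, becomes $P_i$), one sees that on $\Bal{0}{\varrho/\mu_i}\sub\R^n$, a domain which exhausts $\R^n$ because $\mu_i\to0$, the function $\psi_i$ solves
\begin{multline*}
    P_i\psi_i = f_i, \qquad\text{where}\qquad f_i(y) := (\deu-1)\,\mu_i^{2k}\,\TBub[i]^{\deu-2}\big(\exp_{z_i}(\mu_i y)\big)\,\psi_i(y)\\
    + \frac{\mu_i^{2k}}{\inorm{\phi_i}}\Big(R_i + \tsum_{j=0}^n\lambda_i^j\,(\Dg+\a_i)^k\Zed[i]{j}\Big)\big(\exp_{z_i}(\mu_i y)\big).
\end{multline*}
By Lemma \ref{prop:bdpsii}, $\Lnorm[(\Bal{0}{R})]{\infty}{\psi_i}\leq C_R$ for every $R\geq1$ and all large $i$; by Lemma \ref{prop:bdDgapsii} and the estimates in its proof, $\Lnorm[(\Bal{0}{R})]{\infty}{f_i}\leq C_R$ and, crucially, all the terms of $f_i$ except the first tend to $0$ uniformly on $\Bal{0}{R}$ as $i\to\infty$ — this is exactly where the contradiction hypothesis \eqref{eq:contrainfn} enters.

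The operators $P_i$ are uniformly elliptic of order $2k$, and their coefficients converge in $C^\infty_{loc}(\R^n)$ to those of $\Dg[\xi]^k$, since $\Tg_i\to\xi$ in $C^\infty_{loc}(\R^n)$ and $\a_i\mu_i^2\to0$. Interior $L^p$-estimates for elliptic operators of order $2k$ then give, for every $p<\infty$ and $R\geq1$, a bound $\norm{\psi_i}_{W^{2k,p}(\Bal{0}{R})}\leq C_{R,p}$; by Sobolev embedding the $\psi_i$ are bounded in $C^{2k-1,\beta}(\Bal{0}{R})$ for any $\beta\in(0,1)$. By Arzelà--Ascoli and a diagonal extraction over an exhaustion $\Bal{0}{R_m}\uparrow\R^n$, a subsequence converges in $C^{2k-1,\beta'}_{loc}(\R^n)$ (for $\beta'<\beta$) to some $\psi_\infty\in C^{2k-1,\beta'}(\R^n)$, which inherits from Lemma \ref{prop:bdpsii} the decay $\abs{\psi_\infty(y)}\leq C(1+\abs{y})^{2k-n}$.

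To identify the limit, observe that $\mu_i^{2k}\TBub[i]^{\deu-2}(\exp_{z_i}(\mu_i\,\cdot)) = \big(\mu_i^{\frac{n-2k}{2}}\TBub[i](\exp_{z_i}(\mu_i\,\cdot))\big)^{\deu-2}\to\Bub^{\deu-2}$ in $C^\infty_{loc}(\R^n)$ by Proposition \ref{prop:Tconc}(1), using $(\deu-2)\tfrac{n-2k}{2}=2k$; hence $f_i\to(\deu-1)\Bub^{\deu-2}\psi_\infty$ in $L^\infty_{loc}(\R^n)$, while $P_i\psi_i\to\Dg[\xi]^k\psi_\infty$ in $\mathcal D'(\R^n)$ thanks to the coefficient convergence. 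Passing to the limit yields \eqref{eq:psiinf}. Since $\Bub$ is smooth, a standard elliptic bootstrap on \eqref{eq:psiinf} gives $\psi_\infty\in C^\infty(\R^n)$, and when the $R_i$ are smooth — as in the applications, e.g.\ $R_i=\err[i]{\Bub}$ — the same bootstrap run uniformly in $i$ upgrades the convergence to $C^\infty_{loc}(\R^n)$. Finally, rescaling \eqref{eq:psiinf} on the balls $\Bal{y}{\abs{y}/2}$ and using interior estimates together with the pointwise bound on $\psi_\infty$ yields $\abs{\nabla^l\psi_\infty(y)}\leq C(1+\abs{y})^{2k-n-l}$ for $0\leq l\leq k$; as $n>2k$ this makes $\int_{\R^n}\abs{\Dg[\xi]^{k/2}\psi_\infty}^2\,dy$ finite, and a cutoff approximation exploiting this decay shows $\psi_\infty\in\hSob(\R^n)$.

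The main thing to be careful about is the uniformity in $i$ of the elliptic estimates for the order-$2k$ operators $P_i$; this is harmless because their principal parts $\Dg[\Tg_i]^k$ are uniformly elliptic with $C^\infty_{loc}$-convergent coefficients and the only lower-order coefficient, $\a_i\mu_i^2$, is bounded (indeed $\to0$). The equation-specific bookkeeping — controlling $\psi_i$ and $f_i$ by means of \eqref{eq:contrainfn} — has already been carried out in Lemmas \ref{prop:bdpsii} and \ref{prop:bdDgapsii}, so what remains here is routine elliptic theory.
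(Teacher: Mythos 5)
There is a genuine gap, and it sits exactly at the sentence you flag as crucial: the claim that \emph{all} the terms of $f_i$ except $(\deu-1)\mu_i^{2k}\TBub[i]^{\deu-2}(\exp_{z_i}(\mu_i\cdot))\psi_i$ tend to $0$ uniformly on $\Bal{0}{R}$. This is true for the contribution of $R_i$ and of $\err[i]{\Ze{j}}$, but not for the contribution of $\sum_j\lambda_i^j(\Dg+\a_i)^k\Zed[i]{j}$. After the rescaling, that term is of size $\frac{\mu_i^{-\frac{n-2k}{2}}}{\inorm{\phi_i}}\tabs{\lambda_i^j}$ times a fixed profile, and Lemma \ref{prop:estlami} only gives $\tabs{\lambda_i^j}\leq C\rad(\sqai\mu_i)+C\mu_i^{\frac{n-2k}{2}}\inorm{\phi_i}$. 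Under the contradiction hypothesis \eqref{eq:contrainfn} the first summand is negligible, but the second yields a contribution of order $1$: indeed, in the proof of Lemma \ref{prop:bdDgapsii} this term is only bounded by $C_R$, not shown to vanish. Consequently, passing to the limit in your equation $P_i\psi_i=f_i$ does not give \eqref{eq:psiinf}; it gives, after extracting so that $\frac{\mu_i^{-\frac{n-2k}{2}}}{\inorm{\phi_i}}\lambda_i^j\to\lambda_\infty^j$, the equation
\[
\Dg[\xi]^k\psi_\infty-(\deu-1)\Bub^{\deu-2}\psi_\infty=\sum_{j=0}^n\lambda_\infty^j\,\Dg[\xi]^k\Ze{j},
\]
with coefficients $\lambda_\infty^j$ that are a priori nonzero. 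Your argument, as written, silently sets them to zero without justification.

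The missing step — and the reason the statement is still true — is to remove these kernel components \emph{after} the limit: one first establishes, as you do, the decay $\abs{\nabla^l\psi_\infty(y)}\leq C(1+\abs{y})^{2k-n-l}$ and hence $\psi_\infty\in\hSob(\R^n)$, and then tests the limit equation against $\Ze{j_0}$; since each $\Ze{j_0}$ solves the linearized equation \eqref{eq:linbub}, the left-hand side vanishes, while orthogonality of $\{\Ze{j}\}$ in $\hSob(\R^n)$ gives $\lambda_\infty^{j_0}\Hnorm{k,2}{\Ze{j_0}}^2=0$, i.e.\ $\lambda_\infty^{j_0}=0$ for all $j_0$. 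This is precisely the route the paper takes (it works with the weak formulation, testing \eqref{eq:linphiR} against concentrated test functions $\vartheta_{\zm[i]}$, which also sidesteps the issue of justifying $C^\infty_{loc}$ rather than $C^0_{loc}$ convergence when $R_i$ is merely continuous). Your compactness and identification-of-the-limit machinery is otherwise sound, but without this orthogonality step the proof does not yield \eqref{eq:psiinf}.
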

\begin{proof}
    Let $R\geq 1$ be fixed. Since $\Tg_i \to \xi$ in $C^\infty_{loc}(\R^n)$ and $\a_i \mu_i^2 \to 0$, as $i\to \infty$, the operator $(\Dg[\Tg_i]+\a_i\mu_i^2)^k$ is an elliptic operator with bounded coefficients, uniformly with respect to $i$. Thus, by Lemma \ref{prop:bdDgapsii} and by standard elliptic estimates, the sequence $(\psi_i)_i$ is bounded in $C^{2k-1,\beta}(\cBal{0}{R})$ for $\beta \in (0,1)$. By compactness of the embedding $C^{2k-1,\beta}(\cBal{0}{R})\sub C^0(\cBal{0}{R})$, there exists $\psi_\infty \in C^0(\cBal{0}{R})$ such that $\psi_i \to \psi_\infty$ in $C^0(\cBal{0}{R})$, up to a subsequence. Estimate \eqref{eq:estpsii} then passes to the limit, and for all fixed $R>0$, we have 
    \begin{equation}\label{eq:estpsiinf}
        \abs{\psi_\infty(y)} \leq C (1+\abs{y})^{2k-n} \qquad \forall~y\in\Bal{0}{R},
    \end{equation}
    where $C>0$ does not depend on $R$.
    \par We will now show that $\psi_\infty$ satisfies \eqref{eq:psiinf}. Let $\vartheta \in \Cct(\R^n)$ be such that $\Hnorm{k,2}{\vartheta} = 1$, and let $\vartheta_{\zm[i]}$ be as in \eqref{def:conc}. Test equation \eqref{eq:linphiR} against $\vartheta_{\zm[i]}$: For all $i$ we have
    \begin{multline}\label{eq:phivsthet}
        \frac{\mu_i^{-\frac{n-2k}{2}}}{\inorm{\phi_i}}\Bigg[\sum_{l=0}^k\tbinom{k}{l} \a_i^{k-l}\intM{\vprod{\Dg^{l/2}\phi_i}{\Dg^{l/2}\vartheta_{\zm[i]}}}\\ - (\deu-1)\intM{\TBub[i]^{\deu-2}\phi_i\vartheta_{\zm[i]}} -\intM{R_i \vartheta_{\zm[i]}} \\- \sum_{j=0}^n \lambda_i^j \intM{(\Dg+\a_i)^k \Zed[i]{j}\,\vartheta_{\zm[i]}} \Bigg] = 0.
    \end{multline}
    As in \eqref{tmp:TBphipsi}, let $R_\vartheta>0$ be such that the support of $\vartheta$ is contained in $\Bal{0}{R_\vartheta}$, we obtain up to a subsequence as $i \to \infty$, 
    \begin{equation}\label{tmp:infi1}
        \begin{aligned}
            \frac{\mu_i^{-\frac{n-2k}{2}}}{\inorm{\phi_i}} \intM{\TBub[i]^{\deu-2}\phi_i \vartheta_{\zm[i]}} &=\int_{\Bal{0}{R_\vartheta}} \Bub^{\deu-2}\psi_i \vartheta\, dy + o(1)\\
                &= \int_{\R^n} \Bub^{\deu-2}\psi_\infty \vartheta\, dy + o(1),
        \end{aligned}
    \end{equation}
    since $\psi_i\to \psi_\infty$ in $C^0(\Bal{0}{R_\vartheta})$. Similarly, as in \eqref{tmp:Dgaphipsi}, we now show that 
    \begin{equation}\label{tmp:infi2}
        \frac{\mu_i^{-\frac{n-2k}{2}}}{\inorm{\phi_i}} \sum_{l=0}^k \tbinom{k}{l} \a_i^{k-l}\intM{\vprod{\Dg^{l/2}\phi_i}{\Dg^{l/2}\vartheta_{\zm[i]}}} = \int_{\R^n} \psi_\infty \Dg[\xi]^k \vartheta\, dy + o(1)
    \end{equation} 
    as $i\to \infty$. To prove \eqref{tmp:infi2}, we first observe that by integration by parts, and choosing $R_\vartheta>0$ as above, we have
    \begin{align*}  
        \frac{\mu_i^{-\frac{n-2k}{2}}}{\inorm{\phi_i}} \intM{\vprod{\Dg^{k/2}\phi_i}{\Dg^{k/2}\vartheta_{\zm[i]}}} &= \intM[\Bal{z_i}{R_\vartheta\mu_i}]{\frac{\mu_i^{-\frac{n-2k}{2}}}{\inorm{\phi_i}}\phi_i\, \Dg^k \vartheta_{\zm[i]}}\\
            &= \int_{\Bal{0}{R_\vartheta}} \psi_\infty \Dg[\xi]^k\vartheta\, dy + o(1) \\
            &= \int_{\R^n} \psi_\infty \Dg[\xi]^k \vartheta \,dy + o(1)
        \end{align*}
    as $i\to \infty$. For $l=0,\ldots k-1$, we also have, by Hölder's inequality,
    \begin{multline*}
        \frac{\mu_i^{-\frac{n-2k}{2}}}{\inorm{\phi_i}} \a_i^{k-l}\abs{\intM{\vprod{\Dg^{l/2}\phi_i}{\Dg^{l/2}\vartheta_{\zm[i]}}}}\\
        \begin{aligned}
            &\leq \frac{\mu_i^{-\frac{n-2k}{2}}}{\inorm{\phi_i}} \a_i^{k-l} \bpr{\intM[\Bal{z_i}{R_\vartheta \mu_i}]{\abs{\Dg^{l/2}\phi_i}^2}}^{1/2}\\
            &\qquad\qquad \bpr{\intM[\Bal{z_i}{R_\vartheta\mu_i}]{\abs{\Dg^{l/2}\vartheta_{\zm[i]}}^2}}^{1/2}\\
            &\leq C\frac{\mu_i^{-\frac{n-2k}{2}}}{\inorm{\phi_i}} (\a_i\mu_i^2)^{k-l}\Snorm{k}{\phi_i} \leq C\frac{\rad(\sqai\mu_i)\mu_i^{-\frac{n-2k}{2}}}{\inorm{\phi_i}} = o(1)
        \end{aligned}
    \end{multline*}
    with the same computations as in \eqref{eq:Dglphimu} and \eqref{eq:Dglpsizm}, and where we used \eqref{eq:contrainfn}. This concludes the proof of \eqref{tmp:infi2}. With the same arguments, we similarly obtain that
    \begin{multline}\label{tmp:infi3}
        \frac{\mu_i^{-\frac{n-2k}{2}}}{\inorm{\phi_i}}\sum_{j=0}^n \lambda_i^j \intM{(\Dg+\a_i)^k \Zed[i]{j}\,\vartheta_{\zm[i]}}\\ = \sum_{j=0}^n \frac{\mu_i^{-\frac{n-2k}{2}}}{\inorm{\phi_i}}\lambda_j^i \bpr{\int_{\R^n} \Ze{j}\Dg[\xi]^k \vartheta\, dy + o(1)}.
    \end{multline}
    Now using Lemma \ref{prop:estlami} and \eqref{eq:contrainfn}, we have
    \[  \frac{\mu_i^{-\frac{n-2k}{2}}}{\inorm{\phi_i}}\tabs{\lambda_i^j} \leq C \frac{\rad(\sqai\mu_i)\mu_i^{-\frac{n-2k}{2}}}{\inorm{\phi_i}} + C \leq C 
        \]
    for $j=0,\ldots n$, and thus there exists $\lambda^j_\infty \in \R$ such that, up to a subsequence, 
    \begin{equation}\label{tmp:infi3b}
        \frac{\mu_i^{-\frac{n-2k}{2}}}{\inorm{\phi_i}} \lambda_i^j \xto{i\to \infty} \lambda_\infty^j \qquad j=0,\ldots n.
    \end{equation}
    Finally, we have by a change of variables
    \begin{equation}\label{tmp:infi4}
        \begin{aligned}
            \frac{\mu_i^{-\frac{n-2k}{2}}}{\inorm{\phi_i}} \abs{\intM{R_i \vartheta_{\zm[i]}}} &\leq C\frac{\mu_i^{2k}}{\inorm{\phi_i}}\int_{\Bal{0}{R_\vartheta}}\abs{R_i(\exp_{z_i}(\mu_i y))} \abs{\vartheta(y)}dy\\
                &\leq o(1) \bpr{\int_{\Bal{0}{R_\vartheta}}\abs{\vartheta}^{\deu}dy}^{1/\deu},
        \end{aligned}
    \end{equation}
    where we used Hölder's inequality and \eqref{eq:estsupRi}. Thus, for all $\vartheta \in \Cct(\R^n)$, putting \eqref{tmp:infi1}, \eqref{tmp:infi2}, \eqref{tmp:infi3}, \eqref{tmp:infi3b} and \eqref{tmp:infi4} together in \eqref{eq:phivsthet}, and letting $i\to \infty$, we have showed that 
    \[  \int_{\R^n} \psi_\infty \Dg[\xi]^k \vartheta\, dy - (\deu-1) \int_{\R^n}\Bub^{\deu-2}\psi_\infty\vartheta\, dy - \sum_{j=0}^n \lambda_\infty^j \int_{\R^n}\Ze{j}\Dg[\xi]^k \vartheta\, dy = 0.
        \]
    In other words, $\psi_\infty \in C^0(\R^n)$ satisfies
    \begin{equation}\label{eq:1psiinf}  \Dg[\xi]^k \psi_\infty -(\deu-1)\psi_\infty \Bub^{\deu-2} = \sum_{j=0}^n \lambda_\infty^j \Dg[\xi]^k \Ze{j}
        \end{equation}
    in the distributional sense in $\R^n$. Observe that $\psi_\infty \in L^\infty(\R^n)$ thanks to \eqref{eq:estpsiinf}, so that by a standard bootstrap argument $\psi_\infty \in C^\infty(\R^n)$, and $\psi_\infty$ is a classical solution to \eqref{eq:1psiinf}. We use a representation formula for $\psi_\infty$, recalling that the Green's function for the poly-Laplacian operator $\Dg[\xi]^k$ in $\R^n$ is $\operatorname{G}_{\xi,0}(x,y) := c_{n,k}\abs{x-y}^{2k-n}$, where $c_{n,k}$ is an explicit constant (see for instance \cite{GazGruSw10}). 
    For $l=0,\ldots 2k-1$, using \eqref{eq:estpsiinf}, \eqref{def:bub} and \eqref{eq:estdZ}, we obtain with a standard Giraud's Lemma 
    \[  \abs{\nabla^l \psi_\infty(y)} \leq C (1+\abs{y})^{2k-n-l}
        \]
    for all $y \in \R^n$. Thus, we see that $\psi_\infty \in \hSob(\R^n)$. Now integrating \eqref{eq:1psiinf} against $\Ze{j_0}$ for some $j_0\in \{0,\ldots n\}$, we obtain 
    \begin{multline}\label{tmp:intpp}
        \int_{\R^n} \vprod[\xi]{\Dg[\xi]^{k/2}\psi_\infty}{\Dg[\xi]^{k/2}\Ze{j_0}}dy -(\deu-1)\int_{\R^n} \Bub^{\deu-2}\psi_\infty \Ze{j_0}dy\\ = \sum_{j=0}^n \lambda_\infty^j \int_{\R^n} \vprod[\xi]{\Dg[\xi]^{k/2}\Ze{j}}{\Dg[\xi]^{k/2}\Ze{j_0}}dy
        \end{multline}
    by integration by parts. Since the $\Ze{j}$  satisfy \eqref{eq:linbub} and form an orthogonal basis of $\Ker[]$ in $\hSob(\R^n)$, \eqref{tmp:intpp} gives $\lambda_\infty^{j_0}= 0$ for $j_0=0,\ldots n$. Now $\psi_\infty \in C^\infty(\R^n)\cap \hSob(\R^n)$ is a classical solution to \eqref{eq:psiinf}, and this concludes the proof.
\end{proof}

We are now in position to prove Proposition \ref{prop:estphiinfn}.
\begin{proof}[Proof of Proposition \ref{prop:estphiinfn}]
    Let us assume by contradiction that $\phi_i \not\equiv 0$ satisfies \eqref{eq:contrainfn}, then Lemmas \ref{prop:bdpsii}, \ref{prop:bdDgapsii}, and Proposition \ref{prop:eqpsiinf} hold. There exists $\psi_\infty\in C^\infty(\R^n) \cap \hSob(\R^n)$ such that, up to a subsequence, $\psi_i \to \psi_\infty$ in $C^0_{loc}(\R^n)$, 
    where $\psi_i$ is as defined in \eqref{def:psii}. Moreover, $\psi_\infty$ satisfies \eqref{eq:psiinf} and thus $\psi_\infty \in \Ker[]$. Let now $R>R_0$ where $R_0$ is given by Lemma \ref{prop:dgzixi}. Thanks to \eqref{eq:dgzixi}, there exists $y_\infty \in \Bal{0}{R}\sub \R^n$ such that, up to a subsequence, 
    \[  \tfrac{1}{\mu_i}\exp_{z_i}^{-1}(\Tx_i) \to y_\infty,
        \] 
    where $\Tx_i \in M$ is the point where $\abs{\phi_i}$ reaches its maximum. Now by definition of $\psi_i$, we have $\abs{\psi_i\big(\tfrac{1}{\mu_i}\exp_{z_i}^{-1}(\Tx_i)\big)} = 1$ for all $i$, and 
    \[  \psi_i\big(\tfrac{1}{\mu_i}\exp_{z_i}^{-1}(\Tx_i)\big) \xto{i\to \infty} \psi_\infty(y_\infty)
        \]
    up to a subsequence. Thus, by continuity of $\psi_\infty$ we have 
    \begin{equation}\label{eq:psiinfnzer}
        \psi_\infty \not\equiv 0.
    \end{equation}
    \par We now show that $\psi_\infty \in \Ker[]^\perp$, since $\psi_\infty \in \Ker[]$, this will imply that $\psi_\infty \equiv 0$, which is a contradiction. Since $\phi_i \in \Keri^\perp$ for all $i$, we have
    \[  0 =\frac{\mu_i^{-\frac{n-2k}{2}}}{\inorm{\phi_i}} \vprod[\Sob(M)]{\phi_i}{\Zed[i]{j}}= \sum_{l=0}^k \intM{\frac{\mu_i^{-\frac{n-2k}{2}}}{\inorm{\phi_i}} \phi_i \Dg^l \Zed[i]{j}} \qquad j=0,\ldots n
        \]
    by integration by parts. For $l=0,\ldots k-1$, $j=0,\ldots n$, see that
    \begin{multline*}  
        \abs{\intM{\frac{\mu_i^{-\frac{n-2k}{2}}}{\inorm{\phi_i}} \phi_i \Dg^{l} \Zed[i]{j}}} \leq \intM[\Bal{z_i}{1/\sqai}]{\mu_i^{-\frac{n-2k}{2}} \abs{\Dg^l \Zed[i]{j}}}\\ + \intM[M\setminus\Bal{z_i}{1/\sqai}]{\mu_i^{-\frac{n-2k}{2}}\abs{\Dg^l \Zed[i]{j}}}.
        \end{multline*}
    Using \eqref{eq:estdifXsqa}, we have
    \begin{align*}
        \intM[\Bal{z_i}{1/\sqai}]{\mu_i^{-\frac{n-2k}{2}} \abs{\Dg^l \Zed[i]{j}}} &\leq C \mu_i^{2(k-l)}\int_{\Bal{0}{\frac{1}{\sqai\mu_i}}} (1+\abs{y})^{-n-2k+2l} dy\\
            &\leq C\a_i^{-(k-l)},
    \end{align*}
    while using \eqref{eq:estdifXexp}, we have
    \begin{multline}\label{tmp:muDgZbd}
        \intM[M\setminus\Bal{z_i}{1/\sqai}]{\mu_i^{-\frac{n-2k}{2}}\abs{\Dg^l \Zed[i]{j}}}\\
        \begin{aligned}
            &\leq C\a_i^{-(k-l)} \int_{\Bal{0}{\varrho\sqai}\setminus\Bal{0}{1}}\abs{y}^{2k-n}e^{-\abs{y}/2}dy + C \a_i^l e^{-\sqai\varrho/2}\\
            &\leq C\a_i^{-(k-l)}.
        \end{aligned}    
    \end{multline}
    Similarly, for $l=k$, using \eqref{eq:2ndestphi} together with \eqref{eq:estFB}, we obtain
    \begin{multline*}
        \intM[M\setminus\Bal{z_i}{1/\sqai}]{\frac{\mu_i^{-\frac{n-2k}{2}}}{\inorm{\phi_i}}\abs{\phi_i}\abs{\Dg^k \Zed[i]{j}}}\\ \leq C\intM[M\setminus\Bal{z_i}{1/\sqai}]{\frac{\mu_i^{-(n-2k)}\rad(\sqai\mu_i)}{\inorm{\phi_i}} \abs{\Dg^k \Zed[i]{j}}}\\ + C\intM[M\setminus\Bal{z_i}{1/\sqai}]{\mu_i^{\frac{n-2k}{2}}\dg{z_i,x}^{2k-n}e^{-\sqai \dg{z_i,x}/2}\abs{\Dg^k \Zed[i]{j}}}.
    \end{multline*}
    We compute with the same argument as in \eqref{tmp:muDgZbd}
    \[  \intM[M\setminus\Bal{z_i}{1/\sqai}]{\frac{\mu_i^{-(n-2k)}\rad(\sqai\mu_i)}{\inorm{\phi_i}} \abs{\Dg^k \Zed[i]{j}}} \leq C \frac{\mu_i^{-\frac{n-2k}{2}}\rad(\sqai\mu_i)}{\inorm{\phi_i}} = o(1)
        \]
    and 
    \begin{equation*}
        \intM[M\setminus\Bal{z_i}{1/\sqai}]{\mu_i^{\frac{n-2k}{2}}\dg{z_i,x}^{2k-n}e^{-\sqai \dg{z_i,x}/2}\abs{\Dg^k \Zed[i]{j}}} \leq C(\a_i\mu_i^2)^{\frac{n-2k}{2}}
    \end{equation*}
    using \eqref{eq:estdifXexp}. Therefore, we have showed that 
    \begin{equation}\label{tmp:vprodphiZ}
        0 = \intM[\Bal{z_i}{1/\sqai}]{\frac{\mu_i^{-\frac{n-2k}{2}}}{\inorm{\phi_i}} \phi_i \Dg^k \Zed[i]{j}}+ o(1) \qquad j=0,\ldots n,
    \end{equation}
    as $i\to \infty$, since $\a_i\to \infty$, $\a_i\mu_i^2 \to 0$. Fix some $R>0$, we have 
    \begin{equation}\label{tmp:kterm}
    \begin{aligned}
        \frac{\mu_i^{-\frac{n-2k}{2}}}{\inorm{\phi_i}}\intM[\Bal{z_i}{R\mu_i}]{\phi_i \Dg^k \Zed[i]{j}} &= \int_{\Bal{0}{R}} \psi_i \Dg[\xi]^k \Ze{j}dy + o(1)\\
            &= \int_{\Bal{0}{R}} \psi_\infty \Dg[\xi]^k \Ze{j}dy + o(1),
    \end{aligned}
    \end{equation}
    up to a subsequence.
    See that, using \eqref{eq:estpsiinf} and \eqref{eq:estdZ}, by dominated convergence we have 
    \begin{equation}\label{tmp:ktermb}
        \lim_{R\to \infty}\int_{\Bal{0}{R}}\psi_\infty \Dg[\xi]^k \Ze{j} dy = \int_{\R^n}\psi_\infty \Dg[\xi]^k \Ze{j}dy.
    \end{equation}
    Now as in the proof of Proposition \ref{prop:estRa}, we use \eqref{eq:estdifXsqa} to get for all $y \in \Bal{0}{\varrho/\mu_i}$,
    \begin{multline*}
        \Dg^k \Zed[i]{j}(\exp_{z_i}(\mu_i y)) = \mu_i^{-\frac{n+2k}{2}} \Dg[\xi]^k \Ze{j}(y) + \bigO\bpr{\mu_i^2 \mu_i^{-\frac{n+2k}{2}}(1+\abs{y})^{2-n}}\\
            = \mu_i^{-\frac{n+2k}{2}}(\deu-1)\Ze{j}(y)\Bub^{\deu-2}(y) + \bigO\bpr{\mu_i^2 \mu_i^{-\frac{n+2k}{2}} (1+\abs{y})^{2-n}},
    \end{multline*}
    so that 
    \begin{multline*}
        \frac{\mu_i^{-\frac{n-2k}{2}}}{\inorm{\phi_i}}\abs{\intM[\Bal{z_i}{1/\sqai}\setminus\Bal{z_i}{R\mu_i}]{\phi_i \Dg^k \Zed[i]{j}}}\\
        \begin{aligned}
            &\leq \mu_i^{-\frac{n-2k}{2}}\intM[\Bal{z_i}{1/\sqai}\setminus\Bal{z_i}{R\mu_i}]{\abs{\Dg^k \Zed[i]{j}}}\\
            &\leq C \int_{\Bal{0}{\frac{1}{\sqai\mu_i}}\setminus\Bal{0}{R}}\abs{\Ze{j}}\Bub^{\deu-2}dy + C\mu_i^2 \int_{\Bal{0}{\frac{1}{\sqai\mu_i}}}(1+\abs{y})^{2-n}dy.
        \end{aligned} 
    \end{multline*}
    Direct computations using \eqref{eq:controlB} and \eqref{eq:estdZ} give 
    \[  \int_{\Bal{0}{\frac{1}{\sqai\mu_i}}\setminus\Bal{0}{R}}\abs{\Ze{j}}\Bub^{\deu-2}dy \leq C \int_{\R^n\setminus \Bal{0}{R}} (1+\abs{y})^{-(n+2k)} dy = \epsilon(R),
        \]
    where $\epsilon(R) \to 0$ as $R\to \infty$. Finally, see that 
    \[  \mu_i^2 \int_{\Bal{0}{\frac{1}{\sqai\mu_i}}}(1+\abs{y})^{2-n}dy \leq C\a_i^{-1}.
        \]
    In the end, we have thus showed that 
    \begin{equation}\label{tmp:ktermc}
        \frac{\mu_i^{\frac{n-2k}{2}}}{\inorm{\phi_i}}\abs{\intM[\Bal{z_i}{1/\sqai}\setminus\Bal{z_i}{R\mu_i}]{\phi_i \Dg^k \Zed[i]{j}}} \leq \epsilon(R) + o(1) 
    \end{equation}
    as $i\to \infty$.
    Putting \eqref{tmp:kterm}, \eqref{tmp:ktermb}, and \eqref{tmp:ktermc} in \eqref{tmp:vprodphiZ}, letting first $i\to \infty$, up to a subsequence, and then $R\to \infty$, we obtain
    \[  0 = \int_{\R^n} \psi_\infty \Dg[\xi]^k \Ze{j}dy = \int_{\R^n} \vprod[\xi]{\Dg[\xi]^{k/2}\psi_\infty}{\Dg[\xi]^{k/2}\Ze{j}}dy \qquad j= 0\ldots n
        \]
    by integration by parts, since $\psi_\infty \in \hSob(\R^n)$. This gives $\psi_\infty \in \Ker[]^\perp$ and thus $\psi_\infty =0$ which is a contradiction with \eqref{eq:psiinfnzer}.
\end{proof}

We now conclude the proof of Theorem \ref{prop:linest}.
\begin{proof}[Proof of Theorem \ref{prop:linest}]
    Let $\phi_i \in \Keri^\perp$ be the unique solution to \eqref{eq:linphiR}, for $i\geq i_0$ big enough. Then by a standard bootstrap argument, it follows that $\phi_i \in C^0(M)$. Proposition \ref{prop:2ndestphi} together with Proposition \ref{prop:estphiinfn} gives that for all $x\in M$,
    \[  \abs{\phi_i(x)} \leq CF_{\zma[i]}(x) \Bub_{\zm[i]}(x) + C \rad(\sqai\mu_i) \Bub_{\zm[i]}(x)\Psi\big(\sqai \dg{z_i,x}\big),
        \]
    where $F_{\zma[i]}$ is defined in \eqref{def:Fa}, $\rad$ in \eqref{def:rad} and $\Psi$ in \eqref{def:Psi}. Now using \eqref{eq:raPsitoFa}, we obtain \eqref{eq:estphi}.
\end{proof}
\begin{remark}
    A precise inspection of the proof shows that an exponential decay of the form $e^{-\sqai\dg{z_i,x}(1-\epsilon)}$ for $\phi_i$ can be obtained for all $\epsilon\in (0,1)$, without changing the arguments. Similarly, for all $\theta \in (0,1)$, we can set when $k=1, n\geq 2k+2$ or $k\geq 2, n=2k+2$,
    \[  r_{\zma}(x) = \a^{1-\theta} (\mu+\dg{z,x})^{2-2\theta}
        \]
    and still obtain \eqref{eq:estphi} without changing the arguments. Thus, this specific choice of $\epsilon,\theta$ is purely arbitrary, and as we will see later, it will be sufficient to conclude in section \ref{sec:decom}.
\end{remark}
\subsection{Non-linear procedure with explicit pointwise bounds}
In this subsection, we refine the non-linear arguments of Proposition \ref{prop:uniqHk}. We prove here the uniqueness of a solution $u_{\zma} = \TBub + \phi_{\zma}$ to \eqref{eq:critbub}, up to terms that belong to $\Ker$, when $\phi_{\zma}$ is small in some weighted space, with precise pointwise estimates. In a second step, we also show pointwise estimates on the derivatives of $\phi_{\zma}$.
\begin{proposition}\label{prop:uniqest}
    Let $(\tau_i)_i$, $(\a_i)_i$ be sequences of positive numbers such that $\a_i\to \infty$, $\tau_i \to 0$ as $i\to \infty$, and let $\pa_i = (\zm[i])\in \param[i]$ for all $i$. There exists a constant $\Lambda >0$ and $i_0 >0$ such that, for all $i\geq i_0$, the equation
    \begin{equation}\label{eq:Vphi}
        \proKi\bsq{\TBub[i]+ \phi - (\Dg+\a_i)^{-k}\bpr{\TBub[i]+ \phi}_+^{\deu-1}} = 0
    \end{equation} 
    has a unique solution $\phi_i$ in 
    \[  \Keri^\perp \cap \big\{\phi \in C^0(M) \such \abs{\phi(x)} \leq \Lambda F_{\zma[i]}(x)\Bub_{\zm[i]}(x)  \quad \forall~x\in M\big\},
        \]
    where $\proKi$ is the projection in $\Sob(M)$ onto 
    the orthogonal to $\Ker$ as defined in \eqref{def:kerzm}, and $F_{\zma[i]}$ is as in \eqref{def:Fa}. Moreover, this solution satisfies
    \[  \Snorm{k}{\phi_i} = o(1) \qquad \text{as } i\to \infty.
        \]
\end{proposition}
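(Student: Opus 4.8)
The plan is to solve \eqref{eq:Vphi} by a Banach fixed-point argument in a weighted $C^{0}$-ball, using the pointwise linear theory of Theorem \ref{prop:linest} as the key a priori estimate. First I would reformulate the equation exactly as in the proof of Proposition \ref{prop:uniqHk}: writing $(\TBub[i]+\phi)_{+}^{\deu-1}=\TBub[i]^{\deu-1}+(\deu-1)\TBub[i]^{\deu-2}\phi+G_{\zma[i]}(\phi)$ with $G_{\zma[i]}$ as in \eqref{def:EaGa}, and using that $E_{\zma[i]}=\proKi\bsq{(\Dg+\a_{i})^{-k}\err[i]{\Bub}}$ with $\err[i]{\Bub}:=(\Dg+\a_{i})^{k}\TBub[i]-\TBub[i]^{\deu-1}$, one checks that $\phi\in\Keri^{\perp}$ solves \eqref{eq:Vphi} if and only if $\phi=T_{i}(\phi)$, where
\[
  T_{i}(\phi):=\Lia[i]^{-1}\circ\proKi\circ(\Dg+\a_{i})^{-k}\bpr{-\err[i]{\Bub}+G_{\zma[i]}(\phi)},
\]
and $\Lia[i]$ is invertible for $i$ large by Proposition \ref{prop:lininv}. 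I would then run the fixed-point argument in the complete metric space $X_{i}:=\bbr{\phi\in C^{0}(M)\such \|\phi\|_{*,i}\leq\Lambda}$, with $\|\phi\|_{*,i}:=\sup_{x\in M}\tabs{\phi(x)}\bpr{F_{\zma[i]}(x)\Bub_{\zm[i]}(x)}^{-1}$ and $\Lambda>0$ a constant fixed below; since $M$ is compact and the weight $F_{\zma[i]}\Bub_{\zm[i]}$ is continuous and strictly positive, $\|\cdot\|_{*,i}$ is equivalent to the sup norm.

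The heart of the matter is that $T_{i}$ maps $X_{i}$ into itself and is a contraction for $i$ large. The construction $R\mapsto\Lia[i]^{-1}\circ\proKi\circ(\Dg+\a_{i})^{-k}R$ is linear and the bound \eqref{eq:estRiprop} is positively homogeneous, so Theorem \ref{prop:linest} yields $C_{0}>0$, $i_{0}>0$ such that for $i\geq i_{0}$ and any continuous $R$ on $M$ satisfying \eqref{eq:estRiprop} with its right-hand side multiplied by a constant $A>0$, the function $\Lia[i]^{-1}\circ\proKi\circ(\Dg+\a_{i})^{-k}R$ is continuous and bounded by $C_{0}A\,F_{\zma[i]}\Bub_{\zm[i]}$. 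By Proposition \ref{prop:estRa} (see the remark following Theorem \ref{prop:linest}), $\err[i]{\Bub}$ satisfies \eqref{eq:estRiprop} with some universal constant $C_{1}$. To handle $G_{\zma[i]}(\phi)$, I would apply the pointwise inequalities \eqref{eq:estpoint1}--\eqref{eq:estpoint2}, together with $\TBub[i]\leq\Bub_{\zm[i]}$ and $\tabs{\phi}\leq\Lambda F_{\zma[i]}\Bub_{\zm[i]}$ for $\phi\in X_{i}$, to get
\[
  \tabs{G_{\zma[i]}(\phi)(x)}\leq C\bpr{\Lambda^{\deu-1}F_{\zma[i]}(x)^{\deu-1}+\Lambda^{1+\theta}F_{\zma[i]}(x)^{1+\theta}}\Bub_{\zm[i]}(x)^{\deu-1},
\]
and the analogous bound for $G_{\zma[i]}(\phi_{1})-G_{\zma[i]}(\phi_{2})$ with an additional factor $\|\phi_{1}-\phi_{2}\|_{*,i}$. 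A (routine but somewhat tedious) case analysis over the three regimes of $n$ in \eqref{def:rad} --- using $\sqai\mu_{i}\to0$, $\a_{i}\mu_{i}^{2}\leq\tau_{i}\to0$, the explicit forms of $\rad$ and $\sigma_{n,k}$ in \eqref{def:rad}, \eqref{def:snk}, and crucially $0<\theta<1$ --- then shows that the right-hand side above is at most $C(\Lambda)\kappa_{i}$ times the right-hand side of \eqref{eq:estRiprop}, for some $\kappa_{i}\to0$. Fixing $\Lambda:=2C_{0}C_{1}$, this gives $\|T_{i}(\phi)\|_{*,i}\leq C_{0}C_{1}+C_{0}C(\Lambda)\kappa_{i}\leq\Lambda$ and $\|T_{i}(\phi_{1})-T_{i}(\phi_{2})\|_{*,i}\leq C_{0}C(\Lambda)\kappa_{i}\|\phi_{1}-\phi_{2}\|_{*,i}$ for $i$ large, so Banach's fixed-point theorem provides a unique fixed point $\phi_{i}\in X_{i}$. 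It lies in $\Keri^{\perp}$ because $T_{i}$ takes values there, and by the reformulation it is the unique solution of \eqref{eq:Vphi} in $\Keri^{\perp}\cap X_{i}$.

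Finally, for the energy estimate, applying $\Lia[i]$ to $\phi_{i}=T_{i}(\phi_{i})$ gives $\Lia[i]\phi_{i}=\proKi\circ(\Dg+\a_{i})^{-k}\bpr{-\err[i]{\Bub}+G_{\zma[i]}(\phi_{i})}$, hence by \eqref{eq:lininv}, \eqref{eq:Dgabded} and the fact that $\proKi$ is a projection,
\[
  \Snorm{k}{\phi_{i}}\leq C\Snorm{k}{\Lia[i]\phi_{i}}\leq C\Snorm{-k}{-\err[i]{\Bub}+G_{\zma[i]}(\phi_{i})}.
\]
Since $-\err[i]{\Bub}+G_{\zma[i]}(\phi_{i})$ satisfies \eqref{eq:estRiprop} with constant $C_{1}+C(\Lambda)\kappa_{i}\leq2C_{1}$, the $\Sob[-k](M)$ estimate for such functions (Lemma \ref{prop:estintRX}, used already for \eqref{eq:RainH-k}) gives $\Snorm{-k}{-\err[i]{\Bub}+G_{\zma[i]}(\phi_{i})}=o(1)$, so $\Snorm{k}{\phi_{i}}=o(1)$. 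The main obstacle is the pointwise control of $G_{\zma[i]}(\phi)$ --- showing that it is negligible against $\err[i]{\Bub}$ in the weighted norm, uniformly over $X_{i}$, with the delicate case distinction dictated by \eqref{def:rad} --- and, more fundamentally, this whole scheme rests on the pointwise linear analysis of Theorem \ref{prop:linest}.
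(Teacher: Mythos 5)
Your proposal is correct and follows essentially the same route as the paper: a Banach fixed-point argument for $T_i(\phi)=\Lia[i]^{-1}\circ\proKi\circ(\Dg+\a_i)^{-k}\bpr{-\err[i]{B}+G_{\zma[i]}(\phi)}$ in the weighted $C^0$-ball of radius $\Lambda=2C_0C_1$, with Theorem \ref{prop:linest} providing the pointwise bound on the linear solution operator, Proposition \ref{prop:estRa} and the pointwise inequalities \eqref{eq:estpoint1}--\eqref{eq:estpoint2} controlling $R_i(\phi)$, and Proposition \ref{prop:lininv} together with Lemma \ref{prop:estintRX} giving $\Snorm{k}{\phi_i}=o(1)$. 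The only differences are presentational (the paper encodes the orthogonality to $\Keri$ directly in the set $\Set_i$, while you recover it from the range of $T_i$), and your "routine case analysis" for $G_{\zma[i]}$ is exactly the paper's estimates \eqref{tmp:estx}--\eqref{tmp:estxx} yielding the small factor $(\a_i\mu_i^2)^{\Tilde\theta}$.
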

The proof follows again from a fixed-point argument, but this time in strong weighted spaces. We follow the strategy of proof of \cite[Proposition 4.2]{Pre24}, where a similar result is obtained in the case $k=1$ and for bounded coefficients. Instances where results of this type have been obtained in the case $k=1$ and with bounded coefficients can be found in \cite{Pre18,Pre22}.  
\begin{proof}
    Using Proposition \ref{prop:estRa}, there exists $C_1 >0$ and $i_0>0$ such that for all $x\in M$, for all $i\geq i_0$,
    \begin{equation}\label{tmp:estRa1}  
        \abs{(\Dg+\a_i)^k \TBub[i](x) -\TBub[i]^{\deu-1}(x)} \leq C_1 \Bub_{\zm[i]}(x) \Phi_{\zma[i]}(x),
        \end{equation}
    where we write
    \[   \Phi_{\zma[i]}(x):= \begin{cases}
        \a_i^{\sigma_{n,k}}(\mu_i +\dg{z_i,x})^{2\sigma_{n,k}-2k} & \text{when } \sqai\dg{z_i,x} \leq 1\\
        \a_i^k e^{-\sqai \dg{z_i,x}/2} & \text{when } \sqai\dg{z_i,x} \geq 1
    \end{cases}
        \]
    with $\sigma_{n,k}$ as in \eqref{def:snk}. 
    Define the set 
    \begin{multline}\label{def:Si}
        \Set_i := \big\{ \phi \in C^0(M) \such \abs{\phi(x)} \leq 2 C_0C_1 F_{\zma[i]}(x)\Bub_{\zm[i]}(x) \quad \forall~x\in M\\
            \text{and}\quad  \sum_{l=0}^k \intM{\phi \Dg^l \Zed[i]{j}} = 0 \quad \text{for } j=0,\ldots n\big\}, 
    \end{multline}
    where $C_0>0$ is given by Theorem \ref{prop:linest}. Endow $\Set_i$ with the norm 
    \[  \norm{\phi}_{*i} := \Lnorm[(M)]{\infty}{\frac{\phi}{F_{\zma[i]}\Bub_{\zm[i]}}}, \qquad \text{for } \phi \in \Set_i.
        \]
    Since $F_{\zma[i]}\Bub_{\zm[i]} \in L^{\infty}(M)$ is positive, $\Big(\Set_i,\norm{\cdot}_{*i}\Big)$ is a non-empty complete metric space for all $i\geq i_0$. For $\phi \in C^0(M)$, we define as in \eqref{def:EaGa},
    \[  G_{\zma[i]}(\phi) := (\TBub[i]+ \phi)_+^{\deu-1} - \TBub[i]^{\deu-1} -(\deu-1)\TBub[i]^{\deu-2}\phi,
    \]
    and we let
    \begin{equation}\label{def:Raphi}
        R_i(\phi) := -\err[i]{B} + G_{\zma[i]}(\phi).
    \end{equation}
    By Lemma \ref{prop:GaHk} and Proposition \ref{prop:estRa}, $R_i(\phi) \in \Sob[-k](M)$ and thus up to increasing $i_0$ using Proposition \ref{prop:lininv}, for all $i\geq i_0$ we can define $T_i(\phi) \in \Keri^\perp$ as the unique solution in $\Keri^\perp$ to 
    \begin{equation}\label{eq:phiTphi}
        (\Dg+\a_i)^k T_i(\phi) -(\deu-1)\TBub[i]^{\deu-2} T_i(\phi) = R_i(\phi) + \sum_{j=0}^n \lambda_i^j (\Dg+\a_i)^k \Zed[i]{j}
    \end{equation}
    for some unique $\lambda_i^j\in \R$, $j=0,\ldots n$. Then, for $\phi \in C^0(M)$, by standard elliptic theory, $T_i(\phi) \in C^{2k-1,\beta}(M)$ for $\beta \in (0,1)$. We now show that $T_i$ is a contraction on $\Set_i$. 
    \par We use \eqref{eq:estpoint1}, there is a constant $C>0$ such that, for all $i\geq i_0$,
    \[  \abs{G_{\zma[i]}(\phi)(x)} \leq C \abs{\phi(x)} \bpr{\abs{\phi(x)}^{\deu-2} + \TBub[i]^{\deu-2-\theta}(x) \abs{\phi(x)}^\theta} \qquad \forall ~x \in M
        \]
    for some $0< \theta < \min(1,\deu-2)$.
    For $\phi \in \Set_i$, this shows that when $x\in M$ is such that $\sqai\dg{z_i,x} \leq 1$, 
    \begin{equation}\label{tmp:estx}
        \abs{G_{\zma[i]}(\phi)(x)}\leq C(\a_i\mu_i^2)^{\Tilde{\theta}} \Bub_{\zm[i]}(x) \a_i^{\sigma_{n,k}}(\mu_i+\dg{z_i,x})^{2\sigma_{n,k}-2k} 
    \end{equation}
    for some $\Tilde{\theta}>0$, using the definition of $\Set_i$, $F_{\zma[i]}$ in \eqref{def:Fa} and $r_{\zma[i]}$ in \eqref{def:ra}. When $\sqai\dg{z_i,x} \geq 1$, and for $\phi \in \Set_i$, we have 
    \begin{equation}\label{tmp:estxx}
        \abs{G_{\zma[i]}(\phi)(x)} \leq C(\a_i\mu_i^2)^k \Bub_{\zm[i]}(x) \a_i^k e^{-\sqai\dg{z_i,x}/2}.
    \end{equation}
    Thus, with \eqref{tmp:estRa1}, there is a sequence $(\epsilon_i)_i$ with $\epsilon_i \to 0$ as $i\to \infty$, such that 
    \begin{equation}\label{eq:estRiphi}
        \abs{R_i(\phi)(x)} \leq (C_1 +\epsilon_i) \Bub_{\zm[i]}(x)\Phi_{\zma[i]}(x).
    \end{equation}
    Using Theorem \ref{prop:linest} for $R_i(\phi)$, by linearity of \eqref{eq:linphiR}, we obtain
    \[  \abs{T_i(\phi)(x)} \leq C_0(C_1 + \epsilon_i) F_{\zma[i]}(x)\Bub_{\zm[i]}(x) \qquad \forall~x\in M.
        \]
    Up to increasing $i_0$, this shows that $T_i(\Set_i) \subeq \Set_i$. Now let $\phi_1,\phi_2 \in \Set_i$, we have, by definition of $T_i$,
    \begin{multline*}  (\Dg+\a_i)^k \big(T_i(\phi_1)-T_i(\phi_2)\big) - (\deu-1) \TBub[i]^{\deu-2}\big(T_i(\phi_1)-T_i(\phi_2)\big)\\ = \big(G_{\zma[i]}(\phi_1)-G_{\zma[i]}(\phi_2)\big) + \sum_{j=0}^n \Tilde{\lambda}_i^j (\Dg + \a_i)^k \Zed[i]{j}
        \end{multline*}
    for some $\Tilde{\lambda}_i^j\in \R$, $j=0,\ldots n$, and for all $i\geq i_0$. Using \eqref{eq:estpoint2}, we get as before that
    \begin{equation}\label{tmp:estxxx}
        \abs{G_{\zma[i]}(\phi_1)(x)-G_{\zma[i]}(\phi_2)(x)} \leq \epsilon_i \norm{\phi_1-\phi_2}_{*i} \Bub_{\zm[i]}(x) \Phi_{\zma[i]}(x)
    \end{equation}
    for some sequence $(\epsilon_i)_i$ such that $\epsilon_i \to 0$ as $i\to \infty$, using \eqref{tmp:estx} and \eqref{tmp:estxx}. We observe that for $\phi_1,\phi_2 \in \Set_i$, $\norm{\phi_1-\phi_2}_{*i}$ is uniformly bounded, so that by Theorem \ref{prop:linest} we obtain for all $i\geq i_0$,
    \[  \norm{T_i(\phi_1)-T_i(\phi_2)}_{*i} \leq C_0\epsilon_i \norm{\phi_1 - \phi_2}_{*i}.
        \]
    Increasing again $i_0$ so that $C_0\epsilon_i < 1$ for all $i\geq i_0$, $T_i$ is a contraction on $\Set_i$. By Banach's fixed-point Theorem, for all $i\geq i_0$, there exists a unique $\phi_i \in \Set_i$ such that $T_i(\phi_i) = \phi_i$. Using \eqref{eq:phiTphi}, it satisfies
    \[  (\Dg+\a_i)^k \big(\TBub[i]+\phi_i\big) -\big(\TBub[i] + \phi_i\big)_+^{\deu-1}= \sum_{j=0}^{n}\lambda_i^j (\Dg+\a_i)^k \Zed[i]{j}
        \]
    in $M$. Now we have $\phi_i = T_i(\phi_i) \in C^k(M)$, so that by the definition of $\Set_i$ we obtain $\phi_i \in \Keri^\perp$. Finally, using Proposition \ref{prop:lininv}, \eqref{tmp:estxxx} and Lemma \ref{prop:estintRX},
    \[  \Snorm{k}{\phi_i} = \Snorm{k}{T_i(\phi_i)} \leq C \Snorm{-k}{R_i(\phi_i)} = o(1) \qquad \text{as } i \to \infty.
        \]
\end{proof}

\begin{corollary}\label{prop:estdphi}
    Let $(\tau_i)_i$, $(\a_i)_i$ and $(\pa_i)_i$ be sequences as in Proposition \ref{prop:uniqest}, and let $\phi_i$ be the unique solution to \eqref{eq:Vphi} in 
    \[  \Keri \cap \big\{\phi \in C^0(M) \such \abs{\phi(x)} \leq \Lambda F_{\zma[i]}(x)\Bub_{\zm[i]}(x)  \quad \forall~x\in M\big\},
        \]
    for $i \geq i_0$.
    There exists $C>0$ such that for $l=1,\ldots 2k-1$, we have
    \begin{equation}
        (\mu_i+\dg{z_i,x})^l \abs{\nabla_g^l \phi_i(x)}_g \leq C F_{\zma[i]}(x) \Bub_{\zm[i]}(x) \qquad \forall~x\in M.
    \end{equation}  
\end{corollary}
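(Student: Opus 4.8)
The plan is to follow the proof of Theorem~\ref{prop:linest}: differentiate the Green's function representation formula for $\phi_i$ and estimate the resulting integrals with Giraud's Lemma. By the construction of Proposition~\ref{prop:uniqest}, $\phi_i\in\Keri^\perp$ solves
\[
(\Dg+\a_i)^k\bsq{\phi_i-\sum_{j=0}^n\lambda_i^j\Zed[i]{j}}=(\deu-1)\TBub[i]^{\deu-2}\phi_i+R_i(\phi_i)\qquad\text{in }M,
\]
where $R_i(\phi_i)=-\err[i]{B}+G_{\zma[i]}(\phi_i)$ obeys the pointwise bound \eqref{eq:estRiphi}, and $\abs{\phi_i}\leq\Lambda F_{\zma[i]}\Bub_{\zm[i]}$. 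By \eqref{eq:estFB} this gives $\inorm{\phi_i}\leq C\mu_i^{-\frac{n-2k}{2}}\rad(\sqai\mu_i)$, so that, evaluating the representation formula at $z_i$ and at suitable points at distance comparable to $\mu_i$ from $z_i$ and bounding the integrals as in Lemma~\ref{prop:1stestphi}, the argument of Lemma~\ref{prop:estlami} yields $\abs{\lambda_i^j}\leq C\rad(\sqai\mu_i)$ for $j=0,\ldots,n$. A standard bootstrap gives $\phi_i\in C^{2k-1,\beta}(M)$, and $\phi_i$ satisfies the representation formula \eqref{eq:reprforphi} with $R_i$ replaced by $R_i(\phi_i)$.

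First I would fix $1\leq l\leq 2k-1$ and differentiate this formula $l$ times in $x$. The term coming from $\sum_j\lambda_i^j\Zed[i]{j}$ is handled directly by Proposition~\ref{prop:estdifX}: combined with $\abs{\lambda_i^j}\leq C\rad(\sqai\mu_i)$ (and $\rad$ increasing, $\rad(1)=1$) one gets $(\mu_i+\dg{z_i,x})^l\sum_j\abs{\lambda_i^j}\,\abs{\nabla_g^l\Zed[i]{j}(x)}_g\leq C F_{\zma[i]}(x)\Bub_{\zm[i]}(x)$. For the two potential terms I would differentiate under the integral sign, which is legitimate for $l\leq 2k-1$ because the derivative bounds on $\Gga[i]$ from \cite{Car24}, namely $\abs{\nabla_x^l\Gga[i](x,y)}\leq C\dg{x,y}^{2k-l-n}$ when $\sqai\dg{x,y}\leq 1$ and $\abs{\nabla_x^l\Gga[i](x,y)}\leq C\a_i^{l/2}\dg{x,y}^{2k-n}e^{-\sqai\dg{x,y}/2}$ when $\sqai\dg{x,y}\geq 1$, make these kernels locally integrable in $y$ (the top order $l=2k-1$ being handled by isolating a small geodesic ball around $x$ and using $\phi_i\in C^{2k-1,\beta}$). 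The Giraud estimates of Lemmas~\ref{prop:gir1} and~\ref{prop:gir2} go through verbatim with $\dg{x,y}^{2k-n}$ replaced by $\dg{x,y}^{2k-l-n}$, since $2k-l>0$, and reproduce the conclusions of the case $l=0$ multiplied by the weight $(\mu_i+\dg{z_i,x})^{-l}$. Repeating the computations \eqref{eq:girRasqa}, \eqref{eq:girRaexp} and \eqref{tmp:2ndx}, and using here the sharp bound $\abs{\phi_i}\leq\Lambda F_{\zma[i]}\Bub_{\zm[i]}$ in place of a bound involving $\inorm{\phi_i}$ so that a single pass suffices and no iteration is needed, one obtains that $(\mu_i+\dg{z_i,x})^l$ times each potential term is $\leq C F_{\zma[i]}(x)\Bub_{\zm[i]}(x)$. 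Adding the three contributions proves the Corollary.

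The step I expect to be most delicate is the bookkeeping in the far region $\sqai\dg{z_i,x}\geq 1$. There the pointwise bound on $\phi_i$ is not sharp, overestimating $\abs{\phi_i}$ by a factor $(\sqai\dg{z_i,x})^{2k}$, so a mere rescaling of $\phi_i$ at scale $1/\sqai$ together with interior elliptic estimates would only give $\abs{\nabla_g^l\phi_i(x)}_g\leq C\a_i^{l/2}F_{\zma[i]}(x)\Bub_{\zm[i]}(x)$, which is weaker than claimed by a factor $(\sqai\dg{z_i,x})^l$. The improvement to the weight $(\mu_i+\dg{z_i,x})^{-l}$ is precisely what convolution against $\nabla_x^l\Gga[i]$ supplies, through the extra power $\dg{x,y}^{-l}$ in the kernel near the diagonal and the resulting $(\mu_i+\dg{z_i,x})^{-l}$ factor in the weighted Giraud estimates; carrying that gain through the far-field integrals while checking that the exponential factors $e^{-\sqai\dg{z_i,\cdot}/2}$ match up is where the care is needed. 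A secondary and more routine point is extracting the derivative bounds on $\Gga[i]$ above from the estimates of \cite{Car24}, and justifying the interchange of $\nabla_x^{2k-1}$ with the integral.
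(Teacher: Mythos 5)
Your proposal follows essentially the same route as the paper's proof: differentiate the Green's representation formula for $\phi_i$, invoke the derivative estimates on $\Gga[i]$ from \cite{Car24}, run the Giraud Lemmas with exponent $2k-l$ on both the $R_i(\phi_i)$ and the $\TBub[i]^{\deu-2}\phi_i$ terms, and control $\sum_j\lambda_i^j\nabla_g^l\Zed[i]{j}$ via the bound $\tabs{\lambda_i^j}\leq C\rad(\sqai\mu_i)$ (which you obtain, equivalently to the paper's use of Lemma \ref{prop:estlami} and Proposition \ref{prop:estphiinfn}, from the weighted $L^\infty$ bound on $\phi_i$) together with Proposition \ref{prop:estdifX}. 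The only point to fix is your quoted far-field bound on the Green's function: \cite[Proposition 3.9]{Car24} gives $\abs{\nabla_g^l \Gga[i](x,y)}_g\leq C\,\dg{x,y}^{2k-n-l}\Psi(\sqai\dg{x,y})$ for \emph{all} $x\neq y$, which is exactly what legitimizes applying Lemmas \ref{prop:gir1}--\ref{prop:gir2} with $\beta=2k-l$ at the rate $e^{-\sqai\dg{z_i,x}/2}$, whereas the weaker form $\a_i^{l/2}\dg{x,y}^{2k-n}e^{-\sqai\dg{x,y}/2}$ you wrote would cost a factor $(\sqai\dg{x,y})^{l}$ and only recover the stated estimate after degrading the exponential rate.
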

\begin{proof}
    Let $R_i(\phi_i)$ be defined as in \eqref{def:Raphi}, where $\phi_i$ is the unique solution to \eqref{eq:Vphi}. Equivalently, $\phi_i$ satisfies 
    \[  (\Dg+\a_i)^k \bsq{\phi_i - \sum_{j=0}^n \lambda_i^j \Zed[i]{j}} = (\deu-1)\TBub[i]^{\deu-2} \phi_i + R_i(\phi_i),
        \]
    and has the estimate
    \[  \abs{\phi_i(x)} \leq C F_{\zma[i]}(x) \Bub_{\zm[i]}(x) \qquad \forall~x\in M.
        \]  
    As in the proof of Lemma \ref{prop:1stestphi}, we use the representation formula \eqref{eq:reprforphi} and then differentiate it. We use the estimates on $\Gga[i]$, the Green's function for the operator $(\Dg+\a)^k$ in $M$ (see \cite[Proposition 3.9]{Car24}), namely that 
    \[  \abs{\nabla_g^l \Gga[i](x,y)}_g \leq C \dg{x,y}^{2k-n-l} \Psi(\sqai\dg{x,y}) \qquad l=1,\ldots 2k-1,
        \]
    where $\Psi$ is as in \eqref{def:Psi}.
    For $l=1,\ldots 2k-1$, we compute as in \eqref{eq:girRasqa} and \eqref{eq:girRaexp}, differentiating under the integral sign,
    \begin{multline*}
        \intM{\abs{\nabla_g^l \Gga[i](x,y)}_g\abs{R_i(\phi_i)(y)}}(y)\\
        \begin{aligned}
            &\leq C\begin{cases}
            \a_i^{\sigma_{n,k}}(\mu_i + \dg{z_i,x})^{2k-n +2\sigma_{n,k}-l} & \text{when } \sqai \dg{z_i,x} \leq 1\\
            \mu_i^{\frac{n-2k}{2}}\a_i^k \dg{z,x}^{4k-n-l} e^{-\sqai \dg{z_i,x}/2} & \text{when } \sqai \dg{z_i,x} \geq 1
        \end{cases}\\
        &\leq  C(\mu_i+\dg{z_i,x})^{-l} F_{\zma[i]}(x)\Bub_{\zm[i]}(x),
        \end{aligned}
    \end{multline*}
    using \eqref{eq:estRiphi} and Lemma \ref{prop:gir1}. We also compute, using Lemma \ref{prop:gir2} as in \eqref{tmp:2ndx}, that 
    \begin{multline*}  
        \intM{\abs{\nabla_g^l \Gga[i](x,y)}_g\TBub[i]^{\deu-2}(y)\abs{\phi_i(y)}}(y)\\ \begin{aligned}
            &\leq C \mu_i^{\frac{n-2k}{2}} (\mu_i + \dg{z_i,x})^{2k-n-l} \rad(\sqai\mu_i) \Psi\big(\sqai\dg{z_i,x}\big)\\
            &\leq C (\mu_i+\dg{z_i,x})^{-l} F_{\zma[i]}(x)\Bub_{\zm[i]}(x).
        \end{aligned}
        \end{multline*}
    Finally, using Lemma \ref{prop:estlami} and Proposition \ref{prop:estphiinfn}, together with Proposition \ref{prop:estdifX}, we obtain 
    \[  \Babs{\sum_{j=0}^n\lambda_i^j \nabla_g^l\Zed[i]{j}(x)}_g \leq C (\mu_i+\dg{z_i,x})^{-l} F_{\zma[i]}(x)\Bub_{\zm[i]}(x). 
        \]
    Putting these estimates together, we conclude.
\end{proof}

\subsection{Proof of Theorem \ref{prop:mainprinc}}
We conclude this section with the proof of Theorem \ref{prop:mainprinc}. We first show that, for a solution to \eqref{eq:critbub}, being close to $\TBuba$ in $\Sob(M)$ for some $\pa_\a$ automatically results in possessing strong pointwise estimates.
\begin{proposition}\label{prop:uniqsol}
    Let $(u_\a)_\a$ be a sequence in $\Sob(M)$ of positive solutions to \eqref{eq:critbub} for each $\a$. Assume that there exists, for all $\a \geq 1$, some weight $\mu_\a >0$ and point $z_\a \in M$, such that 
    \begin{equation}
        \begin{bigcases}
            &\a\mu_\a^2 \to 0\\
            &\Snorm{k}{u_\a - \TBuba} \to 0
        \end{bigcases}\qquad \text{as } \a \to \infty,
    \end{equation}
    where $\pa_\a = (z_\a,\mu_\a)$. Then, there exists a sequence of points $(\bz_\a)_\a$ in $M$, of positive numbers $(\bmu_\a)_\a$, such that 
    \begin{align}\label{tmp:pabpa}
        \frac{\bmu_\a}{\mu_\a} &\to 1,  & \frac{\dg{z_\a,\bz_\a}^2}{\mu_\a\bmu_\a} \to 0 & &\quad \text{as } \a \to \infty,
    \end{align}
    and there exists $\a_0\geq 1$ and a constant $C>0$ such that, for all $\a\geq \a_0$, the following holds.
    Write $\bpa_\a =(\bz_\a,\bmu_\a)$ and $\phi_\a := u_\a - \TBuba[\bpa_\a]$, for $l=0,\ldots 2k-1$, then
    \begin{equation}\label{eq:estphia}
        (\bmu_\a + \dg{\bz_\a,x})^l\abs{\nabla_g^l \phi_\a(x)}_g \leq C F_{\a,\bpa_\a}(x) \Bub_{\bz_\a,\bmu_\a}(x) \qquad \forall x~\in M.
    \end{equation}
\end{proposition}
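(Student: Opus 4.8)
The strategy is to first select the optimal bubble parameters $\bpa_\a$ by a distance minimisation, and then to transport the pointwise estimates from the fixed-point solution of Proposition~\ref{prop:uniqest} to $\phi_\a$ by means of the energy-space uniqueness of Proposition~\ref{prop:uniqHk}. All the auxiliary results used below carry constants independent of $\a$, so it is enough to establish the conclusions along an arbitrary sequence $\a_i\to\infty$; the stated uniform statements then follow by a routine subsequence argument.

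\textbf{Step 1 (optimal parameters).} Since $\mu_\a>0$ and $\a\mu_\a^2\to 0$, fix a sequence $\tau_\a\to 0$ with $\a\mu_\a^2<\tau_\a$; then $\pa_\a\in\parama$ and $\TBuba\in\mathcal{B}_{\tau_\a,\a}$, whence $\dist[\Sob(M)]{u_\a,\mathcal{B}_{\tau_\a,\a}}\leq\Snorm{k}{u_\a-\TBuba}\to 0$. For $\a$ large, Lemma~\ref{prop:minprob} therefore furnishes a minimiser $\bpa_\a=(\bz_\a,\bmu_\a)\in\parama[2\tau_\a]$ of \eqref{eq:minprob}, and Lemma~\ref{prop:phiainKa} gives $\phi_\a:=u_\a-\TBuba[\bpa_\a]\in\Kera[\bpa_\a]^\perp$. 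As $\pa_\a\in\parama[4\tau_\a]$ is admissible in \eqref{eq:minprob}, minimality yields $\Snorm{k}{\phi_\a}\leq\Snorm{k}{u_\a-\TBuba}\to 0$, and hence $\Snorm{k}{\TBuba-\TBuba[\bpa_\a]}\to 0$ by the triangle inequality; applying Lemma~\ref{prop:uniqbub} along $(\a_i)_i$ to the two sequences $(\pa_{\a_i})_i,(\bpa_{\a_i})_i$, both contained in $\parama[4\tau_{\a_i}]$, gives \eqref{tmp:pabpa}.

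\textbf{Steps 2--3 (projected equation, identification, conclusion).} Since $u_\a>0$ solves \eqref{eq:critbub}, it also solves \eqref{eq:modcrit}, so $(\Dg+\a)^{-k}\big((u_\a)_+^{\deu-1}\big)=(\Dg+\a)^{-k}\big(u_\a^{\deu-1}\big)=u_\a$; writing $u_\a=\TBuba[\bpa_\a]+\phi_\a$ we obtain
\[
    \TBuba[\bpa_\a]+\phi_\a-(\Dg+\a)^{-k}\big((\TBuba[\bpa_\a]+\phi_\a)_+^{\deu-1}\big)=u_\a-u_\a=0,
\]
and projecting onto $\Kera[\bpa_\a]^\perp$ shows that $\phi_\a$ solves \eqref{eq:Vphi}, equivalently \eqref{eq:nlinphi}, for the parameters $(\a,\bpa_\a)$ (cf.\ the remark after Proposition~\ref{prop:uniqHk}). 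Now the sequences $(2\tau_{\a_i})_i$, $(\a_i)_i$, $(\bpa_{\a_i})_i$ satisfy $2\tau_{\a_i}\to 0$, $\a_i\to\infty$, $\bpa_{\a_i}\in\parama[2\tau_{\a_i}]$, so Proposition~\ref{prop:uniqest} produces, for $i$ large, a solution $\Tp_i\in\Kera[\bpa_{\a_i}]^\perp$ of \eqref{eq:Vphi} with $\abs{\Tp_i(x)}\leq\Lambda F_{\a_i,\bpa_{\a_i}}(x)\Bub_{\bz_{\a_i},\bmu_{\a_i}}(x)$ for all $x$ and $\Snorm{k}{\Tp_i}=o(1)$. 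By Proposition~\ref{prop:uniqHk}, for $i$ large \eqref{eq:nlinphi} with parameters $(\a_i,\bpa_{\a_i})$ has a unique solution in $\Kera[\bpa_{\a_i}]^\perp\cap\{\,\Snorm{k}{\cdot}<\delta\,\}$; since both $\Tp_i$ and $\phi_{\a_i}$ lie in this set (for $\phi_{\a_i}$ use Step~1--the present step and $\Snorm{k}{\phi_{\a_i}}\to 0$), we conclude $\phi_{\a_i}=\Tp_i$. Thus $\phi_{\a_i}$ inherits the bound $\abs{\phi_{\a_i}}\leq\Lambda F_{\a_i,\bpa_{\a_i}}\Bub_{\bz_{\a_i},\bmu_{\a_i}}$, which is \eqref{eq:estphia} for $l=0$, while Corollary~\ref{prop:estdphi} applied to $\Tp_i=\phi_{\a_i}$ yields \eqref{eq:estphia} for $l=1,\ldots,2k-1$. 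As $\Lambda$ and the constant of Corollary~\ref{prop:estdphi} do not depend on the chosen sequence, \eqref{eq:estphia} holds for all $\a\geq\a_0$, completing the proof.

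\textbf{Main obstacle.} The crux is the identification in Step~3: a priori one controls $\phi_\a$ only in the energy norm, with no pointwise decay, so the sharp bounds must be imported from the fixed-point solution $\Tp_i$ of Proposition~\ref{prop:uniqest}. This forces a careful verification that $\phi_\a$ and $\Tp_i$ solve the \emph{same} projected equation and both lie in the small $\Sob(M)$-ball on which Proposition~\ref{prop:uniqHk} asserts uniqueness --- precisely the points where the positivity of $u_\a$ (to remove the positive part) and the minimality of $\bpa_\a$ (to force $\Snorm{k}{\phi_\a}\to 0$) enter.
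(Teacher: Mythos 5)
Your proposal is correct and follows essentially the same route as the paper: optimal parameters via Lemma \ref{prop:minprob} and Lemma \ref{prop:phiainKa}, the asymptotics \eqref{tmp:pabpa} via Lemma \ref{prop:uniqbub}, and then identification of $\phi_\a$ with the fixed-point solution of Proposition \ref{prop:uniqest} through the energy-space uniqueness of Proposition \ref{prop:uniqHk}, with Corollary \ref{prop:estdphi} supplying the derivative bounds. Your explicit verification that positivity of $u_\a$ lets one replace $(u_\a)_+$ by $u_\a$, and the minimality argument giving $\Snorm{k}{\phi_\a}\to 0$, are exactly the ingredients the paper uses.
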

\begin{proof}
    By hypothesis, there are $(\tau_\a)_\a$ and $(\epsilon_\a)_\a$ sequences of positive numbers such that $\tau_\a\to 0,\epsilon_\a \to 0$ as $\a \to \infty$, and such that 
    \begin{equation*}
        \begin{aligned}
            &(z_\a,\mu_\a)=\pa_\a \in \parama[\tau_\a]\\
            &\Snorm{k}{u_\a - \TBuba[\pa_\a]} \leq \epsilon_\a
        \end{aligned} \qquad \text{for all } \a.
    \end{equation*}
    Using Lemma \ref{prop:minprob}, there exists $\a_0\geq 1$ such that for all $\a \geq \a_0$, the problem 
    \[  \text{minimize}~ \Snorm{k}{u_\a - \TBub} \quad \text{for } \pa \in \parama[4\tau_\a] 
        \]
    is attained at some $\bpa_\a = (\bz_\a,\bmu_\a) \in \parama[2\tau_\a]$. Moreover, 
    \[  \Snorm{k}{\TBuba[\pa_\a]-\TBuba[\bpa_\a]} \leq 2\epsilon_\a,
        \]
    so that \eqref{tmp:pabpa} follows from Lemma \ref{prop:uniqbub}. Writing $\phi_\a := u_\a - \TBuba[\bpa_\a]$, by Lemma \ref{prop:phiainKa}, we have $\phi_\a \in \Kera[\bpa_\a]^\perp$. Observe that $\phi_\a$ satisfies, for all $\a$,
    \[  (\Dg+\a)^k \big(\TBuba[\bpa_\a]+\phi_\a\big) - \big(\TBuba[\bpa_\a] + \phi_\a\big)^{\deu} = 0.
        \]
    We also have $\Snorm{k}{\phi_\a}  \to 0$ as $\a \to \infty$. 
    \par We use Proposition \ref{prop:uniqest}: Up to increasing $\a_0\geq 1$, for all $\a\geq \a_0$ there is a unique 
    \[  \Tp_\a \in \Kera[\bpa_\a]^\perp \cap \big\{\phi \in C^0(M) \such \abs{\phi(x)}\leq \Lambda F_{\a,\bpa_\a}(x) \Bub_{\bz_\a,\bmu_\a}(x) \quad \forall~x\in M\big\}
        \]
    that solves 
    \begin{equation}\label{tmp:uniqx}
        \proKa[\bpa_\a] \bsq{\TBuba[\bpa_\a] + \Tp_\a -(\Dg+\a)^{-k}\big(\TBuba[\bpa_\a] + \Tp_\a\big)_+^{\deu-1}} = 0,
    \end{equation}
    and it satisfies $\Snorm{k}{\Tp_\a} \to 0$ as $\a\to \infty$. Using Proposition \ref{prop:uniqHk}, up to increasing again $\a_0$ so that $\Snorm{k}{\phi_\a} < \delta$, $\Snorm{k}{\Tp_\a} < \delta$, we conclude that $\phi_\a = \Tp_\a$. Finally, using Corollary \ref{prop:estdphi}, $\phi_\a$ satisfies, for $l=0,\ldots 2k-1$,
    \[  (\bmu_\a + \dg{\bz_\a,x})^{l} \abs{\nabla_g^l \phi_\a(x)}_g \leq CF_{\a,\bpa_\a}(x) \Bub_{\bz_\a,\bmu_\a}(x) \qquad \forall~x\in M.
        \]
\end{proof}

\begin{proof}[Proof of Theorem \ref{prop:mainprinc}]
    First observe that $u_\a \in C^\infty(M)$ for each $\a$, by elliptic regularity (see \cite[Proposition 8.3]{Maz16}). Define, for each $\a$, 
    \[  \mu_\a := \Lnorm[(M)]{\infty}{u_\a}^{-\frac{2}{n-2k}},
        \] 
    and let $z_\a\in M$ be the point where $u_\a$ reaches its maximum: $u_\a(z_\a) = \Lnorm[(M)]{\infty}{u_\a}$. Write 
    \[  \Tu_\a(y) = \mu_\a^{\frac{n-2k}{2}} u_\a(\exp_{z_\a}(\mu_\a y)) \qquad \text{for } y \in \Bal{0}{\varrho/\mu_\a} \sub \R^n,
        \]
    where $\inj/2 < \varrho < \inj$ is as in definition \ref{def:psizm}, and let $\Tg_\a := \exp_{z_\a}^* g(\mu_\a \cdot)$. Then $\Tu_\a$ satisfies 
    \begin{equation}\label{tmp:eqtua}
        (\Dg[\Tg_\a] + \a\mu_\a^2)^k \Tu_\a = \Tu_\a^{\deu-1} \qquad \text{in } \Bal{0}{\varrho/\mu_\a}.
    \end{equation}
    Observe that with \eqref{eq:critbub}, 
    \[  \Lnorm[(M)]{2}{u_\a}^2 \leq \frac{1}{\a^k}\dprod{(\Dg+\a)^k u_\a}{u_\a}_{\Sob[-k],\Sob} = \frac{1}{\a^k} \Lnorm[(M)]{\deu}{u_\a}^{\deu},
        \]
    so that using Hölder's inequality, we have 
    \[  1 = \frac{\Lnorm[(M)]{\deu}{u_\a}^{\deu}}{\Lnorm[(M)]{\deu}{u_\a}^{\deu}} \leq \frac{\Lnorm[(M)]{2}{u_\a}^{2}}{\Lnorm[(M)]{\deu}{u_\a}^{\deu}}\Lnorm[(M)]{\infty}{u_\a}^{\deu-2} \leq \frac{1}{\a^k\mu_\a^{2k}}.
        \]
    This shows that $\a\mu_\a^2$ is uniformly bounded with respect to $\a$, and $\mu_\a \to 0$ as $\a \to \infty$. Fix $R>0$, we have $\Tg_\a \to \xi$ in $C^\infty(\Bal{0}{R})$, and $(\Dg[\Tg_\a] + \a\mu_\a^2)^k$ is an elliptic operator on $\Bal{0}{R}$ with bounded coefficients, uniformly with respect to $\a$. Standard elliptic estimates now give that $(\Tu_\a)_\a$ is a bounded sequence in $C^{2k,\beta}(\cBal{0}{R})$ for $\beta \in (0,1)$. By compactness of the embedding $C^{2k,\beta}(\cBal{0}{R}) \sub C^{2k}(\cBal{0}{R})$, there exists $\Tu \in C^{2k}(\R^n)$ such that $\Tu_\a \to \Tu$ in $C^{2k}_{loc}(\R^n)$ up to a subsequence 
    , that we denote by $\a$ from now on. By definition, 
    \[  0< \Tu_\a(y) \leq 1 \qquad \text{ and } \qquad \Tu_\a(0)= 1.
        \]
    This passes to the limit, as $\a \to \infty$, so that $0\leq \Tu \leq 1$ and $\Tu \not\equiv 0$. We now show that $\Tu$ solves \eqref{eq:bub} in $\R^n$.
    \par We compute 
    \[  \Snorm{k-1}{u_\a}^2 \leq \frac{1}{\a}\dprod{(\Dg+\a)^k u_\a}{u_\a}_{\Sob[-k],\Sob} \leq \frac{1}{\a} \Lnorm[(M)]{\deu}{u_\a}^{\deu} \leq \frac{C}{\a},
        \]
    using Sobolev's inequality and \eqref{eq:assump}. Thanks to Mazumdar \cite{Maz16}, we know that for all $\epsilon>0$, there exists $B_\epsilon >0$ such that 
    \begin{equation}\label{eq:epsineq}
        \Lnorm[(M)]{\deu}{u_\a}^2 \leq (K_0^2 + \epsilon) \Lnorm[(M)]{2}{\Dg^{k/2}u_\a}^2 + B_\epsilon \Snorm{k-1}{u_\a}^2.
    \end{equation}  
    Let now $\a_0\geq 1$ be such that 
    \begin{equation}\label{tmp:maincas}
        \begin{cases}
            B_\epsilon \Snorm{k-1}{u_\a}^2 \leq \epsilon\\
            \Lnorm[(M)]{2}{\Dg^{k/2}u_\a}^2 = \Snorm{k}{u_\a}^2 - \Snorm{k-1}{u_\a}^2 \leq K_0^{-\frac{n}{k}} + \epsilon
        \end{cases}\hfill \forall~\a\geq \a_0,
    \end{equation}
    we have 
    \[  \Lnorm[(M)]{\deu}{u_\a}^2 \leq (K_0^2 + \epsilon)(K_0^{-\frac{n}{k}} + \epsilon) + \epsilon \leq K_0^{-\frac{n-2k}{k}} + C\epsilon.
        \]
    Since this holds for all $0<\epsilon\leq 1$, we obtain 
    \begin{equation}\label{tmp:mainx}
        \limsup_{\a\to \infty} \Lnorm[(M)]{\deu}{u_\a} \leq K_0^{-\frac{n-2k}{2k}}.
    \end{equation} 
    Using \eqref{eq:epsineq}, \eqref{tmp:maincas}, and \eqref{tmp:mainx}, for all $0<\epsilon\leq 1$ there exists $\a_0\geq 1$ such that for all $\a \geq \a_0$,
    \begin{multline*}
        \sum_{l=0}^k \tbinom{k}{l} \a^{k-l}\intM{\abs{\Dg^{l/2}u_\a}^2} = \intM{u_\a^\deu} \leq (K_0^{-2}+ \epsilon) \Lnorm[(M)]{\deu}{u_\a}^2\\
        \begin{aligned}
            &\leq (K_0^{-2}+\epsilon)(K_0^2 + \epsilon) \intM{\abs{\Dg^{k/2}u_\a}^2} + B_\epsilon \Snorm{k-1}{u_\a}^2\\
            &\leq \intM{\abs{\Dg^{k/2}u_\a}^2} + C\epsilon.
        \end{aligned}
    \end{multline*}
    Therefore, we have 
    \begin{equation}\label{tmp:bdDgal}
        \a^{k}\intM{u_\a^2} \leq C\epsilon \qquad \forall~\a\geq \a_0. 
    \end{equation}
    We also compute for all $\a \geq \a_0$, 
    \begin{multline}\label{tmp:bdDgalinv}
        \a^{k}\intM{u_\a^2} \geq \a^k \intM[\Bal{z_\a}{R\mu_\a}]{u_\a^2} = \a^k \mu_\a^{2k} \intMg{\Tg_\a}{\Bal{0}{R}}{\Tu_\a^2}\\
            =(\a\mu_\a^2)^k \bpr{\int_{\Bal{0}{R}} \Tu^2 dy + o(1)}.
    \end{multline}
    Since $\Tu \not\equiv 0$, \eqref{tmp:bdDgal} together with \eqref{tmp:bdDgalinv} give $\a\mu_\a^2\to 0$ as $\a \to \infty$. Equation \eqref{tmp:eqtua} then passes to the limit, and $\Tu$ solves 
    \[  \Dg[\xi]^k \Tu = \Tu^{\deu-1} \qquad \text{in }\R^n.
        \]
    Thanks to the classification of solutions of \cite{WeiXu99}, we  have $\Tu(y) = \Bub(y)$, where $\Bub$ is the standard Euclidean bubble defined in \eqref{def:bub}, and thus
    \[  \int_{\R^n}\Tu^{\deu} dy = K_0^{-\frac{n}{k}}.
        \]
    \par We now claim that 
    \begin{equation}\label{tmp:uaba}
        \Snorm{k}{u_\a - \TBuba} \to 0 \qquad \text{as } \a\to \infty,
    \end{equation}
    where $\pa_\a = (\zm[\a])$. To prove \eqref{tmp:uaba}, we compute, for all $R>0$,
    \[  \intM{u_\a^\deu} \geq \intM[\Bal{z_\a}{R\mu_\a}]{u_\a^\deu} = \int_{\Bal{0}{R}} \Tu^\deu dy + o(1).
        \]
    Since this holds for all $R>0$, we obtain 
    \begin{equation}\label{tmp:mainxx}
        \liminf_{\a\to \infty} \Lnorm[(M)]{\deu}{u_\a}^\deu \geq K_0^{-\frac{n}{k}}.
    \end{equation}
    Putting \eqref{tmp:mainx} and \eqref{tmp:mainxx} together, we have $\Lnorm[(M)]{\deu}{u_\a} \to K_0^{-\frac{n-2k}{2k}}$ as $\a \to \infty$, and then for all $R>0$, up to a subsequence,
    \begin{align*}
        \lim_{\a\to \infty} \intM[M\setminus \Bal{z_\a}{R\mu_\a}]{u_\a^\deu} &= \lim_{\a\to \infty} \intM{u_\a^\deu} - \lim_{\a\to\infty} \intM[\Bal{z_\a}{R\mu_\a}]{u_\a^\deu}\\
            &= K_0^{-\frac{n}{k}} - \int_{\Bal{0}{R}} \Tu^\deu dy = \epsilon(R),
    \end{align*}
    where $\epsilon(R) \to 0$ as $R\to \infty$. This gives, for all $R>0$, 
    \begin{multline*}
        \lim_{\a\to \infty} \intM{\abs{u_\a - \TBuba}^\deu}\\
        \begin{aligned}
            &=\lim_{\a\to \infty}\bsq{ \intM[\Bal{z_\a}{R\mu_\a}]{\abs{u_\a-\TBuba}^\deu} + \intM[M\setminus\Bal{z_\a}{R\mu_\a}]{\abs{u_\a-\TBuba}^\deu} }\\
            &= \lim_{\a\to \infty} \int_{\Bal{0}{R}} \abs{\Tu_\a(y) - \mu_\a^{\frac{n-2k}{2}}\TBuba(\exp_{z_\a}(\mu_\a y))}^\deu dy + \epsilon(R)\\
            &= \epsilon(R),
        \end{aligned}
    \end{multline*}
    up to a subsequence, since $\Tu_\a\to \Bub$ in $C^{2k}(\Bal{0}{R})$, $\Tg_\a \to \xi$ in $C^\infty_{loc}(\R^n)$, and $\mu_\a^{\frac{n-2k}{2}}\TBuba(\exp_{z_\a}(\mu_\a \cdot)) \to \Bub$ in $C^\infty_{loc}(\R^n)$ by Proposition \ref{prop:Tconc}. Thus, we have up to a subsequence $u_\a - \TBuba \to 0$ in $L^\deu(M)$, and as a consequence
    \begin{multline*}
        \Snorm{k}{u_\a - \TBuba}^2  \leq \dprod{(\Dg+\a)^k (u_\a - \TBuba)}{u_\a- \TBuba}_{\Sob[-k],\Sob} \\
            = \intM{\bpr{u_\a^{\deu-1} - \TBuba^{\deu-1}}(u_\a-\TBuba)} - \dprod{\erra{B}}{u_\a-\TBuba}_{\Sob[-k],\Sob},
    \end{multline*}
    where $\erra{B}\to 0$ in $\Sob[-k](M)$ as showed in Proposition \ref{prop:estRa}. Observe that $u_\a-\TBuba$ is bounded in $\Sob(M)$, and
    \begin{multline*}  
        \abs{\intM{\bpr{u_\a^{\deu-1} - \TBuba^{\deu-1}}(u_\a-\TBuba)}}\\ 
        \begin{aligned}
            &\leq \bpr{\Lnorm[(M)]{\deu}{u_\a}^{\deu-1}+\Lnorm[(M)]{\deu}{\TBuba}^{\deu-1}} \Lnorm[(M)]{\deu}{u_\a-\TBuba}\\
            &= o(1),
        \end{aligned}
    \end{multline*}
    so that we obtain $\Snorm{k}{u_\a-\TBuba}\to 0$ as $\a \to \infty$, up to a subsequence.
    \par We can now apply Proposition \ref{prop:uniqsol}, and there exists $\bpa_\a = (\bz_\a,\bmu_\a)$ such that $\a\bmu_\a^2 \to 0$, and 
    \[  \Snorm{k}{u_\a - \TBuba[\bpa_\a]} \to 0 \qquad \text{as } \a \to \infty,
        \]
    up to a subsequence. Moreover, writing $\phi_\a := u_\a - \TBuba[\bpa_\a]$, we have $\phi_\a \in C^{2k-1}(M)\cap \Kera[\bpa_\a]^\perp$ and $\phi_\a$ satisfies \eqref{eq:estphia}, which concludes the proof of Theorem  \ref{prop:mainprinc}.
\end{proof}

\section{The optimal constant for the Sobolev embedding}\label{sec:proof1}
In this section, we prove Theorem \ref{prop:mainest}. We use a contradiction argument, similar to the strategy of proof found in \cite{HebVau96, Heb03}. Our proof, however, is based on the pointwise blow-up description given by Theorem \ref{prop:mainprinc}.
\par We define the following functional, for any $\Lambda>0$, and for $u \in \Sob(M) \setminus\{0\}$,
    \[  I_\Lambda(u) := \frac{\Lnorm[(M)]{2}{\Dg^{k/2}u}^2 + \Lambda \Snorm{k-1}{u}^2}{\bpr{\intM{\abs{u}^\deu}}^{\frac{2}{\deu}}}. 
        \]
We then observe that the fact that any constant $A$ in \eqref{eq:sobineq} has to satisfy $A\geq K_0^2$ is equivalent to $\inf_{u\neq 0} I_\Lambda(u) \leq K_0^{-2}$, for all $\Lambda>0$. Moreover, if there is $\Lambda>0$ such that $\inf_{u\neq 0} I_\Lambda(u) = K_0^{-2}$, then Theorem \ref{prop:mainest} is true.
By contradiction, we will assume that 
\[  \inf_{u\neq 0} I_\Lambda(u) < \frac{1}{K_0^2}
    \]
for all $\Lambda >0$, where the infimum is taken among all $u \in \Sob(M)$, $u\neq0$. This is equivalent to the statement that
\begin{equation}\label{eq:contra}
    \forall~\a>0, \quad \inf_{u \in \mathcal{N}} \intM{(\Dg+\a)^k u ~u} < \frac{1}{K_0^2},
\end{equation}
where  
\[  \mathcal{N} := \{u \in \Sob(M) \such \Lnorm[(M)]{\deu}{u}=1\}.
    \]
\subsection{Proof of Theorem \ref{prop:mainest}}\label{sec:decom}
We now prove Theorem \ref{prop:mainest}. Assume that \eqref{eq:contra} holds, then by \cite[Theorem 3]{Maz16}, there exists for each $\a>0$ a positive minimizer 
\begin{equation}\label{def:ua}
    u_\a \in \mathcal{N},
\end{equation}
and a constant $\lambda_\a>0$ such that 
\begin{equation}\label{eq:ua}
    (\Dg+\a)^k u_\a = \lambda_\a u_\a^{\deu-1}.
\end{equation}
By elliptic regularity, as before we have $u_\a \in C^\infty(M)$ for all $\a >0$. Moreover,  
\[  \lambda_\a = \inf_{u\in \mathcal{N}} \intM{(\Dg+\a)^k u_\a ~u_\a},
    \]
and $\lambda_\a < 1/K_0^2$ for all $\a$, by assumption. It follows that 
\begin{equation}\label{eq:defbui}
    \bu_\a (x) := \lambda_\a^{\frac{n-2k}{4k}} u_\a(x)
    \end{equation}
is a positive solution to \eqref{eq:critbub}, and using \eqref{eq:ua}, 
\[  \Snorm{k}{\bu_\a}^2 = \lambda_\a^{\frac{n-2k}{2k}} \Snorm{k}{u_\a}^2 \leq \lambda_\a^{\frac{n}{2k}} \Lnorm[(M)]{\deu}{u_\a}^2 \leq K_0^{-\frac{n}{k}}.
    \]
We apply Theorem \ref{prop:mainprinc}: Up to a subsequence $\a_i \to \infty$, There exists a sequence $\pa_i := (z_i,\mu_i) \in M\times (0,+\infty)$ such that $\a_i \mu_i^2 \to 0$ as $i\to \infty$, and 
\[  \Snorm{k}{\bu_i - \TBub[i]} \to 0 \qquad \text{as } i \to \infty,
    \]
where we write $\bu_i :=\bu_{\a_i}$. From now on, we index the sequences by $i\in \N$ instead of $\a_i$ for simplicity. Thus, by Sobolev embedding, $\bu_i - \TBub[i] \to 0$ in $L^\deu(M)$, and using \eqref{eq:ii} we get
\begin{equation*}  
    \lambda_i^{\frac{n-2k}{4k}} = \Lnorm[(M)]{\deu}{\bu_i} = \Lnorm[(M)]{\deu}{\TBub[i]} + o(1) = K_0^{-\frac{n-2k}{2k}} + o(1),
\end{equation*}
so that $\lambda_i \to K_0^{-2}$ as $i\to \infty$. Moreover, we have that $\phi_i := \bu_i - \TBub[i]$ satisfies
\begin{align}  
    \notag&\Snorm{k}{\phi_i} \to 0, \\
    \label{eq:estimphi}&(\mu_i + \dg{z_i,x})^l \abs{\nabla_g^l \phi_i(x)}_g \leq C F_{\zma[i]}(x) \Bub_{\zm[i]}(x) \qquad \forall~x\in M.
\end{align}
We prove Theorem \ref{prop:mainest} by showing that \eqref{eq:estimphi} contradicts the sharp Sobolev inequality in $\R^n$, integrating \eqref{eq:ua} against $u_i$ and letting $i\to \infty$. 
\begin{lemma}
    Let $u_i := u_{\a_i}$ be given in \eqref{def:ua}, and let 
    \begin{equation}\label{def:gama}
        \gamma_i := \begin{cases}
            \sqai\mu_i & \text{if } n =2k+1\\
            \a_i \mu_i^2 \big(1+\abs{\log\sqai\mu_i}\big) & \text{if } n=2k+2\\
            \a_i \mu_i^2 & \text{if } n \geq 2k+3
        \end{cases}.
    \end{equation}
    There exists $C>0$ and $i_0 \geq 1$ such that for all $i\geq i_0$, 
    \begin{equation}\label{eq:1stclaim}
        \a_i \intM{\abs{\Dg^{\frac{k-1}{2}}u_i}^2} \geq C\gamma_i.
    \end{equation}
\end{lemma}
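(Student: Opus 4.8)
The strategy is to localize near the concentration point $z_i$ and reduce the lower bound to the (truncated) model integral $\int_{\R^n}\abs{\Dg[\xi]^{\frac{k-1}{2}}\Bub}^2$. Since $u_i = \lambda_i^{-\frac{n-2k}{4k}}\bu_i$ by \eqref{eq:defbui} and $\lambda_i \to K_0^{-2} > 0$, it is enough to prove $\a_i\intM{\abs{\Dg^{\frac{k-1}{2}}\bu_i}^2} \geq C\gamma_i$. I would write $\bu_i = \TBub[i] + \phi_i$, fix a constant $c\in(0,1]$ (to be taken small if $n = 2k+1$, and equal to $1$ if $n\geq 2k+2$), and restrict the integral to $\Bal{z_i}{c/\sqai}$, which for $i$ large lies in a normal chart and on which $\TBub[i] = \Bub_{\zm[i]}$ because $\Theta_{\a_i}(z_i,\cdot)\equiv 1$ there.

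For the main term, using \eqref{eq:estdifXsqa} with $X = \Bub$, $l = k-1$, together with $\exp_{z_i}^*g(\mu_i\cdot)\to\xi$ in $C^\infty_{loc}(\R^n)$ and the change of variables $x = \exp_{z_i}(\mu_i y)$, one gets
\[ \intM[\Bal{z_i}{c/\sqai}]{\abs{\Dg^{\frac{k-1}{2}}\Bub_{\zm[i]}}^2} = \mu_i^2\Big(\int_{\Bal{0}{c/(\sqai\mu_i)}}\abs{\Dg[\xi]^{\frac{k-1}{2}}\Bub}^2\,dy + o(1)\Big), \]
the geometric remainders being negligible since $\mu_i + \dg{z_i,x} \leq 2/\sqai \to 0$ on the ball. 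From the explicit form \eqref{def:bub} and the identity $\Dg[\xi]\abs{y}^{-s} = s(n-s-2)\abs{y}^{-s-2}$, iterated from $s = n-2k$, one computes $\abs{\Dg[\xi]^{\frac{k-1}{2}}\Bub(y)} = c_\infty\abs{y}^{-(n-k-1)}\bpr{1 + \bigO(\abs{y}^{-2})}$ with $c_\infty\neq 0$, hence $\abs{\Dg[\xi]^{\frac{k-1}{2}}\Bub(y)} \geq c_*(1+\abs{y})^{-(n-k-1)}$ for $\abs{y}$ large. Integrating over $\Bal{0}{c/(\sqai\mu_i)}$ and distinguishing $n = 2k+1$, $n = 2k+2$, $n\geq 2k+3$ yields $\int_{\Bal{0}{c/(\sqai\mu_i)}}\abs{\Dg[\xi]^{\frac{k-1}{2}}\Bub}^2 \geq 2c_0\,\gamma_i/(\a_i\mu_i^2)$ for $i$ large, with $c_0>0$; hence $\intM[\Bal{z_i}{c/\sqai}]{\abs{\Dg^{\frac{k-1}{2}}\Bub_{\zm[i]}}^2} \geq c_0\,\gamma_i/\a_i$.

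For the error term, by \eqref{eq:estimphi} one has $\abs{\Dg^{\frac{k-1}{2}}\phi_i} \leq C\sum_{l=0}^{k-1}\abs{\nabla_g^l\phi_i}_g \leq C(\mu_i+\dg{z_i,x})^{-(k-1)}F_{\zma[i]}(x)\Bub_{\zm[i]}(x)$ on $\Bal{z_i}{c/\sqai}$ (since there $\mu_i + \dg{z_i,x} < 1$). Changing variables, $\intM[\Bal{z_i}{c/\sqai}]{\abs{\Dg^{\frac{k-1}{2}}\phi_i}^2} \leq C\mu_i^2\int_{\Bal{0}{c/(\sqai\mu_i)}}(1+\abs{y})^{-2(n-k-1)}\rad\bpr{\sqai\mu_i(1+\abs{y})}^2\,dy$, and evaluating this integral with $\rad$ as in \eqref{def:rad} gives a bound $\leq C\eta(c)\gamma_i/\a_i + o(\gamma_i/\a_i)$, where $\eta(c) = c^3(1+\abs{\log c})^2$ if $n = 2k+1$ (so that $\eta(c)/c_0 < \tfrac1{16}$ after shrinking $c$), while for $n\geq 2k+2$ the whole quantity is already $o(\gamma_i/\a_i)$ for fixed $c$ (the model integral then carries an extra factor $(1+\abs{\log\sqai\mu_i})^{-1}$ or $(\sqai\mu_i)^{n-2k-2}\to 0$). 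For $i$ large this gives $\norm{\Dg^{\frac{k-1}{2}}\phi_i}_{L^2(\Bal{z_i}{c/\sqai})} \leq \tfrac14\norm{\Dg^{\frac{k-1}{2}}\Bub_{\zm[i]}}_{L^2(\Bal{z_i}{c/\sqai})}$, so by the triangle inequality $\intM{\abs{\Dg^{\frac{k-1}{2}}\bu_i}^2} \geq \intM[\Bal{z_i}{c/\sqai}]{\abs{\Dg^{\frac{k-1}{2}}\bu_i}^2} \geq \tfrac12 c_0\gamma_i/\a_i$; multiplying by $\a_i$ and reinserting the bounded, nonvanishing factor $\lambda_i^{-\frac{n-2k}{2k}}$ gives \eqref{eq:1stclaim}.

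The delicate case is $n = 2k+1$: the pointwise estimates \eqref{eq:estimphi} only show that $\intM{\abs{\Dg^{\frac{k-1}{2}}\phi_i}^2}$ and $\intM{\abs{\Dg^{\frac{k-1}{2}}\TBub[i]}^2}$ are of the \emph{same} order $\gamma_i/\a_i$ over all of $M$, so no triangle inequality on $M$ can work; the resolution is precisely that on $\Bal{z_i}{c/\sqai}$ the main contribution scales like $c$ whereas the error scales like $c^3(1+\abs{\log c})^2$, so taking $c$ small makes the error strictly subdominant \emph{uniformly in $i$}. A secondary technical point is the nonvanishing of $c_\infty$, which holds because the factors $s(n-s-2)$ produced by iterating $\Dg[\xi]$ from $s = n-2k$ all lie in $(0,+\infty)$, since $0 < n-2k \leq s < n-2$ throughout.
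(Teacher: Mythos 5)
Your argument is correct and takes essentially the same route as the paper: localize on a ball of radius $c/\sqai$ (with $c$ small only in the case $n=2k+1$), bound the model integral of $\abs{\Dg[\xi]^{\frac{k-1}{2}}\Bub}^2$ from below by a multiple of $\gamma_i/(\a_i\mu_i^2)$, and use the pointwise estimates on $\phi_i$ to make its contribution subdominant. The only deviation is cosmetic: you absorb $\phi_i$ through a triangle inequality on the $L^2$ norm of $\Dg^{\frac{k-1}{2}}\phi_i$ (quadratic in $\phi_i$, scaling like $c^3(1+\abs{\log c})^2$), whereas the paper drops the nonnegative square of $\phi_i$ and estimates the mixed term (scaling like $c^2(1+\abs{\log c})$); both rest on the same smallness-in-$c$ mechanism when $n=2k+1$ and on the $o(\gamma_i)$ bound for fixed radius when $n\geq 2k+2$.
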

\begin{proof}
    By straightforward computations, one sees that for $l=0,\ldots k-1$,
    \[  \abs{\Dg[\xi]^{l/2}\Bub(y)} = \frac{\abs{P_l(\abs{y})}}{(1+\pk \abs{y}^2)^{-\frac{n-2k+2l}{2}}},
        \]
    where $P_l$ is a polynomial of degree $l$. Therefore, there exists a constant $C>0$ depending only on $n$ and $k$ such that for $i$ big enough,
    \begin{equation}\label{eq:controlintB}
        \frac{1}{C}\frac{\gamma_i}{\a_i\mu_i^2} \leq \int_{\Bal{0}{\frac{1}{\sqai\mu_i}}} \abs{\Dg[\xi]^{\frac{k-1}{2}}\Bub(y)}^2 dy \leq C\frac{\gamma_i}{\a_i\mu_i^2}
    \end{equation}  
    for all $n > 2k$. By the decomposition of $\Dg$ in local coordinates, as in \eqref{eq:nabexp}, we get for all $y\in \Bal{0}{1/\sqa}\sub \R^n$, 
    \[  \big(\Dg^{\frac{k-1}{2}} \TBub[i]\big)(\exp_{z_i}(y)) = \mu_i^{\frac{-n+2}{2}}\big(\Dg[\xi]^{\frac{k-1}{2}} \Bub\big) \big(\tfrac{y}{\mu_i}\big) + \bigO\bpr{\sum_{m=1}^{k-2} \mu_i^{k-m} \abs{\big(\nabla_\xi^m\Bub\big)\big(\tfrac{y}{\mu_i}\big)}}.
        \]  
    Moreover, we compute, for $m=0,\ldots k-1$, that
    \[  \int_{\Bal{0}{\frac{1}{\sqai\mu_i}}}\abs{\nabla_\xi^m \Bub(y)}^2 dy \leq C\begin{cases}
        (\sqai\mu_i)^{-2(k-m)+n-2k} & \text{if } m < k- \frac{n-2k}{2}\\
        1 + \abs{\log\sqai\mu_i}  & \text{if } m= k-\frac{n-2k}{2}\\
        1 & \text{if } m > k - \frac{n-2k}{2}.
    \end{cases}
        \]
    We start by showing \eqref{eq:1stclaim} in the case $n\geq 2k+2$. Thanks to \eqref{eq:controlintB}, and using the expansion of the metric in local coordinates and Lemma \ref{prop:intcomp}, we obtain when $i\geq i_0$ is big enough so that $1/\sqai < \varrho_0$,
    \begin{multline}\label{tmp:estintx}
        \a_i \intM[\Bal{z_i}{1/\sqai}]{\abs{\Dg^{\frac{k-1}{2}}\TBub[i]}^2} \geq C \a_i \mu_i^2 \int_{\Bal{0}{\frac{1}{\sqai\mu_i}}} \abs{\Dg[\xi]^{\frac{k-1}{2}}\Bub(y)}^2 dy\\
        + \bigO\bpr{\int_{\Bal{0}{\frac{1}{\sqai\mu_i}}}\a_i\mu_i^4 \abs{y}^2 \abs{\nabla_\xi^{k-1}\Bub(y)}^2 dy}\\ \hfill + \bigO\bpr{\a_i\sum_{m=1}^{k-2} \mu_i^{2(k-m)}\int_{\Bal{0}{\frac{1}{\sqai\mu_i}}} \abs{\nabla_\xi^m \Bub(y)}^2 dy}\\
        \qquad \qquad\geq C \gamma_i (1+ o(1)). \hfill
    \end{multline}
    Moreover, using \eqref{eq:estimphi}, writing as in \eqref{def:snk}
    \[  \sigma_{n,k} = \begin{cases}
        3/4 &\begin{aligned}[t]
            &\text{when } k=1, \,n\geq 4\\
            &~\text{or } k\geq 2,\, n=2k+2
        \end{aligned}\\
        1 & \text{when $k\geq 2,\, n \geq 2k+3$}
    \end{cases}, 
        \]
    we have for all $n\geq 2k+2$
    \begin{multline}\label{tmp:estintxx}
        \abs{\a_i \intM[\Bal{z_i}{1/\sqai}]{\vprod{\Dg^{\frac{k-1}{2}}\TBub[i]}{\Dg^{\frac{k-1}{2}}\phi_i}}}\\
        \begin{aligned}
            &\leq C \a_i \a_i^{\sigma_{n,k}}\int_{\Bal{0}{1/\sqai}} (\mu_i+\abs{y})^{2\sigma_{n,k}} \mu_i^{n-2k} (\mu_i + \abs{y})^{-2(n-k-1)} dy\\
            &\leq C (\a_i \mu_i^2)^{1+\sigma_{n,k}} \int_{\Bal{0}{\frac{1}{\sqai\mu_i}}} (1+\abs{y})^{-2n+2k+2+2\sigma_{n,k}} dy\\
            &= o(\gamma_i).
        \end{aligned}
    \end{multline} 
    Putting \eqref{tmp:estintx} and \eqref{tmp:estintxx} together, we obtain with \eqref{eq:defbui} that
    \begin{multline*}
        \lambda_i^{\frac{n-2k}{2k}}\a_i \intM{\abs{\Dg^{\frac{k-1}{2}}u_i}^2}\\
        \begin{aligned}
            &\geq \a_i \intM[\Bal{z_i}{1/\sqai}]{\vprod{\Dg^{\frac{k-1}{2}}\big(\TBub[i]+\phi_i\big)}{\Dg^{\frac{k-1}{2}}\big(\TBub[i]+\phi_i\big)}}\\
            &\geq \a_i \intM[\Bal{z_i}{1/\sqai}]{\abs{\Dg^{\frac{k-1}{2}}\TBub[i]}^2} + \a_i \intM[\Bal{z_i}{1/\sqai}]{\vprod{\Dg^{\frac{k-1}{2}} \TBub[i]}{\Dg^{\frac{k-1}{2}}\phi_i}} \\
            &\geq C\gamma_i (1+o(1))
        \end{aligned}
    \end{multline*}
    as $i\to \infty$, which proves \eqref{eq:1stclaim}. 
    \par In the case $n= 2k+1$, let $0<\epsilon< 1$ to be fixed later, we have as in \eqref{tmp:estintx} 
    \begin{multline*}
        \a_i\intM{\abs{\Dg^{\frac{k-1}{2}}\TBub[i]}^2} \geq C \a_i\mu_i^2 \int_{\Bal{0}{\frac{\epsilon}{\sqai\mu_i}}} \abs{\Dg[\xi]^{\frac{k-1}{2}}\Bub(y)}^2 dy\\
            + \bigO\bpr{\int_{\Bal{0}{\frac{\epsilon}{\sqai\mu_i}}}\a_i\mu_i^4 \abs{y}\abs{\nabla_\xi^{k-1} \Bub(y)}^2 dy}\\
            \hfill + \bigO\bpr{\int_{\Bal{0}{\frac{\epsilon}{\sqai\mu_i}}} \a_i \sum_{m=1}^{k-2}\mu^{2(k-m)} \int_{\Bal{0}{\frac{\epsilon}{\sqai\mu_i}}}\abs{\nabla_\xi^m \Bub(y)}^2}\\
            \qquad \geq C \sqai\mu_i (1+o(1)) \geq C_0 \sqa_i\mu_i
    \end{multline*} 
    for some $C_0> 0$ independent of $\epsilon$, for all $i\geq i_0$ big enough. Now, as in \eqref{tmp:estintxx}, we obtain
    \begin{multline*}
        \abs{\a_i \intM[\Bal{z_i}{\epsilon/\sqai}]{\vprod{\Dg^{\frac{k-1}{2}}\TBub[i]}{\Dg^{\frac{k-1}{2}}\phi_i}}} \\
        \begin{aligned}
            &\leq C \a_i\a_i^{1/2} \int_{\Bal{0}{\epsilon/\sqai}} (\mu_i + \abs{y})\mu_i (\mu_i + \abs{y})^{-n+1} (1+\abs{\log\sqai(\mu_i+\abs{y})}) dy\\
            &\leq C_1 \epsilon^2 \big(1+ \abs{\log\epsilon}\big) \sqai\mu_i,
        \end{aligned}
    \end{multline*}
    where $C_1>0$ does not depend on $\epsilon$ either. Choose $\epsilon>0$ small enough so that $1-\frac{C_1}{C_0}\epsilon\big(1+\abs{\log\epsilon}\big) \geq \frac{1}{2}$, we get finally
    \begin{multline*}
        \lambda_i^{\frac{n-2k}{2k}}\a_i \intM{\abs{\Dg^{\frac{k-1}{2}}u_i}^2}\\
        \begin{aligned}
            &\geq \a_i \intM[\Bal{z_i}{\epsilon/\sqai}]{\vprod{\Dg^{\frac{k-1}{2}}\big(\TBub[i] + \phi_i\big)}{\Dg^{\frac{k-1}{2}}\big(\TBub[i] + \phi_i\big)}}\\
            &\geq \frac{C_0}{2}\sqai \mu_i.
        \end{aligned}
    \end{multline*}
\end{proof}
\begin{lemma}
    Let $u_i = u_{\a_i}$ be given in \eqref{def:ua}, $\gamma_i$ be as in \eqref{def:gama}, and write
    \begin{equation}\label{def:U}
        \U(y) := u_i(\exp_{z_i}(y)) \qquad \text{for } y \in \Bal{0}{\varrho} \sub \R^n,
    \end{equation}
    where $\inj/2 < \varrho < \inj$ is as in definition \ref{def:psizm}. We have, as $i\to \infty$, that
    \begin{align}
        \label{eq:2ndclaim} \intM{\abs{\Dg^{k/2}u_i}^2} &= \int_{\Bal{0}{\varrho}} \abs{\Dg[\xi]^{k/2}\U}^2 dy + o(\gamma_i)\\
        \label{eq:3rdclaim} \intM{u_i^{\deu}} &= \int_{\Bal{0}{\varrho_0}} \U^\deu dy + o(\gamma_i),
    \end{align}
    where $0<\varrho_0<\inj/2$ is given by Lemma \ref{prop:intcomp}.
\end{lemma}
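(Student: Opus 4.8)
Both identities follow the same two-step pattern: away from the concentration point $z_i$ the function $u_i$ behaves like the Green's function of $(\Dg+\a_i)^k$, so it and its derivatives up to order $2k-1$ are exponentially small in $\sqai$ there, and the whole contribution of $M\setminus\Bal{z_i}{\varrho}$ (resp.\ $M\setminus\Bal{z_i}{\varrho_0}$) is negligible; on the remaining ball one passes to normal coordinates at $z_i$ and controls the distortion caused by the metric against $\gamma_i$. Throughout we use $\bu_i = \lambda_i^{\frac{n-2k}{4k}}u_i$ with $\lambda_i\to K_0^{-2}$, so $u_i$ and $\bu_i=\TBub[i]+\phi_i$ differ by a bounded factor and the bounds \eqref{eq:estimphi} of Theorem \ref{prop:mainprinc} transfer to $u_i$; in particular, on $\Bal{z_i}{1/\sqai}$ one has $\U=\lambda_i^{-\frac{n-2k}{4k}}\big(\Bub_{\zm[i]}\circ\exp_{z_i}+\phi_i\circ\exp_{z_i}\big)$ with $\Bub_{\zm[i]}(\exp_{z_i}(y))=\mu_i^{-\frac{n-2k}{2}}\Bub(y/\mu_i)$ and $\abs{\nabla^l_\xi\Bub(z)}\leq C(1+\abs{z})^{2k-n-l}$, while on $\Bal{z_i}{\varrho}\setminus\Bal{z_i}{1/\sqai}$ the functions $\TBub[i]$, $\phi_i$ and their derivatives are bounded by a polynomial in $\a_i,\mu_i,\dg{z_i,x}$ times $e^{-\sqai\dg{z_i,x}/2}$, by Propositions \ref{prop:estdifX} and \ref{prop:mainprinc}.

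\textbf{Proof of \eqref{eq:3rdclaim}.} Splitting $\intM{u_i^\deu}=\intM[\Bal{z_i}{\varrho_0}]{u_i^\deu}+\intM[M\setminus\Bal{z_i}{\varrho_0}]{u_i^\deu}$, the second integral is bounded by a polynomial in $\a_i,\mu_i$ times $e^{-c\sqai}$, hence $o(\gamma_i)$ since $\a_i\to\infty$ and $\a_i\mu_i^2\to0$. On the ball, $u_i^\deu(\exp_{z_i}(y))=\U^\deu(y)$ pointwise and the Riemannian volume element is $\big(1+\bigO(\abs{y}^2)\big)\,dy$, so $\intM[\Bal{z_i}{\varrho_0}]{u_i^\deu}=\int_{\Bal{0}{\varrho_0}}\U^\deu\,dy+\bigO\big(\int_{\Bal{0}{\varrho_0}}\abs{y}^2\U^\deu\,dy\big)$. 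Inserting $\U\leq C\mu_i^{-\frac{n-2k}{2}}\Bub(\cdot/\mu_i)+(\text{small})$ and substituting $y=\mu_i z$ gives $\int_{\Bal{0}{\varrho_0}}\abs{y}^2\U^\deu\,dy=\bigO\big(\mu_i^2\int_{\R^n}\abs{z}^2\Bub(z)^\deu\,dz\big)=\bigO(\mu_i^2)$, the integral converging because $\Bub(z)^\deu\sim\abs{z}^{-2n}$ at infinity. Since $\mu_i^2=o(\gamma_i)$, this proves \eqref{eq:3rdclaim}.

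\textbf{Proof of \eqref{eq:2ndclaim}.} The contribution of $M\setminus\Bal{z_i}{1/\sqai}$ to the difference of the two sides is $o(\gamma_i)$, by the exponential decay together with $\a_i\mu_i^2\to0$, exactly as in the computation leading to \eqref{tmp:estintx}. On $\Bal{z_i}{\varrho}$ we write $\intM[\Bal{z_i}{\varrho}]{\abs{\Dg^{k/2}u_i}^2}=\int_{\Bal{0}{\varrho}}\abs{\Dg^{k/2}u_i(\exp_{z_i}(y))}^2\big(1+\bigO(\abs{y}^2)\big)\,dy$ and apply \eqref{eq:nabexp} to expand $\Dg^{k/2}u_i(\exp_{z_i}(y))$ in terms of $\Dg[\xi]^{k/2}\U(y)$ plus lower-order Euclidean derivatives of $\U$ carrying $\abs{y}$-weighted coefficients. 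This yields $\intM[\Bal{z_i}{\varrho}]{\abs{\Dg^{k/2}u_i}^2}=\int_{\Bal{0}{\varrho}}\abs{\Dg[\xi]^{k/2}\U}^2\,dy+\mathcal{E}_i$, where $\mathcal{E}_i$ is a finite sum of integrals of the schematic form $\int_{\Bal{0}{\varrho}}\abs{y}^a\,\abs{\nabla^p_\xi\U}\,\abs{\nabla^q_\xi\U}\,dy$ with $0\leq p,q\leq k$, $p+q\leq 2k$, $a\geq2$ when $p=q=k$, $a\geq1$ when $\{p,q\}=\{k-1,k\}$, and $p$ or $q\leq k-2$ otherwise. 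Splitting each at $\abs{y}=1/\sqai$, inserting the pointwise bounds for $u_i$, and substituting $y=\mu_i z$: on $\Bal{0}{1/\sqai}$ the pure-bubble part of the ``critical'' terms ($a\in\{1,2\}$ with top-order derivatives) is bounded by $C\mu_i^2\int_{\Bal{0}{1/(\sqai\mu_i)}}\abs{z}(1+\abs{z})^{2k-2n+1}\,dz$, while the remaining terms carry an extra power $\mu_i^{\geq2}$ in the $z$-variables, and the $\phi_i$-part carries an additional gain $\rad(\sqai\mu_i)$ from \eqref{eq:estimphi}. The displayed integral is $\bigO(1)$, $\bigO(\abs{\log\sqai\mu_i})$, or $\bigO\big((\sqai\mu_i)^{-1}\big)$ according as $n\geq2k+3$, $n=2k+2$, $n=2k+1$, which is precisely $\gamma_i/(\a_i\mu_i^2)$ up to a bounded factor; hence $\mathcal{E}_i=o(\gamma_i)$ in every case, and together with the tail estimate this gives \eqref{eq:2ndclaim}. (Alternatively, the comparison on $\Bal{z_i}{1/\sqai}$ may be organized through Lemma \ref{prop:intcomp}.)

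\textbf{Main obstacle.} Conceptually the proof is routine, but the delicate point is the case analysis: one must verify the $o(\gamma_i)$ bound uniformly across the three regimes $n\geq2k+3$, $n=2k+2$, $n=2k+1$ — in the two borderline dimensions the relevant Euclidean integrals diverge and $\gamma_i$ is exactly the quantity encoding the compensating logarithmic or power factor — and one must track that every error term involving $\phi_i$ genuinely inherits the gain $\rad(\sqai\mu_i)$, so that it remains subordinate to the bubble contribution that already produces the leading term of size $\gamma_i$.
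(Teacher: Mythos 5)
Your proposal is correct and follows essentially the same route as the paper: transfer the pointwise bounds of Theorem \ref{prop:mainprinc} and Proposition \ref{prop:estdifX} to $u_i$, discard the exponentially small far region, expand $\Dg^{k/2}$ and $dv_g$ in normal coordinates via \eqref{eq:nabexp}, and check that the weighted and lower-order error integrals are $o(\gamma_i)$ by splitting at $\abs{y}=1/\sqai$, with the three-regime dimension analysis producing exactly $\gamma_i/(\a_i\mu_i^2)$. The only (harmless) bookkeeping differences are that you keep the $\abs{y}$-weight on the $\{k-1,k\}$ cross term where the paper uses Cauchy--Schwarz and the explicit case analysis \eqref{tmp:Ua} for the unweighted $\int\abs{\nabla_\xi^m\U}^2$, and that your outer-annulus and $\varrho_0$-tail error terms are asserted rather than written out (they reduce to bounds of the type $\frac{C}{\a_i}(\sqai\mu_i)^{n-2k}=o(\gamma_i)$, as in the paper), not to mere ``exponential decay'' as your citation of \eqref{tmp:estintx} might suggest.
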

\begin{proof}
    First, observe that with \eqref{eq:estimphi} and \eqref{eq:estdifXsqa}, \eqref{eq:estdifXexp}, for $l= 0,\ldots 2k-1$ and $y \in \Bal{0}{\varrho}$, we have
    \begin{equation}\label{tmp:estint1}
        \abs{\nabla_g^l u_i}_g(\exp_{z_i}(y)) \leq C\begin{cases}
            \mu_i^{\frac{n-2k}{2}} (\mu_i+\abs{y})^{2k-n-l} & \text{when } \sqai\abs{y} \leq 1\\
            \a_i^{l/2} \mu_i^{\frac{n-2k}{2}} \abs{y}^{2k-n} e^{-\sqai\abs{y}/2} & \text{when } \sqai\abs{y} \geq 1,
        \end{cases}
    \end{equation}
    and for all $x\in M\setminus \Bal{z_i}{\varrho}$,
    \[  \abs{\Dg^{k/2}u_i(x)}^2 \leq C \a_i^{2k} \mu_i^{n-2k} e^{-\sqai\varrho} = o(\gamma_i) 
        \]
    as $i\to \infty$. For $x\in \Bal{z_i}{\varrho}$, we use again the decomposition of $\Dg$ and $dv_g$ in local coordinates, we have
    \begin{multline*}
        \intM[\Bal{z_i}{\varrho}]{\abs{\Dg^{k/2}u_i}^2}
            = \int_{\Bal{0}{\varrho}} \abs{\Dg[\xi]^{k/2} \U(y)}^2 dy + \bigO\bpr{\int_{\Bal{0}{\varrho}}\abs{y}^2 \abs{\nabla_\xi^k \U(y)}^2 dy }\\ + \bigO\bpr{\sum_{m=1}^{k-1}\int_{\Bal{0}{\varrho}}\abs{\nabla_\xi^m \U(y)}^2 dy}.
    \end{multline*}
    We now estimate these quantities, using \eqref{tmp:estint1}, we obtain
    \begin{align*}
        &\int_{\Bal{0}{1/\sqai}} \abs{y}^2 \abs{\nabla_\xi^k \U(y)}^2 dy \leq C \mu_i^2 \int_{\Bal{0}{\frac{1}{\sqai\mu_i}}} \abs{y}^2 (1+\abs{y})^{-2(n-k)}dy \leq \frac{C}{\a_i}\gamma_i \\
        &\int_{\Bal{0}{\varrho}\setminus\Bal{0}{1/\sqai}}\abs{y}^2 \abs{\nabla_\xi^k \U(y)}^2 dy \leq \frac{C}{\a_i}(\sqai\mu_i)^{n-2k} = o(\gamma_i).
    \end{align*}
    When $k\geq 2$, we also need to compute the following integrals: for $m=1,\ldots k-1$,
    \begin{equation}\label{tmp:Ua}
    \begin{aligned}
        &\begin{aligned}
            \int_{\Bal{0}{1/\sqai}} \abs{\nabla_\xi^m \U}^2 dy &\leq C\mu_i^{2(k-m)}\int_{\Bal{0}{\frac{1}{\sqai\mu_i}}} (1+\abs{y})^{-2(n-2k+m)}\\
                &\leq C \begin{cases}
                    (\sqai\mu_i)^{n-2k}\a_i^{-k+m} & \text{if } k-m>\frac{n-2k}{2}\\
                    \mu_i^{n-2k} \big(1+\abs{\log\sqai\mu_i}\big) & \text{if } k-m=\frac{n-2k}{2}\\
                    \mu_i^2 & \text{if } k-m<\frac{n-2k}{2}
                \end{cases} \\
                &= o(\gamma_i),
            \end{aligned}\\
        &\int_{\Bal{0}{\varrho}\setminus\Bal{0}{1/\sqai}} \abs{\nabla_\xi^m \U}^2 dy \leq C\a_i^{m-l}(\sqai\mu_i)^{n-2k} = o(\gamma_i).
    \end{aligned}
    \end{equation}
    Putting everything together, we have finally
    \begin{align*}
        \intM{\abs{\Dg^{k/2}u_i}^2} &= \intM[\Bal{z_i}{\varrho}]{\abs{\Dg^{k/2} u_i}^2} +o(\gamma_i)\\
            &= \int_{\Bal{0}{\varrho}} \abs{\Dg[\xi]^{k/2} \U}^2 dy + o(\gamma_i)
    \end{align*}
    as $i \to \infty$, which proves \eqref{eq:2ndclaim}. 
    \par To prove \eqref{eq:3rdclaim}, we compute as before using \eqref{tmp:estint1},
    \begin{align*}
        &\int_{\Bal{0}{1/\sqai}} \abs{y}^2 \U^\deu(y) dy \leq C \mu_i^2 \int_{\Bal{0}{\frac{1}{\sqai\mu_i}}} \abs{y}^2 (1+\abs{y})^{-2n} = o(\gamma_i)\\
        &\int_{\Bal{0}{\varrho_0}\setminus\Bal{0}{1/\sqai}} \abs{y}^2 \U^\deu(y) dy \leq \frac{C}{\a_i}(\sqai\mu_i)^n = o(\gamma_i),
    \end{align*}
    and we conclude with the same arguments.
\end{proof}

We have now everything we need to conclude the proof of Theorem \ref{prop:mainest}. 
\begin{proof}[End of the proof of Theorem \ref{prop:mainest}]
    Integrate \eqref{eq:ua} against $u_i \in \Sob(M)$, we get
    \begin{equation}\label{tmp:intuai}
        \sum_{l=0}^k \tbinom{k}{l} \a_i^{k-l} \intM{\abs{\Dg^{l/2}u_i}^2} = \lambda_i \intM{u_i^\deu}.
    \end{equation}
    Thanks to \eqref{eq:1stclaim}, there exists $C>0$ such that \eqref{tmp:intuai} gives
    \begin{align*}  
        \intM{\abs{\Dg^{k/2}u_i}^2} + C\gamma_i &\leq \lambda_i \intM{u_i^\deu}\\
            &\leq \frac{1}{K_0^2} \intM{u_i^\deu}.
    \end{align*}
    It then follows from \eqref{eq:2ndclaim} and \eqref{eq:3rdclaim} that
    \begin{equation}\label{eq:mainprofI}
        \int_{\Bal{0}{\varrho}} \abs{\Dg[\xi]^{k/2} \U}^2 dy + C\gamma_i \leq \frac{1}{K_0^2} \bpr{\int_{\Bal{0}{\varrho_0}} \U^\deu dy}^{2/\deu} + o(\gamma_i),
    \end{equation}
    since $\int_{\Bal{0}{\varrho_0}}\U^\deu dy \leq \Lnorm[(M)]{\deu}{u_i}^\deu = 1$. Recall that $\varrho_0 < \inj/2 < \varrho$, and let $\Tilde{\chi} \in \Cct(\R^n)$ be a cut-off function such that $\Tilde{\chi} \equiv 1$ in $\Bal{0}{\varrho_0}$ and $\Tilde{\chi}\equiv 0$ in $\R^n \setminus \Bal{0}{\varrho}$. We have 
    \begin{equation*}
        \bpr{\int_{\Bal{0}{\varrho_0}} \U^\deu dy}^{2/\deu} \leq \bpr{\int_{\R^n} \big(\Tilde{\chi}\U\big)^\deu dy}^{2/\deu},
    \end{equation*}
    and $\Tilde{\chi} \U \in \Cct(\R^n)$. We use the optimal Sobolev's inequality on $\R^n$, and obtain
    \[  \bpr{\int_{\R^n} \big(\Tilde{\chi}\U\big)^\deu dy}^{2/\deu} \leq K_0^2 \int_{\R^n} \abs{\Dg[\xi]^{k/2} \big(\Tilde{\chi}\U\big)}^2 dy = K_0^2 \int_{\Bal{0}{\varrho}} \abs{\Dg[\xi]^{k/2} \big(\Tilde{\chi}\U\big)}^2 dy.
        \]
    Observe that 
    \[\begin{aligned}
        \int_{\Bal{0}{\varrho}}\abs{\Dg[\xi]^{k/2} \big(\Tilde{\chi}\U\big)}^2 dy &\leq \int_{\Bal{0}{\varrho}} \Tilde{\chi}^2 \abs{\Dg[\xi]^{k/2}\U}^2 dy + \bigO\bpr{\sum_{m=0}^{k-1}\int_{\Bal{0}{\varrho}}\abs{\nabla_\xi^m \U}^2 dy}\\
            &\leq \int_{\Bal{0}{\varrho}} \abs{\Dg[\xi]^{k/2}\U}^2 dy + o(\gamma_i)
    \end{aligned}\]
    by \eqref{tmp:Ua}, so that finally 
    \begin{equation}\label{eq:mainprofII}
        \bpr{\int_{\Bal{0}{\varrho_0}} \U^\deu dy}^{2/\deu} \leq K_0^2 \int_{\Bal{0}{\varrho}} \abs{\Dg[\xi]^{k/2}\U}^2 dy + o(\gamma_i)
    \end{equation}
    as $i\to \infty$.
    Putting \eqref{eq:mainprofI} and \eqref{eq:mainprofII} together, we have 
    \[  C\gamma_i \leq o(\gamma_i)
        \]
    for some $C>0$ which is independent of $i$. This is a contradiction and concludes the proof of Theorem \ref{prop:mainest}.
\end{proof}
\begin{remark}
    Since we have proved that \eqref{eq:SoboptHk} is optimal with respect with the first constant, we can now optimize the constant corresponding to the lower-order terms. We set
    \[  B_0(g) := \inf \{B>0 \text{ such that \eqref{eq:SoboptHk} holds with $B$}\}, 
        \]
    then we have for all $u\in \Sob(M)$,
    \begin{equation}\label{tmp:sobnorm}
        \Lnorm[(M)]{\deu}{u}^2 \leq K_0^2 \Lnorm[(M)]{2}{\Dg^{k/2}u}^2 + B_0(g) \Snorm{k-1}{u}^2.
    \end{equation}
    This inequality is optimal with respect to both constants, none of them can be improved. Testing \eqref{tmp:sobnorm} on the constant functions, we have immediately
    \[  B_0(g) \geq \bpr{\Vol(M)}^{-\frac{n}{2k}}.
        \]
    We refer to \cite{DjaDru01}, and to \cite{DruHeb02}, for a more precise characterization of the optimal second constant $B_0(g)$ and the corresponding extremals for \eqref{tmp:sobnorm} on a compact manifold, for the case $k=1$. 
\end{remark}
    The contradiction argument developed here to prove Theorem \ref{prop:mainest} shows the following result along the way.
\begin{corollary}
    There exists $\a_0>0$ such that for all $\a\geq \a_0$, any positive solution $u_\a \in \Sob(M)$ to \eqref{eq:critbub} satisfies
    \[  \Snorm{k}{u_\a} > K_0^{-\frac{n}{2k}}.
        \]
\end{corollary}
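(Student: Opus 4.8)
The plan is to argue by contradiction, re‑running the final computation of Section~\ref{sec:decom}, but now without the normalization $\Lnorm[(M)]{\deu}{u}=1$ available there. Suppose the statement failed: then there would be $\a_i\to\infty$ and positive solutions $u_i\in\Sob(M)$ of \eqref{eq:critbub} with $\Snorm{k}{u_i}\leq K_0^{-\frac{n}{2k}}$. This is exactly hypothesis \eqref{eq:assump} with $\epsilon_\a\equiv 0$, so Theorem~\ref{prop:mainprinc} applies: up to a subsequence there are $z_i\in M$, $\mu_i>0$ with $\a_i\mu_i^2\to 0$ such that $\phi_i:=u_i-\TBub[i]$ satisfies $\Snorm{k}{\phi_i}\to 0$ and the pointwise bound \eqref{eq:estimphi}. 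As in the proof of Theorem~\ref{prop:mainprinc} (around \eqref{tmp:mainx}--\eqref{tmp:mainxx} and \eqref{tmp:bdDgal}) one also gets $\Snorm{k-1}{u_i}\to 0$ and $\intM{\abs{\Dg^{k/2}u_i}^2}\to K_0^{-\frac nk}$.

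The key point is that the auxiliary Lemmas of Section~\ref{sec:decom} yielding \eqref{eq:1stclaim}, \eqref{eq:2ndclaim}, \eqref{eq:3rdclaim} rely only on the decomposition $u_i=\TBub[i]+\phi_i$ with \eqref{eq:estimphi} and on $\a_i\mu_i^2\to 0$, so their proofs carry over verbatim with $\lambda_i=1$ and $\bu_i=u_i$. Writing $\gamma_i$ as in \eqref{def:gama} and $\U(y):=u_i(\exp_{z_i}(y))$, I would integrate \eqref{eq:critbub} against $u_i$, bound the $l=k-1$ term by \eqref{eq:1stclaim}, discard the remaining non‑negative lower‑order terms to get $\intM{\abs{\Dg^{k/2}u_i}^2}+C\gamma_i\leq\intM{u_i^{\deu}}$, and then invoke \eqref{eq:2ndclaim}--\eqref{eq:3rdclaim} to reach
\[  a_i+C\gamma_i\ \leq\ b_i+o(\gamma_i),\qquad a_i:=\int_{\Bal{0}{\varrho}}\abs{\Dg[\xi]^{k/2}\U}^2\,dy,\quad b_i:=\int_{\Bal{0}{\varrho_0}}\U^{\deu}\,dy. \]
Applying \eqref{eq:optSob} to $\Tilde{\chi}\,\U$ for a cut‑off $\Tilde{\chi}\in\Cct(\R^n)$ equal to $1$ on $\Bal{0}{\varrho_0}$ and supported in $\Bal{0}{\varrho}$, and absorbing the lower derivatives via \eqref{tmp:Ua} exactly as in \eqref{eq:mainprofII}, gives $b_i^{2/\deu}\leq K_0^2 a_i+o(\gamma_i)$, hence $b_i\leq K_0^{\deu}a_i^{\deu/2}+o(\gamma_i)$ since $a_i\to K_0^{-\frac nk}$ is bounded away from $0$ and $\infty$.

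To close the argument I would use the function $f(t):=K_0^{\deu}t^{\deu/2}-t$, which is smooth near $K_0^{-\frac nk}$ and satisfies $f(K_0^{-\frac nk})=0$, $f'(K_0^{-\frac nk})=\tfrac{\deu-2}{2}>0$ by a direct computation of exponents. Because $a_i=\intM{\abs{\Dg^{k/2}u_i}^2}+o(\gamma_i)=\Snorm{k}{u_i}^2-\Snorm{k-1}{u_i}^2+o(\gamma_i)\leq K_0^{-\frac nk}+o(\gamma_i)$ — this is exactly where the assumption $\Snorm{k}{u_i}^2\leq K_0^{-\frac nk}$ is used — and $a_i\to K_0^{-\frac nk}$, monotonicity of $f$ near $K_0^{-\frac nk}$ forces $f(a_i)\leq f(K_0^{-\frac nk}+o(\gamma_i))=o(\gamma_i)$, so that
\[  C\gamma_i\ \leq\ b_i-a_i+o(\gamma_i)\ \leq\ K_0^{\deu}a_i^{\deu/2}-a_i+o(\gamma_i)\ =\ f(a_i)+o(\gamma_i)\ =\ o(\gamma_i), \]
contradicting $C>0$, $\gamma_i>0$. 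I expect this last step to be the main obstacle: in the proof of Theorem~\ref{prop:mainest} the normalization makes $\lambda_i\intM{u_i^{\deu}}=\lambda_i\leq K_0^{-2}$ plug straight into \eqref{eq:mainprofI}, whereas here $\intM{u_i^{\deu}}$ is only controlled asymptotically, so the Sobolev deficit has to be kept at the precise scale $\gamma_i$, and the computation closes only because $f$ is increasing at its zero $K_0^{-\frac nk}$ and the energy bound forces $a_i\leq K_0^{-\frac nk}+o(\gamma_i)$. Everything else is a transcription of computations already performed in Sections~\ref{sec:pointwise}--\ref{sec:decom}.
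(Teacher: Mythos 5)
Your proposal is correct and is essentially the paper's intended argument: the paper obtains this corollary by observing that the contradiction argument of Theorem \ref{prop:mainest} applies, and your re-run of the Section \ref{sec:decom} computations with $\lambda_i=1$, $\bu_i=u_i$ (the decomposition from Theorem \ref{prop:mainprinc} with $\epsilon_\a\equiv 0$, the lower bound \eqref{eq:1stclaim}, and the localized identities \eqref{eq:2ndclaim}--\eqref{eq:3rdclaim} fed into the sharp Euclidean inequality) is exactly that argument. The one step the paper leaves implicit — that without the normalization $\Lnorm[(M)]{\deu}{u_i}=1$ and the minimality bound $\lambda_i\leq K_0^{-2}$ one must use the hypothesis $\Snorm{k}{u_i}^2\leq K_0^{-\frac nk}$ quantitatively, via \eqref{eq:2ndclaim}, to get $a_i\leq K_0^{-\frac nk}+o(\gamma_i)$ and then close with the elementary function $f(t)=K_0^{\deu}t^{\deu/2}-t$, which vanishes and is increasing at $K_0^{-\frac nk}$ — is identified and handled correctly in your write-up.
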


\appendix
\section{Technical results}
\subsection{Integration computations}\label{sec:intconc}
We show here standard results about the pull-back of functions onto $(M,g)$ using the local coordinates. One may refer for instance to \cite{Maz16} for similar computations.
\begin{lemma}\label{prop:intcomp}
    Let $(M,g)$ be a smooth compact manifold of dimension $n$, and let $l$ be a positive integer such that $n>2l$. For all $0<\varrho<\inj$, there exists $C>0$ such that for all $z\in M$ and $f\in \Cct(\Bal{0}{\varrho})$, we have
    \begin{equation}\label{tmp:intx1}
        \intM{\abs{\nabla_g^l \big(f\circ \exp_z^{-1}\big)}_g^2} \leq C \int_{\R^n} \abs{\nabla_\xi^l f}^2 dy.
    \end{equation}
    Moreover, there exists $0<\varrho_0<\inj$ and $C_0>0$ such that for all $z\in M$ and $f \in \Cct(\Bal{0}{\varrho_0})$,
    \begin{equation}\label{tmp:intx2}
        \int_{\R^n} \abs{\nabla_\xi^l f}^2 dy \leq C_0 \intM{\abs{\nabla_g^l \big(f\circ \exp_z^{-1}\big)}_g^2}.
    \end{equation}
\end{lemma}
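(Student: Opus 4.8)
The plan is to prove Lemma~\ref{prop:intcomp} by working in the normal coordinate chart centred at $z$ and using the standard comparison of the Riemannian and Euclidean metrics on small balls, with all estimates made uniform in $z$ by compactness of $M$.

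\textbf{Setup and reduction.} First I would fix $0 < \varrho < \inj$ and note that for each $z \in M$ the exponential map $\exp_z$ is a diffeomorphism from $\Bal{0}{\inj} \subset \R^n$ (with the identification $T_zM \simeq \R^n$ via an orthonormal frame) onto $\Bal{z}{\inj} \subset M$. Writing $g_z := \exp_z^* g$ for the pulled-back metric on $\Bal{0}{\inj}$, the change of variables $x = \exp_z(y)$ turns $\intM[\Bal{z}{\varrho}]{\abs{\nabla_g^l(f\circ\exp_z^{-1})}_g^2}$ into $\int_{\Bal{0}{\varrho}} \abs{\nabla_{g_z}^l f}_{g_z}^2\, dv_{g_z}$, where $dv_{g_z} = \sqrt{\det g_z}\, dy$. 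The point is then purely local and Euclidean: compare $\abs{\nabla_{g_z}^l f}_{g_z}^2 \sqrt{\det g_z}$ with $\abs{\nabla_\xi^l f}^2$ pointwise on $\Bal{0}{\varrho}$.

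\textbf{Uniform metric bounds.} The key input is that in normal coordinates the metric coefficients and all their derivatives up to order $l$ are controlled: there is $C>0$, depending only on $(M,g)$, $n$, $l$ and $\varrho$, such that for all $z\in M$ and all $y\in\Bal{0}{\varrho}$,
\[
    \frac{1}{C}\,\xi \leq g_z(y) \leq C\,\xi, \qquad \sum_{|\beta|\leq l}\abs{\partial^\beta (g_z)_{ab}(y)} \leq C.
\]
This follows because the metric, Christoffel symbols, and curvature are smooth on the compact manifold and normal coordinates depend smoothly (hence with uniform bounds) on the base point $z$ ranging over the compact $M$; one can invoke a standard covering argument as in \cite{Maz16}. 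Expanding $\nabla_{g_z}^l f$ in coordinates, each component is $\partial^\beta f$ (for $|\beta|=l$) plus a linear combination of lower-order derivatives $\partial^\gamma f$, $|\gamma|<l$, with coefficients that are universal polynomials in the $(g_z)_{ab}$, their inverses, and their derivatives up to order $l-1$ --- all uniformly bounded by the above. Raising/lowering indices with $g_z$ and multiplying by $\sqrt{\det g_z}$ only introduces more uniformly bounded factors. Hence
\[
    \abs{\nabla_{g_z}^l f}_{g_z}^2 \sqrt{\det g_z} \leq C\Big(\abs{\nabla_\xi^l f}^2 + \sum_{m=0}^{l-1}\abs{\nabla_\xi^m f}^2\Big)
\]
on $\Bal{0}{\varrho}$. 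Integrating over $\Bal{0}{\varrho}$ and absorbing the lower-order terms via the interpolation/Gagliardo--Nirenberg inequality on $\R^n$ combined with the Sobolev embedding $\hSob[l,2](\R^n)\emb L^{\deuk{l}}(\R^n)$ (valid since $n>2l$), or more simply via the scale-invariant bound $\norm{\nabla_\xi^m f}_2 \leq C\norm{\nabla_\xi^l f}_2$ available for compactly supported $f$ when $n>2l$, yields \eqref{tmp:intx1}.

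\textbf{The reverse inequality.} For \eqref{tmp:intx2} the same pointwise comparison gives the lower bound $\abs{\nabla_{g_z}^l f}_{g_z}^2\sqrt{\det g_z} \geq \tfrac1C\abs{\nabla_\xi^l f}^2 - C\sum_{m=0}^{l-1}\abs{\nabla_\xi^m f}^2$, but now the lower-order error terms must be genuinely absorbed into the left-hand side, which is where I expect the main (though still routine) obstacle to lie. The device is to shrink the support: choose $\varrho_0>0$ small enough that for $f\in\Cct(\Bal{0}{\varrho_0})$ one has, by the scale-invariant inequalities on $\R^n$ (using $\operatorname{supp} f \subset \Bal{0}{\varrho_0}$ and Poincaré-type estimates), $\sum_{m=0}^{l-1}\norm{\nabla_\xi^m f}_2^2 \leq \varrho_0^{2}\,C\,\norm{\nabla_\xi^l f}_2^2$; alternatively, one makes $\varrho_0$ small so the metric $g_z$ is $C^l$-close to $\xi$ uniformly in $z$ (again by compactness), so that the error constant multiplying $\sum_m\norm{\nabla_\xi^m f}_2^2$ is as small as desired and can be absorbed after using $\norm{\nabla_\xi^m f}_2\leq C\norm{\nabla_\xi^l f}_2$. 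Either way, for $\varrho_0$ small enough and uniformly in $z$, the lower-order terms are dominated by a small multiple of $\norm{\nabla_\xi^l f}_2^2$, and rearranging gives \eqref{tmp:intx2} with a constant $C_0$ depending only on $(M,g),n,l$. I would remark that this is precisely the asymmetry between the two inequalities: \eqref{tmp:intx1} holds on any ball of radius below the injectivity radius, whereas \eqref{tmp:intx2} requires passing to a possibly smaller radius $\varrho_0$ on which the chart is a near-isometry.
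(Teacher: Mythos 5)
Your proposal follows essentially the same route as the paper: pull back to normal coordinates, compare $\exp_z^*g$ with the flat metric pointwise (uniformly in $z$ by compactness), bound the discrepancy by lower-order Euclidean derivatives, absorb these via Hölder/Sobolev or Poincaré on the ball, and shrink the radius to $\varrho_0$ so that the error terms can be absorbed in the reverse inequality. One caution: the inequality $\norm{\nabla_\xi^m f}_{L^2}\leq C\norm{\nabla_\xi^l f}_{L^2}$ is \emph{not} scale-invariant on $\R^n$ (the two sides scale differently under dilation), so it only holds with a constant depending on the support radius, of the order $\varrho^{l-m}$ --- this is exactly what the paper extracts via Hölder and the embedding $\hSob[l,2](\R^n)\emb \hSob[m,\deuk{l-m}](\R^n)$, and what your Poincaré step with the explicit $\varrho_0^{2}$ factor correctly supplies at the one place where smallness (rather than mere boundedness) is needed.
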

\begin{proof}
    We use the properties of the local coordinates: writing $\Tg := \exp_z^* g$, we have
    \begin{align*}
        \abs{\Tg_{ij}(y) - \delta_{ij}} &\leq C \abs{y}^2\\
        \abs{\nabla_{\Tg}^l f(y)}_{\Tg} &= \abs{\nabla_\xi^l f(y)} + \bigO\Big(\abs{y}^2 \tabs{\nabla_\xi^l f(y)}\Big)\\ 
            &\qquad+ \bigO\Big(\abs{y}\tabs{\nabla_\xi^{l-1}f(y)}\Big) + \bigO\Big(\tsum_{m=1}^{l-2}\tabs{\nabla_\xi^m f(y)}\Big) \qquad l\geq 2
    \end{align*}
    for all $y \in \Bal{0}{\varrho} \sub \R^n$. We also have, for all $s>0$, 
    \begin{equation}\label{tmp:inta}
        \abs{\intM[\Bal{z}{\varrho}]{\abs{f\circ \exp_z^{-1}}^s} - \int_{\Bal{0}{\varrho}} \abs{f}^s dy} \leq C \varrho^{2}\int_{\Bal{0}{\varrho}}\abs{f}^s dy.
    \end{equation}
    Then, we compute 
    \begin{multline}\label{tmp:intb}
        \abs{\int_{\Bal{0}{\varrho}} \abs{\nabla_{\Tg}^l f}_{\Tg}^2 dy - \int_{\Bal{0}{\varrho}} \abs{\nabla_\xi^l f}^2 dy}\\ \leq C \varrho^2 \int_{\Bal{0}{\varrho}} \abs{\nabla_\xi^l f}^2 dy + C \sum_{m=1}^{l-1} \int_{\Bal{0}{\varrho}} \abs{\nabla_\xi^m f}^2 dy.
    \end{multline}
    For $m\leq l-1$, we use the continuous embedding $\hSob[l,2](\R^n) \emb \hSob[m,\deuk{l-m}](\R^n)$, where $\deuk{l} := \frac{2n}{n-2l}$, and get with Hölder's inequality
    \begin{equation}\label{tmp:intc}
        \begin{aligned}
            \int_{\Bal{0}{\varrho}} \abs{\nabla_\xi^m f}^2 dy &\leq C \big(\varrho^n\big)^{\frac{2(l-m)}{n}} \bpr{\int_{\Bal{0}{\varrho}} \abs{\nabla_\xi^m f}^{\deuk{l-m}}}^{2/\deuk{l-m}}\\
                &\leq C \varrho^2 \int_{\Bal{0}{\varrho}} \abs{\nabla_\xi^l f}^2 dy.
        \end{aligned}
    \end{equation}
    Putting \eqref{tmp:inta}, \eqref{tmp:intb}, \eqref{tmp:intc} together, this concludes the proof of \eqref{tmp:intx1} for all $\varrho < \inj$. Finally, taking a small enough $0< \varrho_0< \inj$, there is $C_0>0$ such that
    \[  \intM{\abs{\nabla_g^l \big(f \circ \exp_z^{-1}\big)}_g^2} \geq (1-C\varrho_0^2)\int_{\R^n} \abs{\nabla_\xi^l f}^2 dy \geq \frac{1}{C_0} \int_{\R^n} \abs{\nabla_\xi^l f}^2 dy.
        \]
\end{proof}

We are now in position to prove Lemma \ref{prop:psizmbded}.
\begin{proof}[Proof of Lemma \ref{prop:psizmbded}]
    Let $\mu_0 < \varrho$ be given by Lemma \ref{prop:intcomp}, $z\in M$, and write 
    \[  \Hps_{\zm} (x) := \mu^{-\frac{n-2k}{2}} \psi\big(\tfrac{1}{\mu}\exp_z^{-1}(x)\big) \qquad \forall~ x \in \Bal{z}{\varrho}, 
        \]
    so that $\psi_{\zm}(x) = \chi_\varrho(\dg{z,x})\Hps_{\zm}(x)$. We have 
    \[  \Snorm{k}{\psi_{\zm}}^2 = \sum_{l=0}^k \intM[\Bal{z}{\varrho}]{\abs{\Dg^{l/2} \psi_{\zm}}^2}, 
        \]
    and for $l = 0,\ldots k$, 
    \begin{align*}  
        \intM[\Bal{z}{\varrho}]{\abs{\Dg^{l/2} \psi_{\zm}}^2} &\leq C \sum_{m=0}^l \intM[\Bal{z}{\varrho}]{\abs{\nabla_g^{l-m}\chi_\varrho(\dg{z,x})}_g^2 \abs{\nabla_g^m \Hps_{\zm}(x)}_g^2}(x)\\
            &\leq C \sum_{m=0}^l \intM[\Bal{z}{\varrho}]{\abs{\nabla_g^m \Hps_{\zm}}_g^2}.
    \end{align*}
    We use \eqref{tmp:intx1} and get 
    \[  \intM[\Bal{z}{\varrho}]{\abs{\nabla_g^m\Hps_{\zm}}_g^2} \leq C \int_{\Bal{0}{\varrho}}\abs{\nabla_\xi^m \Tps_\mu(y)}^2 dy,
        \]
    where we write $\Tps_\mu(y) := \mu^{-\frac{n-2k}{2}} \psi\big(\tfrac{y}{\mu}\big)$. By Hölder's inequality and a change of variables, we obtain for $m=0,\dots k$, 
    \begin{align*}
        \int_{\Bal{0}{\varrho}} \abs{\nabla_\xi^m \Tps_\mu}^2 dy &\leq C \big(\varrho^n\big)^{\frac{2(k-m)}{n}} \bpr{\int_{\Bal{0}{\varrho}} \abs{\nabla_\xi^m \Tps_\mu}^{\deuk{k-m}}dy}^{2/\deuk{k-m}}\\
            &\leq C \bpr{\int_{\Bal{0}{\varrho/\mu}} \abs{\nabla_\xi^m \psi}^{\deuk{k-m}} dy}^{2/\deuk{k-m}}\\
            &\leq C \Hnorm{k,2}{\psi}^2
    \end{align*}
    using the continuous embedding $\hSob(\R^n) \emb \hSob[m,\deuk{k-m}](\R^n)$, since $\psi \in \hSob(\R^n)$. In the end, we have thus showed
    \[  \Snorm{k}{\psi_{\zm}}^2 \leq C \Hnorm{k,2}{\psi}^2.
        \]
\end{proof}

In the rest of the appendix, we use the notations introduced in section \ref{sec:prelim} of this paper. 
\begin{lemma}\label{prop:Phito0}
    For all $\epsilon>0$, there exists $\a_0\geq 1$ and $\tau_0>0$ such that for all $\a\geq \a_0$, $\tau\leq \tau_0$, and $\pa = (\zm)\in \param$, 
    \[  \a^{k-l}\intM{\abs{\Dg^{l/2}\bpr{\Zdif}(x)}^2}(x) < \epsilon.
        \]
\end{lemma}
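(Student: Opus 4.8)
The plan is to reduce the estimate to a pointwise bound on $\Dg^{l/2}\bigl(\mu\,\dsur{}{z_j}\Theta_\a(z,\cdot)\,\Bub_{\zm}\bigr)$, which is supported in the annulus $\{1/\sqa\leq\dg{z,x}\leq\varrho\}$, and then integrate. The first ingredient is the analogue of \eqref{eq:estdThet} for the derivative of $\Theta_\a$ in its \emph{center} variable: since $\Theta_\a(z,x)=\chi_\varrho(\dg{z,x})\,h(\sqa\exp_z^{-1}(x))$ equals $1$ on a full neighbourhood (in both arguments) of $\{\dg{z,x}<1/\sqa\}$, the derivative $\dsur{}{z_j}\Theta_\a(z,x)$, defined through charts as in \eqref{def:dzj}, vanishes whenever $\sqa\dg{z,x}\leq1$; and on $\{\sqa\dg{z,x}\geq1\}$ the chain rule together with the bounds $|\nabla^l h(y)|\leq C_l e^{-|y|/2}$ of Definition \ref{def:fcth} and the smoothness with bounded derivatives of $z\mapsto\exp_z^{-1}(x)$ and $z\mapsto\dg{z,x}$ away from the cut locus yields
\[
\bigl|\nabla_g^l\,\dsur{}{z_j}\Theta_\a(z,x)\bigr|\leq C\,\a^{(l+1)/2}\,e^{-\sqa\dg{z,x}/2},\qquad l=0,\ldots,2k,
\]
the point being that each $x$- or $z$-derivative landing on $h(\sqa\,\cdot\,)$ produces a factor $\sqa$, while derivatives landing on $\chi_\varrho$ are $\bigO(1)$ and supported where $e^{-\sqa\dg{z,x}/2}$ is already exponentially small.

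Combining this with the bounds $|\nabla_g^m\Bub_{\zm}(x)|\leq C\mu^{\frac{n-2k}{2}}\dg{z,x}^{2k-n-m}$ on the annulus — which follow from \eqref{eq:controlB} and the local-coordinate expansion exactly as in the proof of Proposition \ref{prop:estdifX}, since there $\dg{z,x}\geq1/\sqa>\mu$ — the Leibniz rule, the inequality $\dg{z,x}^{-m}\leq\a^{m/2}$ valid on $\{\sqa\dg{z,x}\geq1\}$, and $|\Dg^{l/2}v|\leq C\sum_{m\leq l}|\nabla_g^m v|$ give, for $l=0,\ldots,2k$,
\[
\Bigl|\Dg^{l/2}\!\bigl(\mu\,\dsur{}{z_j}\Theta_\a(z,\cdot)\,\Bub_{\zm}\bigr)(x)\Bigr|\leq C\,\sqa\,\mu\,\a^{l/2}\,\mu^{\frac{n-2k}{2}}\,\dg{z,x}^{2k-n}\,e^{-\sqa\dg{z,x}/2}
\]
for $1/\sqa\leq\dg{z,x}\leq\varrho$, and $0$ elsewhere. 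Squaring, multiplying by $\a^{k-l}$, and integrating in geodesic polar coordinates at $z$ bounds the quantity in question by a constant times $\a^{k-l}(\sqa\mu)^2\a^l\mu^{n-2k}\int_{1/\sqa}^{\varrho}r^{2(2k-n)}e^{-\sqa r}r^{n-1}\,dr$; after the substitution $s=\sqa r$ this last integral equals $\a^{(n-4k)/2}\int_1^{\sqa\varrho}s^{4k-n-1}e^{-s}\,ds\leq C\a^{(n-4k)/2}$, so altogether one gets a bound $C\,\a^{1-k+n/2}\mu^{n-2k+2}$.

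Finally, since $\a\mu^2<\tau$ by \eqref{def:param}, this is $\leq C\,\a^{1-k+n/2}(\tau/\a)^{(n-2k+2)/2}=C\,\tau^{(n-2k+2)/2}$, the powers of $\a$ cancelling exactly; as $n>2k$ the exponent $(n-2k+2)/2$ is positive, so it suffices to choose $\a_0$ large enough that $1/\sqa<\inj/2$ and $\tau_0$ so small that $C\,\tau_0^{(n-2k+2)/2}<\epsilon$. The only genuinely delicate point is the first step — verifying that differentiating $\Theta_\a$ in its center variable costs exactly one power of $\sqa$ and interacts harmlessly with the cut-off $\chi_\varrho$; everything after that is bookkeeping of exponents, which collapses thanks to the scaling relation $\a\mu^2<\tau$.
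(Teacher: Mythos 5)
Your proof is correct and follows essentially the same route as the paper: product rule, the observation that $\mu\,\dsur{}{z_j}\Theta_\a(z,\cdot)\Bub_{\zm}$ is supported in the annulus $\{1/\sqa\leq\dg{z,x}\leq\varrho\}$ with a pointwise bound $\bigO\big(\sqa\mu\,\a^{l/2}\mu^{\frac{n-2k}{2}}\dg{z,x}^{2k-n}e^{-\sqa\dg{z,x}/2}\big)$, and then integration yielding $C(\a\mu^2)^{\frac{n-2k}{2}+1}\leq C\tau^{\frac{n-2k}{2}+1}$, exactly the paper's estimate \eqref{eq:estdPhi} and its consequence. The only (welcome) difference is that you spell out the chain-rule justification that a $z_j$-derivative of $\Theta_\a$ costs one factor $\sqa$ and vanishes for $\sqa\dg{z,x}<1$, a point the paper leaves implicit by invoking \eqref{eq:estdThet}.
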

\begin{proof}
    Start by choosing $\a_0\geq 1$ such that $1/\sqrt{\a_0} < \inj/2$. By the product rule, for $l=0, \ldots 2k$, we have
    \begin{multline*}  
        \abs{\nabla_g^l \bpr{\Zdif}(x)}_g\\ \leq C \mu\sum_{m=0}^l \abs{\nabla_g^m \bpr{\dsur{}{z_j}\Theta_\a(z,\cdot)}(x)}_g\abs{\nabla_g^{l-m} \Bub_{\zm}(x)}_g.
        \end{multline*}
    Using \eqref{eq:estdThet}, we then obtain that when $\sqa\dg{z,x} \leq 1$ or $\dg{z,x} > \varrho$,
    \[  \nabla_g^l \bpr{\Zdif}(x) = 0, 
        \]
    and when $1/\sqa \leq \dg{z,x} \leq \varrho$,
    \begin{multline}\label{eq:estdPhi}
        \abs{\nabla_g^l \bpr{\Zdif}(x)}_g\\ \leq C \sum_{m=0}^l \mu \a^{\frac{m+1}{2}}e^{-\sqa\dg{z,x}/2} \mu^{\frac{n-2k}{2}}\dg{z,x}^{2k-n-l+m}.
    \end{multline}
    Thus, we obtain, for $l=0,\ldots k$,
    \begin{multline*}  
        \a^{k-l}\intM{\abs{\Dg^{l/2}\bpr{\Zdif}(x)}^2}(x)\\
        \begin{aligned}
            &= \a^{k-l}\intM[\Bal{z}{\varrho}\setminus\Bal{z}{1/\sqa}]{\abs{\Dg^{l/2}\bpr{\Zdif}(x)}^2}(x)\\
            &\leq C\a\mu^2 \sum_{m=0}^l \int_{\Bal{0}{\varrho}\setminus\Bal{0}{1/\sqa}} \a^{k-l+m} \mu^{n-2k}\abs{y}^{4k-2n-2l+2m}e^{-\sqa \abs{y}/2} dy\\
            &\leq C(\a\mu^2)^{\frac{n-2k}{2}+1}.
        \end{aligned}
    \end{multline*}
    Taking $\tau_0\leq 1$ small enough, and since $\a\mu^2< \tau$, we conclude.
\end{proof}
\begin{lemma}\label{prop:estintRX}
    Let $\a\geq 1$ be such that $1/\sqa < \inj/2$, $\tau\leq 1$, and $\pa = (\zm) \in \param$, and let $R \in L^{\frac{2n}{n+2k}}(M)$ be such that
    \[  \abs{R(x)} \leq \mu^{\frac{n-2k}{2}}(\mu+\dg{x,z})^{2k-n} \begin{cases}
        \a^\sigma (\mu + \dg{z,x})^{2\sigma - 2k} & \text{when } \sqa\dg{z,x} \leq 1\\
        \a^k e^{-\sqa\dg{z,x}/2} & \text{when } \sqa\dg{z,x} \geq 1
    \end{cases},
        \]
    for some $\sigma \in [1/2,1]$. Then, $R \in \Sob[-k](M)$ and 
    \[  \Snorm{-k}{R} \leq C (\sqa\mu)^{1/2}.
        \]
\end{lemma}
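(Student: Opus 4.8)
The plan is to deduce the estimate from the continuous embedding $L^{\frac{2n}{n+2k}}(M)\emb\Sob[-k](M)$, so that it is enough to prove $\Lnorm[(M)]{\frac{2n}{n+2k}}{R}\leq C(\sqa\mu)^{1/2}$ (this also gives $R\in\Sob[-k](M)$). Fix $\inj/2<\varrho<\inj$ and decompose $M$ into the three pieces $\Bal{z}{1/\sqa}$, $\Bal{z}{\varrho}\setminus\Bal{z}{1/\sqa}$ and $M\setminus\Bal{z}{\varrho}$; since $1/\sqa<\inj/2<\varrho<\inj$, on the first two pieces the exponential map at $z$ is a diffeomorphism onto its image, so we may pull back to $\R^n$ with $\dg{z,x}=\abs{y}$ and $dv_g=(1+\bigO(\abs{y}^2))\,dy$. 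Throughout I would write $p=\frac{2n}{n+2k}$ and use freely that $\sqa\mu<\sqrt\tau\leq1$, hence also $\mu<1/\sqa$.

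On $\Bal{z}{1/\sqa}$ the hypothesis reads $\abs{R(x)}\leq\a^\sigma\mu^{\frac{n-2k}{2}}(\mu+\dg{z,x})^{2\sigma-n}$, and the rescaling $y=\mu w$ gives
\[
    \intM[\Bal{z}{1/\sqa}]{\abs{R}^p}\leq C\,\a^{\sigma p}\,\mu^{\frac{4n\sigma}{n+2k}}\int_{\Bal{0}{1/(\sqa\mu)}}(1+\abs{w})^{(2\sigma-n)p}\,dw,
\]
where $(2\sigma-n)p+n=\frac{n(4\sigma-(n-2k))}{n+2k}$. According to the sign of $4\sigma-(n-2k)$, the $w$-integral is $\bigO(1)$, $\bigO(1+\abs{\log\sqa\mu})$, or $\bigO\big((\sqa\mu)^{-n(4\sigma-(n-2k))/(n+2k)}\big)$; I would substitute each of these into the display, take the $\tfrac1p$-th root, and verify by a direct computation of the exponents that the resulting contribution to $\Lnorm[(M)]{p}{R}$ is in every case $\leq C(\sqa\mu)^{\min(2\sigma,\,(n-2k)/2)}$, up to a factor $(1+\abs{\log\sqa\mu})^{1/p}$ present only in the borderline case $2\sigma=\tfrac{n-2k}{2}$. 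Since $\sigma\geq\tfrac12$ forces $2\sigma\geq1$ and $\tfrac{n-2k}{2}\geq\tfrac12$, and $\sqa\mu<1$, this is $\leq C(\sqa\mu)^{1/2}$; in the borderline case one moreover has $n-2k\geq2$ (again since $\sigma\geq\tfrac12$), so the surplus power of $\sqa\mu$ absorbs the logarithm.

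On $\Bal{z}{\varrho}\setminus\Bal{z}{1/\sqa}$ we have $\dg{z,x}\geq1/\sqa>\mu$, hence $\abs{R(x)}\leq C\,\mu^{\frac{n-2k}{2}}\dg{z,x}^{2k-n}\a^k e^{-\sqa\dg{z,x}/2}$, and the rescaling $y=w/\sqa$ turns $\int\abs{R}^p$ into $\bigO(1)\,\mu^{\frac{n-2k}{2}p}\a^{\frac{n(p-1)}{2}}$ (the $w$-integral converges because of the exponential factor); the $\tfrac1p$-th root is exactly $(\sqa\mu)^{\frac{n-2k}{2}}\leq(\sqa\mu)^{1/2}$. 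Finally, on $M\setminus\Bal{z}{\varrho}$ the pointwise bound and the compactness of $M$ give $\Lnorm[(M\setminus\Bal{z}{\varrho})]{p}{R}\leq C\,\mu^{\frac{n-2k}{2}}\a^k e^{-\sqa\varrho/2}$, and writing $\mu^{\frac{n-2k}{2}}=\mu^{1/2}\mu^{\frac{n-2k-1}{2}}$ and using $\mu<1/\sqa$ together with $n-2k-1\geq0$ to bound $\mu^{\frac{n-2k-1}{2}}\a^k e^{-\sqa\varrho/2}$ by a constant, this is $\leq C\mu^{1/2}\leq C(\sqa\mu)^{1/2}$. Summing the three contributions yields the claim. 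The only genuine difficulty is the bookkeeping of powers of $\a$ and $\mu$ in the first region — in particular isolating the logarithmic borderline case and checking that it is harmless — together with the care needed so that the region far from $z$ is controlled despite the growing factor $\a^k$.
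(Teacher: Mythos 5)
Your proof is correct and follows essentially the same route as the paper: the embedding $L^{\frac{2n}{n+2k}}(M)\emb \Sob[-k](M)$, the decomposition of $M$ at radii $1/\sqa$ and $\varrho$, and the same rescalings with the case analysis on $4\sigma$ versus $n-2k$ (logarithmic borderline included), so nothing further is needed.
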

\begin{proof}
    We use the continuous embedding $L^{\frac{2n}{n+2k}}(M) \emb \Sob[-k](M)$, so that $R\in \Sob[-k](M)$, and we compute
    \[  \intM{\abs{R(x)}^{\frac{2n}{n+2k}}}(x) \leq I_1^{\frac{2n}{n+2k}} + I_2^{\frac{2n}{n+2k}} + I_3^{\frac{2n}{n+2k}},
        \]
    defining
    \begin{align*}
        I_1 &:= \bpr{\intM[\Bal{z}{1/\sqa}]{\abs{R}^{\frac{2n}{n+2k}}}}^{\frac{n+2k}{2n}}, ~
        I_2 &:= \bpr{\intM[\Bal{z}{\varrho} \setminus\Bal{z}{1/\sqa}]{\abs{R}^{\frac{2n}{n+2k}}}}^{\frac{n+2k}{2n}},\\
        I_3 &:= \bpr{\intM[M \setminus \Bal{z}{\varrho}]{\abs{R}^{\frac{2n}{n+2k}}}}^{\frac{n+2k}{2n}}.
    \end{align*}
    We first estimate $I_1$, by a change of variables we have 
    \begin{align*}
        I_1 &\leq C (\a\mu^2)^\sigma \bpr{\int_{\Bal{0}{\frac{1}{\sqa\mu}}} (1+\abs{y})^{(2\sigma-n)\frac{2n}{n+2k}} dy}^{\frac{n+2k}{2n}}\\
            &\leq C (\a\mu^2)^\sigma \begin{cases}
                (\sqa \mu)^{\frac{n-2k-4\sigma}{2}} & \text{if } n-2k < 4\sigma\\
                1 + \abs{\log\sqa\mu} & \text{if } n-2k = 4\sigma\\
                1 & \text{if } n-2k > 4\sigma
            \end{cases}\\
            &\leq C (\sqa\mu)^{1/2}
    \end{align*}
    since $\sqa\mu < 1$, and for all $n>2k$, $\sigma\geq 1/2$. Now for $I_2$, with a change of variables we have
    \begin{align*}
        I_2 &\leq C (\sqa\mu)^{\frac{n-2k}{2}} \bpr{\int_{\R^n \setminus \Bal{0}{1}} \abs{y}^{(2k-n)\frac{2n}{n+2k}} e^{-\abs{y}}dy}^{\frac{n+2k}{2n}}\\
            &\leq C(\sqa\mu)^{\frac{n-2k}{2}} \leq C (\sqa\mu)^{1/2}
    \end{align*}
    for all $n>2k$. Finally, for $I_3$, see that when $\dg{z,x}\geq \varrho$, we have
    \[  \abs{R(x)} \leq C \a^k \mu^{\frac{n-2k}{2}} e^{-\sqa\varrho/2} \leq C \mu^{\frac{n-2k}{2}},
        \]
    so that 
    \[  I_3 \leq C \mu^{\frac{n-2k}{2}} \leq (\sqa\mu)^{1/2}
        \]
    for all $n>2k$.
\end{proof}

\subsection{Giraud's Lemmas}\label{sec:gir} 
Write, for $\epsilon \in (0,1)$, $t\geq 0$, 
\[  \Psi_\epsilon(t) := \begin{cases}
    1 & \text{when } t < 1\\
    e^{-(1-\epsilon)t} & \text{when } t\geq 1.
\end{cases}
    \]
\begin{lemma}\label{prop:gir1}
    Let $X \in C^0(M\times M)$ and $Y\in C^0(M\times M \setminus \{(x,x)\such x\in M\})$. Assume that there exists $\epsilon\in (0,1)$, $\gamma \in (0,n]$, $\beta \in (0,n]$, and $-\gamma < \rho < n-\gamma$, such that for all $\a \geq 1$, $\mu >0$,
    \[  \abs{X(x,y)} \leq (\mu + \dg{x,y})^{-\gamma} \begin{cases}
        (\mu + \dg{x,y})^{-\rho} & \text{when } \sqa\dg{x,y} \leq 1\\
        \a^{\rho/2} e^{-(1-\epsilon)\sqa\dg{x,y}} & \text{when } \sqa\dg{x,y} \geq 1
    \end{cases}
        \]  
    for all $x,y \in M$, and 
    \[  \abs{Y(x,y)} \leq \dg{x,y}^{\beta-n} \Psi_\epsilon(\sqa \dg{x,y})
        \]
    for all $x\neq y$ in $M$. Let $Z(x,y) := \intM{X(x,z)Y(z,y)}(z)$ for all $x,y \in M$, then $Z\in C^0(M\times M)$. There exists $\a_0\geq 1$, $\mu_0 >0$ and $C>0$ such that for all $\a\geq \a_0$, $\mu\leq \mu_0$, we have the following:
    \begin{itemize}
        \item If $\sqa\dg{x,y} \leq 1$, 
            \[  \abs{Z(x,y)} \leq C \begin{cases}
                \a^{\frac{\rho+\gamma-\beta}{2}} & \text{if } \beta-\rho > \gamma\\
                1+ \abs{\log \sqa(\mu+\dg{x,y})} & \text{if } \beta-\rho = \gamma\\
                (\mu + \dg{x,y})^{\beta-\rho -\gamma} & \text{if } \beta-\rho < \gamma
            \end{cases}.
                \]
        \item If $\sqa\dg{x,y} \geq 1$,
            \[  \abs{Z(x,y)} \leq C \a^{\rho/2} \dg{x,y}^{\beta-\gamma}e^{-(1-\epsilon)\sqa\dg{x,y}}.
                \]
    \end{itemize}
\end{lemma}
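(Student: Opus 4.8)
The proof is a refinement of the classical Giraud Lemma that keeps track of the two additional scales $\mu$ and $1/\sqa$ present in the bound on $X$. Throughout we take $\a_0\geq 1$ large and $\mu_0>0$ small, work in geodesic normal coordinates around each point of $M$ so that $dv_g$ is comparable to the Lebesgue measure and $\Vol(\Bal{x}{t})\leq Ct^n$ uniformly in $x\in M$, and we use that $M$ is compact so that $\dg{x,y}\leq\operatorname{diam}_g(M)$ for all $x,y$. First, the integral defining $Z$ converges uniformly: near $z=x$ the map $z\mapsto X(x,z)$ has a singularity of order $\dg{x,z}^{-\gamma-\rho}$ with $\gamma+\rho<n$ (this is where $\rho<n-\gamma$ is used), and near $z=y$ the map $z\mapsto Y(z,y)$ has a singularity of order $\dg{z,y}^{\beta-n}$ with $\beta>0$; hence $z\mapsto X(x,z)Y(z,y)\in L^1(M)$ with a bound locally uniform in $(x,y)$, and $Z\in C^0(M\times M)$ follows by dominated convergence.

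Set $r:=\dg{x,y}$ and decompose $M=\Omega_1\cup\Omega_2\cup\Omega_3$ with $\Omega_1:=\Bal{x}{r/2}$, $\Omega_2:=\Bal{y}{r/2}$, and $\Omega_3:=M\setminus(\Omega_1\cup\Omega_2)$. On $\Omega_1$ one has $\dg{z,y}\in[r/2,3r/2]$, on $\Omega_2$ one has $\dg{x,z}\in[r/2,3r/2]$, and on $\Omega_3$ the triangle inequality gives $\tfrac14\dg{x,z}\leq\dg{z,y}\leq 4\dg{x,z}$, so the two distances are comparable there. Accordingly: on $\Omega_1$ we bound $|Y(z,y)|$ by its value at distance $r$ and are reduced to $\intM[\Omega_1]{|X(x,z)|}$; on $\Omega_2$ we bound $|X(x,z)|$ by its value at distance $r$ (absorbing, in the regime $\sqa r\geq1$, the passage from $e^{-(1-\epsilon)\sqa\dg{x,z}}$ to $e^{-(1-\epsilon)\sqa r}$ into a slightly larger $\epsilon$) and are reduced to $\intM[\Omega_2]{|Y(z,y)|}$; on $\Omega_3$ we pass to a single radial variable $t\approx\dg{x,z}\approx\dg{z,y}$ ranging over $[r/2,\operatorname{diam}_g(M)]$, each dyadic annulus $\{t\leq\dg{x,z}\leq 2t\}$ contributing a volume factor $\leq Ct^n$. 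Each of the resulting one-dimensional integrals is then split at the scales $1/\sqa$ and $\mu$.

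The trichotomy in the statement originates from the $\Omega_3$ term. When $\sqa r\leq1$ one is led, after discarding an exponentially small tail over $[1/\sqa,\operatorname{diam}_g(M)]$, to estimate $\int_{r/2}^{1/\sqa}(\mu+t)^{-\gamma-\rho}t^{\beta-1}\,dt$: on the range where $t\gtrsim\mu$ the integrand is $\approx t^{\beta-\gamma-\rho-1}$, which integrates to a multiple of $\a^{(\rho+\gamma-\beta)/2}$, of $1+\bigl|\log\sqa(\mu+r)\bigr|$, or of $(\mu+r)^{\beta-\rho-\gamma}$ according as $\beta-\rho$ is $>\gamma$, $=\gamma$, or $<\gamma$; the complementary range where $\mu+t\approx\mu$ is handled using $\gamma+\rho>0$ (which follows from $\rho>-\gamma$), together with the restriction $r\leq 1/\sqa$, to absorb it into the same bound even when $\sqa\mu\geq1$. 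The $\Omega_1$ and $\Omega_2$ contributions are dominated by these (using $\intM[\Omega_2]{|Y(z,y)|}\lesssim\min(r^\beta,\a^{-\beta/2})$ and the analogous bound for $\intM[\Omega_1]{|X(x,z)|}$). When $\sqa r\geq1$, all of $\Omega_2$ and $\Omega_3$ lie in the exponential regime, and a direct computation using $\int_1^{\infty}s^{\beta-1}e^{-(1-\epsilon)s}\,ds<\infty$ (again enlarging $\epsilon$ slightly to swallow polynomial prefactors, and using $\sqa r\geq1$ to trade $\a^{-\beta/2}$ for $r^\beta$) produces $\a^{\rho/2}\dg{x,y}^{\beta-\gamma}e^{-(1-\epsilon)\sqa\dg{x,y}}$. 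Summing the three regions gives the claim.

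The only real difficulty is the bookkeeping: there are three competing scales — $\mu$, $1/\sqa$, and $r=\dg{x,y}$ — and in each of the (roughly a dozen) sub-configurations of their relative sizes one must verify that the crude bound obtained is subsumed by the single clean bound in the statement. The borderline case $\beta-\rho=\gamma$, which is what produces the logarithmic factor, and the configurations where $\sqa\mu\geq1$ is allowed require the most care; in all of them one repeatedly uses that $\a_0$ may be taken large and $\mu_0$ small so that $\Psi_\epsilon(\sqa t)$ and the exponential tails are genuinely negligible.
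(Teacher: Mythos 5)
Your overall architecture (the three--region split $\Bal{x}{r/2}\cup\Bal{y}{r/2}\cup\Omega_3$ with $r=\dg{x,y}$, reduction to radial integrals, case analysis at the scales $\mu$, $1/\sqa$, $r$) is the classical Giraud scheme that the paper itself points to, and it does produce the stated trichotomy in the regime $\sqa\dg{x,y}\leq 1$ (with $\sqa\mu\lesssim 1$). The genuine gap is in the exponential regime $\sqa\dg{x,y}\geq 1$: the conclusion must come out with the \emph{same} rate $(1-\epsilon)$ as in the hypotheses, and your reductions do not deliver it. On $\Bal{y}{r/2}$ you only know $\dg{x,z}\geq r/2$, so "bounding $X(x,z)$ by its value at distance $r$" costs a factor $e^{(1-\epsilon)\sqa r/2}$; converting $e^{-(1-\epsilon)\sqa r/2}$ into $Ce^{-(1-\epsilon')\sqa r}$ forces $\epsilon'\geq \frac{1+\epsilon}{2}$, which is not a "slightly larger $\epsilon$" but a halving of the rate (the same issue occurs on $\Bal{x}{r/2}$ and in your "swallow polynomial prefactors" step). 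As written, your argument only proves the second bullet with $e^{-\frac{1-\epsilon}{2}\sqa\dg{x,y}}$, which is strictly weaker than the statement and would not reproduce the fixed rate $e^{-\sqa\dg{z_i,x}/2}$ that is propagated through Lemma \ref{prop:1stestphi} and Proposition \ref{prop:2ndestphi}. The rate-preserving mechanism you are missing is to keep the two exponentials together: on $\Bal{y}{r/2}$ use $\dg{x,z}\geq r-\dg{z,y}$, so that $e^{-(1-\epsilon)\sqa\dg{x,z}}\leq e^{-(1-\epsilon)\sqa r}\,e^{(1-\epsilon)\sqa\dg{z,y}}$ and the excess is cancelled by $\Psi_\epsilon(\sqa\dg{z,y})$ (or costs only $e^{1-\epsilon}$ when $\sqa\dg{z,y}\leq1$); symmetrically on $\Bal{x}{r/2}$; and on the far region use $\dg{x,z}+\dg{z,y}\geq r$, splitting further into $\dg{z,y}\leq 2r$ (where $\dg{z,y}^{\beta-n}\leq Cr^{\beta-n}$ and the volume bound $Cr^n$ gives $r^{\beta-\gamma}$ directly) and $\dg{z,y}\geq 2r$ (where $\dg{x,z}+\dg{z,y}\geq r+\tfrac12\dg{z,y}$ leaves genuine leftover decay to integrate $\dg{z,y}^{\beta-n}$).

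Two smaller points. First, the tail of the $\Omega_3$ integral over $t\in[1/\sqa,\operatorname{diam}_g(M)]$ is \emph{not} exponentially small: since the exponential only starts at scale $1/\sqa$, a change of variables gives this tail the size $\a^{(\rho+\gamma-\beta)/2}$ (for $\mu\leq 1/\sqa$), which in the case $\beta-\rho>\gamma$ is exactly the order of the claimed bound; it cannot be discarded, it must be estimated and compared with the target (it does comply when $\sqa\mu\lesssim1$). Second, your one-line claim that the configuration $\sqa\mu\geq1$ is absorbed "using $\gamma+\rho>0$" is unsubstantiated: for $\mu\geq1/\sqa$ the same tail is of order $\a^{(\rho-\beta)/2}\mu^{-\gamma}$, which exceeds the claimed $(\mu+r)^{\beta-\rho-\gamma}$ by the factor $(\sqa\mu)^{\rho-\beta}$ whenever $\rho>\beta$; so this case requires either a real argument or the restriction $\a\mu^2\leq 1$ under which the lemma is actually applied in the paper (this is also a point where the lemma's literal statement, with $\a$ and $\mu$ unrelated, deserves care). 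In short: right skeleton, but the $\epsilon$-preserving bookkeeping in the exponential regime — which is precisely what distinguishes this lemma from the classical Giraud lemma — is missing, and the tail/large-$\sqa\mu$ claims need actual estimates.
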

\begin{lemma}\label{prop:gir2}
    Let $X \in C^0(M\times M)$ and $Y\in C^0(M\times M \setminus \{(x,x)\such x\in M\})$. Assume that there exists $\epsilon\in (0,1)$, $\beta \in (0,n]$ and $\gamma > \beta$, such that for all $\a \geq 1$, $\mu >0$,
    \begin{align*}  
        \abs{X(x,y)} &\leq (\mu + \dg{x,y})^{-\gamma} \Psi_\epsilon(\sqa\dg{x,y}) & &\forall~x,y \in M, \text{ and }\\
        \abs{Y(x,y)} &\leq \dg{x,y}^{\beta-n} \Psi_\epsilon(\sqa \dg{x,y}) &  &\forall~x\neq y \text{ in } M.
    \end{align*}   
    Let $Z(x,y) := \intM{X(x,z)Y(z,y)}(z)$ for all $x,y \in M$, then $Z\in C^0(M\times M)$. There exists $\a_0\geq 1$, $\mu_0 >0$ and $C>0$ such that for all $\a\geq \a_0$, $\mu\leq \mu_0$, we have 
    \[  \abs{Z(x,y)} \leq C\Psi_\epsilon(\sqa\dg{x,y}) \begin{cases}
        \mu^{n-\gamma} (\mu+\dg{x,y})^{\beta-n} & \text{if } \gamma>n\\
        (\mu+\dg{x,y})^{\beta-n} \big(1 + \abs{\log\frac{\mu+\dg{x,y}}{\mu}}\big) & \text{if } \gamma = n\\
        (\mu+\dg{x,y})^{\beta-\gamma} & \text{if } \gamma< n
    \end{cases}.
        \]
\end{lemma}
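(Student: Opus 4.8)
I will prove Lemma~\ref{prop:gir2} by first reducing the weighted convolution estimate to a purely metric (non-exponential) Giraud-type bound, via an elementary inequality for products of the cut-off weights $\Psi_\epsilon$, and then establishing that metric bound by the classical decomposition of $M$ into the regions $\{d(x,z)\le \tfrac12 d(x,y)\}$, $\{d(z,y)\le\tfrac12 d(x,y)\}$ and their complement.

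\textbf{Continuity of $Z$.} For fixed $\mu>0$ the function $X$ is continuous and bounded on $M\times M$ (indeed $|X|\le\mu^{-\gamma}$), while $z\mapsto Y(z,y)$ has only an integrable singularity at $z=y$, since $\beta>0$ forces $d(\cdot,y)^{\beta-n}\in L^1(M)$ uniformly in $y$. Hence, if $(x_m,y_m)\to(x,y)$, one has $X(x_m,z)\to X(x,z)$ uniformly in $z$, and $\int_M|Y(z,y_m)-Y(z,y)|\,dv_g(z)\to0$ by the continuity of $y\mapsto d(\cdot,y)^{\beta-n}$ as a map into $L^1(M)$. Therefore $Z(x_m,y_m)\to Z(x,y)$, so $Z\in C^0(M\times M)$.

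\textbf{Reduction.} The key elementary fact is that there is a constant $C_\epsilon>0$, depending only on $\epsilon\in(0,1)$, such that
\[
\Psi_\epsilon(\sqrt\alpha\,a)\,\Psi_\epsilon(\sqrt\alpha\,b)\le C_\epsilon\,\Psi_\epsilon(\sqrt\alpha\,r)\qquad\text{whenever }0\le r\le a+b,\ \alpha\ge1.
\]
This is checked by a short case analysis on whether $\sqrt\alpha a$ and $\sqrt\alpha b$ lie below or above $1$; the only nontrivial case is $\sqrt\alpha a<1\le\sqrt\alpha b$, which follows from $\sqrt\alpha r\le\sqrt\alpha a+\sqrt\alpha b<1+\sqrt\alpha b$. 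Applying this with $a=d(x,z)$, $b=d(z,y)$, $r=d(x,y)$ (triangle inequality), the hypotheses on $X$ and $Y$ yield
\[
|Z(x,y)|\le C\,\Psi_\epsilon\big(\sqrt\alpha\,d(x,y)\big)\int_M (\mu+d(x,z))^{-\gamma}\,d(z,y)^{\beta-n}\,dv_g(z).
\]
It now suffices to bound the last integral by the right-hand side of the claimed inequality, without the $\Psi_\epsilon$ factor.

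\textbf{The metric estimate.} Write $r:=d(x,y)$ and split $M=\Omega_1\cup\Omega_2\cup\Omega_3$, with $\Omega_1=\{d(x,z)\le r/2\}$, $\Omega_2=\{d(z,y)\le r/2\}$ and $\Omega_3$ the complement. On $\Omega_1$ one has $d(z,y)\simeq r$; on $\Omega_2$ one has $d(x,z)\simeq r$; on $\Omega_3$ one has $d(x,z)\simeq d(z,y)$. Using the comparison of volumes of geodesic balls on the compact manifold $M$ — valid for radii below the injectivity radius, which is what forces the restriction $\mu\le\mu_0$ (one may take $\alpha_0=1$) — each contribution reduces to a one-dimensional integral of the form $\int_0^{R}\tfrac{s^{n-1}}{(\mu+s)^{\gamma}}\,ds$ (from $\Omega_1$) or $\int_{r/2}^{D}(\mu+s)^{-\gamma}s^{\beta-1}\,ds$ (from $\Omega_3$), with $D=\operatorname{diam}(M)$; the $\Omega_2$ term is handled directly using $\int_{\{d(z,y)\le r/2\}}d(z,y)^{\beta-n}\,dv_g\simeq r^{\beta}$. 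Evaluating these, and distinguishing the regimes $r\le\mu$ (where $\mu+d(x,z)\simeq\mu$ on $\Omega_1\cup\Omega_2$) and $r\ge\mu$ (where $\mu+s\simeq s$ for $s\ge r/2$), gives
\[
\int_M (\mu+d(x,z))^{-\gamma}\,d(z,y)^{\beta-n}\,dv_g(z)\le C
\begin{cases}
\mu^{n-\gamma}(\mu+r)^{\beta-n} & \text{if }\gamma>n,\\[2pt]
(\mu+r)^{\beta-n}\big(1+\big|\log\tfrac{\mu+r}{\mu}\big|\big) & \text{if }\gamma=n,\\[2pt]
(\mu+r)^{\beta-\gamma} & \text{if }\gamma<n,
\end{cases}
\]
where the hypothesis $\gamma>\beta$ is used on $\Omega_3$ to guarantee that the tail integral $\int^{D}s^{\beta-1-\gamma}\,ds$ is controlled by its lower endpoint. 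Combining this with the display from the reduction step completes the proof.

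\textbf{Expected main obstacle.} The conceptual content is entirely in the product inequality for $\Psi_\epsilon$; once that is in place the remainder is the classical Giraud computation. The delicate point is purely bookkeeping: one must verify that every case is uniform in $\mu\le\mu_0$ and $\alpha\ge\alpha_0$, in particular in the regime $r<\mu$, where $d(z,y)^{\beta-n}$ can be large but is compensated by $r<\mu$, and in the regime $\gamma>n$, where the bound genuinely grows like $\mu^{n-\gamma}$ as $\mu\to0$.
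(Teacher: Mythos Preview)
Your proof is correct and follows precisely the approach the paper indicates: it defers to the classical Giraud computation (as in \cite[Proposition~4.12]{Aub82}) after factoring out the exponential weight via the product inequality $\Psi_\epsilon(\sqrt{\a}\,a)\Psi_\epsilon(\sqrt{\a}\,b)\le C_\epsilon\,\Psi_\epsilon(\sqrt{\a}\,r)$ for $r\le a+b$, exactly as in \cite[Lemma~A.2]{Car24}. The paper does not write out the argument, so your proposal is in fact a faithful expansion of what the paper cites.
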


The proof of both Lemmas \ref{prop:gir1} and \ref{prop:gir2} is very similar to \cite[Lemma A.2]{Car24}, and is based on the standard proof for the classical Giraud's Lemma, found for instance in \cite[Proposition 4.12]{Aub82}.

\bibliographystyle{amsplain}
\bibliography{reference}

\end{document}